%
%

%
%

\documentclass[11pt,reqno]{amsart}

\setlength{\oddsidemargin}{0.125in}
\setlength{\evensidemargin}{0.125in}
\setlength{\textwidth}{6.375in}
\setlength{\textheight}{8.5in}
\topskip 0in
\topmargin 0.375in
\footskip 0.25in










\setcounter{tocdepth}{4}



\newtheorem{thm}{Theorem}[section]
\newtheorem*{thm*}{Theorem}
\newtheorem{lem}[thm]{Lemma}
\newtheorem*{lem*}{Lemma}

\newtheorem{cor}[thm]{Corollary}
\newtheorem{claim}[thm]{Claim}
\newtheorem{prop}[thm]{Proposition}

\theoremstyle{definition}

\newtheorem{assump}[thm]{Assumption}
\newtheorem{case}{Case}\renewcommand{\thecase}{}
\newtheorem*{case*}{Case}

\newtheorem{defn}[thm]{Definition}
\newtheorem*{defn*}{Definition}

\newtheorem*{exmp*}{Example}

\newtheorem{rmk}[thm]{Remark}
\newtheorem*{rmk*}{Remark}
\newtheorem{step}{Step}\renewcommand{\thestep}{}

\theoremstyle{remark}


\makeatletter
\def\alphenumi{
  \def\theenumi{\alph{enumi}}
  \def\p@enumi{\theenumi}
  \def\labelenumi{(\@alph\c@enumi)}}
\makeatother




\makeatletter
\def\thecase{\@arabic\c@case}
\makeatother




\makeatletter
\def\thestep{\@arabic\c@step}
\makeatother


%
%

\newcount\hh
\newcount\mm
\mm=\time
\hh=\time
\divide\hh by 60
\divide\mm by 60
\multiply\mm by 60
\mm=-\mm
\advance\mm by \time
\def\hhmm{\number\hh:\ifnum\mm<10{}0\fi\number\mm}


%

\setlength{\marginparwidth}{1.0in}
\let\oldmarginpar\marginpar
\renewcommand\marginpar[1]{\-\oldmarginpar[\raggedleft\footnotesize #1]%
{\raggedright\footnotesize #1}}


\renewcommand\emptyset{\varnothing}





\newcommand\DD{\mathbb{D}}
\newcommand\EE{\mathbb{E}}

\newcommand\NN{\mathbb{N}}

\newcommand\PP{\mathbb{P}}
\newcommand\QQ{\mathbb{Q}}
\newcommand\RR{\mathbb{R}}


\newcommand\cB{{\mathcal{B}}}
\newcommand\cC{{\mathcal{C}}}

\newcommand\cF{{\mathcal{F}}}
\newcommand\cG{{\mathcal{G}}}
\newcommand\cH{{\mathcal{H}}}

\newcommand\cN{{\mathcal{N}}}

\newcommand\cX{{\mathcal{X}}}

\newcommand\cZ{{\mathcal{Z}}}








\newcommand\eps{\varepsilon}





\newcommand\diag{\operatorname{diag}}

\newcommand\dist{\operatorname{dist}}

\newcommand{\essinf}{\operatornamewithlimits{ess\ inf}}
\newcommand{\esssup}{\operatornamewithlimits{ess\ sup}}




%
%

\numberwithin{equation}{section}

\usepackage{amssymb}
\usepackage{hyperref}
\usepackage{mathrsfs}
\usepackage{slashed}
\usepackage[usenames]{color}
\usepackage{url}
\usepackage{verbatim}

\hypersetup{pdftitle={Harnack Inequalities for Degenerate Diffusions}}

\hypersetup{pdfauthor={Charles L. Epstein and Camelia A. Pop}}

%
%

\begin{document}

\title[Harnack Inequalities]{Harnack Inequalities for Degenerate Diffusions}
\author[C. Epstein]{Charles L. Epstein}
\address[CLE]{Department of Mathematics, University of Pennsylvania, 209 South 33rd Street, Philadelphia, PA 19104-6395}
\email{cle@math.upenn.edu}

\author[C. Pop]{Camelia A. Pop}
\address[CP]{Department of Mathematics, University of Pennsylvania, 209 South 33rd Street, Philadelphia, PA 19104-6395}
\email{cpop@math.upenn.edu}

\date{\today{ }\hhmm}

\begin{abstract}
We study various probabilistic and analytical properties of a class of degenerate diffusion operators arising in Population Genetics, the so-called generalized Kimura diffusion operators \cite{Epstein_Mazzeo_2010, Epstein_Mazzeo_annmathstudies, Epstein_Mazzeo_cont_est}. Our main results is a stochastic representation of weak solutions to a degenerate parabolic equation with singular lower-order coefficients, and the proof of the scale-invariant Harnack inequality for nonnegative solutions to the Kimura parabolic equation. The stochastic representation of solutions that we establish is a considerable generalization of the classical results on Feynman-Kac formulas concerning the assumptions on the degeneracy of the diffusion matrix, the boundedness of the drift coefficients, and on the a priori regularity of the weak solutions.
\end{abstract}

%

\subjclass[2010]{Primary 35J70; secondary 60J60}
\keywords{Degenerate elliptic equations, degenerate diffusions, generalized
  Kimura diffusions, Markov processes, Feynman-Kac formulas, Girsanov formula, weighted Sobolev spaces, anisotropic H\"older spaces}

\thanks{CLE's Research partially supported by NSF grant DMS12-05851, and  ARO grant W911NF-12-1-0552.}

\maketitle

\tableofcontents

\section{Introduction}
\label{sec:Introduction}
Generalized Kimura diffusion operators are a class of degenerate elliptic operators arising in Population Genetics as the infinitesimal generators of continuous limits of Markov chains \cite{Kimura_1957, Kimura_1964, Shimakura_1981, Ethier_Kurtz, KarlinTaylor2}. A thorough study of the parabolic equations defined by generalized Kimura operators was initiated by C. Epstein and R. Mazzeo in \cite{Epstein_Mazzeo_2010, Epstein_Mazzeo_annmathstudies}, where the authors construct anisotropic H\"older spaces to prove existence, uniqueness and optimal regularity of solutions to the parabolic Kimura equation. In general, Kimura operators act on functions defined on manifolds with corners \cite[\S 2.1]{Epstein_Mazzeo_annmathstudies}. Let $S_{n,m}:=\RR^n_+\times\RR^m$, where $\RR_+:=(0,\infty)$, $n$ and $m$ are nonnegative integers such that $n+m\geq 1$. In adapted local coordinates, $z=(x,y)\in S_{n,m}$, the generalized Kimura operator takes the form
\begin{equation}
\label{eq:Generator}
\begin{aligned}
\widehat Lu&= \sum_{i=1}^n \left(x_iu_{x_ix_i} + \widehat b_i(x,y)u_{x_i}\right) + \sum_{i,j=1}^n x_ix_j \widehat a_{ij}(x,y)u_{x_ix_j} \\
&\quad+\sum_{i=1}^n\sum_{l=1}^m x_i \widehat c_{il}(x,y)u_{x_iy_l} +\sum_{k,l=1}^m \widehat d_{kl}(x,y)u_{y_ky_l}
+ \sum_{l=1}^m \widehat e_l(x,y)u_{y_l},\quad\forall\, u \in C^2(S_{n,m}),
\end{aligned}
\end{equation}
in a neighborhood $(0,0)$. The main feature of the operator $\widehat L$ is that it is not strictly elliptic
as we approach the boundary of the domain $S_{n,m}$, because the smallest
eigenvalue of the second-order coefficient matrix tends to $0$ at a rate proportional to the distance to the boundary of the domain. For this reason, the signs of the coefficient functions $\widehat b_i(z)$ along $\partial S_{n,m}$ play a crucial role in the regularity of solutions;  we always assume that the drift coefficients $\widehat b_i(z)$ are positive functions along $\partial S_{n,m}$. The precise technical conditions imposed on the coefficients of the operator $\widehat L$ in our article are described in Assumption \ref{assump:Coeff_hat_L}.

The initial motivation of our article was to prove that nonnegative solutions to the parabolic equation defined by generalized Kimura operators, 
\begin{equation}
\label{eq:Parabolic_Kimura_homogeneous}
u_t-\widehat Lu=0 \quad\hbox{on } (0,\infty)\times S_{n,m},
\end{equation}
satisfy a scale-invariant Harnack inequality, \eqref{eq:Harnack_inequality_scale_invariant}. We prove this result in Theorem \ref{thm:Harnack_inequality_scale_invariant}. A different proof, based on
Moser's iteration method \cite{Moser_1961, Moser_1964}, is given
in~\cite{Epstein_Mazzeo_cont_est} to establish the Harnack inequality of
nonnegative solutions to the Kimura parabolic equation
\eqref{eq:Parabolic_Kimura_homogeneous}. In this paper our method of the proof
is based on a probabilistic technique due to K.-T. Sturm. In \cite{Sturm_1994},
Sturm establishes the Harnack inequality for operators that can be viewed
as lower-order perturbations of a model operator for which a Harnack inequality is known to hold. Adapting this idea to our framework, we view the generalized Kimura operator $\widehat L$ defined in \eqref{eq:Generator}, as a ``lower-order'' perturbation of an operator $L$ considered in \cite{Epstein_Mazzeo_cont_est}. The operator $L$ takes the form
\begin{equation}
\label{eq:Singular_operator}
\begin{aligned}
&Lu= \sum_{i=1}^n \left(x_i a_{ii} u_{x_ix_i}+b_ia_{ii}u_{x_i}\right) + \sum_{i,j=1}^n x_ix_j\tilde a_{ij} u_{x_ix_j} + \sum_{i=1}^n\sum_{l=1}^m 2x_ic_{il} u_{x_iy_l} +\sum_{k,l=1}^m d_{lk} u_{y_ly_k}\\
&\quad+ \sum_{i=1}^n x_i\left(\partial_{x_i} a_{ii}+\sum_{j=1}^n\left(\tilde a_{ij}+\delta_{ij}\tilde a_{ii}+x_j\partial_{x_j}\tilde a_{ij} + \tilde a_{ij}(b_j-1)\right)+\sum_{l=1}^m \partial_{y_l} c_{il}\right)u_{x_i}\\
&\quad+\sum_{i=1}^n x_i \left[ \sum_{j=1}^n \left(\partial_{x_i} b_j +\sum_{k=1}^n x_k\tilde a_{ik}\partial_{x_k} b_j +\sum_{l=1}^m c_{il} \partial_{y_l} b_j\right) \ln x_j\right] u_{x_i}\\
&\quad+\sum_{l=1}^m \left[\sum_{i=1}^n (x_i\partial_{x_i} c_{il}+b_ic_{il})+\sum_{k=1}^m \partial_{y_k} d_{lk} +\sum_{j=1}^n\left(\sum_{i=1}^n x_ic_{il}\partial_{x_i}b_j+\sum_{k=1}^m d_{lk} \partial_{y_k} b_j\right)\ln x_j\right] u_{y_l},
\end{aligned}
\end{equation}
where $u\in C^2(S_{n,m})$, and $\delta_{ij}$ denotes the Kronecker delta
symbol. The main difference between the \emph{standard} Kimura diffusion
operator $\widehat L$, defined in \eqref{eq:Generator}, and the \emph{singular}
Kimura diffusion operator $L$, defined in \eqref{eq:Singular_operator}, is the
addition of singular drift coefficients of logarithmic-type in the definition
of the operator $L$. The remaining technical conditions satisfied by the coefficients of the operator $L$ are described in Assumption \ref{assump:Coeff}. 

If $\widehat L$ is a standard Kimura diffusion, with
weights $\{b_i(x;y)\}$ that are non-constant along some boundary components,
then the natural representation of the dual operator $\widehat L^t$ acting on
$wd\mu$ includes a first order tangent vector field with logarithmically
divergent coefficients. It is therefore inevitable that such singular terms be considered. The particular choice of the operator $L$ in \cite{Epstein_Mazzeo_cont_est} and in our work, is also motivated by the fact that it can be written in divergence form, that is, for all $u,v\in C^2_c(\bar S_{n,m})$, we have that
$$
-\left(Lu,v\right)_{L^2(S_{n,m};d\mu)} = Q(u,v),
$$ 
where the weighted Sobolev space $L^2(S_{n,m};d\mu)$ consists of measurable functions, $u:S_{n,m}\rightarrow\RR$, that are $L^2$-integrable with respect to the measure $d\mu(z)$, defined by
\begin{align}
\label{eq:Measure}
d\mu(z):=\left(\prod_{i=1}^n x_i^{b_i(z)-1}\right) \, dz,\quad\forall\, z=(x,y)\in S_{n,m},
\end{align} 
and the symmetric bilinear form, $Q(u,v)$, is given by 
\begin{equation}
\label{eq:Bilinear_form}
\begin{aligned}
Q(u,v)&:=\int_{S_{n,m}} \left(\sum_{i=1}^n x_i a_{ii}(z) u_{x_i} v_{x_i}+\sum_{i,j=1}^n x_ix_j\tilde a_{ij}(z) u_{x_i} v_{x_j}\right.\\
&\quad\left.
+\sum_{i=1}^n\sum_{l=1}^m x_i c_{il}(z)\left(u_{x_i}v_{y_l}+u_{y_l}v_{x_i}\right)
+\sum_{l,k=1}^m d_{lk}(z) u_{y_l} v_{y_k} \right) d\mu(z).
\end{aligned}
\end{equation}
Singular Kimura diffusion operators of the form \eqref{eq:Singular_operator} have been studied by C. Epstein and R. Mazzeo in \cite{Epstein_Mazzeo_2010, Epstein_Mazzeo_cont_est}, where the authors establish that nonnegative solutions to the parabolic equation defined by the singular Kimura operator $L$ satisfy the scale-invariant Harnack inequality (\cite[Theorem 4.1]{Epstein_Mazzeo_cont_est}) and are H\"older continuous with respect to a suitable distance function defined in \S \ref{sec:Fundamental_solution} (\cite[Corollary 4.1]{Epstein_Mazzeo_cont_est}), and they prove supremum and infimum estimates of the fundamental solution associated to the operator $L$ (\cite[Corollaries 4.3 and 4.4]{Epstein_Mazzeo_cont_est}). The Harnack inequality and the supremum estimates of the fundamental solution of the operator $L$ motivate our choice to view $L$ as the model operator, and $\widehat L$ as the lower-order perturbation of $L$ in applying Sturm's approach \cite{Sturm_1994} to prove the Harnack inequality for the operator $\widehat L$.

The difficulty in applying Sturm's method to our framework is due to the fact that Harnack's inequality has to be satisfied by solutions of the parabolic equation defined by the operator $L$, 
\begin{equation}
\label{eq:Parabolic_singular_Kimura_homogeneous}
u_t- Lu=0\quad\hbox{on } (0,\infty)\times S_{n,m},
\end{equation}
that satisfy a suitable stochastic representation give by \eqref{eq:Stochastic_representation_cont}, as opposed to weak solutions to the equation \eqref{eq:Parabolic_singular_Kimura_homogeneous}, defined in \S \ref{sec:Inhomogeneous_initial_value_problem}. To explain the difference between weak solutions and solutions satisfying a suitable stochastic representation, we introduce the two concepts. 

Let $\Omega \subseteq S_{n,m}$ be an open set, and let $I:=(t_1,t_2)\subset \RR_+$. We consider the inhomogeneous initial-value problem,
\begin{equation}
\label{eq:Inhomogeneous_initial_value_problem}
\begin{aligned}
u_t-Lu&=g\quad\hbox{on } I\times\Omega,\\
u&=f\quad\hbox{on } \{t_1\}\times\Omega.
\end{aligned}
\end{equation}
Following \cite[\S 1.4 A]{Sturm_1995}, \cite[\S 7.1.1 b.]{Evans}, \cite[Theorem 10.9]{Brezis}, we define the notion of a weak solution as follows.
\begin{defn}[Weak solution]
\label{defn:Weak_solution}
Let $f\in \cH$ and $g \in L^2(I,\cF^*)$. A function $u\in \cF(I\times\Omega)$ is a solution to the inhomogeneous initial-value problem \eqref{eq:Inhomogeneous_initial_value_problem} if
\begin{enumerate}
\item[1.] For all $v\in \cF(I\times\Omega)$, we have that
\begin{equation}
\label{eq:Weak_sol_var_eq}
\int_I Q(u(t),v(t))\, dt +\int_I \left(\frac{du(t)}{dt}, v(t)\right)\, dt=\int_I \left(g(t),v(t)\right)\, dt, 
\end{equation}
where $(\cdot,\cdot)$ denotes the dual pairing of $\cF^*$ and $\cF$.
\item[2.] The initial condition is satisfied in the $\cH$-sense, that is
\begin{equation}
\label{eq:Weak_sol_initial_cond}
\|u(t)-f\|_{\cH}\rightarrow 0\quad\hbox{as } t\downarrow t_1.
\end{equation}
\end{enumerate}
\end{defn}
The technical definitions of the weighted Sobolev spaces $\cH$ and $\cF$, and of the dual space $\cF^*$ are given in \S \ref{sec:Inhomogeneous_initial_value_problem}. We prove in \S \ref{sec:Inhomogeneous_initial_value_problem}, that the unique weak solution, $u \in \cF(I\times\Omega)$, to the homogeneous initial-value problem \eqref{eq:Inhomogeneous_initial_value_problem}  with $f\in \cH$ and $g\equiv 0$ generates a strongly continuous semigroup, $\{T^{\Omega}_t\}_{t\geq 0}$, such that
\begin{equation}
\label{eq:Representation_homogeneous_sol}
u(t+t_1)=T^{\Omega}_t f,\quad\forall\, t \in [0,t_2-t_1].
\end{equation}
From \cite[Theorem 4.1]{Epstein_Mazzeo_cont_est}, we know that Harnack's inequality holds for nonnegative weak solutions to the parabolic equation \eqref{eq:Parabolic_singular_Kimura_homogeneous}.

To introduce the version of Harnack's inequality needed for our proof of Theorem \ref{thm:Harnack_inequality_scale_invariant}, we first need to define the singular Kimura stochastic differential equation associated to the operator $L$:
\begin{equation}
\label{eq:Kimura_SDE_singular}
\begin{aligned}
d X_i(t) &= \left(g_i(Z(t))+ X_i(t)\sum_{j=1}^{n}f_{ij}( Z(t))\ln  X_j(t)\right)\, dt+\sqrt{ X_i(t)}\sum_{j=1}^{n+m} \sigma_{ij}( Z(t))\, d W_j(t),\\
d Y_l(t) &= \left(e_l( Z(t))+\sum_{j=1}^{n}f_{l+n,j}( Z(t))\ln  X_j(t)\right)\, dt+\sum_{j=1}^{n+m} \sigma_{l+n,j}( Z(t))\, d W_j(t),
\end{aligned}
\end{equation}
for all $i=1,\ldots,n$ and $l=1,\ldots,m$. The coefficients of the stochastic differential equation \eqref{eq:Kimura_SDE_singular} are related to the coefficients of the operator $L$ through identities \eqref{eq:Coeff_Kimura_SDE_L}. The existence and uniqueness of weak solutions to the Kimura equation \eqref{eq:Kimura_SDE_singular} that satisfy the Markov property is established in \cite[Theorems 3.4 and 3.8]{Pop_2013a}. Denote $Q:=(t_1,t_2)\times\Omega$, and let
\begin{equation}
\label{eq:parabolic_boundary}
\eth Q:=\left([t_1,t_2]\times\partial_1\Omega\right) \cup\left(\{t_1\}\times\Omega\right),
\end{equation}
where $\partial_1\Omega := \partial\Omega\cap S_{n,m}$. We can now state the Harnack inequality satisfied by functions defined through a stochastic representation:
\begin{thm}[Stochastic representation and the Harnack inequality]
\label{thm:Stochastic_representation}
There is a positive constant, $K_0$, such that the following hold. Let $g \in C(\overline{\eth Q})$ be a nonnegative function, and let $u$ be the function defined by the stochastic representation
\begin{equation}
\label{eq:Stochastic_representation_cont}
u(t,z):=\EE_{\PP^z}\left[g(t-(t-t_1)\wedge\tau_{\Omega}), Z((t-t_1)\wedge\tau_{\Omega}))\right],\quad\forall\, (t,z) \in \bar Q,
\end{equation}
where $\tau_{\Omega}$ denotes the stopping time
$$
\tau_{\Omega} :=\inf\{ t\geq 0:\, Z(t) \notin\underline \Omega\}.
$$
Then the function $u$ satisfies the scale-invariant Harnack inequality, that is, for all $(t^0,z^0)\in\bar Q$ and $r>0$ such that $Q_{2r}(t^0,z^0) \subset Q$, we have that
\begin{equation}
\label{eq:Harnack_cont}
\esssup_{Q_r(t^0-2r^2,z^0)} u \leq K_0 \essinf_{Q_r(t^0,z^0)} u,
\end{equation}
where the parabolic cylinder $Q_r(t^0,z^0)$ is defined in \eqref{eq:Parabolic_cylinder}.
\end{thm}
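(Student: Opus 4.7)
The plan is to identify $u$, defined pointwise by the stochastic representation \eqref{eq:Stochastic_representation_cont}, with a nonnegative weak solution of the homogeneous parabolic equation $u_t - Lu = 0$ on $Q$, and then to invoke the Harnack inequality for weak solutions of equation \eqref{eq:Parabolic_singular_Kimura_homogeneous} established in \cite[Theorem 4.1]{Epstein_Mazzeo_cont_est}.

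First I would verify the basic qualitative properties of $u$. Since $g \geq 0$ on $\overline{\eth Q}$, we have $u \geq 0$ on $\bar Q$; since $g$ is continuous on the compact set $\overline{\eth Q}$ it is bounded, and the stopped process $Z((t-t_1)\wedge\tau_\Omega)$ almost surely remains in $\overline{\underline{\Omega}}$, so $u$ is bounded. Continuity of $u$ on $\bar Q$ away from the edge $\{t_1\}\times\partial_1\Omega$ follows from continuity of sample paths of $Z$ under $\PP^z$, continuity of $g$, and dominated convergence, together with the continuous dependence on the initial point of the weak solution to \eqref{eq:Kimura_SDE_singular} asserted in \cite[Theorems 3.4 and 3.8]{Pop_2013a}.

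Next I would establish the martingale mean-value identity
\begin{equation*}
u(t,z) = \EE_{\PP^z}\bigl[u(t-\sigma, Z(\sigma))\bigr]
\end{equation*}
for every $(t,z) \in Q$ and every stopping time $\sigma \leq (t-t_1)\wedge\tau_\Omega$, which is a direct consequence of the strong Markov property of the Kimura diffusion. Applying this with $\sigma$ equal to the exit time from a small ball $B\Subset\Omega$ around $z$, and then using It\^o's formula against arbitrary test functions in $\cF(I\times\Omega)$ — noting that the quadratic covariation is controlled by the matrix appearing in the bilinear form $Q(\cdot,\cdot)$ via the relations \eqref{eq:Coeff_Kimura_SDE_L} — produces the variational identity \eqref{eq:Weak_sol_var_eq} locally. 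When $g$ is supported in $\{t_1\}\times\Omega$, this identifies $u(t_1+s,\cdot)$ with the semigroup action $T^\Omega_s[g(t_1,\cdot)]$ via \eqref{eq:Representation_homogeneous_sol}, and hence with a weak solution on $Q$. For the general case I would split $g = g_0 + g_1$ by a smooth partition of unity on $\overline{\eth Q}$ separating $\{t_1\}\times\Omega$ from $[t_1,t_2]\times\partial_1\Omega$, write $u = u_0 + u_1$ correspondingly, treat $u_0$ exactly as above, and treat $u_1$ by exhausting $\Omega$ from inside by smooth subdomains $\Omega_k \uparrow \Omega$ for which $u_1$ coincides (via the strong Markov property at first exit from $\Omega_k$) with the weak solution on $(t_1,t_2)\times\Omega_k$ whose boundary datum is the continuous bounded function $u_1$ itself on $\eth((t_1,t_2)\times\Omega_k)$; a stability/passage-to-the-limit argument then promotes $u_1$ to a weak solution on $Q$.

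Once $u$ is identified as a nonnegative weak solution of $u_t - Lu = 0$ on $Q$, the scale-invariant Harnack inequality \eqref{eq:Harnack_cont}, with constant $K_0$ independent of $u$ and of the cylinder $Q_r(t^0,z^0)$, follows immediately from \cite[Theorem 4.1]{Epstein_Mazzeo_cont_est}. The main obstacle I anticipate is the passage from the pointwise stochastic representation to the integrated variational formulation in the weighted spaces $\cH$ and $\cF$: the boundary datum $g$ is only continuous and need not belong to $\cH$, while the weak-solution framework of \S\ref{sec:Inhomogeneous_initial_value_problem} is set up for initial data in $\cH$ with zero lateral contribution. Bridging this gap rigorously — in particular making the domain-exhaustion argument converge in $\cF$-topology up to the degenerate boundary $\partial_1 \Omega$, where the logarithmic drifts in $L$ and the weight $\prod x_i^{b_i-1}$ interact delicately — is the technical heart of the proof.
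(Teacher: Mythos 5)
Your endgame is the same as the paper's (exhibit $u$ as a nonnegative weak solution of $u_t-Lu=0$ on $Q$ and invoke \cite[Theorem 4.1]{Epstein_Mazzeo_cont_est}), but the route you sketch to get there has a genuine unresolved gap, and you in fact flag it yourself: nothing in your outline shows that the pointwise-defined function \eqref{eq:Stochastic_representation_cont} actually belongs to $\cF(I\times\Omega)$ and satisfies the variational identity \eqref{eq:Weak_sol_var_eq}. The two tools you propose for bridging this are both unavailable here. First, ``using It\^o's formula against test functions in $\cF(I\times\Omega)$'' is exactly the step the paper explains cannot be carried out in this setting (see \S\ref{subsec:Comparison}): weak solutions in $\cF$ have no control on second-order derivatives, so the It\^o/approximation argument of \cite[Theorem 4]{Sturm_1994} does not apply, and applying It\^o to smooth test functions only yields information about the test functions, not the variational identity for $u$. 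Second, your mean-value identity at arbitrary stopping times $\sigma$, and your domain-exhaustion step at the first exit from $\Omega_k$, both invoke the \emph{strong} Markov property of $Z$; the framework of \cite{Pop_2013a} and of this paper only provides uniqueness and the (simple) Markov property for solutions of \eqref{eq:Kimura_SDE_singular}, and the paper deliberately works around strong Markovianity (e.g.\ via the Stroock--Varadhan patching in the proof of Theorem \ref{thm:Connection_semigroup_Markov_solution}). Finally, your exhaustion step requires solving lateral Dirichlet problems with merely continuous boundary data, which is outside the $H^1_0$-based weak-solution theory set up in \S\ref{sec:Inhomogeneous_initial_value_problem}; you do not supply the stability argument that would close it.

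The paper's proof avoids all of this with one approximation trick you are missing. Take nonnegative $g_k\in C^{\infty}(\bar Q)$ with $\|g_k-g\|_{C(\overline{\eth Q})}\to 0$, and set $u_k$ by \eqref{eq:Approximate_stochastic_representation} and $h_k:=\partial_t g_k - Lg_k$. It\^o's rule is applied to the \emph{smooth} function $g_k(t-r,Z(r))$ (perfectly legitimate), giving $u_k(t,z)=g_k(t,z)-\EE_{\PP^z}\bigl[\int_0^{(t-t_1)\wedge\tau_\Omega}h_k(t-r,Z(r))\,dr\bigr]$; the Feynman--Kac identity \eqref{eq:Connection_semigroup_Markov_process} of Theorem \ref{thm:Connection_semigroup_Markov_solution} rewrites the correction term as $\int_0^{t-t_1}\bigl(T^{\Omega}_{t-(r+t_1)}h_k\bigr)(r+t_1,\cdot)(z)\,dr$, which by the Duhamel representation in Lemma \ref{lem:Existence_uniqueness_inhomogeneous} is a weak solution of $u_t-Lu=-h_k$; since $g_k$ solves $u_t-Lu=h_k$ classically, $u_k$ is a weak solution of the homogeneous equation, so \cite[Theorem 4.1]{Epstein_Mazzeo_cont_est} gives \eqref{eq:Harnack_cont} for $u_k$ with a constant independent of $k$, and uniform convergence $u_k\to u$ on $\bar Q$ transfers the inequality to $u$. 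This sidesteps entirely the question of whether $u$ itself lies in $\cF(I\times\Omega)$, which is precisely the point your proposal leaves open.
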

The first part of our article,  \S \ref{sec:Inhomogeneous_initial_value_problem} - \S \ref{sec:Kimura_SDE}, is dedicated to proving  Theorem \ref{thm:Stochastic_representation}. This is the key ingredient, which allows us in Theorem \ref{thm:Harnack_inequality_scale_invariant}, to apply Sturm's probabilistic argument to prove the scale-invariant Harnack inequality for the standard Kimura operator $\widehat L$. An intermediary result of independent interest is Theorem \ref{thm:Connection_semigroup_Markov_solution}, where we prove that the unique weak solution, in the sense of Definition \ref{defn:Weak_solution}, to the homogeneous initial-value problem \eqref{eq:Homogeneous_initial_value_problem}, defined by the singular Kimura operator $L$, satisfies a Feynman-Kac formula.

\begin{thm}[Connection between the semigroup $\{T^{\Omega}_t\}_{t\geq 0}$ and Markov solutions to \eqref{eq:Kimura_SDE_singular}]
\label{thm:Connection_semigroup_Markov_solution}
Let $z \in \bar S_{n,m}$. Let $\{Z(t):Z(0)=z\}_{t \geq 0}$ be the unique weak solution satisfying the Markov property to the singular Kimura stochastic differential equation \eqref{eq:Kimura_SDE_singular}, with initial condition $Z(0)=z$. Then, given any bounded, Borel measurable function, $f:\underline\Omega\rightarrow\RR$, we have that
\begin{equation}
\label{eq:Connection_semigroup_Markov_process}
\left(T^{\Omega}_t f\right)(z) = \EE_{\PP^z}\left[f(Z(t)) \mathbf{1}_{\{t <\tau_{\Omega}\}}\right],\quad\forall\, t \geq 0,
\end{equation}
where $\PP^z$ is the probability distribution of the process $\{Z(t)\}_{t\geq 0}$.
\end{thm}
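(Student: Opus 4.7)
The plan is to show that the stochastic semigroup $P^\Omega_t f(z) := \EE_{\PP^z}[f(Z(t))\mathbf{1}_{\{t<\tau_\Omega\}}]$ agrees with the analytic semigroup $T^\Omega_t$ by verifying that $v(t,z) := (P^\Omega_{t-t_1} f)(z)$ is the unique weak solution, in the sense of Definition \ref{defn:Weak_solution}, to the homogeneous initial-value problem \eqref{eq:Inhomogeneous_initial_value_problem} with $g\equiv 0$; since the representation \eqref{eq:Representation_homogeneous_sol} identifies $T^\Omega_t f$ with that same unique weak solution, this will establish \eqref{eq:Connection_semigroup_Markov_process}. The argument proceeds first for $f$ in a dense subclass of $\cH$, say $f \in C^2_c(\Omega)$, and is then extended to bounded Borel $f$ by a monotone-class argument together with the pointwise boundedness $\|P^\Omega_t f\|_\infty \leq \|f\|_\infty$.

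The key technical step is an Itô-type computation: for $\varphi \in C^2_c(\bar S_{n,m})$, apply Itô's formula to $\varphi(Z(\cdot\wedge\tau_\Omega))$ using the Markov weak solution of \eqref{eq:Kimura_SDE_singular} provided by \cite[Theorems 3.4 and 3.8]{Pop_2013a}, and take expectations under $\PP^z$ to derive the pointwise infinitesimal identity $\partial_t P^\Omega_t\varphi(z) = L(P^\Omega_t\varphi)(z)$. Pairing this identity with a test function against the reference measure $d\mu$ and using the divergence identity $-(L\varphi,\psi)_{L^2(d\mu)} = Q(\varphi,\psi)$ (valid for $\varphi,\psi \in C^2_c(\bar S_{n,m})$) produces the variational equation \eqref{eq:Weak_sol_var_eq}. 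Extending this identity by density from $\psi \in C^2_c(\Omega)$ to arbitrary $\psi \in \cF(I\times\Omega)$, and verifying that $v(t,\cdot) \in \cF$ through energy bounds obtained from the supremum estimates for the fundamental solution of $L$ in \cite[Corollary 4.3]{Epstein_Mazzeo_cont_est}, completes the weak formulation.

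For the initial condition \eqref{eq:Weak_sol_initial_cond}, one needs strong continuity of the stochastic semigroup on $\cH$ at $t=0$. For bounded continuous $f$, right-continuity of the paths of $Z$ at $t=0$ and bounded convergence yield $P^\Omega_t f(z)\to f(z)$ pointwise; convergence in $\cH = L^2(d\mu)$ then follows from dominated convergence, since the local finiteness of $d\mu$ controls the integrals on the compact support of $f$. Combining the verified variational identity with the initial condition, the uniqueness statement from \S\ref{sec:Inhomogeneous_initial_value_problem} forces $P^\Omega_t f = T^\Omega_t f$ in $\cH$ for all $f\in C^2_c(\Omega)$, hence as $d\mu$-a.e. equality; a standard monotone-class argument then extends the identity to all bounded Borel measurable $f$.

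The main obstacle lies in the Itô step: the drift field of \eqref{eq:Kimura_SDE_singular} contains logarithmic singularities $\ln X_j(t)$ along $\{x_j = 0\}$, so one cannot apply Itô's formula naively for $\varphi$ that extends smoothly up to the boundary. The resolution is to localize by means of the stopping times $\tau_\eps := \inf\{t\geq 0 : \min_{1\leq j\leq n} X_j(t) \leq \eps\}$ on which Itô's formula applies directly, derive the integrated weak identity up to $t\wedge\tau_\eps\wedge\tau_\Omega$, and then pass $\eps\downarrow 0$ using uniform moment bounds for the logarithmic drift — bounds that follow from the boundary repulsion properties of the Kimura process established in \cite{Pop_2013a}. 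Once this localization is in place, the remaining steps reduce to routine applications of symmetric Dirichlet-form semigroup theory.
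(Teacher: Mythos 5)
Your plan runs in the opposite direction from the paper's: you start from the unique Markov solution $\{Z(t)\}$ of \eqref{eq:Kimura_SDE_singular} furnished by \cite{Pop_2013a}, define $v(t,z)=\EE_{\PP^z}[f(Z(t-t_1))\mathbf{1}_{\{t-t_1<\tau_\Omega\}}]$, and try to show $v$ is the unique weak solution in the sense of Definition \ref{defn:Weak_solution}, whereas the paper goes from the analytic semigroup to a Markov process (Proposition \ref{prop:Stopped_process}), to a solution of the local martingale problem (Proposition \ref{prop:Solution_local_martingale_problem}), to a stopped weak solution of \eqref{eq:Kimura_SDE_singular} satisfying \eqref{eq:Weak_solution_semigroup} (Proposition \ref{prop:Stopped_solutions}), and finally identifies laws via the Stroock--Varadhan patching theorem and the uniqueness of Markovian solutions in \cite[Theorem 3.8]{Pop_2013a}. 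Your direction is not unreasonable in principle, but as written it has a genuine gap at its central step: to invoke the uniqueness of Lemma \ref{lem:Existence_uniqueness_homogeneous} you must show that the purely probabilistically defined $v$ belongs to $\cF(I\times\Omega)=L^2(I,\cF)\cap H^1(I,\cF^*)$ with $\cF=H^1_0(\underline\Omega;d\mu)$ --- i.e.\ finite weighted energy $Q(v(t),v(t))$, a time derivative in $\cF^*$, and the vanishing-on-$\partial_1\Omega$ condition in the Sobolev (closure of $C^1_c(\underline\Omega)$) sense. Your proposed source for this, the supremum bound \eqref{eq:Upper_bound_transition_density}, cannot deliver it: sup bounds give no gradient/energy control, and, more seriously, those bounds concern the kernel $p$ of the \emph{analytic} semigroup; relating the transition kernel of $Z$ killed at $\tau_\Omega$ under $\PP^z$ to $p^\Omega$ is essentially the statement being proved, so using it here is circular. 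The same circularity affects your Itô step: what Itô plus the Markov property gives is $P^\Omega_t\varphi(z)=\varphi(z)+\int_0^t P^\Omega_s(L\varphi)(z)\,ds$, not $\partial_tP^\Omega_t\varphi=L(P^\Omega_t\varphi)$ (the latter needs a priori regularity of $P^\Omega_t\varphi$ which you do not have), and even the integrability of $L\varphi(Z(r))$ and of the logarithmic drift along the paths under $\PP^z$ --- which the paper obtains by pairing $L\varphi\in L^p(d\mu)$ with the $L^q$ density estimates of Lemma \ref{lem:L_q_transition_density} under the law it has already tied to $p^\Omega$ --- is only asserted in your proposal via unspecified ``boundary repulsion'' bounds from \cite{Pop_2013a}; the exponential estimates actually available there (of the type \eqref{eq:Exp_Girsanov_theorem}) are stated for the standard Kimura process $\widehat Z$, not directly in the form you need.

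There is also a boundary-regularity issue hidden in the initial-condition and boundary steps: continuity of $t\mapsto P^\Omega_tf$ in $\cH$ at $t=0$ and the Dirichlet behaviour of $v$ at $\partial_1\Omega$ require knowing that $\tau_\Omega$ behaves well (e.g.\ $\PP^z(\tau_\Omega=s)=0$ and regularity of points of $\partial_1\Omega$), facts which in the paper are consequences of the identification with the analytic semigroup (cf.\ the argument in Corollary \ref{cor:Stochastic_representation}), not inputs to it. To make your route work you would need an independent proof of energy estimates and $H^1_0$ membership for the probabilistic candidate $v$ --- a substantial Caccioppoli-type or Dirichlet-form argument for the killed process that the proposal does not supply. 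This is precisely the difficulty that led the authors to the longer construction through the martingale problem and the patching/uniqueness argument.
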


Assume that $f\in L^2(\Omega;d\mu)\cap L^{\infty}(\Omega)$ and $g\equiv 0$ in Definition \ref{defn:Weak_solution} of the initial-value problem \eqref{eq:Homogeneous_initial_value_problem}. Then Lemma \ref{lem:Existence_uniqueness_homogeneous} and Theorem \ref{thm:Connection_semigroup_Markov_solution} show that the unique weak solution, $u\in\cF((0,T)\times\Omega)$, to the homogeneous initial-value problem \eqref{eq:Inhomogeneous_initial_value_problem} admits the stochastic representation,
\begin{equation}
\label{eq:Stochastic_representation_whole_space}
u(t,z) = \EE_{\PP^z}\left[f(Z(t)) \mathbf{1}_{\{t <\tau_{\Omega}\}}\right],\quad\forall\, (t,z) \in (0,T)\in\underline\Omega.
\end{equation}
The stochastic representation \eqref{eq:Connection_semigroup_Markov_process} of weak solutions is a considerable generalization of classical results on stochastic representations concerning the assumptions on the degeneracy of the diffusion matrix, the boundedness of the drift coefficients, and on the a priori regularity of the weak solutions. A more detailed comparison between the classical results on stochastic representation of solutions relevant to our work and Theorem \ref{thm:Connection_semigroup_Markov_solution} is given in \S \ref{subsec:Comparison}.

\subsection{Outline of the article}
\label{subsec:Outline}
In \S \ref{sec:Inhomogeneous_initial_value_problem} - \S \ref{sec:Kimura_SDE}, we prove various results concerning the singular Kimura differential operator $L$ defined in \eqref{eq:Singular_operator}. In \S \ref{sec:Kimura_SDE}, we prove Theorem \ref{thm:Connection_semigroup_Markov_solution} which establishes the stochastic representation of weak solutions to the initial-value problem \eqref{eq:Homogeneous_initial_value_problem} defined by the singular Kimura operator $L$, and we prove Theorem \ref{thm:Stochastic_representation} which contains the Harnack inequality satisfied by functions defined by the stochastic representation \eqref{eq:Stochastic_representation_cont}. In \S \ref{sec:Harnack_inequality}, we apply Theorem \ref{thm:Stochastic_representation} to the proof of the Harnack inequality for nonnegative solutions to the parabolic equation defined by the standard Kimura operator $\widehat L$ given by \eqref{eq:Generator}. The main results of \S \ref{sec:Harnack_inequality} are Theorems \ref{thm:Harnack_inequality} and \ref{thm:Harnack_inequality_scale_invariant}.

We begin in \S \ref{sec:Inhomogeneous_initial_value_problem} by introducing the parabolic problem defined by the singular Kimura operator $L$ on subdomains $\Omega$ of $S_{n,m}$, and the notion of weak solution in Definition \ref{defn:Weak_solution}. We prove the existence and uniqueness of weak solutions in suitable weighted Sobolev spaces, and we show that the weak solutions to the homogeneous initial-value problem \eqref{eq:Homogeneous_initial_value_problem} generate a strongly continuous, contraction semigroup on $L^2(\Omega;d\mu)$, $\{T^{\Omega}_t\}_{t\geq 0}$. Sections \S \ref{sec:Fundamental_solution} - \S \ref{sec:Kimura_SDE} contain the gradual construction of a continuous Markov process, $\{Z(t)\}_{t\geq 0}$, associated to the semigroup $\{T^{\Omega}_t\}_{t\geq 0}$ (Theorem \ref{thm:Connection_semigroup_Markov_solution}), satisfying the property that it is the unique Markovian solution to the singular Kimura stochastic differential equation \eqref{eq:Kimura_SDE_singular}.

In \S \ref{sec:Fundamental_solution}, with the aid of the results obtained in \cite{Epstein_Mazzeo_cont_est}, we establish properties of the fundamental solution of the singular Kimura operator $L$ in Lemmas \ref{lem:Measurability_fundam} and \ref{lem:Regularity_fundam}, which we then use in \S \ref{subsec:Markov_process} to construct a continuous Markov process associated to the semigroup $\{T^{\Omega}_t\}_{t\geq 0}$, which is defined initially only at nonnegative dyadic time points (Lemma \ref{lem:Process_dyadic_times}). In \S \ref{subsec:Distribution_estimates}, we make use of the supremum estimates \eqref{eq:Upper_bound_transition_density} of the fundamental solution of the operator $L$ established in \cite[Corollary 4.3]{Epstein_Mazzeo_cont_est}, to derive $L^q$ and H\"older distribution estimates of the fundamental solution in Lemmas \ref{lem:L_q_transition_density} and \ref{lem:Continuity_transition_density}, respectively. The H\"older estimates of the fundamental solution are used in \S \ref{subsec:Markov_process} to extend, by continuity, the Markov process constructed Lemma \ref{lem:Process_dyadic_times} from nonnegative dyadic time points to all nonnegative times, obtaining a continuous Markov process with values in $\bar \Omega$ associated to the semigroup $\{T^{\Omega}_t\}_{t\geq 0}$ (Proposition \ref{prop:Stopped_process}). 

In \S \ref{sec:Martingale_problem}, we formulate in Definition \ref{def:Martingale_problem} the local martingale problem associated to the operator $L$ on subdomains $\Omega$ of $S_{n,m}$. Using the $L^q$-estimates of the fundamental solution and the existence of a continuous Markov process associated to the semigroup $\{T^{\Omega}_t\}_{t\geq 0}$, established in Proposition \ref{prop:Stopped_process}, we prove existence of solutions to the local martingale problem in Proposition \ref{prop:Solution_local_martingale_problem}. The solutions to the local martingale problem associated to the operator $L$ on subdomains $\Omega \subseteq S_{n,m}$ give us in Proposition \ref{prop:Stopped_solutions}, the existence of solutions to the singular Kimura stochastic differential equation \eqref{eq:Kimura_SDE_singular} associated to the semigroup $\{T^{\Omega}_t\}_{t\geq 0}$. The process we construct in Proposition \ref{prop:Stopped_solutions} is stopped at the first time it hits the non-degenerate boundary of the domain, $\partial_1\Omega$, defined in \eqref{eq:Non_degenerate_boundary}. This is not sufficient for our purposes, because we want to establish the connection between the semigroup $\{T^{\Omega}_t\}_{t\geq 0}$ and the solutions to the singular Kimura stochastic differential equation \eqref{eq:Kimura_SDE_singular} that live on $\bar S_{n,m}$, as opposed to being stopped on $\partial_1\Omega$. This result is proved in Theorem \ref{thm:Connection_semigroup_Markov_solution} using a patching argument due to Stroock and Varadhan described in \cite[Theorem 6.1.2]{Stroock_Varadhan}, and the uniqueness of Markovian solutions to the singular Kimura stochastic differential equation \eqref{eq:Kimura_SDE_singular}, established in \cite[Theorem 3.8]{Pop_2013a}.

In \S \ref{sec:Harnack_inequality}, we give in Theorem \ref{thm:Harnack_inequality_scale_invariant} the proof of the scale-invariant Harnack inequality for the standard Kimura diffusion operator $\widehat L$. This requires us to introduce in Definition \ref{defn:Probabilistic_solution} the notion of a probabilistic solution to the parabolic differential equation defined by the operator $\widehat L$. To motivate the definition of the probabilistic solution, we describe in \S \ref{subsec:Connection_L__hat_L} the probabilistic relation between the standard Kimura operator $\widehat L$ and the singular Kimura operator $L$, via Girsanov's Theorem \cite[Theorem 3.5.1]{KaratzasShreve1991}. 
Notice that unlike the definition of the probabilistic solution in \cite[ p. 596]{Sturm_1994}, we assume that our probabilistic solution in Definition \ref{defn:Probabilistic_solution}, is continuous, as opposed to only Borel measurable, since the expression inside the expectation in identity \eqref{eq:Probabilistic_solution} is evaluated at points on the boundary of a cylinder, which is a set of measure zero. In Lemma \ref{lem:Stochastic_representation_L_local}, we justify that the unique solutions in anisotropic H\"older spaces to the homogeneous initial-value problem \eqref{eq:Initial_value_problem_widehat_L} ($g\equiv 0$) defined by the operator $\widehat L$ obtained in \cite{Epstein_Mazzeo_annmathstudies}, are probabilistic solutions in the sense of Definition \ref{defn:Probabilistic_solution}, and so, in particular, they satisfy the scale-invariant Harnack inequality, \eqref{eq:Harnack_inequality_scale_invariant}.

\subsection{Comparison with previous research}
\label{subsec:Comparison}
Moser's classical articles \cite{Moser_1964, Moser_1967, Moser_1971}, which establish Harnack's inequality of nonnegative solutions to parabolic equations defined by strictly elliptic differential operators in divergence form, have been extended to a broad class of degenerate parabolic differential operators in divergence form in a number of works such as 
\cite[Theorem 4.1]{Epstein_Mazzeo_cont_est}, \cite[Lemma 2.3.5]{Fabes_Kenig_Serapioni_1982a}, \cite[Theorem 2.13]{Koch}, \cite[Theorem 2.13]{Lierl_Saloff-Coste_2012}, \cite{Sturm_1996}, \cite[Theorem 1.24]{Feehan_Pop_regularityweaksoln_v3}, among others. Using completely different methods, Harnack's inequality for strictly elliptic parabolic equations with bounded coefficients in non-divergence form has been establish in works such as 
\cite[Theorem 3.1]{Fabes_Stroock_1984}, \cite[Theorem 4.2.1]{Krylov_nonlinear_parabolic_pde} and \cite[Theorem 1]{Sturm_1994}. The framework of our scale-invariant Harnack inequality (Theorem \ref{thm:Harnack_inequality_scale_invariant}) differs from the aforementioned results in that the generalized Kimura diffusion operator $\widehat L$ is not strictly elliptic and has unbounded first-order coefficients.

While the stochastic representation of weak solutions to the homogeneous initial-value problem \eqref{eq:Homogeneous_initial_value_problem} obtained in Theorem \ref{thm:Connection_semigroup_Markov_solution}, is an auxiliary result in our article, it is a considerable generalization of classical results concerning Feynman-Kac formulas, which we now briefly review. In \cite[Theorem 4]{Sturm_1994}, the author proves stochastic representations of solutions to parabolic equations defined by operators that are strictly elliptic and have bounded coefficients. Let $Q:=(0,T)\times\Omega$, where $\Omega\subseteq \RR^N$, $N\in \NN$, and $T>0$. The solutions  considered in \cite[Theorem 4]{Sturm_1994} are assumed to belong to the Sobolev space $H^{1,2}_p(Q)$, which consists of functions $u:Q\rightarrow\RR$ in $L^p(Q)$, with the property that the distributional derivatives $u_t$, $u_{x_i}$ and $u_{x_ix_j}$ also belong to $L^p(Q)$, where $p>1+N$. Notice that Morrey's embedding theorem \cite[Theorem 5.6.4]{Evans}, \cite[Theorem 5.4 C']{Adams_1975} and the condition that $p>1+N$ imply that the functions $u\in W^{1,2}_p(Q)$ are H\"older continuous on $\bar Q$. Comparing this result with our stochastic representation in Theorem \ref{thm:Connection_semigroup_Markov_solution}, we see that the diffusion matrix of the Kimura operator $L$ is not strictly elliptic, the drift coefficients are not bounded because they have logarithmic singularities as we approach the boundary of the domain $S_{n,m}$, and our weak solutions belong to the weighted Sobolev space $\cF(Q)$ defined in \S \ref{sec:Inhomogeneous_initial_value_problem}, and thus, have less regularity than functions in $W^{1,2}_p(Q)$. In particular, Morrey's embedding theorem does not necessarily hold and we know nothing about the regularity of the second-order spatial derivatives of the weak solutions. Thus, we cannot apply It\^o's rule and the approximation argument used in the proof of \cite[Theorem 4]{Sturm_1994} to establish Theorem \ref{thm:Connection_semigroup_Markov_solution}. Instead, we adopt a completely different approach outlined in \S \ref{subsec:Outline} in more detail. Similar stochastic representation formulas to \cite[Theorem 4]{Sturm_1994} are established in \cite[Theorems 2.7.3 and 2.7.4]{Bensoussan_Lions} and \cite[\S II.3]{Portenko}. Stochastic representation formulas for solutions to a degenerate parabolic problem arising in Mathematical Finance are established in \cite[Theorems 1.14, 1.15 and 1.17]{Feehan_Pop_stochrepdirichlet}.

\subsection{Notations and conventions}
\label{subsec:Notations_conventions}
Let $\NN:=\{0,1,2,\ldots\}$. For all $n,m$ and $k$ positive integers, and
$U\subset\RR^n$ an open set, we denote by $C^k(U;\RR^m)$ the set of functions
$u:U\rightarrow\RR^m$ that are $k$ times continuously differentiable on $U$. We
let $C^k(\bar U;\RR^m)$ be the set of functions $u:\bar U\rightarrow\RR^m$ that
are $k$ times continuously differentiable on $\bar U$, such that the norm
$$
\|u\|_{C^k(\bar U;\RR^m)} :=\sum_{\stackrel{\alpha\in\NN^n}{|\alpha|\leq k}}\|D^{\alpha}u\|_{C(\bar U;\RR^m)} <\infty,
$$
where $|\alpha|=\alpha_1+\ldots+\alpha_n$, for all $\alpha=(\alpha_1,\ldots,\alpha_n)\in\NN^n$.

Let $a,b\in\RR$. We denote $a\wedge b:=\min\{a,b\}$ and $a\vee b:=\max\{a,b\}$.

\section{Inhomogeneous initial-value problems and semigroups}
\label{sec:Inhomogeneous_initial_value_problem}
In this section, we begin by stating the conditions satisfied by the coefficients of the singular Kimura operator $L$ given by \eqref{eq:Singular_operator}, in Assumption \ref{assump:Coeff}. We then introduce in Definition \ref{defn:Weak_solution} the notion of weak solution to the parabolic problem defined by the singular Kimura operator, and we recall in Lemmas \ref{lem:Existence_uniqueness_homogeneous} and \ref{lem:Existence_uniqueness_inhomogeneous} the existence and uniqueness results for weak solutions to homogeneous and inhomogeneous initial-value problems defined by $L$, respectively.

We begin by stating the assumptions on the coefficients of the singular Kimura
diffusion operator $L$ defined in \eqref{eq:Singular_operator}. For a set of indices, $I\subseteq\{1,\ldots,n\}$, we denote
\begin{align}
\label{eq:M_I}
M_I:=\left\{z=(x,y) \in S_{n,m}: x_i \in (0,1)\hbox{ for all } i \in I,\hbox{ and } x_j \in [1,\infty)\hbox{ for all } j \in I^c\right\},
\end{align}
where we let $I^c:=\{1,\ldots,n\}\backslash I$. We make the following assumptions about the coefficients of the bilinear form $Q(u,v)$ given by \eqref{eq:Bilinear_form}.
\begin{assump}[Coefficients of the operator $L$]
\label{assump:Coeff}
There are positive constants, $\bar b$, $\delta$ and $K$, such that
\begin{enumerate}
\item[1.] The coefficient matrix functions $\diag(a_{ii}(z))$, $(\tilde a(z))$ and $(c(z))$ are chosen such that, for all $z \in \bar M_{\{1,\ldots,n\}}$, $\xi\in\RR^n$ and $\eta\in\RR^m$, we have that 
\begin{equation}
\label{eq:Ellipticity_all_second_order_coeff}
\begin{aligned}
&\delta\left(|\xi|^2+|\eta|^2\right)\\
&\leq \sum_{i=1}^n a_{ii}(z)\xi_i^2+\sum_{i,j=1}^n \sqrt{x_ix_j}\tilde a_{ij}(z)\xi_i\xi_j + 8\sum_{i=1}^n\sum_{l=1}^m \sqrt{x_i}c_{il}(z)\xi_i\eta_l +4\sum_{l,k=1}^m d_{lk}(z)\eta_l\eta_k\\
&\leq K\left(|\xi|^2+|\eta|^2\right).
\end{aligned}
\end{equation}
Compare condition \eqref{eq:Ellipticity_all_second_order_coeff} with \cite[Condition (33)]{Epstein_Mazzeo_cont_est} and \cite[Definitions (3) and (6)]{Epstein_Mazzeo_cont_est} of the bilinear form.
\item[2.] Let $I\subsetneqq\{1,\ldots,n\}$. For all $z \in \bar M_I$, we assume that
\begin{equation}
\label{eq:Outside_neighborhood_O}
\begin{aligned}
x_j a_{jj}(z)&=1,\quad \forall\, j \in I^c,\\
\tilde a_{ij}(z) &=0,\quad\forall\, i,j=1,\ldots,n,\\
c_{il}(z) &= 0,\quad\forall\, i=1,\ldots,n,\quad\forall\, l=1,\ldots,m,\\
d_{lk}(z) &= \delta_{lk}, \quad\forall\, l,k=1,\ldots,m,
\end{aligned}
\end{equation}
where we recall that $\delta_{lk}$ denotes the Kronecker delta symbol.
\item[3.] Let $I\subsetneqq\{1,\ldots,n\}$. For all $z \in \bar M_{I}$, we assume that
\begin{equation}
\label{eq:Outside_neighborhood_O_a_ii}
\begin{aligned}
\delta\leq a_{ii}(z) \leq K,\quad \forall\, i \in I.
\end{aligned}
\end{equation}
\item[4.] The coefficients functions $b_i(z)$ satisfy, for all $i=1,\ldots,n$, 
\begin{align}
\label{eq:b_coeff_bound_lower}
 b_{i}(z) \geq \bar b>0,&\quad \forall\, z\in \partial S_{n,m}\cap\{x_i=0\},\\
\label{eq:b_coeff_bound_upper}
|b_{i}(z)| \leq K,&\quad\forall\, z\in \bar S_{n,m},\\
\label{eq:b_coeff_away_O}
b_i(z)=1,&\quad\forall\, z\in M^c_{\{1,\ldots,n\}}.
\end{align}
\item[5.] The coefficient functions $a_{ii}(z)$, $\tilde a_{ij}(z)$, $b_i(z)$, $c_{ik}(z)$, $d_{kl}(z)$ are smooth and bounded functions on $\bar S_{n,m}$, for all $i,j=1,\ldots,n$, and all $k,l=1,\ldots,m$.
\end{enumerate}
\end{assump}

\begin{rmk}[Remarks on Assumption \ref{assump:Coeff}]
Conditions \eqref{eq:Outside_neighborhood_O} and \eqref{eq:b_coeff_away_O} imply that for all sets of indices $I\subsetneqq\{1,\ldots,n\}$, and functions $u,v\in C^2_c(M_I)$, the quadratic form $Q(u,v)$ given by \eqref{eq:Bilinear_form}, takes the particular form:
$$
Q(u,v) =\int_{S_{n,m}} \left(\sum_{i\in I} x_ia_{ii}(z) u_{x_i}v_{x_i}+ \sum_{j\in I^c} u_{x_j}v_{x_j} +\sum_{l=1}^m u_{y_l}v_{y_l}\right)\, dz.
$$
On the other hand, when $I=\{1,\ldots,n\}$, the bilinear form $Q(u,v)$ takes the more general form \eqref{eq:Bilinear_form}, for all functions $u,v\in C^2_c(M_I)$. Thus, conditions \eqref{eq:Ellipticity_all_second_order_coeff}, \eqref{eq:b_coeff_bound_lower} and \eqref{eq:b_coeff_bound_upper} imply that the coefficients of the bilinear form satisfy natural assumptions on the open set $M_{\{1,\ldots,n\}}$.  This suffices for our purposes, because our goal is to establish analytical properties of solutions in a neighborhood of $(0,0)\in\bar S_{n,m}$, since the analytical properties of solutions away from the boundary of $S_{n,m}$ are standard results in the literature. 
\end{rmk}

Let now $\Omega\subseteq S_{n,m}$ be an open set, and denote 
\begin{align}
\label{eq:Degenerate_boundary}
\partial_0\Omega &:=\hbox{int} \left(\partial\Omega\cap\partial S_{n,m}\right),\\
\label{eq:Non_degenerate_boundary}	
\partial_1\Omega &:= \partial\Omega\cap S_{n,m},\\
\label{eq:Omega_and_degenerate_boundary}
\underline{\Omega}&:=\Omega\cup\partial_0\Omega.
\end{align}
Let $I=(t_1,t_2)$ be an open, bounded interval in $\RR$. To formulate the notion of weak solution and prove existence and uniqueness of solutions to the inhomogeneous initial-value problem \eqref{eq:Inhomogeneous_initial_value_problem}, we first need to introduce suitable function spaces. We follow \cite[\S 1.3 A]{Sturm_1995}, and denote for simplicity $\cH:=L^2(\Omega; d\mu)$. We let $H^1_0(\underline{\Omega}; d\mu)$ be the closure of smooth functions with compact support in $\underline{\Omega}$, $C^1_c(\underline{\Omega})$, with respect to the norm,
$$
\|u\|_{H^1(\Omega;d\mu)}:=\left(Q(u,u)+\|u\|^2_{\cH}\right)^{1/2}.
$$
For consistency with \cite[\S 1.3 A]{Sturm_1995}, we let $\cF:= H^1_0(\underline{\Omega};d\mu)$. Let $C(\bar I, \cH)$ be the space of continuous functions, $u:\bar I\rightarrow\cH$, endowed with the norm
$$
\|u\|_{C(\bar I, \cH)}:=\sup_{t\in\bar I} \|u(t)\|_{\cH} <\infty.
$$
The space of functions $C(\bar I, \cF)$ is defined similarly to $C(\bar I, \cH)$ by simply replacing the space $\cH$ with $\cF$ in the preceding definition. We let $L^2(I,\cF)$ denote the space of measurable functions, $u$, on $I\times\Omega$ endowed with the norm,
$$
\|u\|_{L^2(I, \cF)}:=\left(\int_I \|u(t)\|^2_{\cF} \, dt\right)^{1/2}<\infty.
$$
Let $\cF^*$ denote the dual space of $\cF$. We let $H^1(I,\cF^*)$ be the space of functions, $u$, such that $u\in L^2(I,\cF^*)$ and the distributional time derivative, $\frac{du}{dt}$, also belongs to $L^2(I,\cF^*)$. We endow the space $H^1(I,\cF^*)$ with the norm
$$
\left(\int_I \left(\|u(t)\|^2_{\cF^*} + \left\|\frac{du(t)}{dt}\right\|^2_{\cF^*}\right) \, dt\right)^{1/2}<\infty.
$$
Finally, we let
$$
\cF(I\times\Omega):=L^2(I,\cF)\cap H^1(I,\cF^*).
$$
We recall from \cite[p. 285]{Sturm_1995} that we have 
\begin{equation}
\label{eq:inclusion_cF_C}
\cF(I\times\Omega) \subset C(\bar I,\cH).
\end{equation}
From \cite[Proposition 1.2]{Sturm_1995}, we have the following existence and uniqueness of weak solutions to the \emph{homogeneous} initial-value problem

\begin{lem}[Existence and uniqueness of solutions to the homogeneous problem]
\label{lem:Existence_uniqueness_homogeneous}
\cite[Proposition 1.2]{Sturm_1995},
\cite[Theorem 3.4.1 and Remark 3.4.3]{Lions_Magenes1},
\cite[Theorem 10.9]{Brezis}
Let $f\in \cH$ and $I=(t_1,t_2).$ There is a unique weak solution, $u\in\cF(I\times\Omega)$, to the homogeneous initial-value problem
\begin{equation}
\label{eq:Homogeneous_initial_value_problem}
\begin{aligned}
u_t-Lu&=0\quad\hbox{on } I\times\Omega,\\
u&=f\quad\hbox{on } \{t_1\}\times\Omega.
\end{aligned}
\end{equation}
\end{lem}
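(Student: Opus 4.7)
The plan is to cast the homogeneous problem in the abstract framework of a symmetric, densely defined, closed, coercive bilinear form on the Hilbert space $\cH = L^2(\Omega;d\mu)$ with domain $\cF = H^1_0(\underline{\Omega};d\mu)$, and then directly invoke the abstract existence-uniqueness theorem cited in the statement. Three standing hypotheses need to be checked: continuity of $Q$ on $\cF\times\cF$; a G{\aa}rding-type coercivity estimate $Q(u,u)+\lambda_0\|u\|_\cH^2 \ge \alpha\|u\|_\cF^2$ for some $\alpha>0$ and $\lambda_0\ge 0$; and the dense embedding $\cF\hookrightarrow\cH$.

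Continuity follows at once from Cauchy-Schwarz applied term-by-term to the integrand of \eqref{eq:Bilinear_form}, using the upper bound in \eqref{eq:Ellipticity_all_second_order_coeff} together with the uniform boundedness of all coefficients furnished by Assumption \ref{assump:Coeff}(5). For coercivity, I split the domain along the strata $M_I$ of \eqref{eq:M_I}. On $M_{\{1,\ldots,n\}}$ the lower bound in \eqref{eq:Ellipticity_all_second_order_coeff} controls the weighted first-order energy pointwise by
\[
\delta\Bigl(\sum_{i=1}^n x_i u_{x_i}^2 + \sum_{l=1}^m u_{y_l}^2\Bigr),
\]
which is exactly the natural $H^1(\cdot;d\mu)$ density on this stratum. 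On each complementary stratum $M_I$ with $I\subsetneqq\{1,\ldots,n\}$, conditions \eqref{eq:Outside_neighborhood_O} and \eqref{eq:Outside_neighborhood_O_a_ii} force $Q$ to reduce to the standard Dirichlet energy
\[
\int_{M_I}\Bigl(\sum_{i\in I}x_i a_{ii}(z) u_{x_i}^2 + \sum_{j\in I^c} u_{x_j}^2 + \sum_{l=1}^m u_{y_l}^2\Bigr)\,dz,
\]
which again controls the corresponding weighted $H^1$ norm on $M_I$. Summing over strata yields the full norm $\|u\|_\cF^2$, up to a lower-order $\|u\|_\cH^2$ term that is absorbed by choosing $\lambda_0$ sufficiently large. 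Density of $\cF$ in $\cH$ is built into the definition of $\cF$ as the closure of $C^1_c(\underline{\Omega})$ in the $H^1(\Omega;d\mu)$ norm, since $C^1_c(\underline{\Omega})$ is already dense in $L^2(\Omega;d\mu)$.

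With the three hypotheses in place, the Friedrichs construction associates to $Q$ a nonnegative self-adjoint generator realizing $-L$ on its domain, and hence a strongly continuous contraction semigroup $\{S_t\}_{t\ge 0}$ on $\cH$. For any $f\in\cH$ the function $u(t):=S_{t-t_1}f$ lies in $\cF(I\times\Omega)$ by the standard analytic-semigroup regularity for closed coercive forms (this is the content of the cited theorems in Lions-Magenes, Brezis, and Sturm), satisfies the variational identity \eqref{eq:Weak_sol_var_eq} with $g\equiv 0$, and realizes the initial condition \eqref{eq:Weak_sol_initial_cond} by strong continuity at $t=0$. For uniqueness, suppose $f\equiv 0$; testing \eqref{eq:Weak_sol_var_eq} against $u$ itself and using the chain rule identity
\[
\int_{t_1}^{t}\Bigl\langle \tfrac{du(s)}{ds},\, u(s)\Bigr\rangle\, ds = \tfrac{1}{2}\|u(t)\|^2_\cH - \tfrac{1}{2}\|u(t_1)\|^2_\cH,
\]
valid on $\cF(I\times\Omega)\subset C(\bar I,\cH)$ by \eqref{eq:inclusion_cF_C}, together with coercivity and Gr{\"o}nwall's inequality, forces $u\equiv 0$.

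The main obstacle is establishing the coercivity estimate in the weighted-measure setting: the coefficient matrix degenerates along $\partial S_{n,m}$ and $Q$ takes genuinely different forms on the strata $M_I$ prescribed by \eqref{eq:Outside_neighborhood_O}, so the local lower bounds must be combined to recover full control of the weighted Sobolev norm defining $\cF$. Assumption \ref{assump:Coeff} has been calibrated precisely for this purpose: items (1)-(3) give matched lower bounds stratum by stratum, item (4) controls the measure $d\mu$ near $\partial S_{n,m}$ via $b_i\ge \bar b>0$, and item (5) keeps all coefficients globally bounded and smooth so that no residual unbounded terms appear.
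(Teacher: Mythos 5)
Your proposal is correct and follows essentially the same route as the paper, which proves the lemma simply by invoking the abstract variational existence--uniqueness theory (Sturm, Lions--Magenes, Brezis) in the framework $\cH = L^2(\Omega;d\mu)$, $\cF = H^1_0(\underline\Omega;d\mu)$ already set up in \S 2; your contribution is merely to spell out the hypothesis-checking that the paper leaves implicit. Note only that your stratum-by-stratum coercivity argument is unnecessary: since $\|u\|_{H^1(\Omega;d\mu)}$ is \emph{defined} as $\left(Q(u,u)+\|u\|^2_{\cH}\right)^{1/2}$, the G{\aa}rding inequality holds tautologically with constants $\alpha=\lambda_0=1$ once one knows $Q(u,u)\geq 0$, which is the only place the ellipticity condition \eqref{eq:Ellipticity_all_second_order_coeff} is needed.
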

It is shown in \cite[Property (1.19)]{Sturm_1995} that the unique weak solution, $u$, to the homogeneous initial-value problem with initial condition $f\in\cH$, has the property that
\begin{align}
\label{eq:Monotonicity_solutions}	
[0,\infty)\ni t\mapsto \|u(t)\|_{\cH} \quad\hbox{ is non-increasing}.
\end{align}
The uniqueness statement in Lemma \ref{lem:Existence_uniqueness_homogeneous}, the inclusion relation \eqref{eq:inclusion_cF_C}, and  property \eqref{eq:Monotonicity_solutions}	 show that there is a strongly continuous, contraction semigroup on $\cH$, $\{T^{\Omega}_t\}_{t\geq 0}$, such that the unique weak solution, $u$, to the homogeneous problem with initial condition $f\in\cH$, can be represented in the form \eqref{eq:Representation_homogeneous_sol}. Using Lemma \ref{lem:Existence_uniqueness_homogeneous} and Duhamel's principle, we obtain the existence and uniqueness of a weak solution to the \emph{inhomogeneous} initial-value problem \eqref{eq:Inhomogeneous_initial_value_problem}.

\begin{lem}[Existence and uniqueness of solutions to the inhomogeneous problem]
\label{lem:Existence_uniqueness_inhomogeneous}
\cite[Theorem 3.4.1 and Remark 3.4.3]{Lions_Magenes1},
\cite[Theorem 10.9]{Brezis}
Let $f\in \cH$ and $g\in L^2(I,\cH)$. Then there is a unique weak solution, $u\in\cF(I\times\Omega)$, to the inhomogeneous initial-value problem \eqref{eq:Inhomogeneous_initial_value_problem}, and we have that
\begin{equation}
\label{eq:Representation_inhomogeneous_sol}
u(t+t_1)=T^{\Omega}_t f +\int_{0}^t T^{\Omega}_{t-s}g(s+t_1,\cdot)\, ds,\quad\forall\, t \in [0,t_2-t_1].
\end{equation}
\end{lem}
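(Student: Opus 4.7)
The plan is to combine Lemma \ref{lem:Existence_uniqueness_homogeneous} with Duhamel's principle, decomposing $u = u^{(1)} + u^{(2)}$, where $u^{(1)}$ carries the initial datum and $u^{(2)}$ carries the source term. For uniqueness, if $u, \tilde u \in \cF(I\times\Omega)$ are two weak solutions to \eqref{eq:Inhomogeneous_initial_value_problem}, then $u - \tilde u$ is a weak solution to the homogeneous problem with zero initial datum; Lemma \ref{lem:Existence_uniqueness_homogeneous} then forces $u = \tilde u$.

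For existence, I would first set $u^{(1)}(t+t_1) := T^{\Omega}_t f$ and appeal directly to Lemma \ref{lem:Existence_uniqueness_homogeneous} to get $u^{(1)}\in\cF(I\times\Omega)$ satisfying the homogeneous equation with $u^{(1)}(t_1)=f$. The main work is in constructing $u^{(2)}$. Define
\begin{equation*}
u^{(2)}(t+t_1) := \int_0^t T^{\Omega}_{t-s}\, g(s+t_1,\cdot)\, ds, \qquad t\in[0,t_2-t_1],
\end{equation*}
a Bochner integral in $\cH$. The strong continuity and contractivity of $\{T^{\Omega}_t\}_{t\geq 0}$ on $\cH$ immediately give $u^{(2)}\in C(\bar I,\cH)$ and $u^{(2)}(t_1)=0$, so \eqref{eq:Weak_sol_initial_cond} is satisfied with $f\equiv 0$.

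The delicate step is to verify $u^{(2)}\in\cF(I\times\Omega)=L^2(I,\cF)\cap H^1(I,\cF^*)$ and that it satisfies the variational identity \eqref{eq:Weak_sol_var_eq}. I would proceed by approximation: choose $g_k\in C^1(\bar I,\cH)$ with $g_k\to g$ in $L^2(I,\cH)$, and set $u^{(2)}_k(t+t_1):=\int_0^t T^{\Omega}_{t-s}g_k(s+t_1,\cdot)\, ds$. For smooth $g_k$, a standard semigroup argument (together with Fubini applied to the bilinear form $Q$) shows that $u^{(2)}_k$ is the weak solution of the inhomogeneous problem with source $g_k$ and zero initial datum; in particular, testing against $v\in\cF(I\times\Omega)$ yields an energy bound
\begin{equation*}
\|u^{(2)}_k\|_{L^2(I,\cF)}^2 + \bigl\|\tfrac{d}{dt}u^{(2)}_k\bigr\|_{L^2(I,\cF^*)}^2 \leq C\,\|g_k\|_{L^2(I,\cH)}^2,
\end{equation*}
obtained by choosing the test function $v=u^{(2)}_k$ and using the coercivity of $Q$ together with the identity $\frac{d}{dt}\|u^{(2)}_k(t)\|_{\cH}^2 = 2\langle\frac{d}{dt}u^{(2)}_k(t),u^{(2)}_k(t)\rangle$. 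Passing to the limit $k\to\infty$ gives $u^{(2)}\in\cF(I\times\Omega)$ and verifies \eqref{eq:Weak_sol_var_eq} for $u^{(2)}$ with source $g$.

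The hardest part will be justifying that $u^{(2)}_k$ lies in $\cF(I\times\Omega)$ with a bound controlled by $\|g_k\|_{L^2(I,\cH)}$, since the semigroup $\{T^{\Omega}_t\}_{t\geq 0}$ is a priori only given as a contraction on $\cH$, not on $\cF$; thus one cannot directly pull $L$ under the integral. The resolution is to read off the $\cF$-regularity from the variational identity itself (via the quadratic form $Q$), rather than from pointwise regularization properties of $T^{\Omega}_t$. Once this energy estimate is in place, the rest is routine: combining the $\cH$-initial condition for $u^{(2)}$, the homogeneous solution $u^{(1)}$ from Lemma \ref{lem:Existence_uniqueness_homogeneous}, and linearity of \eqref{eq:Weak_sol_var_eq} yields the desired weak solution $u=u^{(1)}+u^{(2)}$ with the representation \eqref{eq:Representation_inhomogeneous_sol}.
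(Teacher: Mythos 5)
Your proposal is correct and follows essentially the same route as the paper: start from the Duhamel representation \eqref{eq:Representation_inhomogeneous_sol}, show the right-hand side lies in $\cF(I\times\Omega)$ and solves \eqref{eq:Inhomogeneous_initial_value_problem} by combining Lemma \ref{lem:Existence_uniqueness_homogeneous} with parabolic energy estimates, and get uniqueness from linearity and the homogeneous case. The approximation-by-smooth-sources detail you add is exactly the standard way of carrying out the energy estimate the paper invokes.
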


\begin{proof}
To prove the conclusion of the lemma, we start from the representation \eqref{eq:Representation_inhomogeneous_sol} and show that the function on the right-hand side belongs to $\cF(I\times\Omega)$ and solves the inhomogeneous initial-value problem \eqref{eq:Inhomogeneous_initial_value_problem}, by using Lemma \ref{lem:Existence_uniqueness_homogeneous} and  energy estimates \cite[Theorem 2, \S 7.1.2]{Evans}. 
\end{proof}

\section{Properties of the fundamental solution}
\label{sec:Fundamental_solution}
We begin by summarizing the properties of the semigroup $\{T^{\Omega}_t\}_{t \geq 0}$ constructed in \S \ref{sec:Inhomogeneous_initial_value_problem}:
\begin{enumerate}
\item[1.] It is strongly continuous on $L^2(\Omega;d\mu)$: for all $f \in L^2(\Omega;d\mu)$, the map $[0,\infty) \ni t \mapsto T^{\Omega}_t f \in L^2(\Omega;d\mu)$ is continuous.
\item[2.] It is a contraction semigroup:
\begin{equation}
\label{eq:Contraction_semigroup}
\|T^{\Omega}_t f\|_{L^2(\Omega;d\mu)} \leq \|f\|_{L^2(\Omega;d\mu)},\quad\forall f\in L^2(\Omega;d\mu).
\end{equation}
\item[3.] It has the semigroup property: for all $s,t\geq 0$, we have that $T^{\Omega}_{t+s} = T^{\Omega}_t T^{\Omega}_s$ .
\item[4.] It has the symmetry property \cite[Lemma 1.5]{Sturm_1995}: for all $f,g \in L^2(\Omega;d\mu)$, we have that
\begin{equation}
\label{eq:Symmetry_semigroup}
\int_{\Omega} (T^{\Omega}_t f) g \, d\mu =  \int_{\Omega} f (T^{\Omega}_t g) \, d\mu.
\end{equation}
\end{enumerate} 

In this section we prove the existence of the fundamental solution, $p^{\Omega}(t,z^0,z)$, associated to the semigroup $\{T^{\Omega}_t\}_{t\geq 0}$, and we prove some of its regularity properties which are used extensively in the sequel. In \S \ref{subsec:Distribution_estimates}, we prove distribution estimates for the fundamental solution. The $L^q$-distribution estimates in Lemma \ref{lem:L_q_transition_density} are used in Proposition \ref{prop:Solution_local_martingale_problem} to establish existence of solutions to the stopped martingale problem associated to the operator $L$ (Definition \ref{def:Martingale_problem}), while the H\"older estimates of the fundamental solution in Lemma \ref{lem:Continuity_transition_density} are used in Proposition \ref{prop:Stopped_process} to construct a continuous Markov process associated to the semigroup $\{T^{\Omega}_t\}_{t\geq 0}$.

We start with the following 
\begin{lem}[Measurability of the fundamental solution]
\label{lem:Measurability_fundam}
There is a measurable function, $p^{\Omega}:(0,\infty)\times \underline{\Omega}\times \underline\Omega\rightarrow [0,\infty)$, such that 
\begin{equation}
\label{eq:L_2_integrability}
p^{\Omega}(t,z,\cdot) \in L^2(\Omega; d\mu),\quad\forall\, (t,z) \in (0,\infty)\times\underline\Omega,
\end{equation}
and we have that
\begin{align}
\label{eq:Representation_T_t}
\left(T^{\Omega}_t f\right)(z) = \int_{\Omega} p^{\Omega}(t,z,w) f(w)\, d\mu(w),\quad\forall\, (t,z)\in (0,\infty)\times\underline\Omega,\quad\forall\, f \in L^2(\Omega;d\mu).
\end{align}
\end{lem}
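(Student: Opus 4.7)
The plan is to produce $p^{\Omega}$ by applying the Riesz representation theorem to the functional $f\mapsto (T^{\Omega}_t f)(z)$ for each fixed $(t,z)$, after first establishing that this functional is bounded on $L^2(\Omega;d\mu)$ by means of the pointwise upper bound on the fundamental solution of $L$ that was proved in \cite[Corollary~4.3]{Epstein_Mazzeo_cont_est}, and then to upgrade the pointwise construction to a jointly measurable object by means of the semigroup property together with symmetry \eqref{eq:Symmetry_semigroup}.

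First, for any $f\in L^2(\Omega;d\mu)$ I extend $f$ by zero to $S_{n,m}$. Comparing the killed Dirichlet-type problem on $\Omega$ with the problem on the whole space via the weak maximum principle associated with the symmetric form $Q$ yields the pointwise domination $|(T^{\Omega}_t f)(z)|\le (T^{S_{n,m}}_t |f|)(z)$ for every $(t,z)\in(0,\infty)\times\underline{\Omega}$. Using the fundamental solution $p^{S_{n,m}}(t,z,w)$ constructed in \cite{Epstein_Mazzeo_cont_est}, the supremum bound from \cite[Corollary~4.3]{Epstein_Mazzeo_cont_est} gives $\|p^{S_{n,m}}(t,z,\cdot)\|_{L^2(S_{n,m};d\mu)}<\infty$ for each $(t,z)$, hence Cauchy--Schwarz yields
\begin{equation*}
|(T^{\Omega}_t f)(z)|\le C(t,z)\,\|f\|_{L^2(\Omega;d\mu)},
\end{equation*}
with a locally bounded constant $C(t,z)$. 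The Riesz representation theorem then produces a unique element $p^{\Omega}(t,z,\cdot)\in L^2(\Omega;d\mu)$ satisfying \eqref{eq:Representation_T_t}, proving \eqref{eq:L_2_integrability}. Nonnegativity comes from positivity preservation of the semigroup (a Beurling--Deny-type consequence of $Q$ being a Dirichlet form, or directly from the maximum principle): if $f\ge 0$ then $T^{\Omega}_t f\ge 0$, which forces $p^{\Omega}(t,z,\cdot)\ge 0$ $d\mu$-a.e.\ in $w$, and redefining on a null set allows one to take $p^{\Omega}\ge 0$ everywhere.

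The main technical step, which I expect to be the chief obstacle, is upgrading the above pointwise construction to a jointly measurable function on $(0,\infty)\times\underline{\Omega}\times\underline{\Omega}$. My approach is to combine the semigroup identity $T^{\Omega}_t=T^{\Omega}_{t/2}T^{\Omega}_{t/2}$ with symmetry \eqref{eq:Symmetry_semigroup} to obtain, for each $t>0$,
\begin{equation*}
p^{\Omega}(t,z,w)=\int_{\Omega} p^{\Omega}(t/2,z,\zeta)\,p^{\Omega}(t/2,w,\zeta)\,d\mu(\zeta),
\end{equation*}
which is manifestly symmetric in $(z,w)$ and reduces joint measurability in $(t,z,w)$ to the Borel measurability in $(t,z)$ of the $L^2$-valued map $(t,z)\mapsto p^{\Omega}(t,z,\cdot)$. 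The latter in turn follows by testing against a countable dense subset $\{f_k\}\subset L^2(\Omega;d\mu)$: for each $k$ the map $(t,z)\mapsto (T^{\Omega}_t f_k)(z)=\langle p^{\Omega}(t,z,\cdot),f_k\rangle$ is measurable, since it is continuous in $t$ by strong continuity of $\{T^{\Omega}_t\}_{t\ge 0}$ and measurable in $z$ from its defining integral representation and the uniform bound above. A standard approximation using $t\mapsto t/2$ halving and Fubini then gives joint Borel measurability of $p^{\Omega}$ on $(0,\infty)\times\underline{\Omega}\times\underline{\Omega}$, which completes the proof.
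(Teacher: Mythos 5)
Your overall skeleton (Riesz representation for the evaluation functional, then a measurability upgrade) matches the paper's, but two of your steps have genuine gaps. First, the lemma asserts \eqref{eq:Representation_T_t} for \emph{every} $(t,z)\in(0,\infty)\times\underline\Omega$, so before any functional-analytic argument you must explain why $(T^{\Omega}_t f)(z)$ has a canonical pointwise value at every $z$: as an element of $L^2(\Omega;d\mu)$, $T^{\Omega}_t f$ is defined only up to $d\mu$-null sets, and your pointwise domination $|(T^{\Omega}_tf)(z)|\le (T^{S_{n,m}}_t|f|)(z)$ ``for every $(t,z)$'' has no meaning at this stage. The paper handles this with the a priori regularity of weak solutions: by \cite[Corollary 4.1]{Epstein_Mazzeo_cont_est} the solution is H\"older continuous on $(0,\infty)\times\underline\Omega$, and \cite[Theorem 2.1]{Sturm_1995} together with the contraction property gives the local bound $|(T^{\Omega}_tf)(z)|\le C\|f\|_{L^2(\Omega;d\mu)}$ uniformly on compacta, with no recourse to the whole-space kernel. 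Your route through \eqref{eq:Upper_bound_transition_density} also presupposes the existence of the whole-space fundamental solution, which in this paper is precisely the case $\Omega=S_{n,m}$ of the lemma being proved; the domination $T^{\Omega}_t\le T^{S_{n,m}}_t$ is itself only established later (Lemma \ref{lem:Fundam_sol_different_domains}) via a nontrivial test-function argument; and square-integrability of the right-hand side of \eqref{eq:Upper_bound_transition_density} is not immediate --- it requires the volume estimates of Lemma \ref{lem:Measure_of_balls}.

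Second, your measurability upgrade is circular at its crux. The Riesz theorem gives, for each fixed $(t,z)$ separately, an equivalence class $p^{\Omega}(t,z,\cdot)\in L^2(\Omega;d\mu)$ with no regularity whatsoever in $(t,z)$; asserting that $z\mapsto\langle p^{\Omega}(t,z,\cdot),f_k\rangle$ is ``measurable in $z$ from its defining integral representation'' assumes exactly the measurability you are trying to prove, and continuity in $t$ at a fixed point $z$ does not follow from strong ($L^2$-valued) continuity of the semigroup. The paper closes this gap by applying \cite[Corollary 4.1]{Epstein_Mazzeo_cont_est} once more: the H\"older estimate, with the sup-norm controlled through \cite[Theorem 2.1]{Sturm_1995} by $\|f\|_{L^2(\Omega;d\mu)}$, yields $\|p^{\Omega}(t',z',\cdot)-p^{\Omega}(t'',z'',\cdot)\|_{L^2(\Omega;d\mu)}\le C\left(\rho(z',z'')+\sqrt{|t'-t''|}\right)^{\alpha}$ locally, i.e.\ local H\"older continuity of $(t,z)\mapsto p^{\Omega}(t,z,\cdot)$ as an $L^2$-valued map, from which a jointly measurable (indeed continuous) version follows. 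Your Chapman--Kolmogorov symmetrization via \eqref{eq:Symmetry_semigroup} would be an acceptable alternative way to finish once such continuity (or at least strong measurability of the $L^2$-valued map) is in hand, but it cannot substitute for that input.
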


Because of the particular degeneracy of the diffusion matrix of the Kimura operator $L$, the H\"older continuity of solutions is established in \cite{Epstein_Mazzeo_annmathstudies, Epstein_Mazzeo_cont_est} with respect to a new distance function, $\rho$, which we now introduce. For points $z^0, z\in \bar S_{n,m}$, we denote by $\rho(z^0,z)$ the \emph{intrinsic metric} induced by the bilinear form, $Q(u,v)$; see \cite[Identity (40)]{Epstein_Mazzeo_cont_est}. Given Assumption \ref{assump:Coeff}, there is a positive constant, $c$, such that for all sets of indices $I,J\subseteq\{1,\ldots,n\}$ and all $z^0\in \bar M_I$ and $z\in \bar M_J$, we have 
\begin{equation}
\label{eq:Equivalent_intrinsic_metric}
\begin{aligned}
&c\left(\max_{i\in I\cap J} \left|\sqrt{x^0_i}-\sqrt{x_i}\right| + \max_{k\in (I\cap J)^c} |x^0_k-x_k| +\max_{l\in\{1,\ldots,m\}}|y^0_l-y_l|\right)\\
&\leq \rho(z^0,z)\\
&\leq c^{-1}\left(\max_{i\in I\cap J} \left|\sqrt{x^0_i}-\sqrt{x_i}\right| + \max_{k\in (I\cap J)^c} |x^0_k-x_k| +\max_{l\in\{1,\ldots,m\}}|y^0_l-y_l|\right).
\end{aligned}
\end{equation}
Compare the preceding inequalities with \cite[Property (42)]{Epstein_Mazzeo_cont_est}. For all $i=1,\ldots,n$, and $x^0_i, x_i\in \bar \RR_+$, we denote for brevity
by $\rho(x^0_i,x_i)$ the distance between the points $P^0$ and $P$ that have
have the $i$-th coordinates equal to $x^0_i$ and $x_i$, respectively, and all
the other coordinates are equal. We define analogously $\rho(y^0_l,y_l)$, for all $y^0_l,y_l\in\RR$ and all $l=1,\ldots,m$. For $z^0\in \underline \Omega$ and $r>0$, we let 
\begin{equation}
\label{eq:Ball_rho}
B_r(z^0):= \{z\in\bar S_{n,m}:\ \rho(z^0,z)<r\},
\end{equation}
denote the ball with center $z^0$ and radius $r$, with respect to the distance function $\rho$. When $z^0=(0,0)$, we write for brevity $B_r$ instead of $B_r(0,0)$.

\begin{proof}[Proof of Lemma \ref{lem:Measurability_fundam}]
By Lemma \ref{lem:Existence_uniqueness_inhomogeneous}, for every function $f\in L^2(S_{n,m};d\mu)$, the homogeneous initial-value problem \eqref{eq:Inhomogeneous_initial_value_problem} has a unique solution, $u \in \cF([0,\infty) \times\Omega)$, which can be represented by $u(t) = T^{\Omega}_t f$, for all $t \geq 0$. By \cite[Corollary 4.1]{Epstein_Mazzeo_cont_est}, the solution $u$ is H\"older continuous on $(0,\infty)\times\underline\Omega$, and so, the function $(T^{\Omega}_t f)(z)$ is well-defined at all points $(t,z)\in (0,\infty)\times\underline\Omega$. By \cite[Theorem 2.1]{Sturm_1995}, for all $0<t_1<t_2$ and all compact sets, $K\subset\underline\Omega$, there is a positive constant, $C=C(t_1,t_2,K)$, such that for all $t\in [t_1,t_2]$ and all $z\in K$, we have that
\begin{align}
\left|\left(T^{\Omega}_t f\right)(z)\right|	&\leq C\left(\int_{t_1/2}^{t_2} \left\|T^{\Omega}_s f\right\|_{L^2(\Omega;d\mu)}\, ds\right)^{1/2}\notag\\
&\label{eq:Upper_bound_T_t_f_z}
 \leq C \left(t_2-\frac{t_1}{2}\right)^{1/2} \|f\|_{L^2(\Omega;d\mu)}\quad\hbox{(using the contraction property of $\{T^{\Omega}_t\}_{t \geq 0}$).}
\end{align}
It follows that the map $L^2(\Omega;d\mu) \ni f \mapsto (T^{\Omega}_t f)(z)$ is continuous, and so, there is a Borel measurable function, $p^{\Omega}(t,z,\cdot)\in L^2(\Omega;d\mu)$, such that identity \eqref{eq:Representation_T_t} holds. Moreover, using the fact that
$$
\|p^{\Omega}(t,z,\cdot)\|_{L^2(\Omega;d\mu)} = \sup_{\|f\|_{L^2(\Omega;d\mu)}=1 } \left|(T^{\Omega}_tf)(z)\right|,
$$
inequality \eqref{eq:Upper_bound_T_t_f_z} gives us that
\begin{align}
\label{eq:L_2_norm_uniform_bound}
\|p^{\Omega}(t,z,\cdot)\|_{L^2(\Omega;d\mu)} \leq C ,\quad\forall\, (t,z)\in [t_1,t_2]	\times K.
\end{align}

Let $(t^0,z^0)\in (0,\infty)\times\underline\Omega$ and $r>0$ be such that $t^0-4r^2>0$. We denote 
$$
Q_{r}(t^0,z^0):=(t^0-r^2,t^0)\times \left(B_{r}(z^0) \cap\underline\Omega\right).
$$
Let $(t',z'), (t''z'')$ be points in $Q_r(t^0,z^0)$. By \cite[Corollary 4.1]{Epstein_Mazzeo_cont_est}, there are positive constants,  $\alpha\in (0,1)$ and $C=C(t^0,z^0,r)$, such that
\begin{align*}
\left|\left(T^{\Omega}_{t'}f\right)(z') - \left(T^{\Omega}_{t''}f\right)(z'')\right|	\leq C \left\|T^{\Omega}_tf\right\|_{L^{\infty}(Q_{2r}(t^0,z^0))} 
\left(\rho(z',z'')+\sqrt{|t'-t''|}\right)^{\alpha}.
\end{align*}
From inequality \eqref{eq:Upper_bound_T_t_f_z}, it follows that
\begin{equation}
\label{eq:Upper_bound_difference_T_t_f_z}
\begin{aligned}
\left|\left(T^{\Omega}_{t'}f\right)(z') - \left(T^{\Omega}_{t''}f\right)(z'')\right|	\leq C \|f\|_{L^2(\Omega;d\mu)} \left(\rho(z',z'')+\sqrt{|t'-t''|}\right)^{\alpha}.
\end{aligned}
\end{equation}
Using the fact that
$$
\left\|p^{\Omega}(t',z',\cdot)-p^{\Omega}(t'',z'',\cdot)\right\|_{L^2(\Omega;d\mu)} = \sup_{\|f\|_{L^2(\Omega;d\mu)}=1 } \left|\left(T^{\Omega}_{t'}f\right)(z') - \left(T^{\Omega}_{t''}f\right)(z'')\right|,
$$
inequality \eqref{eq:Upper_bound_difference_T_t_f_z} gives us that, for all $(t',z'),(t'',z'') \in Q_r(t^0,z^0)$, we have
\begin{align*}
\|p^{\Omega}(t',z',\cdot)-p^{\Omega}(t'',z'',\cdot)\|_{L^2(\Omega;d\mu)} \leq C \left(\rho(z',z'')+\sqrt{|t'-t''|}\right)^{\alpha}.
\end{align*}
Because the point $(t^0,z^0)\in(0,\infty)\times\underline\Omega$ was arbitrarily chosen, the preceding inequality implies that we can find a Borel measurable modification of the function $p^{\Omega}:(0,\infty)\times \underline\Omega\times \underline\Omega\rightarrow [0,\infty)$ that satisfies property \eqref{eq:L_2_integrability} and identity \eqref{eq:Representation_T_t}.
\end{proof}

\begin{lem}[Regularity of the fundamental solution]
\label{lem:Regularity_fundam}
The fundamental solution, $p^{\Omega}$, is continuous on $(0,\infty)\times \underline{\Omega}\times \underline\Omega$, and is smooth on $(0,\infty)\times \Omega\times \Omega$.
\end{lem}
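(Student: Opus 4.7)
The plan is to derive both parts from a Chapman--Kolmogorov identity obtained by combining the semigroup property with the symmetry \eqref{eq:Symmetry_semigroup}. Writing $T^\Omega_t = T^\Omega_{t/2}\circ T^\Omega_{t/2}$, inserting the kernel representation of Lemma~\ref{lem:Measurability_fundam} twice, and using symmetry to replace $p^\Omega(t/2,\zeta,w)$ by $p^\Omega(t/2,w,\zeta)$, I would first verify, as an identity in $L^2(\Omega\times\Omega;\, d\mu\otimes d\mu)$ and then pointwise after selecting continuous representatives, that
\[
p^\Omega(t,z,w) \;=\; \int_\Omega p^\Omega(t/2,z,\zeta)\, p^\Omega(t/2,w,\zeta)\, d\mu(\zeta) \;=\; \bigl(p^\Omega(t/2,z,\cdot),\, p^\Omega(t/2,w,\cdot)\bigr)_{L^2(\Omega;d\mu)}.
\]

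For the continuity statement, the proof of Lemma~\ref{lem:Measurability_fundam} already shows that the map $(t,z)\mapsto p^\Omega(t,z,\cdot)\in L^2(\Omega;d\mu)$ is locally H\"older continuous on $(0,\infty)\times\underline\Omega$, with the uniform $L^2$-bound \eqref{eq:L_2_norm_uniform_bound} on compact subsets. Joint continuity of the $L^2$ inner product then implies that the right-hand side of the Chapman--Kolmogorov formula is jointly continuous in $(t,z,w)$ on $(0,\infty)\times\underline\Omega\times\underline\Omega$, and this expression furnishes the desired continuous representative of $p^\Omega$.

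For smoothness on $(0,\infty)\times\Omega\times\Omega$, I would rewrite the Chapman--Kolmogorov formula as $p^\Omega(t,z,w) = \bigl(T^\Omega_{t-s}\, p^\Omega(s,z,\cdot)\bigr)(w)$ for $0<s<t$, so that for each fixed $z\in\Omega$ the function $p^\Omega(\cdot,z,\cdot)$ is the strongly continuous semigroup evolution of an $L^2(\Omega;d\mu)$-datum, hence a weak solution of $u_t - L_w u = 0$ on $(s,\infty)\times\Omega$. On any compact set $K\subset\subset\Omega$ the coordinates $x_i$ are bounded below by a positive constant, so by items 1 and 5 of Assumption~\ref{assump:Coeff} the operator $\partial_t - L_w$ is uniformly parabolic on $(0,\infty)\times K$ with smooth coefficients; interior parabolic Schauder theory therefore yields $p^\Omega(\cdot,z,\cdot)\in C^\infty((s,\infty)\times\Omega)$ for every $z\in\Omega$, together with interior estimates of the form
\[
\sup_{K'}\bigl|\partial_t^{\,j}\partial_w^{\alpha}\, p^\Omega(t,z,w)\bigr| \;\le\; C(K',K'')\,\|p^\Omega(s,z,\cdot)\|_{L^2(\Omega;d\mu)}
\]
on nested compacts $K'\subset K''\subset (s,\infty)\times\Omega$, with $C$ independent of $z$. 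Differentiating the Chapman--Kolmogorov formula under the integral sign, using these uniform bounds together with \eqref{eq:L_2_norm_uniform_bound}, then produces all partial derivatives in $(t,w)$; the symmetry $p^\Omega(t,z,w)=p^\Omega(t,w,z)$ yields derivatives in $(t,z)$, and iterating the argument on mixed derivatives produces arbitrary multi-indices, with joint continuity of each derivative following from dominated convergence combined with the $L^2$-continuity in $z$.

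The main obstacle will be justifying the passage of arbitrarily many derivatives under the integral sign in the Chapman--Kolmogorov formula, which requires the interior Schauder estimates to be uniform in the parameter $z\in\Omega$; this uniformity is secured by the $z$-independence of the coefficients of $L$ and by the local $L^2$ bound \eqref{eq:L_2_norm_uniform_bound}.
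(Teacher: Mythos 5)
Your continuity argument is correct and uses the same ingredients as the paper, packaged slightly differently: the paper evolves $p^{\Omega}(t_0,z^0,\cdot)$ as an $L^2(\Omega;d\mu)$ initial datum, invokes the interior H\"older continuity of weak solutions from \cite[Corollary 4.1]{Epstein_Mazzeo_cont_est} together with the semigroup property, and then appeals to the symmetry \eqref{eq:Symmetry_semigroup}, whereas you pair two copies of the kernel through the Chapman--Kolmogorov identity and use the local H\"older continuity of $(t,z)\mapsto p^{\Omega}(t,z,\cdot)\in L^2(\Omega;d\mu)$ already extracted in the proof of Lemma \ref{lem:Measurability_fundam}. Your version has the mild advantage of delivering joint continuity in $(t,z,w)$ in one step, and it is a legitimate variant of the paper's proof of the first assertion. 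Likewise, reducing the smoothness statement to interior regularity for a uniformly parabolic operator with smooth coefficients on compact subsets of $\Omega$ (where the $x_i$ are bounded below, the logarithmic terms are smooth, and $d\mu$ is comparable to Lebesgue measure) is exactly what the paper does, although the paper disposes of this in one sentence.

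The gap is in your mechanism for upgrading to \emph{joint} smoothness by differentiating the Chapman--Kolmogorov formula under the integral sign. First, the uniformity you need is in the integration variable $\zeta\in\Omega$, not in $z$: the bound \eqref{eq:L_2_norm_uniform_bound} is uniform only over \emph{compact} subsets of $\underline\Omega$, while $\zeta$ ranges over all of $\Omega$; to bound $\sup_{w\in K'}|\partial_t^{\,j}\partial_w^{\alpha}p^{\Omega}(t/2,w,\zeta)|$ uniformly in $\zeta$ one should instead use $\|p^{\Omega}(s,\zeta,\cdot)\|_{L^2(\Omega;d\mu)}^2=p^{\Omega}(2s,\zeta,\zeta)$ together with \eqref{eq:Upper_bound_transition_density}, Lemma \ref{lem:Fundam_sol_different_domains} and the lower bound \eqref{eq:Measure_ball}, which give a bound independent of $\zeta$. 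Second, and more seriously, for mixed derivatives $\partial_z^{\alpha}\partial_w^{\beta}$ both factors of the integrand are differentiated, so the majorant produced by these sup bounds is a constant in $\zeta$; since $\mu(\Omega)$ is in general infinite (e.g.\ $\Omega=S_{n,m}$), a constant is not $d\mu$-integrable and the dominated-convergence step fails as written. One way to close the argument is to use interior estimates in the form $\sup_{K'}|\partial_z^{\alpha}p^{\Omega}(\cdot,\cdot,\zeta)|\leq C\,\|p^{\Omega}(\cdot,\cdot,\zeta)\|_{L^2((t/8,t)\times K'')}$, whose right-hand side, as a function of $\zeta$, belongs to $L^2(\Omega;d\mu)$ because $\int_{\Omega}\|p^{\Omega}(s,\xi,\cdot)\|_{L^2(\Omega;d\mu)}^2\,d\mu(\zeta)$-type quantities reduce, via Fubini, to integrals of $p^{\Omega}(2s,\xi,\xi)$ over the compact cylinder; Cauchy--Schwarz in $\zeta$ then furnishes an integrable majorant uniform over $z,w$ in compacts. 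Alternatively, and this is in effect all the paper does, observe that on interior compacts $p^{\Omega}$ is the fundamental solution of a uniformly parabolic equation with smooth coefficients (equivalently, by symmetry it satisfies $\partial_t p^{\Omega}=\tfrac12(L_z+L_w)p^{\Omega}$ weakly in the doubled variables) and cite the classical joint smoothness of such kernels.
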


\begin{proof}
By the symmetry property \eqref{eq:Symmetry_semigroup}, we have that
\begin{equation}
\label{eq:Symmetry_fundam}
p^{\Omega}(t,z^0,z) = p^{\Omega}(t,z,z^0),\quad\forall\, t\in (0,\infty),\quad\hbox{ for almost all } z^0, z\in\Omega.
\end{equation}
Let $t_0>0$ and $z^0\in\underline\Omega$. Because $p(t_0,z^0,\cdot)\in L^2(\Omega;d\mu)$, there is a unique weak solution, $u\in \cF((0,T)\times\Omega)$, for all $T>0$, to the homogeneous equation \eqref{eq:Inhomogeneous_initial_value_problem} with initial condition $f=p(t_0,z^0,\cdot)$. Using \cite[Corollary 4.1]{Epstein_Mazzeo_cont_est} together with \cite[Theorem 2.1]{Sturm_1995}, it follows that the solution $u$ is continuous on $(0,T)\times\underline\Omega$. Since $u(t)=T^{\Omega}_t p^{\Omega}(t_0,z^0,\cdot)$, the semigroup property implies that $u(t)=p^{\Omega}(t+t_0,z^0,\cdot)$, and so, the function $p^{\Omega}(\cdot,z^0,\cdot)$ is continuous on $(0,\infty)\times\underline\Omega$, where we used the fact that the positive constants $t_0$ and $T$ were arbitrarily chosen. Using the symmetry of the fundamental solution, \eqref{eq:Symmetry_fundam}, it follows that the function $p^{\Omega}$ is continuous on $(0,\infty)\times\underline\Omega\times\underline\Omega$. Interior regularity of solutions to parabolic equations with smooth coefficients implies that the function $p^{\Omega}$ is smooth on $(0,\infty)\times \Omega\times \Omega$.
\end{proof}

\begin{lem}[Fundamental solutions corresponding to different domains]
\label{lem:Fundam_sol_different_domains}
Let $\Omega_1\subset\Omega_2\subseteq S_{n,m}$ be domains. Then we have that
\begin{equation}
\label{eq:Fundam_sol_different_domains}
p^{\Omega_1}(t,z^0,z) \leq p^{\Omega_2}(t,z^0,z),\quad\forall\, (t,z^0,z)\in(0,\infty)\times\underline\Omega_1\times\underline\Omega_1.
\end{equation}
\end{lem}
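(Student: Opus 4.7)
The plan is to deduce \eqref{eq:Fundam_sol_different_domains} from a semigroup comparison of the form
\[
0 \leq T^{\Omega_1}_t f \leq T^{\Omega_2}_t f \quad \text{for every nonnegative } f \in L^2(\Omega_1; d\mu)
\]
(extended by zero to $\Omega_2$), and then translate that inequality to the kernel level. Once the semigroup comparison is in hand, the kernel representation \eqref{eq:Representation_T_t} yields $\int_{\Omega_1} p^{\Omega_1}(t,z,w) f(w)\,d\mu(w) \leq \int_{\Omega_1} p^{\Omega_2}(t,z,w) f(w)\,d\mu(w)$ for every nonnegative $f \in L^2(\Omega_1;d\mu)$ and every $z \in \underline{\Omega_1}$, so $p^{\Omega_1}(t,z,\cdot) \leq p^{\Omega_2}(t,z,\cdot)$ almost everywhere; the joint continuity of both kernels from Lemma \ref{lem:Regularity_fundam} then promotes this to the pointwise inequality on $\underline{\Omega_1}$.

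To establish the semigroup comparison, I would first verify positivity preservation: if $f \in L^2(\Omega; d\mu)$ is nonnegative then $u := T^\Omega_t f \geq 0$. Indeed $u \in \cF((0,T)\times\Omega)$, and the truncation $u^-$ inherits the Dirichlet boundary behaviour along $\partial_1 \Omega$ encoded in $\cF$, so $u^-$ is an admissible test function. Using $-u^-$ in the weak formulation and noting that in \eqref{eq:Bilinear_form} the gradients of $u^+$ and $u^-$ have disjoint supports, so $Q(u^+, u^-) = 0$, gives
\[
\tfrac{1}{2}\tfrac{d}{dt}\|u^-\|_{\cH}^2 \;=\; -Q(u^-, u^-) \;\leq\; 0,
\]
and since $u^-(0) = f^- = 0$, one concludes $u^- \equiv 0$.

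Second, I would upgrade this to domain monotonicity. The elementary inclusion $C^1_c(\underline{\Omega_1}) \subset C^1_c(\underline{\Omega_2})$ implies $H^1_0(\underline{\Omega_1}; d\mu) \hookrightarrow H^1_0(\underline{\Omega_2}; d\mu)$ via extension by zero, so the zero-extension $\tilde u_1$ of $u_1 := T^{\Omega_1}_t f$ lies in $\cF((0,T)\times \Omega_2)$. Set $w := T^{\Omega_2}_t f - \tilde u_1$. For every test function $v \in \cF((0,T)\times \Omega_1)$ extended by zero to $\Omega_2$, the weak formulations for $u_2$ and $u_1$ agree, so $w$ satisfies the weak equation against such $v$, with $w(0,\cdot) = 0$ on $\Omega_1$. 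Along $\partial_1 \Omega_1$, the first term $T^{\Omega_2}_t f$ is nonnegative by the first step, while $\tilde u_1$ has zero trace, hence $w \geq 0$ on $\partial_1 \Omega_1$ and therefore $w^- \mathbf{1}_{\Omega_1} \in \cF((0,T)\times\Omega_1)$. Using $-w^-\mathbf{1}_{\Omega_1}$ as a test function and repeating the energy estimate above forces $w^- \equiv 0$ on $\Omega_1$, which is exactly $T^{\Omega_1}_t f \leq T^{\Omega_2}_t f$ on $\Omega_1$.

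The main delicate point is to justify, in the degenerate weighted setting, that the truncation operations $u \mapsto u^{\pm}$ preserve the space $\cF = H^1_0(\underline{\Omega}; d\mu)$ and that $Q(u^+, u^-) = 0$, i.e.\ that $(Q, \cF)$ is a Markovian Dirichlet form in the sense of Fukushima. In the classical unweighted setting these are textbook facts; here one must check that a standard mollifier-plus-truncation approximation respects both the weight $\prod_i x_i^{b_i(z)-1}\,dz$ and the boundary behaviour at $\partial_0 \Omega$, but the explicit form of $Q$ in \eqref{eq:Bilinear_form} and Assumption \ref{assump:Coeff} make both verifications routine. Everything else in the argument is a straightforward energy estimate together with the regularity of $p^{\Omega}$ already established in Lemma \ref{lem:Regularity_fundam}.
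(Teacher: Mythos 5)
Your argument is correct and is essentially the paper's own route: reduce \eqref{eq:Fundam_sol_different_domains} to the semigroup comparison $T^{\Omega_1}_t\varphi \le T^{\Omega_2}_t\varphi$ for nonnegative data (the paper takes $\varphi\in C^{\infty}_c(\underline\Omega_1)$ and passes to the kernels via \eqref{eq:Representation_T_t} and the continuity in Lemma \ref{lem:Regularity_fundam}), and prove that comparison by testing the weak formulation of the difference with its negative part and running the energy estimate with $Q\ge 0$. The one step you label as ``routine'' --- that the truncated difference is an admissible element of $\cF((0,T)\times\Omega_1)$ in this degenerate weighted setting --- is precisely where the paper does its real work (Claim \ref{claim:Test_function}, proved by approximation with $C^1_c(\underline\Omega_1)$-valued functions and the smoothing $\bigl((v^k)^2+\eps^2\bigr)^{1/2}-\eps$), so your proposal matches the paper's proof in substance.
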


\begin{proof}
Using the continuity of the fundamental solution $p^{\Omega_i}$ on $(0,\infty)\times\underline\Omega_i\times\underline\Omega_i$, for $i=1,2$, established in Lemma \ref{lem:Regularity_fundam}, it is sufficient to prove that, for all nonnegative functions $\varphi\in C^{\infty}_c(\underline\Omega_1)$, we have 
\begin{equation}
\label{eq:Fundam_sol_different_domains_aux}
\left(T^{\Omega_1}_t\varphi\right)(z^0) \leq \left(T^{\Omega_2}_t\varphi\right)(z^0),\quad\forall\, (t,z^0)\in (0,\infty)\times\underline\Omega_1.
\end{equation}
For $i=1,2$, we let $\cH_i:=L^2(\underline\Omega_i;d\mu)$, $\cF_i:=H^1_0(\underline\Omega_i;d\mu)$ and $u_i(t) := T^{\Omega_i}_t\varphi$, for all $t\in (0,\infty)$, and we set $u:= u_2-u_1$, and $v:= u^-$. For all $T>0$, we prove that $v\in \cF((0,T)\times\Omega_1)$ in the following
\begin{claim}
\label{claim:Test_function}
The test function $v:=u^-$ belongs to $\cF((0,T)\times\Omega_1)$.
\end{claim}
Before we give the proof of the claim, we show how it can be used to obtain inequality \eqref{eq:Fundam_sol_different_domains}. Applying identity \eqref{eq:Weak_sol_var_eq} to the function $u(t)$ and the test function $v(t)$, and using the fact that $u(0,\cdot)\equiv 0$, we obtain that
\begin{align*}
0= \|v(t)\|^2_{L^2(\Omega_1;d\mu)} +\int_0^t Q(v(s),v(s))\, ds,\quad\forall\, t\in (0,T).
\end{align*}
The symmetry of the bilinear form $Q(u,v)$ and the uniform ellipticity condition \eqref{eq:Ellipticity_all_second_order_coeff}, gives us that $Q(v(s),v(s)) \geq 0$, for all $s\in (0,T)$, and so, it follows that
$$
\|v(t)\|_{L^2(\Omega_1;d\mu)}=0,\quad\forall\, t\in (0,T),
$$
which yields the fact that inequality \eqref{eq:Fundam_sol_different_domains_aux} holds, for all $t\in (0,T)$. Since the positive constant $T$ and the nonnegative test function $\varphi\in C^{\infty}_c(\underline\Omega_1)$ were arbitrarily chosen, we immediately obtain the conclusion of the lemma.

It remains to give the

\begin{proof}[Proof of Claim \ref{claim:Test_function}]
Our goal is to prove that $\frac{dv}{dt} \in L^2((0,T), \cF_1^*)$ and that $v \in L^2((0,T), \cF_1)$. From the fact that $\frac{dv}{dt} = \frac{du}{dt} \mathbf{1}_{\{u<0\}}$, it follows that $\frac{dv}{dt} \in L^2((0,T), \cF_1^*)$. Because $u \in L^2((0,T),\cF_1)$, it follows from \cite[Lemma 7.6]{GilbargTrudinger} that $Dv = Du \mathbf{1}_{\{u<0\}}$ a.e. on $(0,T)\times \Omega'$, for all $\Omega' \subset\Omega$, where $D$ denotes the spatial derivatives. Thus, we obtain that $v$ has finite $L^2((0,T), H^1(\Omega_1;d\mu))$-norm, and now we construct a sequence of functions, $\{v^k\}_{k\in\NN} \subset L^2((0,T), \cF_1)$, such that
\begin{equation}
\label{eq:Convergence_v_k}
\|v^k - v\|_{L^2((0,T), \cF_1)} \rightarrow 0,\quad\hbox{ as } k \rightarrow\infty,
\end{equation}
which will imply that $v \in L^2((0,T), \cF_1)$. From \cite[Theorem 7.7]{Brezis}, it follows that $u \in C([0,T], \cF_1)$, and so, we can find a sequence of functions, $\{u^k\}_{k\in\NN} \subset L^2((0,T), \cF_1)$, such that
\begin{equation}
\label{eq:Convergence_u_k}
\|u^k - u\|_{L^2((0,T), \cF_1)} \rightarrow 0,\quad\hbox{ as } k \rightarrow\infty,
\end{equation}
and $u_k(t) \in C^1_c(\underline\Omega_1)$, for all $t\in (0,T)$ and all $k\in\NN$. By letting $v^k:=(u^k)^-$, we have that $Dv^k = Du^k\mathbf{1}_{\{u^k<0\}}$ by \cite[Lemma 7.6]{GilbargTrudinger}, and we also have that
\begin{align*}
\|v^k-v\|_{L^2((0,T), \cH_1)} & \leq \|u^k-u\|_{L^2((0,T), \cH_1)},\\
\|v^k-v\|_{L^2((0,T), \cH_1)} &= \|Du^k\mathbf{1}_{\{u^k<0\}}-Du\mathbf{1}_{\{u<0\}}\|_{L^2((0,T), \cH_1)}\\
&\leq  \|Du^k-Du\|_{L^2((0,T), \cH_1)}+\|Du^k\mathbf{1}_{\{u^k<0, u \geq 0\}}\|_{L^2((0,T), \cH_1)}\\
&\quad+\|Du\mathbf{1}_{\{u^k\geq 0, u<0\}}\|_{L^2((0,T), \cH_1)}.
\end{align*} 
Property \eqref{eq:Convergence_u_k} yields that both quantities above converge to $0$, as $k$ tends to $\infty$, and so \eqref{eq:Convergence_v_k} holds. 

We now conclude the proof of the fact that the function $v^k$ belongs to $L^2((0,T), \cF_1)$, for all $k\in\NN$. Following \cite[Lemma 7.6]{GilbargTrudinger}, for all $\eps>0$, we let
$$
w^k_{\eps} := \left(\left(v^k\right)^2+\eps^2\right)^{1/2}-\eps.
$$
Direct calculations give us 
$$
Dw^k_{\eps} = \frac{v^kDv^k}{\left(\left(v^k\right)^2+\eps^2\right)^{1/2}}.
$$
Recall that $u^k(t)\in C^1_c(\underline\Omega_1)$ and that $v^k(t) := (u^k(t))^-$, for all $t\in (0,T)$, which implies that $w^k_{\eps}(t) \in C^1_c(\underline\Omega_1)$, for all $t\in (0,T)$. Thus, the sequence of functions $\{w^k_{\eps}\}_{\eps>0} \subset L^2((0,T),\cF_1)$, and using the expressions of $w^k_{\eps}$ and $Dw^k_{\eps}$, it is clear that
$$
\|w^k_{\eps} - v^k\|_{L^2((0,T),\cF_1)} \rightarrow 0\quad\hbox{ as } \eps\downarrow 0.
$$
This implies that the functions $v^k$ belong to $L^2((0,T),\cF_1)$, for all $k\in\NN$. 

This concludes the proof of the claim and of the lemma.
\end{proof}

\end{proof}

When $\Omega=S_{n,m}$ we denote, for simplicity, $p^{S_{n,m}}=p$ and $T^{S_{n,m}}=T$. We next show that the functions $p(t,z,w)\, d\mu(w)$ can be viewed as transition probability densities.

\begin{lem}[Transition probability densities]
\label{lem:Transition_probabilities}
For all $(t,z)\in (0,\infty)\times\bar S_{n,m}$, we have that $p(t,z,\cdot)\in
L^1(S_{n,m};d\mu)$. Moreover, $p$ is a non-negative function, which satisfies
\begin{equation}
\label{eq:Transition_probabilities}
\int_{S_{n,m}} p(t,z,w)\, d\mu(w) =1,\quad\forall\, (t,z)\in (0,\infty)\times\bar S_{n,m}.
\end{equation}
\end{lem}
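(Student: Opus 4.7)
I plan to establish the three assertions --- pointwise nonnegativity of $p$, $L^1$-integrability, and unit mass --- sequentially, using variations of the test-function argument from Claim \ref{claim:Test_function}.

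For \emph{nonnegativity}, fix a nonnegative $\varphi\in C_c^\infty(\underline S_{n,m})$, let $u(t):=T_t\varphi$, and use $v:=u^-$ as test function in the variational equation \eqref{eq:Weak_sol_var_eq}. By the proof of Claim \ref{claim:Test_function}, $v\in\cF((0,T)\times S_{n,m})$ for every $T>0$. Since $Du^+$ and $Du^-$ are supported on disjoint sets, every summand in $Q(u^+,u^-)$ vanishes, so $Q(u,u^-)=-Q(u^-,u^-)$; a direct calculation gives $(du/ds,u^-)=-\tfrac12\frac{d}{ds}\|u^-\|_{\cH}^2$, and $u^-(0,\cdot)=0$ because $\varphi\geq 0$. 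Substituting into \eqref{eq:Weak_sol_var_eq} yields
\[
\int_0^t Q(v(s),v(s))\,ds+\tfrac12\|v(t)\|_{\cH}^2=0,
\]
and the ellipticity \eqref{eq:Ellipticity_all_second_order_coeff} forces $v\equiv 0$. Density of $C_c^\infty(\underline S_{n,m})$ in $\cH$, formula \eqref{eq:Representation_T_t}, and the continuity of $p$ (Lemma \ref{lem:Regularity_fundam}) then give the pointwise nonnegativity of $p$ on $(0,\infty)\times\underline S_{n,m}\times\underline S_{n,m}$.

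For the \emph{sub-Markov bound}, I run the same computation with $v:=(u-1)^+=u-(u\wedge 1)\in\cF$ (since unit contractions operate on the Dirichlet form $\cF$), applied to $u=T_t\varphi$ for $\varphi\in C_c^\infty(\underline S_{n,m})$ with $0\leq\varphi\leq 1$. Because $D\mathbf 1=0$, one has $Q(u,v)=Q(v,v)$, and $v(0,\cdot)=(\varphi-1)^+=0$, so once again $v\equiv 0$ and hence $0\leq T_t\varphi\leq 1$. Taking $\varphi_R\in C_c^\infty(\underline S_{n,m})$ with $0\leq\varphi_R\uparrow 1$ and applying monotone convergence in \eqref{eq:Representation_T_t} shows that $p(t,z,\cdot)\in L^1(S_{n,m};d\mu)$ and that $\int_{S_{n,m}} p(t,z,w)\,d\mu(w)\leq 1$ for every $(t,z)\in(0,\infty)\times\underline S_{n,m}$.

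The main obstacle is the reverse inequality, i.e.\ \emph{conservativity} $\int_{S_{n,m}} p(t,z,w)\,d\mu(w)\geq 1$. The plan is to exploit Assumption~\ref{assump:Coeff}(2)--(4): outside the bounded region $M_{\{1,\dots,n\}}$ the weights satisfy $b_i\equiv 1$ so $d\mu=dz$, all cross and mixing coefficients vanish, and $L$ decouples into half-line Bessel-type operators in the $x_j$ (for $x_j\geq 1$) and the Euclidean Laplacian in $y$, each classically conservative. Extending $T_t$ to bounded measurable functions by monotone limit, I set $\Phi(t,z):=\int p(t,z,w)\,d\mu(w)\in[0,1]$, and combine the Gaussian tail estimates \cite[Corollary 4.3]{Epstein_Mazzeo_cont_est} for the fundamental solution with a Khas'minskii/Lyapunov-type argument --- testing $\mathbf 1-T_t\chi_R$ against suitable cutoffs and sending $R\to\infty$ --- to conclude $\Phi\equiv 1$ in the interior. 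Continuity of $p$ on $(0,\infty)\times\underline S_{n,m}\times\underline S_{n,m}$ (Lemma \ref{lem:Regularity_fundam}), together with the H\"older estimates \cite[Corollary 4.1]{Epstein_Mazzeo_cont_est}, extends the identity \eqref{eq:Transition_probabilities} to all $z\in\bar S_{n,m}$.
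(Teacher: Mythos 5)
Your nonnegativity and sub-Markov arguments are essentially sound, and they differ mildly from the paper: the paper simply invokes \cite[Lemma 1.4]{Sturm_1995} for nonnegativity and obtains $p(t,z,\cdot)\in L^1(S_{n,m};d\mu)$ from the $L^q$-estimates of Lemma \ref{lem:L_q_transition_density} (with $q=1$), whereas you re-derive both from the $u^-$ and $(u-1)^+$ truncation arguments in the spirit of Claim \ref{claim:Test_function}. That route is acceptable provided you verify that the normal contraction $(u-1)^+$ indeed lies in $\cF(I\times S_{n,m})$ (Markovianity of the form in Sturm's framework), and it buys you a self-contained proof of $0\le T_t\varphi\le 1$ and of the bound $\int p\,d\mu\le 1$.

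The genuine gap is the conservativity step $\int_{S_{n,m}}p(t,z,w)\,d\mu(w)\ge 1$, which is the actual content of the lemma, and which you only gesture at. Your plan of ``testing $\mathbf 1-T_t\chi_R$ against suitable cutoffs'' cannot be run as stated: $\mathbf 1\notin\cH=L^2(S_{n,m};d\mu)$ (on $M^c_{\{1,\dots,n\}}$ the measure is Lebesgue measure on an unbounded set), so $\mathbf 1-T_t\chi_R$ is not an admissible element of the weak formulation, and no Lyapunov function or supersolution comparison is constructed --- nor is a maximum principle for $L$ on the unbounded, degenerate domain available at this stage of the paper. If instead the Khas'minskii argument is meant probabilistically, it is circular, since the diffusion associated with $L$ is built only later (Sections 4--6) using precisely these kernel facts. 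The paper closes this gap analytically and concretely: writing $T_t\varphi_k=\varphi_k-\int_0^t T_sL\varphi_k\,ds$ for cutoffs $\varphi_k$ equal to $1$ on $B^e_k$, using the $L^p(S_{n,m};d\mu)$-contraction of the semigroup \cite[Proposition 1.6]{Sturm_1995} to get $\|T_t\varphi_k-\varphi_k\|_{L^p}\le t\|L\varphi_k\|_{L^p}$, and exploiting Assumption \ref{assump:Coeff} (2)--(4) to show $\|L\varphi_k\|_{L^p}\le Ck^{\frac{n+m}{p}-1}\to 0$ for $p>n+m$; together with the kernel upper bound \eqref{eq:Upper_bound_transition_density} and the continuity of $T_t\varphi_k$ this yields $T_t1\equiv 1$. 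Some quantitative substitute of this type (an $L^p$-contraction with large $p$, or an explicit supersolution at infinity with a justified comparison argument) is needed; naming the method does not supply it.
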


For $z^0\in \underline \Omega$ and $r>0$, we let 
\begin{equation}
\label{eq:Ball_Euclidean}
B^e_r(z^0):= \{z\in\bar S_{n,m}:\ |z^0-z|<r\},
\end{equation}
denote the Euclidean open ball with center $z^0$ and radius $r$, relative to the set $\bar S_{n,m}$. When $z^0=(0,0)\in\bar S_{n,m}$, we write for brevity $B^e_r$ instead of $B^e_r(0,0)$. 

\begin{proof}[Proof of Lemma \ref{lem:Transition_probabilities}]
The fact that $p(t,z,\cdot)$ is a non-negative function follows from \cite[Lemma 1.4]{Sturm_1995}. From Lemma \ref{lem:L_q_transition_density}, we obtain that $p(t,z,\cdot)\in L^1(S_{n,m};d\mu)$. It remains to prove the integrability property \eqref{eq:Transition_probabilities} which is equivalent to proving that
\begin{equation}
\label{eq:Transition_probabilities_prim}
\left(T_t 1\right)(z) =1,\quad\forall\, (t,z)\in (0,\infty)\times\bar S_{n,m}.
\end{equation}
Let $\{\varphi_k\}_{k\geq 1} \subset C^{\infty}_c(\bar S_{n,m})$ be a sequence of smooth functions such that $0\leq \varphi_k\leq 1$, and
$$
\varphi_k(z)=1\hbox{ for } z\in B^e_k,\quad\hbox{and } \varphi_k(z)=0\hbox{ for } z\in \bar S_{n,m}\backslash B^e_{2k}.
$$
From \eqref{eq:Representation_inhomogeneous_sol}, we have that the following identity holds in the $L^2(S_{n,m}; d\mu)$-sense:
$$
T_t\varphi_k=\varphi_k -\int_0^t T_s L\varphi_k\, ds.
$$
Using the contraction property \cite[Proposition 1.6]{Sturm_1995} of the semigroup $\{T_t\}_{t\geq 0}$ on $L^p(S_{n,m};d\mu)$, for all $p\in [1,\infty]$, and the preceding identity, we obtain that
$$
\|T_t\varphi_k-\varphi_k\|_{L^p(S_{n,m};d\mu)} \leq t \|L\varphi_k\|_{L^p(S_{n,m};d\mu)},\quad \forall\, k \geq 1.
$$
From conditions \eqref{eq:Ellipticity_all_second_order_coeff}, \eqref{eq:Outside_neighborhood_O}, \eqref{eq:Outside_neighborhood_O_a_ii}, \eqref{eq:b_coeff_bound_upper} and \eqref{eq:b_coeff_away_O}, it follows that there is a positive constant, $C=C(K,m,n,p)$, such that
$$
\|L\varphi_k\|_{L^p(S_{n,m};d\mu)} \leq Ck^{\frac{n+m}{p}-1},\quad\forall\, k \geq 1.
$$
Choosing $p$ large enough, we obtain that $\|L\varphi_k\|_{L^p(S_{n,m};d\mu)}\rightarrow 0$, as $k\rightarrow\infty$, and so,
there is a subsequence $\{T_t\varphi_k-\varphi_k\}_{k\geq 1}$ which converges to $0$, a.e. on $S_{n,m}$.

Using the upper bound estimates of the fundamental solution (see \eqref{eq:Upper_bound_transition_density} below), it follows that for all compact sets, $K\subset \bar S_{n,m}$, and all $t>0$, we have 
$$
\sup_{z\in K} |T_t\varphi_k(z)-T_t 1| \rightarrow 0,\quad\hbox{ as } k\rightarrow\infty.
$$ 
By \cite[Corollary 4.1]{Epstein_Mazzeo_cont_est}, the functions $T_t\varphi_k$ belong to $C(\bar S_{n,m})$, for all $k \geq 1$, and so the function $T_t 1$ is continuous on $\bar S_{n,m}$. Since the sequence $(T_t\varphi_k-\varphi_k) \rightarrow 0$ a.e. on $S_{n,m}$, as $k\rightarrow\infty$, $\varphi_k\rightarrow 1$ pointwise on $S_{n,m}$, as $k\rightarrow\infty$, and $T_t 1$ is continuous on $\bar S_{n,m}$, it follows that identity \eqref{eq:Transition_probabilities_prim} holds. This concludes the proof of identity \eqref{eq:Transition_probabilities}.
\end{proof}

\subsection{Distribution estimates}
\label{subsec:Distribution_estimates}
From \cite[Corollary 4.3]{Epstein_Mazzeo_cont_est}, we have the following upper estimates of the fundamental solution. There is a positive constant, $C$, such that
\begin{align}
\label{eq:Upper_bound_transition_density}
p(t,z^0,z) \leq C\frac{e^{-\frac{1}{8t}\rho^2(z^0,z)}}{\sqrt{\mu(B_{\sqrt{t}}(z^0)) \mu(B_{\sqrt{t}}(z))}},\quad \forall\, t>0,\quad\forall z^0, z \in\bar S_{n,m}.
\end{align}
We denote by $\tilde p(t,z^0,z)$ the function on the right-hand side of the preceding inequality, \eqref{eq:Upper_bound_transition_density}. We use the upper bound estimate of the fundamental solution \eqref{eq:Upper_bound_transition_density} to prove distribution estimates. Our goal is to prove the following two results:
\begin{lem}[$L^q$-distribution estimates]
\label{lem:L_q_transition_density}
Let $\Omega \subseteq S_{n,m}$ be a domain and $T>0$. Then there is a positive constant, $q_0=q_0(K,m,n)\in (1,2)$, such that for all $q\in [1,q_0)$, there are positive constants, $C_0=C_0(\bar b, \|b\|_{C^1(\bar S_{n,m};\RR^n)},m,n,q,T)$ and $\beta=\beta(q)<1$, such that 
\begin{align}
\label{eq:L_q_transition_density_aux}
 \|p^{\Omega}(t,z^0,\cdot)\|_{L^q(\Omega;d\mu)} \leq C_0 t^{-\beta},\quad \forall\, z^0 \in\underline\Omega,\quad\forall\, t\in (0,T].
\end{align}
and 
\begin{align}
\label{eq:L_q_transition_density}
\int_0^T \|p^{\Omega}(t,z^0,\cdot)\|_{L^q(\Omega;d\mu)}\, dt \leq C_0,\quad \forall\, z^0 \in\underline\Omega.
\end{align}
\end{lem}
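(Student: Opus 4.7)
The plan is to first reduce to the global case using Lemma \ref{lem:Fundam_sol_different_domains}, which gives $p^{\Omega}(t,z^0,\cdot) \leq p(t,z^0,\cdot)$ pointwise on $\underline\Omega$, so that it suffices to bound $\|p(t,z^0,\cdot)\|_{L^q(S_{n,m};d\mu)}$ uniformly in $z^0$. For $q=1$, Lemma \ref{lem:Transition_probabilities} gives the trivial bound $1$. For $q\in(1,q_0)$, I will insert the Gaussian upper bound \eqref{eq:Upper_bound_transition_density} and estimate
$$
\int_{S_{n,m}} p(t,z^0,z)^q\, d\mu(z) \leq \frac{C^q}{\mu(B_{\sqrt{t}}(z^0))^{q/2}} \int_{S_{n,m}} \frac{e^{-q\rho^2(z^0,z)/(8t)}}{\mu(B_{\sqrt{t}}(z))^{q/2}}\, d\mu(z).
$$

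To handle the remaining integral, I will dyadically decompose $S_{n,m}$ into the ball $A_0 := B_{\sqrt{t}}(z^0)$ and the annuli $A_k := \{z : 2^{k-1}\sqrt{t} \leq \rho(z^0,z) < 2^k \sqrt{t}\}$ for $k \geq 1$. On $A_k$, the Gaussian factor is bounded by $e^{-q 4^{k-1}/8}$, and by the triangle inequality $B_{2^{k+1}\sqrt{t}}(z) \supset B_{\sqrt{t}}(z^0)$ for $z \in A_k$, so volume doubling of $d\mu$ with respect to $\rho$ (which follows from Assumption \ref{assump:Coeff} via a direct computation of $\mu(B_r(z^0))$ in the $(u,y)$-coordinates $u_i = 2\sqrt{x_i}$) yields constants $\alpha = \alpha(\bar b, \|b\|_{C^1}, m, n)$ and $C_D$ such that
$$
\mu(B_{\sqrt{t}}(z))^{-1} \leq C_D\, 2^{\alpha k}\, \mu(B_{\sqrt{t}}(z^0))^{-1}
\quad\text{and}\quad
\mu(A_k) \leq C_D\, 2^{\alpha k}\, \mu(B_{\sqrt{t}}(z^0)).
$$
Summing over $k$, the series $\sum_k 2^{\alpha k(1+q/2)} e^{-q 4^{k-1}/8}$ converges, giving
$$
\|p(t,z^0,\cdot)\|_{L^q(S_{n,m};d\mu)}^q \leq C\, \mu(B_{\sqrt{t}}(z^0))^{1-q}.
$$

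The next step is to establish a uniform lower bound of the form $\mu(B_{\sqrt{t}}(z^0)) \geq c\, t^{\delta}$ for all $z^0 \in \bar S_{n,m}$ and $t \in (0,T]$, where $\delta = \delta(\bar b, \|b\|_{C^1}, m, n)$. Writing the measure in the $(u,y)$-coordinates gives $d\mu \asymp \prod_i u_i^{2b_i(z)-1}\, du\, dy$, and the intrinsic metric is essentially Euclidean in these coordinates by \eqref{eq:Equivalent_intrinsic_metric}; using Assumption \ref{assump:Coeff}(4) together with the continuity of $b_i$, a direct computation over the Euclidean box $\{|u-u^0| < c\sqrt{t}\} \cap \RR^n_+$ yields $\mu(B_{\sqrt{t}}(z^0)) \geq c\, t^{(m+n)/2 + \sum_i(\bar b \vee 1/2 - 1/2)}$ in the worst case where $u^0 = 0$, so one may take $\delta := m/2 + n/2 + \sum_i((\bar b \vee 1/2)-1/2)$ or similar. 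Combining with the $L^q$ bound gives
$$
\|p(t,z^0,\cdot)\|_{L^q(S_{n,m};d\mu)} \leq C_0\, t^{-\beta(q)}, \qquad \beta(q) := \frac{\delta(q-1)}{q},
$$
and the condition $\beta(q) < 1$ is equivalent to $q < \delta/(\delta-1)$ (which is $>1$), so setting $q_0 := \delta/(\delta-1) \wedge 2$ gives \eqref{eq:L_q_transition_density_aux}. The time-integrated bound \eqref{eq:L_q_transition_density} follows immediately by $\int_0^T t^{-\beta}\, dt = T^{1-\beta}/(1-\beta)$.

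The main obstacle I expect is the sharp uniform volume lower bound together with verifying that doubling holds uniformly in $z^0 \in \bar S_{n,m}$; because the weights $b_i(z)$ are non-constant, the exponents $2b_i(z) - 1$ in the measure depend on the base point, and one must carefully localize the computation to a small neighborhood (where $b_i$ is essentially constant by the $C^1$-bound) and handle the transition between the boundary regime $u_i^0 < r$ and the interior regime $u_i^0 \geq r$ uniformly. Once these volume estimates are in hand, the annular summation is standard and the final exponent $\beta(q) < 1$ is automatic for $q$ sufficiently close to $1$.
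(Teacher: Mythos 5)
Your overall route is viable and genuinely different from the paper's. After the same reduction to $p=p^{S_{n,m}}$ via Lemma \ref{lem:Fundam_sol_different_domains} and the Gaussian bound \eqref{eq:Upper_bound_transition_density}, the paper does not use annuli or doubling: it splits the spatial integral into a $t^{\alpha}$-neighborhood $A_t(z^0)$ of $z^0$ (in the $\sqrt{x}$-coordinates, $\alpha\in(0,1/2)$) and its complement, and estimates explicit one-dimensional integrals against the weight $\varphi(x_i)$, using Lemma \ref{lem:Measure_of_balls} only at scale $\sqrt t$ through the uniform lower bound $\mu(B_{\sqrt t}(z))\geq Ct^{(m+n)/2}\prod_{i\in I(z)}(x_i\vee t)^{b_i(z)-1/2}\geq Ct^{(m+n)/2+nK}$. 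Your scheme --- Gaussian bound plus volume doubling giving $\|p(t,z^0,\cdot)\|^q_{L^q}\lesssim \mu(B_{\sqrt t}(z^0))^{1-q}$, followed by a uniform polynomial lower volume bound --- is the cleaner, more standard ``doubling space with Gaussian upper bound'' argument; the paper's computation is more hands-on but never needs any volume comparison at radii beyond $\sqrt t$.

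There are, however, two concrete problems. First, your worst-case volume exponent is wrong: at a boundary point $x^0=0$, Lemma \ref{lem:Measure_of_balls} gives $\mu(B_{\sqrt t}(z^0))\asymp t^{(m+n)/2+\sum_i(b_i(z^0)-1/2)}$, and since $t\leq 1$ this is \emph{smallest} when $b_i(z^0)$ is as large as possible, i.e.\ it is governed by the upper bound $K$ in \eqref{eq:b_coeff_bound_upper}, not by the lower bound $\bar b$. With your $\delta=(m+n)/2+\sum_i((\bar b\vee 1/2)-1/2)$ the claimed inequality $\mu(B_{\sqrt t}(z^0))\geq c\,t^{\delta}$ fails whenever some $b_i(z^0)>\bar b\vee 1/2$ at a boundary point, and the resulting $q_0$ is too large. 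Replacing $\bar b$ by $K$ (as the paper does, yielding $\mu(B_{\sqrt t}(z^0))\geq Ct^{(m+n)/2+nK}$ and hence $q_0=q_0(K,m,n)$) fixes this without altering your structure. Second, your treatment of $q=1$ via Lemma \ref{lem:Transition_probabilities} is circular: the proof of that lemma cites the present lemma for the $L^1$-integrability of $p$; simply run your main estimate at $q=1$, where it gives a constant bound. Finally, note that Lemma \ref{lem:Measure_of_balls} provides two-sided volume bounds only for radii $r<r_0/2$, while your annular summation uses doubling (or at least a polynomial upper volume bound) at all scales $2^k\sqrt t$; this holds here but needs a separate, if easy, verification for $r\gtrsim r_0$ --- or can be bypassed by bounding $\mu(B_{\sqrt t}(z))^{-1}\leq Ct^{-(m+n)/2-nK}$ uniformly and $\mu(A_k)$ crudely by a polynomial in $2^k\sqrt t$, since the Gaussian factor absorbs any polynomial growth.
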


\begin{lem}[H\"older distribution estimates]
\label{lem:Continuity_transition_density}
There is a positive constant, $\alpha_0=\alpha_0(K,m,n)$, such that for all $\alpha\in (\alpha_0,\infty)$, there is a positive constant, $\gamma=\gamma(\alpha,K,m,n)$, such that for all $T>0$, we can find a positive constant, $C=C(\alpha,\bar b,\|b\|_{C^1(\bar S_{n,m};\RR^n)},m,n,T)$, with the property that 
\begin{align}
\label{eq:Continuity_transition_density}
\int_{\Omega} \rho^{\alpha}(z^0,z)p^{\Omega}(t,z^0, z) \, d\mu(z) \leq C t^{n+m+\gamma},\quad \forall\, t \in (0,T],\quad\forall\, z^0 \in\underline\Omega.
\end{align}
\end{lem}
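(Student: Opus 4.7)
The plan is to reduce to the whole space $S_{n,m}$ using Lemma \ref{lem:Fundam_sol_different_domains}, apply the Gaussian upper bound \eqref{eq:Upper_bound_transition_density}, and decompose the integral into annular shells adapted to the intrinsic metric $\rho$. Since $p^{\Omega}(t,z^0,z)\leq p(t,z^0,z)$ on $\underline{\Omega}\times\underline{\Omega}$ by Lemma \ref{lem:Fundam_sol_different_domains}, extending $p^\Omega$ by zero outside $\Omega$ gives
$$\int_{\Omega}\rho^{\alpha}(z^0,z)p^{\Omega}(t,z^0,z)\,d\mu(z)\leq\int_{S_{n,m}}\rho^{\alpha}(z^0,z)p(t,z^0,z)\,d\mu(z),$$
so it is enough to prove the bound $\int_{S_{n,m}}\rho^\alpha p\,d\mu\leq C_\alpha t^{\alpha/2}$ with $C_\alpha$ depending on the listed parameters; the power matching $t^{n+m+\gamma}$ will then follow by interpolating in $t$.

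The first key intermediate step is to establish that $(\bar S_{n,m},\rho,\mu)$ is a doubling metric measure space: there is a constant $D=D(K,\bar b,m,n)$ such that $\mu(B_{2r}(z))\leq D\,\mu(B_r(z))$ for all $z\in\bar S_{n,m}$ and $r>0$. Using the equivalence \eqref{eq:Equivalent_intrinsic_metric} and the definition \eqref{eq:Measure} of $d\mu$, a direct computation in the intrinsic coordinates $u_i=\sqrt{x_i}$ yields
$$\mu(B_r(z))\asymp r^{n+m}\prod_{i=1}^{n}(\sqrt{x_i}+r)^{2b_i(z)-1},$$
and since each $b_i$ is bounded on $\bar S_{n,m}$ by Assumption \ref{assump:Coeff}, the ratio $\mu(B_{2r}(z))/\mu(B_r(z))$ is uniformly bounded. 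Iterating gives a doubling exponent $\nu=\nu(K,m,n)$ with $\mu(B_R(z))\leq C(R/r)^\nu\mu(B_r(z))$ for all $R\geq r>0$.

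With doubling in hand, introduce the shells $A_0:=\{\rho(z^0,z)<\sqrt{t}\}$ and $A_k:=\{2^{k-1}\sqrt{t}\leq\rho(z^0,z)<2^k\sqrt{t}\}$ for $k\geq 1$. On $A_k$ one has $\rho^\alpha\leq 2^{k\alpha}t^{\alpha/2}$ and $e^{-\rho^2/(8t)}\leq e^{-4^{k-1}/8}$. The triangle inequality gives $B_{\sqrt{t}}(z^0)\subseteq B_{(2^k+1)\sqrt{t}}(z)$ for $z\in A_k$, and doubling then yields $\mu(B_{\sqrt{t}}(z))\geq c\,2^{-k\nu}\mu(B_{\sqrt{t}}(z^0))$, while $\mu(A_k)\leq\mu(B_{2^k\sqrt{t}}(z^0))\leq C\,2^{k\nu}\mu(B_{\sqrt{t}}(z^0))$. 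Combining these with the Gaussian upper bound gives, for every $k\geq 0$,
$$\int_{A_k}\rho^\alpha p(t,z^0,z)\,d\mu(z)\leq C\,t^{\alpha/2}\,2^{k(\alpha+3\nu/2)}e^{-c\,4^k},$$
and summing over $k$ produces the bound $\int_{S_{n,m}}\rho^\alpha p\,d\mu\leq C_\alpha t^{\alpha/2}$ with $C_\alpha$ depending only on $\alpha,K,\bar b,\|b\|_{C^1},m,n$.

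To complete the proof, set $\alpha_0:=2(n+m)$; for $\alpha>\alpha_0$, pick any $\gamma=\gamma(\alpha,K,m,n)\in(0,\alpha/2-(n+m))$, say $\gamma:=(\alpha/2-(n+m))/2$. Then for $t\in(0,T]$,
$$t^{\alpha/2}=t^{n+m+\gamma}\cdot t^{\alpha/2-(n+m+\gamma)}\leq T^{\alpha/2-(n+m+\gamma)}\,t^{n+m+\gamma},$$
which yields \eqref{eq:Continuity_transition_density} with $C:=C_\alpha\,T^{\alpha/2-(n+m+\gamma)}$. I expect the main obstacle to be a clean verification of the doubling property in the intrinsic metric: one must show that the $z$-dependence of the weights $b_i(z)$ causes at most a bounded distortion of the formal volume estimate across each ball $B_r(z)$, which requires combining the smoothness of $b_i$ in Assumption \ref{assump:Coeff} with a case analysis according to which stratum $M_I$ the ball meets. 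Once this is granted, the annular Gaussian argument is routine.
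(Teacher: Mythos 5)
Your route is genuinely different from the paper's. The paper never uses doubling: after reducing to $S_{n,m}$ via Lemma \ref{lem:Fundam_sol_different_domains}, it bounds the denominator in \eqref{eq:Upper_bound_transition_density} by the crude, worst-case estimate $\mu(B_{\sqrt t}(z))\geq Ct^{(m+n)/2}t^{nK}$ (uniform in $z$, from Lemma \ref{lem:Measure_of_balls}) and then integrates the Gaussian coordinate-by-coordinate against the weight $\varphi$, arriving at $\int\rho^\alpha \tilde p\,d\mu\leq Ct^{(\alpha-n(2+K))/2}$ and hence the $K$-dependent threshold $\alpha_0=2(n+m)+n(2+K)$. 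Your annular argument, which compares $\mu(B_{\sqrt t}(z))$ to $\mu(B_{\sqrt t}(z^0))$ via volume doubling, would give the sharper bound $Ct^{\alpha/2}$ and the smaller threshold $\alpha_0=2(n+m)$; since the lemma only asserts existence of some $\alpha_0(K,m,n)$ and some $\gamma>0$, that stronger conclusion certainly implies the statement, and your interpolation in $t$ at the end is fine.

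The one genuine gap is the doubling input itself, which you assert rather than prove, and which as stated is not correct. The volume formula $\mu(B_r(z))\asymp r^{n+m}\prod_{i=1}^n(\sqrt{x_i}+r)^{2b_i(z)-1}$ fails for coordinates with $x_i$ bounded away from $0$ (there the metric is Euclidean and the weight is $\equiv 1$, so the factor $(\sqrt{x_i}+r)^{2b_i-1}=\sqrt{x_i}+r$ overcounts) and is only established in the paper, in the corrected form \eqref{eq:Measure_ball} with the product restricted to $i\in I(z^0)$, for radii $r<r_0/2$; the distortion coming from the variable exponents $b_i(z)$ is exactly what limits Lemma \ref{lem:Measure_of_balls} to small scales. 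Your argument, however, applies doubling at the scales $2^k\sqrt t$ and $(2^k+1)\sqrt t$, which are unbounded as $k\to\infty$, so small-scale doubling alone does not suffice as written. The argument can be repaired: use Lemma \ref{lem:Measure_of_balls} (hence genuine doubling, with exponent depending on $K$ through $|2b_i-1|$) only for the annuli with $2^{k+1}\sqrt t\leq r_0/2$, and for the remaining annuli note that $4^{k}\gtrsim r_0^2/t$, so the factor $e^{-c4^k}\leq e^{-c'/t}$ absorbs any crude polynomial-in-$2^k$ bound on $\mu(B_{2^k\sqrt t}(z^0))$ together with the rough lower bound \eqref{eq:Lower_bound_ball} for $\mu(B_{\sqrt t}(z))$, at the cost only of a constant depending on $T$ (which the lemma permits). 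With that case split made explicit, your proof closes and in fact yields a stronger estimate than the paper's.
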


\begin{rmk}[Distribution estimates for standard Brownian motion]
In the case of $n$-dimensional Brownian motion, the transition probability densities are given by
$$
p(t,z^0,z) = \frac{1}{(2\pi t)^{n/2}}e^{-\frac{|z-z^0|^2}{2t}},\quad\forall\, z^0, z\in \RR^n, \quad\forall\, t>0.
$$
Direct calculations give us that, for all $\alpha>0$, there is a positive constant, $C=C(\alpha,n)$, such that
\begin{equation}
\label{eq:Continuity_transition_density_Brownian_motion}
\int_{\RR^n} |z-z^0|^{\alpha} p(t,z^0,z)\, dz = C t^{\alpha/2}.
\end{equation}
Therefore, estimate \eqref{eq:Continuity_transition_density} holds in the case of Brownian motion with $\gamma=\alpha/2-n$, when we assume that $\alpha>2n$. Moreover, we have that
$$
\int_{\RR^n} |p(t,z^0,z)|^q\, dz = \frac{(2\pi)^{n(1-q)/2}}{q^{n/2}} t^{(1-q)n/2},\quad\forall\, z^0\in\RR^n,\quad\forall\, t>0,
$$
from which it follows that, for all $z^0\in\RR^n$ and $T>0$, there is a positive constant, $C_0$, such that  
$$
\int_0^T \|p(t,z^0,\cdot)\|_{L^q(\RR^n)}\, dt < C_0,
$$
for all $q \in (0,(n+2)/n)$. Therefore, in the case of standard Brownian motion, estimate \eqref{eq:L_q_transition_density} holds, for all $q\in (0,(n+2)/n)$.
\end{rmk}

To prove Lemmas \ref{lem:Continuity_transition_density} and \ref{lem:L_q_transition_density} we need the following elementary result concerning the $\mu$-measure of the balls with respect to the distance function $\rho$. 
\begin{lem}[Measure of the balls]
\label{lem:Measure_of_balls}
Assume that the coefficient $b \in C^1(\bar S_{n,m}; \RR^n)$,  satisfies conditions \eqref{eq:b_coeff_bound_lower}, \eqref{eq:b_coeff_bound_upper} and \eqref{eq:b_coeff_away_O}. Then there are positive constants, $C=C(\bar b,\|b\|_{C^1(\bar S_{n,m};\RR^n)},m,n)$ and $r_0=r_0(\bar b, \|b\|_{C^1(\bar S_{n,m};\RR^n)},m,n)$, such that for all $r\in (0,r_0/2)$ and $ z^0\in \bar S_{n,m}$, we have
\begin{equation}
\label{eq:Measure_ball}
\begin{aligned}
\frac{1}{C}r^{m+n}  \prod_{i\in I(z^0)} \left|\sqrt{x^0_i}\vee r\right|^{2b_i(z^0)-1} 
\leq \mu(B_r(z^0))
\leq  Cr^{m+n}  \prod_{i\in I(z^0)} \left|\sqrt{x^0_i}\vee r\right|^{2b_i(z^0)-1},
\end{aligned}
\end{equation}
where we let
\begin{equation}
\label{eq:I_z_0}
I(z^0):=\left\{i\in\{1,\ldots,n\}:\ x^0_i\in [0,r_0]\right\}.
\end{equation}
\end{lem}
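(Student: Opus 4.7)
My plan is to reduce the measure $\mu(B_r(z^0))$ to a product of one-dimensional integrals by exploiting the product structure of $B_r(z^0)$ implied by \eqref{eq:Equivalent_intrinsic_metric}, and then to estimate each factor explicitly. Two observations drive the proof. First, for $r\le r_0/2$ with $r_0$ suitably small, the ball $B_r(z^0)$ is sandwiched, up to multiplicative constants, between product sets of the form $\prod_{i=1}^n J_i \times \prod_{l=1}^m K_l$, where $|K_l|\asymp r$ for every $l$, $|J_i|\asymp r$ with $x_i\asymp x_i^0$ on $J_i$ for each $i\notin I(z^0)$, and $J_i$ is a curvilinear interval characterized by $|\sqrt{x_i^0}-\sqrt{x_i}|\lesssim r$ for each $i\in I(z^0)$. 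Second, on $B_r(z^0)$ the density $\prod_i x_i^{b_i(z)-1}$ is uniformly comparable (after integration) to $\prod_i x_i^{b_i(z^0)-1}$, with the exponents frozen at $z^0$.

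Granting these, I would write
\begin{equation*}
\mu(B_r(z^0))\asymp \prod_{l=1}^m |K_l|\,\prod_{i=1}^n \int_{J_i} x_i^{b_i(z^0)-1}\, dx_i,
\end{equation*}
and compute each factor. For $i\notin I(z^0)$ with $x_i^0>r_0$, the integrand is bounded above and below by positive constants depending only on $r_0,\bar b,K$: if $x_i^0\ge 1$ then $z^0\in M^c_{\{1,\ldots,n\}}$ forces $b_i(z^0)=1$ by \eqref{eq:b_coeff_away_O}, while otherwise $x_i^0\in(r_0,1)$ and the boundedness of $b_i$ still yields a uniform constant. Thus $\int_{J_i}x_i^{b_i(z^0)-1}\,dx_i\asymp r$ in this case. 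For $i\in I(z^0)$ I would split on whether $\sqrt{x_i^0}\le r$ or $\sqrt{x_i^0}>r$: the first gives $J_i\subset[0,4r^2]$ and a direct computation yields $\int_{J_i}x_i^{b_i(z^0)-1}\,dx_i\asymp r^{2b_i(z^0)}=r\cdot r^{2b_i(z^0)-1}$; the second gives $|J_i|\asymp r\sqrt{x_i^0}$ with $x_i\asymp x_i^0$ on $J_i$, so $\int_{J_i}x_i^{b_i(z^0)-1}\,dx_i\asymp r\sqrt{x_i^0}\,(x_i^0)^{b_i(z^0)-1}=r\cdot(\sqrt{x_i^0})^{2b_i(z^0)-1}$. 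Both cases unify to $r\cdot|\sqrt{x_i^0}\vee r|^{2b_i(z^0)-1}$, and assembling all factors produces the asserted bound.

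The main obstacle is the freezing of $b_i$ at $z^0$. Using that $b\in C^1(\bar S_{n,m};\RR^n)$ and the Euclidean diameter of $B_r(z^0)$ is $O(r)$, one obtains $|b_i(z)-b_i(z^0)|\le C\|b\|_{C^1}r$ uniformly on $B_r(z^0)$. The subtle point is that $\ln x_i$ is unbounded on the relevant range $x_i\in(0,4r^2]$ (when $\sqrt{x_i^0}\le r$), so no pointwise bound on $x_i^{b_i(z)-b_i(z^0)}$ is available. I would bypass this by directly comparing integrals:
\begin{equation*}
\int_0^{4r^2} x_i^{b_i(z^0)-1\pm Cr}\,dx_i=\frac{(4r^2)^{b_i(z^0)\pm Cr}}{b_i(z^0)\pm Cr},
\end{equation*}
which is meaningful once $r$ is small enough that $Cr<\bar b/2$. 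The ratio of this expression to $(4r^2)^{b_i(z^0)}/b_i(z^0)$ is $(4r^2)^{\pm Cr}$ times a benign factor, and $(4r^2)^{\pm Cr}=\exp(\pm Cr(\ln 4+2\ln r))\to 1$ as $r\downarrow 0$ because $r|\ln r|\to 0$. Choosing $r_0$ small enough that these ratios lie in $[1/2,2]$, say, yields the desired comparability with constants depending only on $\bar b,\|b\|_{C^1(\bar S_{n,m};\RR^n)},m,n$, and completes the reduction to the frozen integrand.
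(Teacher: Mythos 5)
Your proof is correct and takes essentially the same route as the paper's: reduce $\mu(B_r(z^0))$ to a product of one-dimensional integrals via the metric equivalence \eqref{eq:Equivalent_intrinsic_metric}, freeze the exponents $b_i$ at $z^0$ up to an $O(r)$ drift from the $C^1$ bound, compute $\int x_i^{b_i(z^0)\pm Cr-1}\,dx_i$ over $\{|\sqrt{x_i}-\sqrt{x^0_i}|\lesssim r\}$, and absorb the drift using $r^{\pm Cr}\to 1$ (the paper carries the perturbed exponents $b_i(z^0)\pm cr$ to the end and absorbs $|\sqrt{x^0_i}\vee r|^{\pm 2cr}$ into the constant, which is the same observation you make more explicitly). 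The only cosmetic slip is the claim $x_i\asymp x^0_i$ on $J_i$ when $\sqrt{x^0_i}$ barely exceeds $r$ (the left endpoint $(\sqrt{x^0_i}-r)^2$ can be much smaller than $x^0_i$), but your stated integral estimate is still valid, e.g.\ by substituting $x_i=s^2$ or by splitting at $\sqrt{x^0_i}\gtrless 2r$, so nothing essential is affected.
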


\begin{proof}
Because we assume that the coefficient $b$ belongs to $C^1(\bar S_{n,m};\RR^n)$ and that condition \eqref{eq:b_coeff_bound_lower} holds, there are positive constant, $c=\|b\|_{C^1(\bar S_{n,m};\RR^n)}$ and $r_1=r_1(\bar b,\|b\|_{C^1(\bar S_{n,m};\RR^n)})$ such that, for all $i=1,\ldots,n$, we have 
\begin{align*}
b_i(z) &\geq \frac{\bar b}{2},\quad\forall\, z=(x,y)\in \bar S_{n,m}\quad\hbox{such that } x_i\in (0,r_1),\\
|b_i(z)-b_i(z^0)| &\leq  c r,\quad\forall\, z\in B_r(z^0),\quad\forall\, r>0.
\end{align*}
Thus there is a positive constant, $r_0=r_0(\bar b, \|b\|_{C^1(\bar S_{n,m};\RR^n)},m,n)$, small enough so that 
\begin{equation}
\label{eq:Ineq_b_i_small_z}
\frac{\bar b}{4}<\beta_i-cr \leq b_i(z) \leq \beta_i+cr,\quad\forall z\in B_r(z^0),\quad\forall\, r\in (0,r_0/2),\quad\forall\, i \in I(z^0),
\end{equation}
where we denote $\beta_i:=b_i(z^0)$, for all $i=1,\ldots,n$. We prove the first inequality in \eqref{eq:Measure_ball}. From the preceding inequality, it follows that 
\begin{align*}
\mu(B_r(z^0)) &\geq \int_{B_r(z^0)} \prod_{i\in I(z^0)}^n x_i^{\beta_i+cr-1}\,dx_i \prod_{i\in I^c(z^0)}^n x_i^{b_i(z)-1}\,dx_i\prod_{l=1}^m dy_l,\quad\forall\, r\in (0,r_0/2),
\end{align*}
where we let $I^c(z^0):=\{1,\ldots,n\}\backslash I(z^0)$. Using property \eqref{eq:Equivalent_intrinsic_metric} of the distance function $\rho$, it follows from the preceding inequality, and conditions \eqref{eq:b_coeff_bound_upper} and \eqref{eq:b_coeff_away_O} that there is a positive constant, $C_1=C_1(r_0,K,m,n)$, such that
\begin{align*}
\mu(B_r(z^0)) 
&\geq C_1r^{m+|I^c(z^0)|}\prod_{i\in I(z^0)} \int_{\left\{\left|\sqrt{x_i}-\sqrt{x^0_i}\right|<r\right\}} x_i^{\beta_i+cr-1}\,dx_i \\
&= C_1r^{m+|I^c(z^0)|}\prod_{i\in I(z^0)} \int_{\left(\sqrt{x^0_i}-r\right)^2\vee 0}^{\left(\sqrt{x^0_i}+r\right)^2} x_i^{\beta_i+cr-1}\,dx_i,\quad\forall\, r\in (0,r_0/2).
\end{align*}
We can find a positive constant, $C_2$, such that
\begin{align*}
\int_{\left(\sqrt{x^0_i}-r\right)^2\vee 0}^{\left(\sqrt{x^0_i}+r\right)^2} x_i^{\beta_i+cr-1} \, dx_i&\geq C_2r\left|\sqrt{x^0_i}\vee r\right|^{2(\beta_i+cr)-1},
\end{align*}
and so, we obtain that there are positive constants, $C=C(\bar b,\|b\|_{C^1(\bar S_{n,m};\RR^n)},m,n)$ and $r_0=r_0(\bar b, \|b\|_{C^1(\bar S_{n,m};\RR^n)},m,n)$, such that
\begin{align*}
\mu(B_r(z^0)) 
&\geq Cr^{m+|I^c(z^0)|+|I(z^0)|}  \prod_{i\in I(z^0)} \left|\sqrt{x^0_i}\vee r\right|^{2(\beta_i+cr)-1}\\
&= Cr^{m+n}  \prod_{i\in I(z^0)} \left|\sqrt{x^0_i}\vee r\right|^{2(\beta_i+cr)-1}, \quad\forall\, r\in (0,r_0/2),
\end{align*}
from where the left-hand side of inequality \eqref{eq:Measure_ball} immediately follows. The right-hand side of inequality \eqref{eq:Measure_ball} is proved by a similar argument, and so, we omit the detailed proof.
\end{proof}

We first give the
\begin{proof}[Proof of Lemma \ref{lem:L_q_transition_density}]
Let $q \in [1,\infty)$. The range of the constant $q$ will be suitably chosen below. Notice that inequality \eqref{eq:L_q_transition_density} is a consequence of inequality \eqref{eq:L_q_transition_density_aux}, and so, it is sufficient to prove that \eqref{eq:L_q_transition_density_aux} holds. Choosing $\Omega_1:=\Omega$ and $\Omega_2:=S_{n,m}$ in Lemma \ref{lem:Fundam_sol_different_domains}, we see that it is sufficient to prove estimates \eqref{eq:L_q_transition_density_aux} and \eqref{eq:L_q_transition_density} for $p(t,z^0,z)$, instead of $p^{\Omega}(t,z^0,z)$. From the expression of the function $\tilde p(t,z^0,z)$ on the right-hand side of inequality \eqref{eq:Upper_bound_transition_density}, it is clear that it is enough to establish that 
there is a positive constant, $q_0=q_0(K,m,n)\in (1,2)$, such that for all $q\in [1,q_0)$, there are positive constants, $C=C(\bar b, \|b\|_{C^1(\bar S_{n,m};\RR^n)},m,n,q,T)$ and $\beta=\beta(q)<1$, such that we have
\begin{align}
\label{eq:L_q_transition_density_aux_tilde}
 \|\tilde p(t,z^0,\cdot)\|_{L^q(S_{n,m};d\mu)} \leq C_0 t^{-\beta},\quad \forall\, z^0 \in\bar S_{n,m},\quad\forall\, t\in (0,T].
\end{align}
Moreover, notice that the positive constant $T$ can be chosen as small as we like. Let $r_0$ be the positive constant appearing in the conclusion of Lemma \ref{lem:Measure_of_balls}. Without loss of generality, we may assume that
\begin{equation}
\label{eq:Upper_bound_T}
\sqrt{T} \leq \frac{r_0}{2}.
\end{equation}
It follows from the left-hand side of inequality \eqref{eq:Measure_ball}, using \eqref{eq:Upper_bound_T}, that there is a positive constant, $C=C(\bar b, \|b\|_{C^1(\bar S_{n,m};\RR^n)},m,n)$, such that
\begin{equation}
\label{eq:Lower_bound_ball}
\mu(B_{\sqrt{t}}(z)) \geq C t^{(m+n)/2} \prod_{i\in I(z)} \left(x_i\vee t\right)^{b_i(z)-1/2},\quad\forall z\in \bar S_{n,m},\quad\forall t\in (0,T],
\end{equation}
where we recall the definition of the set of indices $I(z)$ in \eqref{eq:I_z_0}. To estimate $\|\tilde p(t,z^0,\cdot)\|_{L^q(S_{n,m};d\mu)}$, we consider the set 
\begin{align*}
A_t(z^0)&:=\left\{z=(x,y)\in S_{n,m}:\, \left|\sqrt{x_i}-\sqrt{x^0_i}\right|<t^{\alpha}\hbox{ for } i\in I(z^0),\right.\\
&\qquad\left. |x_i-x^0_i|<t^{\alpha}\hbox{ for } i\in I^c(z^0),\hbox{ and }
 |y_l-y^0_l|<t^{\alpha} \hbox{ for } l\in\{1\ldots,m\}\right\},
\end{align*}
where we choose $\alpha\in (0,1/2)$, and we denote $A^c_t(z^0):=S_{n,m}\backslash A_t(z^0)$. We split the proof into two steps in which we estimate $\|\tilde p(t,z^0,\cdot)\|_{L^q(A^c_t(z^0);d\mu)}$ and $\|\tilde p(t,z^0,\cdot)\|_{L^q(A_t(z^0);d\mu)}$, respectively.

\setcounter{step}{0}
\begin{step}[Estimate of $\|\tilde p(t,z^0,\cdot)\|_{L^q(A^c_t(z^0);d\mu)}$]
\label{step:L_q_estimate_on_A_c_t_z_0}
In this step, we prove that there are positive constants, $C=C(\bar b, \|b\|_{C^1(\bar S_{n,m};\RR^n)},m,n)$ and $c=c(m,n)$, such that
\begin{equation}
\label{eq:L_q_estimate_on_A_c_t_z_0}
\|\tilde p(t,z^0,\cdot)\|_{L^q(A^c_t(z^0);d\mu)} \leq Ct^{-q(m+n)/2-qnK}e^{-\frac{qc}{t^{1-2\alpha}}}.
\end{equation}
From inequalities \eqref{eq:Lower_bound_ball} and \eqref{eq:Upper_bound_transition_density}, we have that
\begin{equation}
\label{eq:Integral_L_q_estimate_on_A_c_t_z_0}
\begin{aligned}
\|\tilde p(t,z^0,\cdot)\|^q_{L^q(A^c_t(z^0);d\mu)} 
&\leq C t^{-q(m+n)/2} \prod_{i\in I(z^0)} \left(x^0_i\vee t\right)^{-q(b_i(z^0)-1/2)/2} \\
&\quad\int_{A^c_t(z^0)} \prod_{j\in I(z)} \left(x_j\vee t\right)^{-q(b_j(z)-1/2)/2} e^{-\frac{q\rho^2(z,z^0)}{8t}}\, d\mu(z),
\end{aligned}
\end{equation}
which, with the aid of inequality \eqref{eq:b_coeff_bound_upper}, gives us
\begin{equation}
\label{eq:Integral_L_q_estimate_on_A_c_t_z_0_aux}
\begin{aligned}
\|\tilde p(t,z^0,\cdot)\|^q_{L^q(A^c_t(z^0);d\mu)} 
&\leq C t^{-q(m+n)/2} t^{-qnK} \int_{A^c_t(z^0)} e^{-\frac{q\rho^2(z,z^0)}{8t}}\, d\mu(z),
\end{aligned}
\end{equation}
Using the first inequality in \eqref{eq:Ineq_b_i_small_z}, together with \eqref{eq:b_coeff_bound_upper} and \eqref{eq:b_coeff_away_O}, we obtain for all $i=1,\ldots,n$ that 
\begin{equation}
\label{eq:Inequality_weight}
x_i^{b_i(z)-1} \leq x_i^{\bar b/4-1} \mathbf{1}_{\{x_i\in(0,r_0/2)\}}+x_i^{-K-1} \mathbf{1}_{\{x_i\in [r_0/2,1)\}} +\mathbf{1}_{\{x_i\in [1,\infty)\}},\quad\forall\, z\in S_{n,m},
\end{equation}
and we denote the function on the right-hand side of the preceding inequality by
\begin{equation}
\label{eq:Inequality_weight_function}
\varphi(x_i)= x_i^{\bar b/4-1} \mathbf{1}_{\{x_i\in(0,r_0/2)\}}+x_i^{-K-1} \mathbf{1}_{\{x_i\in [r_0/2,1)\}} +\mathbf{1}_{\{x_i\in [1,\infty)\}},\quad\forall\, z\in S_{n,m}.
\end{equation}
Using property \eqref{eq:Equivalent_intrinsic_metric} of the distance function $\rho(z,z^0)$, together with definition \eqref{eq:Measure} of the measure $d\mu(z)$, we obtain from \eqref{eq:Integral_L_q_estimate_on_A_c_t_z_0_aux} that there is a positive constant, $c=c(m,n)$, such that 
\begin{equation}
\label{eq:Integral_L_q_estimate_on_A_c_t_z_0_aux_1}
\begin{aligned}
&\|\tilde p^{\Omega}(t,z^0,\cdot)\|^q_{L^q(A^c_t(z^0);d\mu)} 
\leq C t^{-q(m+n)/2-qnK} \\
&\qquad\left(\sum_{i\in I(z^0)} \int_{\left\{\left|\sqrt{x_i}-\sqrt{x^0_i}\right|>t^{\alpha}\right\}} e^{-\frac{qc\rho^2(x_i,x^0_i)}{8t}}\varphi(x_i)\, dx_i \right.\\
&\qquad\quad\quad\prod_{\stackrel{j=1}{j\neq i}}^n \int_0^{\infty} e^{-\frac{qc\rho^2(x_j,x^0_j)}{8t}}\varphi(x_j)\, dx_j
\prod_{l=1}^m \int_{\RR} e^{-\frac{qc\rho^2(y_l,y^0_l)}{8t}}\, dy_l\\
&\qquad+\sum_{i\in I^c(z^0)} \int_{\left\{\left|x_i-x^0_i\right|>t^{\alpha}\right\}} e^{-\frac{qc\rho^2(x_i,x^0_i)}{8t}}\varphi(x_i)\, dx_i \\
&\qquad\quad\quad\prod_{\stackrel{j=1}{j\neq i}}^n \int_0^{\infty} e^{-\frac{qc\rho^2(x_j,x^0_j)}{8t}}\varphi(x_j)\, dx_j
\prod_{l=1}^m \int_{\RR} e^{-\frac{qc\rho^2(y_l,y^0_l)}{8t}}\, dy_l\\
&\qquad+\sum_{l=1}^m\prod_{i=1}^n \int_0^{\infty} e^{-\frac{qc\rho^2(x_i,x^0_i)}{8t}}\varphi(x_i)\, dx_i\\
&\left.\qquad\quad\quad
\int_{\left\{\left|y_l-y^0_l\right|>t^{\alpha}\right\}}  e^{-\frac{qc\rho^2(y_l,y^0_l)}{8t}}\, dy_l
\prod_{\stackrel{k=1}{k\neq l}}^m \int_{\RR}  e^{-\frac{qc\rho^2(y_k,y^0_k)}{8t}}\, dy_k\right).
\end{aligned}
\end{equation}
The parenthesis on the right-hand side of the preceding inequality can be written as the sum of three terms, $I_1+I_2+I_3$. We show that there are positive constants, $C=C(\bar b, \|b\|_{C^1(\bar S_{n,m};\RR^n)},m,n)$ and $c=c(m,n)$, such that
\begin{equation}
\label{eq:Integral_L_q_estimate_on_A_c_t_z_0_aux_2}
I_1+I_2+I_3 \leq Ce^{-\frac{cq}{t^{1-2\alpha}}},
\end{equation}
which implies estimate \eqref{eq:L_q_estimate_on_A_c_t_z_0} by inequality \eqref{eq:Integral_L_q_estimate_on_A_c_t_z_0_aux_1}.
We will only give the detailed proof of the fact that 
\begin{equation}
\label{eq:Integral_L_q_estimate_on_A_c_t_z_0_aux_3}
I_1 \leq Ce^{-\frac{cq}{t^{1-2\alpha}}},
\end{equation}
because the estimates for the integrals $I_2$ and $I_3$ can be obtained by a similar argument. Let $i\in I(z^0)$. Using property \eqref{eq:Equivalent_intrinsic_metric} of the distance function $\rho$, there are positive constants, $C=C(\bar b, \|b\|_{C^1(\bar S_{n,m};\RR^n)},m,n)$ and $c=c(m,n)$, such that
\begin{align*}
\int_{\left\{\left|\sqrt{x_i}-\sqrt{x^0_i}\right|>t^{\alpha}\right\}} e^{-\frac{qc\rho^2(x_i,x^0_i)}{8t}}x_i^{\bar b/4-1}\mathbf{1}_{\{x_i\in(0,r_0/2)\}}\, dx_i
&\leq e^{-\frac{qc}{8t^{1-2\alpha}}} \int_0^{r_0/2} x_i^{\bar b/4-1}\, dx_i\\
&\leq C e^{-\frac{qc}{t^{1-2\alpha}}}.
\end{align*}
Similarly, we obtain that
\begin{align*}
\int_{\left\{\left|\sqrt{x_i}-\sqrt{x^0_i}\right|>t^{\alpha}\right\}} e^{-\frac{qc\rho^2(x_i,x^0_i)}{8t}}x_i^{-K-1}\mathbf{1}_{\{x_i\in[r_0/2,1)\}}\, dx_i
&\leq e^{-\frac{qc}{8t^{1-2\alpha}}} \int_{r_0/2}^1 x_i^{-K-1}\, dx_i\\
&\leq C e^{-\frac{qc}{t^{1-2\alpha}}},
\end{align*}
and, we also have that
\begin{align*}
\int_{\left\{\left|\sqrt{x_i}-\sqrt{x^0_i}\right|>t^{\alpha}\right\}} e^{-\frac{qc\rho^2(x_i,x^0_i)}{8t}}\mathbf{1}_{\{x_i\in[1,\infty)\}}\, dx_i
&\leq \int_1^{\infty} e^{-\frac{qc|x_i-x^0_i|^2}{8t}}\, dx_i\\
&\leq C e^{-\frac{qc}{t}},
\end{align*}
for some positive constant, $c=c(m,n)$. In the last inequality, we used the fact that $|x_i-x^0_i|\geq |1-r_0|>0$, since $x_i\in [1,\infty)$ and $x^i_0\in (0,r_0)$, where we recall that $i\in I(z^0)$ and the set of indices $I(z^0)$ is defined in \eqref{eq:I_z_0}. Using definition \eqref{eq:Inequality_weight_function} of the function $\varphi(x_i)$, we see that there are positive constants, $C=C(\bar b,\|b\|_{C^1(\bar S_{n,m};\RR^n)},m,n)$ and $c=c(m,n)$, such that
$$
\int_{\left\{\left|\sqrt{x_i}-\sqrt{x^0_i}\right|>t^{\alpha}\right\}} e^{-\frac{qc\rho^2(x_i,x^0_i)}{8t}}\varphi(x_i)\, dx_i
\leq C e^{-\frac{qc}{t^{1-2\alpha}}}.
$$
We notice that, for all $j\in\{1,\ldots,n\}$ and $l\in\{1,\ldots,m\}$, we also have the very rough estimates:
\begin{align*}
\int_0^{\infty} e^{-\frac{qc\rho^2(x_j,x^0_j)}{8t}}\varphi(x_j)\, dx_j &\leq C,\\
\int_{\RR} e^{-\frac{qc\rho^2(y_l,y^0_l)}{8t}}\, dy_l &\leq C,
\end{align*}
where $C=C(\bar b, \|b\|_{C^1(\bar S_{n,m};\RR^n)},m,n)$ is a positive constant. Thus combining the preceding three inequalities it follows that estimate \eqref{eq:Integral_L_q_estimate_on_A_c_t_z_0_aux_3} holds. A similar argument implies estimate \eqref{eq:Integral_L_q_estimate_on_A_c_t_z_0_aux_2}, and inequality \eqref{eq:Integral_L_q_estimate_on_A_c_t_z_0_aux_1} yields estimate \eqref{eq:L_q_estimate_on_A_c_t_z_0}. This completes the proof of Step \ref{step:L_q_estimate_on_A_c_t_z_0}.
\end{step}

\begin{step}[Estimate of $\|\tilde p^{\Omega}(t,z^0,\cdot)\|_{L^q(A_t(z^0);d\mu)}$]
\label{step:L_q_estimate_on_A_t_z_0}
In this step, we prove that there is a positive constant, $C=C(\bar b, \|b\|_{C^1(\bar S_{n,m};\RR^n)},m,n)$, such that
\begin{equation}
\label{eq:L_q_estimate_on_A_t_z_0}
\|\tilde p^{\Omega}(t,z^0,\cdot)\|^q_{L^q(A_t(z^0);d\mu)} 
\leq C t^{(m+n)(\alpha-q/2)}
 \left(t^{nK(-q/2-\alpha q+2\alpha)}+t^{2nK(1-q)}\right),
\end{equation}
for all $t\in (0,T]$, where the positive constant $T$ is chosen to satisfy conditions \eqref{eq:Upper_bound_T_1} and \eqref{eq:Upper_bound_T_2} below. From inequality \eqref{eq:Upper_bound_T}, we may assume without loss of generality that the positive constant $r_0$ is small enough such that there is a positive constant, $C_1=C_1(\bar b, \|b\|_{C^1(\bar S_{n,m};\RR^n)},m,n)$, with the property that
$$
x_i \geq C_1,\quad\forall\, z=(x,y)\in A_t(z_0),\quad\forall\, i\in I^c(z^0),\quad\forall\, t\in [0,T].
$$
Using the fact that the coefficient function $b(z)$ belongs to $C^1(\bar S_{n,m};\RR^n)$, and letting $c_1=\|b\|_{C^1(\bar S_{n,m};\RR^n)}$, we have that
\begin{equation}
\label{eq:Bounds_b_i}
|b_i(z) - b_i(z^0)| \leq c_1 t^{\alpha},\quad\forall\, z\in A_t(z^0),\quad\forall\, i=1,\ldots,n. 
\end{equation}
From the first inequality in \eqref{eq:Ineq_b_i_small_z}, we have that $b_i(z^0)\geq \bar b/4$, for all $i\in I(z^0)$. Choosing the positive constant $T$ such that
\begin{equation}
\label{eq:Upper_bound_T_1}
T \leq \left(\frac{\bar b}{8c_2}\right)^{1/\alpha},
\end{equation}
we have that
$$
b_i(z^0) -c_2t^{\alpha} \geq \frac{\bar b}{8}>0,\quad\forall\, z=(x,y)\in A_t(z^0),\quad\forall\, i\in I(z^0),\quad\forall\, t\in [0,T],
$$
where $T$ satisfies the bounds \eqref{eq:Upper_bound_T} and \eqref{eq:Upper_bound_T_1}. From \eqref{eq:Bounds_b_i}, it follows that
\begin{equation}
\label{eq:Bounds_b_i_I_z_0}
b_i(z^0) +c_2t^{\alpha} \geq b_i(z) \geq b_i(z^0)-c_2t^{\alpha}>0,\quad\forall\, z=(x,y)\in A_t(z^0),\quad\forall\, i\in I(z^0).
\end{equation}
Choosing now $i\in I^c(z^0)$, we have that $x^0_i\geq r_0$, and so, it follows that
\begin{equation}
\label{eq:Lower_bound_x_i_not_in_I_Z_0}
x_i \geq x^0_i-t^{\alpha} \geq r_0-t^{\alpha} \geq \frac{r_0}{2},\quad\forall\, z=(x,y)\in A_t(z^0),\quad\forall\, t\in[0,T],
\end{equation}
 where we choose $T$ such that it satisfies the upper bound
\begin{equation}
\label{eq:Upper_bound_T_2}
T \leq \left(\frac{r_0}{2}\right)^{1/\alpha}.
\end{equation}
Inequality \eqref{eq:Integral_L_q_estimate_on_A_c_t_z_0} holds with $A^c_t(z^0)$ replaced by $A_t(z^0)$, and so, using property \eqref{eq:Equivalent_intrinsic_metric} of the distance function $\rho(z,z^0)$, definition \eqref{eq:Measure} of the measure $d\mu(z)$, and inequalities \eqref{eq:Bounds_b_i_I_z_0} and \eqref{eq:Lower_bound_x_i_not_in_I_Z_0}, we obtain that there are positive constants, $C=C(\bar b, \|b\|_{C^1(\bar S_{n,m};\RR^n)},m,n)$ and $c=c(m,n)$, such that 
\begin{align*}
&\|\tilde p(t,z^0,\cdot)\|^q_{L^q(A_t(z^0);d\mu)} 
\leq C t^{-q(m+n)/2} \prod_{i\in I(z^0)} \left(x^0_i\vee t\right)^{-q(b_i(z^0)-1/2)/2} \\
&\qquad\quad \prod_{i\in I(z^0)} \int_{\left\{\left|\sqrt{x_i}-\sqrt{x^0_i}\right|\leq t^{\alpha}\right\}} 
\left(x_i\vee t\right)^{-q(b_i(z^0)+c_2t^{\alpha}-1/2)/2} e^{-\frac{qc\left|\sqrt{x_i}-\sqrt{x^0_i}\right|^2}{8t}} x_i^{b_i(z^0)-c_2t^{\alpha}-1}\, dx_i\\
&\qquad\quad \prod_{j\in I^c(z^0)} \int_{\left\{\left|x_j-x^0_j\right|\leq t^{\alpha}\right\}} e^{-\frac{qc|x_j-x^0_j|^2}{8t}} \, dx_j
\prod_{l=1}^m \int_{\left\{\left|y_l-y^0_l\right|\leq t^{\alpha}\right\}} e^{-\frac{qc|y_l-y^0_l|^2}{8t}} \, dy_l.
\end{align*}
The preceding inequality holds for all $t\in (0,T]$, where $T$ satisfies both inequalities \eqref{eq:Upper_bound_T_1} and \eqref{eq:Upper_bound_T_2}. The integrals in the last two product terms of the preceding inequality can all be bounded by $t^{\alpha}$, and so, it follows that
\begin{align*}
&\|\tilde p(t,z^0,\cdot)\|^q_{L^q(A_t(z^0);d\mu)} 
\leq C t^{-q(m+n)/2} t^{\alpha(|I^c(z^0)|+m)}t^{q|I(z^0)|/2}\prod_{i\in I(z^0)} \left(x^0_i\vee t\right)^{-qb_i(z^0)/2} \\
&\qquad\quad \prod_{i\in I(z^0)} \int_{\left\{\left|\sqrt{x_i}-\sqrt{x^0_i}\right|\leq t^{\alpha}\right\}} 
\left(x_i\vee t\right)^{-q(b_i(z^0)+c_2t^{\alpha})/2}  x_i^{b_i(z^0)-c_2t^{\alpha}-1}\, dx_i.
\end{align*}
Direct calculations give us that there is a positive constant, $C=C(\bar b, \|b\|_{C^1(\bar S_{n,m};\RR^n)},m,n)$, such that for all $i\in I(z^0)$, we have that
\begin{align*}
\int_{\left\{\left|\sqrt{x_i}-\sqrt{x^0_i}\right|\leq t^{\alpha}\right\}} 
\left(x_i\vee t\right)^{-q(b_i(z^0)+c_2t^{\alpha})/2}  x_i^{b_i(z^0)-c_2t^{\alpha}-1}\, dx_i
\leq C\left(\sqrt{x^0_i}+t^{\alpha}\right)^{-qb_i(z^0)+2b_i(z^0)}.
\end{align*}
The preceding two inequalities yield
\begin{equation}
\label{eq:Integral_L_q_estimate_on_A_t_z_0}
\begin{aligned}
\|\tilde p(t,z^0,\cdot)\|^q_{L^q(A_t(z^0);d\mu)} 
&\leq C t^{(m+|I^c(z^0)|)(\alpha-q/2)} \\
&\quad\prod_{i\in I(z^0)} \left(x^0_i\vee t\right)^{-qb_i(z^0)/2} 
\left(\sqrt{x^0_i}+t^{\alpha}\right)^{-qb_i(z^0)+2b_i(z^0)}.
\end{aligned}
\end{equation}
Using condition \eqref{eq:b_coeff_bound_upper}, we see that if $0\leq x^0_i \leq \sqrt{t}$, we have that
$$
\left(x^0_i\vee t\right)^{-qb_i(z^0)/2} 
\left(\sqrt{x^0_i}+t^{\alpha}\right)^{-qb_i(z^0)+2b_i(z^0)}
\leq Ct^{K(-q/2-\alpha q+2\alpha)},
$$
while if $\sqrt{t} < x^0_i \leq r_0$, we obtain
$$
\left(x^0_i\vee t\right)^{-qb_i(z^0)/2} 
\left(\sqrt{x^0_i}+t^{\alpha}\right)^{-qb_i(z^0)+2b_i(z^0)}
\leq Ct^{2\alpha K(1-q)}.
$$
The preceding two inequalities together with estimate \eqref{eq:Integral_L_q_estimate_on_A_t_z_0} imply that
\begin{equation}
\label{eq:Integral_L_q_estimate_on_A_t_z_0_1}
\begin{aligned}
\|\tilde p(t,z^0,\cdot)\|^q_{L^q(A_t(z^0);d\mu)} 
&\leq C t^{(m+n)(\alpha-q/2)}
 \left(t^{nK(-q/2-\alpha q+2\alpha)}+t^{2\alpha nK(1-q)}\right),
\end{aligned}
\end{equation}
which immediately implies inequality \eqref{eq:L_q_estimate_on_A_t_z_0} since we assume that $\alpha\in (0,1/2)$ and $q\in [1,\infty)$.

This completes the proof of Step \ref{step:L_q_estimate_on_A_t_z_0}.
\end{step}

Combining inequalities \eqref{eq:L_q_estimate_on_A_c_t_z_0} and \eqref{eq:L_q_estimate_on_A_t_z_0}, we see that there are positive constants, $q_0=q_0(K,m,n)\in (1,2)$ and $\alpha\in (0,1/2)$, such that for all $q\in [1,q_0)$, there are positive constants, $C=C(\bar b, \|b\|_{C^1(\bar S_{n,m};\RR^n)},m,n,q)$ and $\beta=\beta(q)<1$, such that estimate \eqref{eq:L_q_transition_density_aux} holds. This completes the proof.
\end{proof}

We now give the proof of 
\begin{proof}[Proof of Lemma \ref{lem:Continuity_transition_density}]
Choosing $\Omega_1:=\Omega$ and $\Omega_2:=S_{n,m}$ in Lemma \ref{lem:Fundam_sol_different_domains}, and using \eqref{eq:Upper_bound_transition_density}, we see that it is sufficient to prove estimate \eqref{eq:Continuity_transition_density} for $\tilde p(t,z^0,z)$ instead of $p^{\Omega}(t,z^0,z)$. Notice that the positive constant $T$ can be chosen as small as we like. Let $r_0$ be the positive constant appearing in the conclusion of Lemma \ref{lem:Measure_of_balls}. Without loss of generality, we may assume that $T$ satisfies inequality \eqref{eq:Upper_bound_T}. It follows from the left-hand side of inequality \eqref{eq:Measure_ball}, using \eqref{eq:Upper_bound_T}, that there is a positive constant, $C=C(\bar b, \|b\|_{C^1(\bar S_{n,m};\RR^n)},m,n)$, such that
$$
\mu\left(B_{\sqrt{t}}(z)\right) \geq C t^{(m+n)/2} t^{nK},\quad\forall\, z\in \bar S_{n,m},\quad\forall\, t\in (0,T],
$$
where we recall the definition of the constant $K$ in \eqref{eq:b_coeff_bound_upper}. The supremum bound of the fundamental solution \eqref{eq:Upper_bound_transition_density}, together with the preceding inequality, gives us that
\begin{equation}
\label{eq:Cont_est_with_measure_balls}
\begin{aligned}
\int_{S_{n,m}}\rho^{\alpha}(z^0,z) \tilde p(t,z^0,z)\, d\mu(z)
\leq Ct^{-(m+n)/2-nK} \int_{S_{n,m}}\rho^{\alpha}(z^0,z) e^{-\frac{1}{8t}\rho^2(z^0,z)}\, d\mu(z).
\end{aligned}
\end{equation}
We see that there is a positive constant, $C=C(\alpha,m,n)$, such that
\begin{align*}
\int_{S_{n,m}}\rho^{\alpha}(z^0,z) e^{-\frac{1}{8t}\rho^2(z^0,z)}\, d\mu(z)
&\leq C\sum_{j=1}^n \int_{S_{n,m}}\rho^{\alpha}(x^0_j,x_j) e^{-\frac{1}{8t}\rho^2(z^0,z)}\, d\mu(z)\\
&\quad+C\sum_{k=1}^m \int_{S_{n,m}}\rho^{\alpha}(y^0_k,y_k) e^{-\frac{1}{8t}\rho^2(z^0,z)}\, d\mu(z).
\end{align*}
We estimate each term on the right-hand side of the preceding inequality. We show that there is a positive constant, $C=C(\alpha,\bar b,\|b\|_{C^1(\bar S_{n,m};\RR^n)},m,n)$, such that for all $j=1,\ldots,n$, we have
\begin{equation}
\label{eq:Cont_est_x}
\int_{S_{n,m}}\rho^{\alpha}(x^0_j,x_j) e^{-\frac{1}{8t}\rho^2(z^0,z)}\, d\mu(z) \leq Ct^{(m+\alpha)/2},
\end{equation}
and, for all indices $k=1,\ldots,m$, we have that
\begin{equation}
\label{eq:Cont_est_y}
\int_{S_{n,m}}\rho^{\alpha}(y^0_k,y_k) e^{-\frac{1}{8t}\rho^2(z^0,z)}\, d\mu(z) \leq Ct^{(m+\alpha)/2},
\end{equation}
We outline the proof of estimate \eqref{eq:Cont_est_x}, but inequality
\eqref{eq:Cont_est_y} can be deduced by a similar argument, and so, we do not include the detailed proof. 

Inequality \eqref{eq:Inequality_weight} and definition \eqref{eq:Measure} of the measure $d\mu(z)$ yield that there is a positive constant, $c=c(m,n)$, such that
\begin{equation}
\label{eq:Decomposition_est_cont_x}
\begin{aligned}
\int_{S_{n,m}}\rho^{\alpha}(x^0_j,x_j) e^{-\frac{1}{8t}\rho^2(z^0,z)}\, d\mu(z)
&\leq 
\int_0^{\infty} \rho^{\alpha}(x^0_j,x_j) e^{-\frac{c}{8t}\rho^2(x^0_j,x_j)}\varphi(x_j)\, dx_j\\
&\quad\prod_{\stackrel{i=1}{i\neq j}}^n \int_0^{\infty} e^{-\frac{c}{8t}\rho^2(x^0_i,x_i)}\varphi(x_i)\, dx_i 
 \prod_{k=1}^m\int_{\RR} e^{-\frac{c|y^0_k-y_k|^2}{8t}} \, dy_k,
\end{aligned}
\end{equation}
where we recall the definition of the function $\varphi$ in \eqref{eq:Inequality_weight_function}. From inequality \eqref{eq:Continuity_transition_density_Brownian_motion} applied with $\alpha=0$, we have that
\begin{equation}
\label{eq:Estimate_y_integral}
\int_{\RR} e^{-\frac{c|y^0_k-y_k|^2}{8t}} \, dy_k \leq C t^{1/2},\quad\forall\, k=1,\ldots,m,
\end{equation}
while using again identity \eqref{eq:Continuity_transition_density_Brownian_motion} and property \eqref{eq:Equivalent_intrinsic_metric} of the distance function $\rho$, there is a positive constant, $C=C(\bar b, K,m,n,T)$, such that for all $t\in (0,T]$, we have that
\begin{equation}
\label{eq:Estimate_x_integral}
\int_0^{\infty} e^{-\frac{c}{8t}\rho^2(x^0_i,x_i)} \varphi(x_i)\, dx_i \leq C,\quad\forall\, i=1,\ldots,n,\ i\neq j.
\end{equation}
Thus, using the preceding two inequalities, estimate \eqref{eq:Decomposition_est_cont_x} becomes
\begin{equation}
\label{eq:Decomposition_est_cont_x_aux}
\begin{aligned}
\int_{S_{n,m}}\rho^{\alpha}(x^0_j,x_j) e^{-\frac{c}{8t}\rho^2(z^0,z)}\, d\mu(z)
&\leq Ct^{m/2} \int_0^{\infty} \rho^{\alpha}(x^0_j,x_j) e^{-\frac{c}{8t}\rho^2(x^0_j,x_j)}\varphi(x_j)\, dx_j.
\end{aligned}
\end{equation}
It remains to estimate the integral on the right-hand side of the preceding inequality. Using definition \eqref{eq:Inequality_weight_function} of the function $\varphi(x_j)$, we write the integral on the right-hand side of inequality \eqref{eq:Decomposition_est_cont_x_aux} as a sum of three integrals, $I_1+I_2+I_3$, where the integral $I_1$ is taken over the interval $(0,r_0/2)$, the integral $I_2$ is over $(r_0/2,1)$,  and the last integral is over $(1,\infty)$. We estimate integral $I_1$, which satisfies the inequality
$$
I_1\leq C t^{\alpha/2} \int_0^{r_0/2} \left(\frac{\rho(x^0_j,x_j)}{\sqrt{t}}\right)^{\alpha} e^{-\frac{8}{c}\left(\frac{\rho(x^0_j,x_j)}{\sqrt{t}}\right)^2} x_j^{\bar b/4-1}\, dx_j.
$$
where $C=C(\alpha,m,n)$ is a positive constant. Because the function $s\mapsto s^{\alpha}e^{-s^2}$ is bounded on $\bar\RR_+$, and the function $s\mapsto s^{\bar b/4-1}$ is integrable on $(0,1)$, we see that there is a positive constant, $C=C(\bar b,r_0,m,n)$, such that
$I_1\leq Ct^{\alpha/2}$. A similar argument can be applied to estimate integrals $I_2$ and $I_3$ which satisfy the same estimate as $I_1$. Thus, using inequalities \eqref{eq:Decomposition_est_cont_x_aux}, \eqref{eq:Estimate_x_integral} and \eqref{eq:Estimate_y_integral}, it follows that there is a positive constant, $C=C(\alpha,\bar b,\|b\|_{C^1(\bar S_{n,m};\RR^n)},m,n)$, such that estimate \eqref{eq:Cont_est_x} holds. A similar argument can be applied to prove that estimate \eqref{eq:Cont_est_y} holds. Using \eqref{eq:Cont_est_x} and \eqref{eq:Cont_est_y} in inequality \eqref{eq:Cont_est_with_measure_balls} gives us that
\begin{align}
\int_{S_{n,m}}\rho^{\alpha} (z^0,z)\tilde p(t,z^0,z)\, d\mu(z)
\leq Ct^{(\alpha-n(2+K))/2}.
\end{align}
Choosing $\alpha_0 :=2(n+m)+n(2+K)$, the preceding inequality shows that, for all $\alpha\in (\alpha_0,\infty)$, there are positive constants, $C=C(\alpha,\bar b,\|b\|_{C^1(\bar S_{n,m};\RR^n)},m,n)$ and $\gamma=\gamma(\alpha,K,m,n)$, such that estimate \eqref{eq:Continuity_transition_density} holds. This completes the proof.
\end{proof}

\section{Construction of a continuous Markov process}
\label{subsec:Markov_process}
Let $\Omega\subseteq S_{n,m}$ be an open set. The goal of this section is to build a continuous, Markov process, $\{Z^{0,\Omega}(t)\}_{t\geq 0}$, associated to the semigroup $\{T^{\Omega}_t\}_{t\geq 0}$. The main result of this section is

\begin{prop}
\label{prop:Stopped_process}
Let $z\in\underline \Omega$. Then there is a filtered probability space, $\left(\cZ, \cF,\{\cF_t\}_{t \geq 0}, \PP^z\right)$, and a continuous, progressively measurable process, 
\begin{align}
\label{eq:Z_Omega}
Z^{0,\Omega}:[0,\infty)\times\cZ\rightarrow\bar\Omega,
\end{align}
such that for all bounded, Borel measurable functions, $f:\underline{\Omega}\rightarrow\RR$, we have that
\begin{equation}
\label{eq:Connection_semigroup}
\left(T^{\Omega}_t f\right)(z) = \EE_{\PP^z}\left[f(Z^{0,\Omega}(t))\mathbf{1}_{\{t<\tau_{\Omega}\}}\right],\quad\forall\, t\geq 0,
\end{equation}
where the random time $\tau_{\Omega}$ defined by
\begin{align}
\label{eq:tau_Omega}
\tau_{\Omega}:=\inf\left\{t\geq 0:\, Z^{0,\Omega}(t)\in\overline{\partial_1\Omega}\right\},
\end{align}
is a stopping time with respect to the filtration $\{\cF_t\}_{t\geq 0}$. Moreover, the process $\{Z^{0,\Omega}(t)\}_{t \geq 0}$ satisfies the Markov property, that is, for all bounded, Borel measurable functions, $f:\underline\Omega\rightarrow\RR$, and all $0\leq s\leq t$, we have
\begin{equation}
\label{eq:Markov_prop_stopped_process}
\EE_{\PP^z}\left[f\left(Z^{0,\Omega}(t)\right)\big{|} \cF_s\right] = \EE_{\PP^z}\left[f\left(Z^{0,\Omega}(t)\right)\big{|} Z^{0,\Omega}(s)\right]=\left(T^{\Omega}_{t-s}f\right)(Z^{0,\Omega}(s)),\quad\PP^z\hbox{-a.s.}
\end{equation}
and satisfies the property that
\begin{align}
\label{eq:Process_after_stopping_time}
\PP^z\left(Z^{0,\Omega} (t) \in\overline{\partial_1\Omega},\quad\forall\, t \geq\tau_{\Omega}\right)=1.
\end{align}
\end{prop}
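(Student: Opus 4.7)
The plan is to construct $Z^{0,\Omega}$ in three stages, combining the dyadic-time Markov process from Lemma \ref{lem:Process_dyadic_times} with Kolmogorov's continuity criterion applied to the Hölder distribution estimate of Lemma \ref{lem:Continuity_transition_density}. First, I take the $\underline\Omega$-valued Markov chain $\{\tilde Z(t)\}_{t \in D}$ indexed by the nonnegative dyadic set $D = \bigcup_{k \geq 0} 2^{-k}\NN$ produced by Lemma \ref{lem:Process_dyadic_times}, together with its canonical filtered probability space $(\cZ, \cF, \{\cF_t\}_{t \geq 0}, \PP^z)$; its transition kernel is $p^{\Omega}(t,z,w)\,d\mu(w)$, which is sub-Markovian on $\underline\Omega$, the defect $1 - \int_{\underline\Omega} p^{\Omega}(t,z,w)\,d\mu(w)$ being absorbed into an auxiliary cemetery state. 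Choosing $\alpha > \alpha_0$ large enough that $n + m + \gamma(\alpha) > 1$, the Markov property at dyadic times combined with Lemma \ref{lem:Continuity_transition_density} yields
\begin{equation*}
\EE_{\PP^z}\!\left[\rho^{\alpha}\bigl(\tilde Z(s), \tilde Z(t)\bigr)\, \mathbf{1}_{\{\tilde Z(s)\, \in\, \underline\Omega\}}\right] \leq C\,|t - s|^{n + m + \gamma}, \qquad s, t \in D,\ s < t \leq T,
\end{equation*}
so that Kolmogorov's continuity criterion, applied with respect to the intrinsic metric $\rho$ (which by \eqref{eq:Equivalent_intrinsic_metric} is a genuine metric on $\bar S_{n,m}$), furnishes a modification $\{Z^{0,\Omega}(t)\}_{t \in [0, \sigma)}$ with paths that are locally $\rho$-Hölder continuous on the random interval $[0, \sigma)$ preceding absorption.

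Second, I would extend $Z^{0,\Omega}$ from $[0, \sigma)$ to $[0, \infty)$ by closing up the path continuously. Since the paths are Hölder continuous on $[0, \sigma)$ and take values in $\underline\Omega$, the left-hand limit $Z^{0,\Omega}(\sigma^-) := \lim_{t \uparrow \sigma} Z^{0,\Omega}(t)$ exists $\PP^z$-almost surely; this limit cannot belong to $\underline\Omega$ (otherwise the process would continue past $\sigma$), so it necessarily lies in $\overline{\partial_1\Omega}$. Setting $\tau_\Omega := \sigma$ and $Z^{0,\Omega}(t) := Z^{0,\Omega}(\tau_\Omega^-)$ for $t \geq \tau_\Omega$ produces a continuous, $\bar\Omega$-valued process that automatically satisfies \eqref{eq:Process_after_stopping_time}. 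Progressive measurability follows from path continuity and adaptedness to the natural filtration, $\tau_\Omega$ coincides with the hitting-time expression \eqref{eq:tau_Omega}, and it is a stopping time because $\overline{\partial_1\Omega}$ is closed in $\bar S_{n,m}$ and the paths are continuous (after passing to the usual augmentation of $\{\cF_t\}$).

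Third, I would verify identities \eqref{eq:Connection_semigroup} and \eqref{eq:Markov_prop_stopped_process}. By construction and Lemma \ref{lem:Measurability_fundam}, for any bounded Borel function $f : \underline\Omega \to \RR$,
\begin{equation*}
\EE_{\PP^z}\bigl[f(Z^{0,\Omega}(t))\, \mathbf{1}_{\{t < \tau_\Omega\}}\bigr] = \int_{\underline\Omega} f(w)\, p^{\Omega}(t, z, w)\, d\mu(w) = \bigl(T^{\Omega}_t f\bigr)(z),
\end{equation*}
giving \eqref{eq:Connection_semigroup}. The Markov property at dyadic times is built into Lemma \ref{lem:Process_dyadic_times}, and it extends to all $s \leq t$ by right-continuity of paths together with the continuity of the map $(t, z) \mapsto (T^{\Omega}_t f)(z)$ on $(0, \infty) \times \underline\Omega$, which is supplied by Lemmas \ref{lem:Measurability_fundam} and \ref{lem:Regularity_fundam}.

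The main obstacle I expect is establishing the Hölder extension uniformly up to the exit time and confirming that $Z^{0,\Omega}(\tau_\Omega^-)$ genuinely lands on $\overline{\partial_1\Omega}$ rather than oscillating or collapsing into the cemetery state without approaching the boundary. This requires that the Kolmogorov bound above be valid uniformly in the starting point as it approaches $\partial_1\Omega$, which in turn rests on combining the fundamental-solution upper bound \eqref{eq:Upper_bound_transition_density} with the two-sided ball-measure estimates of Lemma \ref{lem:Measure_of_balls} to rule out any degeneration of the Hölder exponent near the degenerate strata of $\partial S_{n,m}$.
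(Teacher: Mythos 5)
Your overall architecture (dyadic-time process from Lemma \ref{lem:Process_dyadic_times}, Kolmogorov--\v Centsov with the metric $\rho$ via Lemma \ref{lem:Continuity_transition_density}, extension by continuity, then approximation to get \eqref{eq:Connection_semigroup} and the Markov property at real times) matches the paper's strategy. But there is a genuine gap at the decisive step, which you pass over with the sentence that the left limit at the absorption time ``cannot belong to $\underline\Omega$ (otherwise the process would continue past $\sigma$), so it necessarily lies in $\overline{\partial_1\Omega}$.'' Nothing in your construction justifies this dichotomy. The kernels $p^{\Omega}(t,z,\cdot)\,d\mu$ are only sub-Markovian, and the process is sent to the cemetery state exactly when mass is lost; a priori a sub-Markovian family can lose mass while the path sits at an interior point (think of a semigroup with interior killing), in which case the Euclidean limit at $\sigma$ would lie in $\underline\Omega$ and \eqref{eq:Process_after_stopping_time}, as well as the identification of $\sigma$ with the hitting time \eqref{eq:tau_Omega}, would fail. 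That the present semigroup kills only at $\partial_1\Omega$ is precisely what must be proved, and it is the content of the paper's Lemma \ref{lem:Continuity_paths_compactification}: one shows that the paths are continuous in the topology of the one-point compactification, i.e.\ that as $t\nearrow\tau_\Delta$ the process leaves every compact subset of $\underline\Omega$; combined with the existence of the Euclidean limit (Lemma \ref{lem:Continuity_paths_stopped}) this forces the limit onto $\overline{\partial_1\Omega}$. The proof of that lemma is not a path-regularity estimate: it uses $f_k\equiv 1$ on an exhausting sequence of compacts, the identity $(T^{\Omega}_t f_k)(z)=\EE[f_k(Z^0(t))\mathbf{1}_{\{t<\tau_\Delta\}}]$, the time-continuity of $T^{\Omega}_t f_k$ from \cite[Corollary 4.1]{Epstein_Mazzeo_cont_est}, and a null-set argument on the event $\{\tau_\Delta=t_0\}$. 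So the remedy you sketch in your last paragraph --- sharpening the H\"older/Kolmogorov bound near the degenerate strata using \eqref{eq:Upper_bound_transition_density} and Lemma \ref{lem:Measure_of_balls} --- attacks the wrong difficulty: uniform H\"older continuity only gives existence of the limit, and says nothing about where the limit lands or why no interior killing occurs.

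Two smaller points in the same direction. First, your verification of \eqref{eq:Connection_semigroup} at non-dyadic $t$ for an arbitrary bounded Borel $f$ invokes continuity of $(t,z)\mapsto(T^{\Omega}_t f)(z)$, but Lemmas \ref{lem:Measurability_fundam} and \ref{lem:Regularity_fundam} only give this directly for $f\in L^2$ through \cite[Corollary 4.1]{Epstein_Mazzeo_cont_est}; for general bounded Borel $f$ you need the two-step approximation the paper carries out (first $f$ continuous with compact support, then bounded Borel via a.e.\ approximation and dominated convergence). Second, declaring $\tau_\Omega$ a stopping time ``after passing to the usual augmentation'' changes the filtration in the statement; the paper avoids this by proving $\tau_\Omega=\tau_\Delta$ pathwise and checking directly that $\{\tau_\Delta\le t\}\in\cF^0_t$ for the right-continuous filtration generated at dyadic times. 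Neither of these is fatal, but the missing argument that the pre-exit limit lies on $\overline{\partial_1\Omega}$ is a real hole in the proposal as written.
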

The proof of Proposition \ref{prop:Stopped_process} makes use of the family of densities $\{p^{\Omega}(t) d\mu\}_{t\geq 0}$ constructed in Lemma \ref{lem:Measurability_fundam}.

Let $\Omega_{\Delta}:=\underline\Omega\cup\{\Delta\}$ be the one-point
compactification of $\underline\Omega$. The open neighborhoods of $\Delta$ are
sets of the form $(\underline\Omega\backslash K)\cup\{\Delta\}$, where
$K\subset \underline\Omega$ is a compact set. Let $\cB(\underline\Omega)$ and
$\cB(\Omega_{\Delta})$ denote the Borel $\sigma$-algebras on $\underline\Omega$
and $\Omega_{\Delta}$, respectively. The family of densities, $\{p^{\Omega}(t)
d\mu\}_{t> 0}$, are sub-Markovian on $(\underline\Omega,
\cB(\underline\Omega))$, in general. To obtain a Markovian family of
probability measures on $(\Omega_{\Delta}, \cB(\Omega_{\Delta}))$, we consider
the extension: 
\begin{equation*}
P^{\Omega}_{ext} : [0,\infty)\times\Omega_{\Delta}\times\cB(\Omega_{\Delta})\rightarrow [0,\infty),
\end{equation*}
and we let
\begin{equation}
\label{eq:Extended_measure}
P^{\Omega}_{ext} (t,z,B) = \int_{\underline\Omega\cap B} p^{\Omega}(t,z,w)\, d\mu(w) + \mathbf{1}_{\{\Delta\in B\}} \left(1-\int_{\underline\Omega} p^{\Omega}(t,z,w)\, d\mu(w)\right),
\end{equation} 
for all $t>0$, $z\in \underline\Omega$ and $B \in \cB(\Omega_{\Delta})$, and we let
\begin{equation}
\label{eq:Extended_measure_Delta}
P^{\Omega}_{ext} (t,\Delta,B) =	\mathbf{1}_{\{\Delta\in B\}}, 
\end{equation}
for all $t>0$ and $B \in \cB(\Omega_{\Delta})$. When $t=0$, we let
\begin{equation}
\label{eq:Extended_measure_t_0}
P^{\Omega}_{ext} (0,z,B) =	\mathbf{1}_{\{z\in B\}}, 
\end{equation}
for all $z\in \Omega_{\Delta}$ and $B \in \cB(\Omega_{\Delta})$. Then, we obtain that the family of transition probability measures $\{P^{\Omega}_{ext}(t)\}_{t\geq 0}$ is Markovian on $(\Omega_{\Delta}, \cB(\Omega_{\Delta}))$. 

Let $\DD_+$ denote the set of nonnegative dyadic numbers, and let 
$$
\cZ^0:=(\Omega_{\Delta})^{\DD_+}.
$$
We define the coordinate process, $Z^0:\DD_+\times\cZ^0\rightarrow \Omega_{\Delta}$, by
$$
Z^0(t,\omega):=\omega(t),\quad\forall\, t \in \DD_+, \quad\forall\, \omega\in\cZ^0.
$$
We consider the filtration generated by the cylinder sets,
$$
\cF'_t:=\sigma\left(Z^0(s):\, s\in[0,t]\cap\DD_+\right),\quad\forall\, t\in \DD_+,
$$
and we let $\cF'=\cup_{t\in \DD^+}\cF'_t$. 
 
\begin{rmk}[Extension of measurable functions from $\Omega$ to $\Omega_{\Delta}$]
Throughout the article we use the convention that any bounded, Borel measurable function $f$ on $\Omega$ is extended to $\Omega_{\Delta}$ by setting $f(\Delta)=0$.
\end{rmk}
We apply the Daniell-Kolmogorov Theorem \cite[Theorem 2.3.9]{Baudoin}, \cite[Theorem 2.2.2]{KaratzasShreve1991}, \cite[\S 7.2]{Fukushima_Oshima_Takeda} to obtain

\begin{lem}[Existence of a Markov process]
\label{lem:Process_dyadic_times}
Let $\nu(dz)$ be a probability measure on $\Omega_{\Delta}$. Then there is a probability measure, $\PP^{0,\nu}$, on the filtered measure space $(\cZ^0,\{\cF'_t\}_{t\in \DD_+}, \cF')$ such that for all $k\in\NN$, $0< t_1< t_2<\ldots<t_k$, $t_i\in\DD_+$, for $i=1,\ldots,k$, and all Borel measurable sets, $B_i\in\cB(\Omega_{\Delta})$, for $i=1,\ldots,k$, we have
\begin{equation}
\begin{aligned}
\label{eq:Finite_dim_marginal_distrib}
&\PP^{0,\nu}\left(Z^0(t_i)\in B_i,\,\forall\, i=1,\ldots,k\right) 
= \int_{\Omega_{\Delta}} \nu(dz)  \int_{B_1} P^{\Omega}_{ext} (t_1,z,dz_1) \\
&\qquad\quad\int_{B_2} P^{\Omega}_{ext} (t_2-t_1,z_1,dz_2)\ldots \int_{B_k} P^{\Omega}_{ext} (t_k-t_{k-1},z_{k-1},dz_k).
\end{aligned}
\end{equation}
Moreover, the following hold:
\begin{enumerate}
\item[1.] The law of $Z^0(0)$ is given by the probability measure $\nu(dz)$.
\item[2.] The coordinate process $Z^0$ is Markov, that is, for all bounded, Borel measurable functions $f$ on $\underline\Omega$, and all $0\leq s\leq t$ such that $s,t \in \DD_+$, we have that
\begin{align}
\label{eq:Markov_prop_dyadic_times}
\EE_{\PP^{0,\nu}} \left[f(Z^0(t))|\cF'_s\right] = \EE_{\PP^{0,\nu}} \left[f(Z^0(t))|Z^0(s)\right] = \left(T^{\Omega}_{t-s}f\right)
(Z^0(s))
\quad \PP^{0,\nu}\hbox{-a.s.}
\end{align}
\end{enumerate}
\end{lem}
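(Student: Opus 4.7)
The plan is to verify that the extended transition kernels $\{P^{\Omega}_{ext}(t,\cdot,\cdot)\}_{t\geq 0}$ define a Markov transition function on $(\Omega_{\Delta},\cB(\Omega_{\Delta}))$ indexed by $\DD_+$, and then invoke Daniell--Kolmogorov to build a measure with the prescribed finite-dimensional marginals. The verification of the transition function structure breaks into three checks: (i) for each $(t,z)$, $P^{\Omega}_{ext}(t,z,\cdot)$ is a probability measure on $\cB(\Omega_{\Delta})$; (ii) for each $(t,B)$, $z\mapsto P^{\Omega}_{ext}(t,z,B)$ is Borel measurable; (iii) the Chapman--Kolmogorov identity
\begin{equation*}
P^{\Omega}_{ext}(t+s,z,B)=\int_{\Omega_{\Delta}} P^{\Omega}_{ext}(s,w,B)\,P^{\Omega}_{ext}(t,z,dw),\quad t,s\geq 0,\ z\in\Omega_{\Delta},\ B\in\cB(\Omega_{\Delta}),
\end{equation*}
holds. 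Check (i) is immediate from the definitions \eqref{eq:Extended_measure}, \eqref{eq:Extended_measure_Delta}, \eqref{eq:Extended_measure_t_0} together with the fact that $p^{\Omega}(t,z,\cdot)\in L^1(\Omega;d\mu)$ with total mass at most one (Lemmas \ref{lem:Measurability_fundam} and \ref{lem:Transition_probabilities} applied via Lemma \ref{lem:Fundam_sol_different_domains}). Check (ii) follows from the joint measurability of $(t,z,w)\mapsto p^{\Omega}(t,z,w)$ established in Lemma \ref{lem:Measurability_fundam} and Fubini.

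The main work is check (iii). The case where either argument is $\Delta$ is handled by \eqref{eq:Extended_measure_Delta} (which makes $\Delta$ absorbing) and by splitting $B=(B\cap\underline\Omega)\sqcup(B\cap\{\Delta\})$. For $z\in\underline\Omega$ and $B\subseteq\underline\Omega$, the Chapman--Kolmogorov identity reduces to
\begin{equation*}
\int_B p^{\Omega}(t+s,z,u)\,d\mu(u)=\int_{\underline\Omega}p^{\Omega}(t,z,w)\left(\int_B p^{\Omega}(s,w,u)\,d\mu(u)\right)d\mu(w),
\end{equation*}
which is the integrated form of the semigroup identity $T^{\Omega}_{t+s}=T^{\Omega}_tT^{\Omega}_s$ together with the representation \eqref{eq:Representation_T_t} and Fubini. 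The bookkeeping at the cemetery point $\Delta$ then forces the remaining mass identity and follows from the fact that the two sides of Chapman--Kolmogorov evaluate a probability measure on $\Omega_{\Delta}$.

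With the Markov transition function in hand, I would apply the Daniell--Kolmogorov extension theorem (as in \cite[Theorem 2.2.2]{KaratzasShreve1991}) on the countable index set $\DD_+$. Define the finite-dimensional marginals on $(\Omega_{\Delta})^{\{t_1,\ldots,t_k\}}$ by the right-hand side of \eqref{eq:Finite_dim_marginal_distrib}; the consistency (Kolmogorov compatibility) under permutations of indices and marginalization over a coordinate is exactly the Chapman--Kolmogorov identity combined with the fact that each $P^{\Omega}_{ext}(t,z,\cdot)$ is a probability measure. Since $\Omega_{\Delta}$ is a compact metrizable space and $\DD_+$ is countable, the Daniell--Kolmogorov hypotheses are met and yield a unique probability measure $\PP^{0,\nu}$ on $(\cZ^0,\cF')$ reproducing \eqref{eq:Finite_dim_marginal_distrib}.

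Finally, I would read off the two stated consequences. Taking $k=1$ and $t_1\downarrow 0$ (or using $P^{\Omega}_{ext}(0,z,\cdot)=\delta_z$) in \eqref{eq:Finite_dim_marginal_distrib} gives $\PP^{0,\nu}\circ (Z^0(0))^{-1}=\nu$, settling claim~1. For the Markov property in claim~2, the standard monotone-class argument suffices: for cylinder sets generating $\cF'_s$, identity \eqref{eq:Finite_dim_marginal_distrib} directly yields
\begin{equation*}
\EE_{\PP^{0,\nu}}\left[f(Z^0(t))\mathbf{1}_A\right]=\EE_{\PP^{0,\nu}}\left[(T^{\Omega}_{t-s}f)(Z^0(s))\mathbf{1}_A\right],\qquad A\in\cF'_s,
\end{equation*}
for $s\leq t$ in $\DD_+$ and bounded Borel $f$ (using the convention $f(\Delta)=0$ together with \eqref{eq:Extended_measure} to identify the transition integral with $T^{\Omega}_{t-s}f$), and then one extends from indicator $f$ to bounded Borel $f$ by monotone class. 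The main potential obstacle is bookkeeping at the cemetery point $\Delta$ in Chapman--Kolmogorov, but once one notices that the extension is built precisely to turn a sub-Markov kernel into a Markov kernel by absorbing the lost mass at $\Delta$, this check is routine.
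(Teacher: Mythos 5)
Your proposal is correct and follows exactly the route the paper intends: the paper constructs the extended kernels $P^{\Omega}_{ext}$ precisely so that they form a Markovian transition function on $(\Omega_{\Delta},\cB(\Omega_{\Delta}))$ and then simply invokes the Daniell--Kolmogorov theorem, which is what you verify in detail (normalization and measurability of the kernels via Lemmas \ref{lem:Measurability_fundam}, \ref{lem:Fundam_sol_different_domains} and \ref{lem:Transition_probabilities}, Chapman--Kolmogorov from the semigroup property with the cemetery state absorbing, consistency of the finite-dimensional marginals, and the Markov property by a cylinder-set plus monotone-class argument with the convention $f(\Delta)=0$). The only cosmetic remark is that claim~1 is read off directly from the definition $P^{\Omega}_{ext}(0,z,\cdot)=\delta_z$ rather than by letting $t_1\downarrow 0$, as your parenthetical already notes.
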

\noindent
When we choose $\nu=\delta_z$ in Lemma \ref{lem:Process_dyadic_times}, for a point $z\in \Omega_{\Delta}$, we denote for simplicity $\PP^{0,z}:=\PP^{0,\nu}$ .

Let $\{\cF^0_t\}_{t\geq 0}$ be the right-continuous filtration defined by
$$
\cF^0_t :=\bigcap_{\stackrel{s>t}{s\in\DD_+}} \cF'_s,\quad\forall\, t \geq 0,
$$
and we set $\cF^0 :=\cup_{t \geq 0} \cF^0_t$. Clearly, we have that $\cF^0=\cF'$. We denote by $\tau_{\Delta}$ the random time defined by
\begin{align}
\label{eq:tau_Delta}
\tau_{\Delta}:=\inf\{t \in\DD_+:\, Z^0(t)=\Delta\}.
\end{align}
Notice that $\tau_{\Delta}$ takes values in $[0,\infty)$, as opposed to $\DD_+$.

\begin{rmk}[$\tau_{\Delta}$ is a stopping time ]
\label{rmk:tau_Delta_stopping_time}
 We show that $\tau_{\Delta}$ is a stopping time with respect to the right-continuous filtration $\{\cF^0_t\}_{t\geq 0}$. For all $t\geq 0$, the sets $\{\tau_{\Delta} \leq t\}$ can be represented in the form
\begin{align*}
\{\tau_{\Delta}\leq t\} &=   \{\tau_{\Delta} < t\} \cup \{\tau_{\Delta} = t\}\\
&= \left(\bigcup_{\stackrel{s<t}{s\in\DD_+}} \{Z^0(s)=\Delta\}\right) \cup \{Z^0(t)=\Delta\} \cup\left(\bigcap_{\stackrel{s>t}{s\in\DD_+}} \bigcup_{\stackrel{t\leq s'<s}{s'\in\DD_+}} \{Z^0(s')=\Delta\}\right),
\end{align*}
where the set $\{Z^0(t)=\Delta\}$ is empty in the preceding identity, if $t \notin\DD_+$. Using the fact that the filtration $\{\cF^0_t\}_{t \geq 0}$ is right-continuous, we see that
\begin{align}
\label{eq:Measurable_set_tau_Delta}
\{\tau_{\Delta} \leq t\} \in \cF^0_t,\quad\forall\, t \geq 0,
\end{align}
and so, the random time $\tau_{\Delta}$ is indeed a stopping time with respect to the filtration $\{\cF^0_t\}_{t\geq 0}$.
\end{rmk}

We now use the distribution estimate \eqref{eq:Continuity_transition_density} to establish the continuity of the paths of the coordinate process $\{Z^0(t)\}_{t\in\DD_+}$ in the Euclidean topology on $\underline\Omega$.
\begin{lem}[Continuity of the paths up to $\tau_{\Delta}$]
\label{lem:Continuity_paths_stopped}
Let $\nu(dz)$ be a probability measure on $\underline\Omega$. Then there is a measurable set, $\cZ^{0,\star}\in \cF^0$, such that
\begin{align}
\label{eq:measurable_subset1}
\PP^{0,\nu}(\cZ^{0,\star})=1,
\end{align}
and the paths of the process have the property that
\begin{equation}
\label{eq:Continuity_paths_stopped}
\begin{aligned}
&\PP^{0,\nu}\left(\omega\in\cZ^{0,\star}: [0,\tau_{\Delta})\cap\DD_+\ni t\mapsto Z^0(t,\omega) \hbox{ is continuous, and }\right.\\
&\qquad\left.\lim_{t \nearrow\tau_{\Delta}} Z^0(t,\omega) \hbox{ exists in } \bar\Omega\right)=1,
\end{aligned}
\end{equation}
\end{lem}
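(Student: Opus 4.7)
The plan is to apply a Kolmogorov--Chentsov-type argument along dyadic times, using the moment estimate of Lemma \ref{lem:Continuity_transition_density} together with the Markov property of Lemma \ref{lem:Process_dyadic_times}. First, I would fix $T>0$ and $\alpha>\alpha_0$, with $\alpha$ chosen large enough that the exponent $\gamma=\gamma(\alpha,K,m,n)$ in \eqref{eq:Continuity_transition_density} satisfies $n+m+\gamma>1$; this is possible since $\gamma$ grows with $\alpha$. For dyadic $s<t$ in $[0,T]$, the Markov property \eqref{eq:Markov_prop_dyadic_times} applied to the bounded Borel function $w\mapsto \rho^\alpha(Z^0(s),w)$ (truncated, if necessary, and then passed to a monotone limit) yields
\begin{equation*}
\EE_{\PP^{0,\nu}}\!\left[\rho^\alpha(Z^0(s),Z^0(t))\,\mathbf{1}_{\{t<\tau_\Delta\}}\,\big|\,\cF^0_s\right]=\mathbf{1}_{\{s<\tau_\Delta\}}\!\int_{\underline\Omega}\rho^\alpha(Z^0(s),w)\,p^\Omega(t-s,Z^0(s),w)\,d\mu(w),
\end{equation*}
and the inner integral is bounded by $C|t-s|^{n+m+\gamma}$ uniformly in $Z^0(s)\in\underline\Omega$ by Lemma \ref{lem:Continuity_transition_density}.

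Second, I would run the standard Kolmogorov--Chentsov argument restricted to dyadic times. For each $k\in\NN$, set $D_k:=\{j2^{-k}:0\le j\le T2^k\}$ and consider the maximal dyadic increment
\begin{equation*}
M_k:=\max\bigl\{\rho\bigl(Z^0(j2^{-k}),Z^0((j+1)2^{-k})\bigr)\,\mathbf{1}_{\{(j+1)2^{-k}<\tau_\Delta\}}: 0\le j<T2^k\bigr\}.
\end{equation*}
Choosing $\beta\in(0,(n+m+\gamma-1)/\alpha)$ and applying Chebyshev's inequality with the estimate above, $\PP^{0,\nu}(M_k>2^{-k\beta})\le C T\,2^{k(1-(n+m+\gamma)+\alpha\beta)}$, which is summable in $k$. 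By Borel--Cantelli, there is a set $\cZ^{0,\star}_T\in\cF^0$ of full measure on which, for all $k$ sufficiently large, every neighbouring-dyadic increment of order $2^{-k}$ before $\tau_\Delta$ is bounded by $2^{-k\beta}$. The usual chaining argument then shows that on $\cZ^{0,\star}_T$ the map $[0,\tau_\Delta)\cap D_k\ni t\mapsto Z^0(t,\omega)$ is $\beta$-H\"older continuous, uniformly in $k$; by density of $\cup_kD_k$ in $[0,T]\cap\DD_+$, it is H\"older on $[0,\tau_\Delta)\cap[0,T]\cap\DD_+$ with respect to $\rho$, and hence, by the equivalence \eqref{eq:Equivalent_intrinsic_metric}, with respect to the Euclidean metric as well. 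Intersecting $\cZ^{0,\star}_T$ over integer $T$ yields the desired full-measure set $\cZ^{0,\star}$ and the continuity statement in \eqref{eq:Continuity_paths_stopped}.

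Third, the existence of the left limit at $\tau_\Delta$: on $\{\tau_\Delta<\infty\}\cap\cZ^{0,\star}$, pick any $T>\tau_\Delta$; the uniform $\rho$-H\"older modulus on $[0,\tau_\Delta)\cap D_k$ implies that any sequence $t_j\uparrow\tau_\Delta$ with $t_j\in\DD_+$ produces a Cauchy sequence $\{Z^0(t_j,\omega)\}_j$ in the Euclidean metric on $\underline\Omega$. Since the path stays in $\underline\Omega\subset\bar\Omega$ before $\tau_\Delta$ by construction of $P^\Omega_{\mathrm{ext}}$, and $\bar\Omega$ is closed in $\bar S_{n,m}$ and the path is bounded by H\"older continuity, the limit lies in $\bar\Omega$. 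The main obstacle is the first step: one needs the moment bound \eqref{eq:Continuity_transition_density} to hold with a constant independent of the base point $Z^0(s)$ and large enough exponent $n+m+\gamma$, which is exactly what Lemma \ref{lem:Continuity_transition_density} provides, so the argument reduces to the careful choice of $\alpha$, $\beta$ and the verification that the set $\cZ^{0,\star}_T$ is $\cF^0$-measurable (which follows because $\tau_\Delta$ is an $\{\cF^0_t\}$-stopping time, as observed in Remark \ref{rmk:tau_Delta_stopping_time}).
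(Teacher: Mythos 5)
Your proposal is correct and takes essentially the same route as the paper: the same moment estimate from Lemma \ref{lem:Continuity_transition_density} with the cutoff $\mathbf{1}_{\{t<\tau_{\Delta}\}}$ (the paper computes the two-time expectation directly from the finite-dimensional distributions of Lemma \ref{lem:Process_dyadic_times} rather than conditioning on $\cF^0_s$), followed by the Kolmogorov--\v Centsov argument along dyadic times with respect to $\rho$, which the paper cites from Karatzas--Shreve and you spell out via Chebyshev, Borel--Cantelli and chaining. The existence of the left limit at $\tau_{\Delta}$ in $\bar\Omega$ is then deduced from the uniform modulus of continuity exactly as in the paper's proof.
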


\begin{rmk}
The values of the process $Z^0(t)$, for all $t\in [0,\tau_{\Delta})\cap\DD_+$, belong to $\underline\Omega$, by definition of the stopping time $\tau_{\Delta}$. In identity \eqref{eq:Continuity_paths_stopped}, when we write 
$$
\lim_{t \nearrow\tau_{\Delta}} Z^0(t,\omega) \hbox{ exists in } \bar\Omega,
$$
we understand that the preceding limit exists in $\bar\Omega$ which we endow with the usual \emph{Euclidean} topology, \emph{not} in $\Omega_{\Delta}$ endowed with the one-point compactification topology. See also Lemma \ref{lem:Continuity_paths_compactification}.
\end{rmk}

\begin{proof}[Proof of Lemma \ref{lem:Continuity_paths_stopped}]
It is sufficient to show that, for all $T\in\DD_+$, we can find a $\cF^0$-measurable set $\cZ^{0,\star}$, such that identity \eqref{eq:measurable_subset1} holds, and such that
\begin{equation}
\label{eq:Continuity_paths_dyadic_times_stopped}
\begin{aligned}
&\PP^{0,\nu}\left(\omega\in\cZ^{0,\star}: [0,\tau^T_{\Delta})\cap\DD_+\ni t\mapsto Z^0(t,\omega) \hbox{ is continuous, and }\right.\\
&\qquad\left.\lim_{t \nearrow\tau^T_{\Delta}} Z^0(t,\omega) \hbox{ exists in } \bar\Omega\right)=1,
\end{aligned}
\end{equation}
where we define the stopping time $\tau_{\Delta}^T$ by $\tau^T_{\Delta}:=\tau_{\Delta}\wedge T$. Let $\alpha_0$ be the positive constant appearing in the statement of Lemma \ref{lem:Continuity_transition_density}, and choose $\alpha>\alpha_0$. Using the construction of the measure $\PP^{0,\nu}$ in Lemma \ref{lem:Process_dyadic_times}, we have that, for all $0\leq s<t\leq T$ such that $s,t\in\DD_+$,
\begin{align*}
&\EE_{\PP^{0,\nu}}\left[\rho^{\alpha}\left(Z^0(t),Z^0(s)\right)\mathbf{1}_{\{t<\tau_{\Delta}\}}\right] \\
&\quad\quad= \int_{\underline\Omega} \nu(dz) \int_{\underline\Omega} p^{\Omega}(s,z,z')\, d\mu(z')\int_{\underline\Omega} \rho^{\alpha}(z',z'')p^{\Omega}(t-s,z',z'')\, d\mu(z'')\\
&\quad\quad\leq \int_{\underline\Omega} d\nu(z) \int_{\underline\Omega} p^{\Omega}(s,z,z')\, d\mu(z')\sup_{z'\in\underline\Omega}
\int_{\underline\Omega} \rho^{\alpha}(z',z'')p^{\Omega}(t-s,z',z'')\, d\mu(z'').
\end{align*}
Lemma \ref{lem:Continuity_transition_density} shows that there are positive constants, $C_{\alpha}$ and $\gamma_{\alpha}$, such that
\begin{align*}
\sup_{z'\in\underline\Omega}\int_{\underline\Omega} \rho^{\alpha}(z',z'')p^{\Omega}(t-s,z',z'')\, d\mu(z'') \leq C_{\alpha}|t-s|^{n+m+\gamma_{\alpha}},\quad\forall\, 0\leq s<t\leq T.	
\end{align*}
Thus, the preceding two inequalities show that
\begin{align*}
\EE_{\PP^{0,\nu}}\left[\rho^{\alpha}\left(Z^0(t),Z^0(s)\right) \mathbf{1}_{\{t<\tau_{\Delta}\}}\right] \leq C_{\alpha} |t-s|^{1+\gamma_{\alpha}},\quad \forall\, 0\leq s<t<T.	
\end{align*}
Let $\beta:=\gamma_{\alpha}/\alpha$. From the proof of the Kolmogorov-\v Centsov Theorem \cite[Theorem 2.2.8 and Problem 2.2.9]{KaratzasShreve1991} applied with the distance function $\rho$ instead of the Euclidean distance, there is a measurable set, $\cZ^{0,*}\in \cF^0$, such that identity \eqref{eq:measurable_subset1} holds, and there is a positive constant, $C$, such that for all $\omega \in \cZ^{0,*}$, there is a positive measurable function, $\delta(\omega)>0$, such that
\begin{equation*}
\rho^{\alpha}\left(Z^0(t),Z^0(s)\right) \leq C |t-s|^{\beta},\quad\forall\, 0\leq s\leq t\leq \tau^T_{\Delta},\quad s,t \in \DD_+,\quad |t-s|<\delta(\omega).
\end{equation*}
Therefore, the paths $Z^0(t,\omega)$ are uniformly continuous for all $t\in [0,\tau_{\Delta}^T)\cap\DD_+$, and for all $\omega \in \cZ^{0,*}$, which shows that the limit
$$
\lim_{t \nearrow\tau^T_{\Delta}} Z^0(t,\omega) \hbox{ exists in } \bar\Omega,\quad\forall\, \omega \in \cZ^{0,*}.
$$
Because the positive constant $T$ was arbitrarily chosen, the conclusion follows.
\end{proof}

From identities \eqref{eq:Extended_measure_Delta} and \eqref{eq:Markov_prop_dyadic_times} (applied with $s=0$), we have that, for all bounded, Borel measurable functions $f$ on $\underline\Omega$, 
\begin{align}
\label{eq:Relationship_with_semigroup}
\EE_{\PP^{0,z}}\left[f(Z^0(t))\right]=\EE_{\PP^{0,z}}\left[f(Z^0(t))\mathbf{1}_{\{t <\tau_{\Delta}\}}\right] = \left(T^{\Omega}_t f\right)(z),\quad\forall\, z\in\underline\Omega,\quad\forall\, t \in\DD_+,
\end{align}
where we recall the convention that any Borel measurable function $f$ on $\underline\Omega$ is extended to $\Omega_{\Delta}$ by setting $f(\Delta)=0$.

We next prove the continuity of the paths of the process $\{Z^0(t)\}_{t\in\DD_+}$ in the topology of the one-point compactification. This result relies heavily on the fact that the set $\DD_+$ is countable to establish the measurability of certain sets in $\cZ^0$. The argument of the proof of Lemma \ref{lem:Continuity_paths_compactification} could not be applied if we replaced the set $\DD_+$ by $[0,\infty)$, and this is the reason why we start by constructing the process $\{Z^0(t)\}$ for all $t\in\DD_+$, as opposed to all $t\in [0,\infty)$.

\begin{lem}[Continuity of the paths in the topology of the one-point compactification]
\label{lem:Continuity_paths_compactification}
Let $z \in \underline\Omega$. Then there is a measurable set, $\cZ^{0,\star\star}\in \cF^0$, such that
\begin{align}
\label{eq:measurable_subset2}
\PP^{0,z}(\cZ^{0,\star\star})=1,
\end{align}
and the paths of the coordinate process, $\{Z^0(t,\omega)\}_{t \in \DD_+}$, are continuous in the topology of the one-point compactification, for all $\omega \in \cZ^{0,\star\star}$.
\end{lem}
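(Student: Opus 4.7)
The plan is to upgrade the Euclidean continuity on $[0,\tau_\Delta)\cap\DD_+$ from Lemma~\ref{lem:Continuity_paths_stopped} to continuity in the one-point compactification topology on all of $\DD_+$. Since on $\underline\Omega$ the Euclidean topology coincides with the subspace topology from $\Omega_\Delta$, only two things remain to verify: (i) $Z^0(t,\omega)=\Delta$ for every $t\in\DD_+$ with $t\ge\tau_\Delta(\omega)$, and (ii) the Euclidean limit $\zeta(\omega):=\lim_{t\nearrow\tau_\Delta(\omega),\,t\in\DD_+}Z^0(t,\omega)\in\bar\Omega$ provided by Lemma~\ref{lem:Continuity_paths_stopped} in fact lies in $\overline{\partial_1\Omega}\subset\bar\Omega\setminus\underline\Omega$. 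Together these yield convergence $Z^0(t,\omega)\to\Delta$ in $\Omega_\Delta$ as $t\nearrow\tau_\Delta(\omega)$, since $Z^0(t,\omega)$ must then eventually leave every compact subset of $\underline\Omega$; combined with (i) this gives continuity at and past $\tau_\Delta$.

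Ingredient (i) is straightforward from \eqref{eq:Extended_measure_Delta} and the Markov property \eqref{eq:Markov_prop_dyadic_times}: for each pair $s\le t$ in $\DD_+$, $\PP^{0,z}\bigl(\{Z^0(s)=\Delta\}\cap\{Z^0(t)\neq\Delta\}\bigr)=0$. The countability of $\DD_+$ is essential here (as flagged just before the statement of the lemma): it permits us to intersect these countably many null events and obtain a full-measure set on which $Z^0$ stays at $\Delta$ at every later dyadic time after its first visit there.

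Ingredient (ii) is the main obstacle. I would fix a compact exhaustion $\{K_\ell\}_{\ell\ge 1}$ of $\underline\Omega$ with $K_\ell\subset\Int(K_{\ell+1})$ and introduce the $\{\cF^0_t\}$-stopping times
\[
\sigma_\ell:=\inf\{t\in\DD_+\colon Z^0(t)\notin K_\ell\},\qquad \sigma_\infty:=\sup_\ell\sigma_\ell,
\]
which are stopping times by the countability argument used for $\tau_\Delta$ in Remark~\ref{rmk:tau_Delta_stopping_time}, and which satisfy $\sigma_\ell\le\tau_\Delta$ and $\sigma_\ell\uparrow\sigma_\infty$. If one shows $\sigma_\infty=\tau_\Delta$ almost surely, then (ii) follows immediately: any compact $K\subset\underline\Omega$ lies inside some $K_\ell$, so $Z^0(t)\notin K$ for all $t\in[\sigma_\ell,\tau_\Delta)\cap\DD_+$, and letting $K$ range over the exhaustion forces $\zeta\in\overline{\partial_1\Omega}$. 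I would prove $\sigma_\infty=\tau_\Delta$ a.s.\ by contradiction: on $\{\sigma_\infty<\tau_\Delta\}$, Lemma~\ref{lem:Continuity_paths_stopped} produces a Euclidean-continuous extension of $Z^0$ at $\sigma_\infty$ whose limit $\zeta_\infty$ must simultaneously belong to $\underline\Omega$ (since $\sigma_\infty<\tau_\Delta$ means the path has not yet reached $\Delta$) and to $\bigcap_\ell\overline{\underline\Omega\setminus\Int K_\ell}\subset\overline{\partial_1\Omega}$ (because $Z^0(\sigma_\ell)\notin\Int K_\ell$ for every $\ell$ and these points converge to $\zeta_\infty$). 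The technical heart of making this contradiction rigorous, which I expect to be the hardest point, is a uniform short-time non-absorption estimate of the form $\sup_{z\in K'}\PP^{0,z}(\tau_\Delta\le h)\to 0$ as $h\downarrow 0$ for every compact $K'\subset\underline\Omega$; this can be obtained from the construction of the semigroup $\{T^\Omega_t\}_{t\ge 0}$ and the dominated-convergence argument used inside the proof of Lemma~\ref{lem:Transition_probabilities}, and is then applied via the Markov property at $\sigma_\ell$ together with a Borel--Cantelli argument along a refining dyadic mesh.
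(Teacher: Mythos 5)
Your splitting into (i) absorption at $\Delta$ after $\tau_{\Delta}$ and (ii) the statement that the Euclidean limit $\zeta=\lim_{t\nearrow\tau_{\Delta}}Z^0(t)$ lies in $\overline{\partial_1\Omega}$ is the right framing, and your ingredient (i) matches the easy part of the argument. The problem is your treatment of (ii). The reduction to ``$\sigma_\infty=\tau_{\Delta}$ a.s.'' does not work: the $\sigma_\ell$ are \emph{first} exit times, so after $\sigma_\ell$ the path may re-enter $K_\ell$, and the claim ``$Z^0(t)\notin K$ for all $t\in[\sigma_\ell,\tau_{\Delta})\cap\DD_+$'' is false. Worse, the implication points the wrong way: the very event you must exclude --- the path staying inside a fixed compact $K_{\ell_0}$ at all dyadic times $t<\tau_{\Delta}$ and then jumping to $\Delta$ --- forces $\sigma_\ell=\tau_{\Delta}$ for every $\ell\geq\ell_0$, hence $\sigma_\infty=\tau_{\Delta}$; so proving $\sigma_\infty=\tau_{\Delta}$ a.s.\ cannot rule it out. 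Inside your contradiction argument there is a second gap: on $\{\sigma_\infty<\tau_{\Delta}\}$ you assert $\zeta_\infty\in\underline\Omega$ ``since the path has not yet reached $\Delta$'', but $\tau_{\Delta}$ is only the hitting time of the cemetery state by the dyadic skeleton, and nothing constructed so far prevents the Euclidean limit of the path from lying on $\overline{\partial_1\Omega}$ at a time strictly before $\tau_{\Delta}$; excluding that is a probabilistic statement about the killed semigroup of exactly the same nature as the claim you are trying to prove.

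The ``technical heart'' you invoke is also not available from the sources you cite. Lemma \ref{lem:Transition_probabilities} concerns the conservative whole-space semigroup $T_t=T^{S_{n,m}}_t$, for which $\int p(t,z,\cdot)\,d\mu=1$; here one needs the killed semigroup $T^{\Omega}_t$ on a subdomain, whose total mass $(T^{\Omega}_t 1)(z)=\PP^{0,z}(\tau_{\Delta}>t)$ is genuinely below $1$, and a locally uniform bound $\sup_{z\in K'}\PP^{0,z}(\tau_{\Delta}\leq h)\to0$ is essentially equivalent to the assertion being proved, so the proposal is circular at its key step (moreover, applying it ``via the Markov property at $\sigma_\ell$'' would require a strong Markov property for whole-future events, whereas only \eqref{eq:Markov_prop_dyadic_times} at deterministic dyadic times is available at this stage). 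The paper's proof avoids any uniform estimate and any stopping-time argument: since continuity need only be checked at dyadic times $t_0$, the only hard case is $\tau_{\Delta}=t_0\in\DD_+$; one takes cutoffs $f_k\equiv1$ on an exhaustion $K_k$, uses the pointwise-in-$z$ time-continuity of $t\mapsto(T^{\Omega}_t f_k)(z)$ from \cite[Corollary 4.1]{Epstein_Mazzeo_cont_est} together with the representation \eqref{eq:Relationship_with_semigroup}, Lemma \ref{lem:Continuity_paths_stopped} and dominated convergence to get $\EE_{\PP^{0,z}}\left[f_k(Z^0(t))\mathbf{1}_{\{t<\tau_{\Delta}\leq t_0\}}\right]\to0$ as $t\nearrow t_0$, and concludes that the event $\{\lim_{t\nearrow\tau_{\Delta}}Z^0(t)\in K_k,\ \tau_{\Delta}=t_0\}$ is null; a countable union over $k$ and over $t_0\in\DD_+$ finishes. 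You would need to either adopt this semigroup-continuity argument or independently establish a locally uniform non-killing estimate for $T^{\Omega}_t$ near interior points, which the paper's quoted results do not provide.
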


\begin{proof}
Let $\cC$ denote the set of all $\omega \in \cZ^{0,\star}$ such that the path $\DD_+\ni t\mapsto Z^{0}(t,\omega) \in \Omega_{\Delta}$
is continuous in the topology of the one-point compactification of $\Omega_{\Delta}$. For all $t_0\in \DD_+$, we let $\cC_{t_0}$ denote the set of all $\omega \in \cZ^{0,\star}$ such that the path $\DD_+\ni t\mapsto Z^{0}(t,\omega) \in \Omega_{\Delta}$ is continuous \emph{at $t_0$} in the topology of the one-point compactification of $\Omega_{\Delta}$. Because the set $\DD_+$ is countable, it easily follows that $\cC_{t_0}\in \cF^0$, for all $t_0\in\DD_+$, and because we can write $\cC=\cap_{t_0\in\DD_+}\cC_{t_0}$, we also have that $\cC$ is a $\cF^0$-measurable set. Our goal is to prove that there is a set, $\cN\in \cF^0$, of $\PP^{0,z}$-measure zero, such that the measurable set $\cZ^{0,\star\star}:=\cC\backslash\cN$ satisfies  property \eqref{eq:measurable_subset2}. Because the set $\DD_+$ is countable, it is sufficient to show that, for all $t_0\in \DD_+$, there is a $\cF^0$-measurable set with $\PP^{0,z}$-measure zero, such that
\begin{align}
\label{eq:measurable_set_t_0}
\cC_{t_0}=\cZ^{0,\star}\backslash \cN_{t_0}.
\end{align}
We can then choose $\cN:=\cup_{t_0\in\DD_+}\cN_{t_0}$ to obtain that the measurable set $\cZ^{0,\star\star}:=\cC\backslash\cN$ satisfies  property \eqref{eq:measurable_subset2}. 

We write $\cC_{t_0}=\cC^1_{t_0}\cup\cC^2_{t_0}\cup\cC^3_{t_0}$, where we let
\begin{align*}
\cC^1_{t_0} &:= \cC_{t_0} \cap\{\omega\in\cZ^{0,\star}:\,\tau_{\Delta}>t_0\},\\	
\cC^2_{t_0} &:= \cC_{t_0} \cap\{\omega\in\cZ^{0,\star}:\,\tau_{\Delta}<t_0\},\\	
\cC^3_{t_0} &:= \cC_{t_0} \cap\{\omega\in\cZ^{0,\star}:\,\tau_{\Delta}=t_0\},
\end{align*}
and our goal is to prove that
\begin{align}
\label{eq:measure_cC1}
\cC^1_{t_0} &=  \left\{\omega\in\cZ^{0,\star}:\,\tau_{\Delta}>t_0\right\},\\	
\label{eq:measure_cC2}
\cC^2_{t_0}  &= \left\{\omega\in\cZ^{0,\star}:\,\tau_{\Delta}<t_0\right\},\\	
\label{eq:measure_cC3}
\cC^3_{t_0} &= \left\{\omega\in\cZ^{0,\star}:\,\tau_{\Delta}=t_0\right\}\backslash \cN_{t_0},	
\end{align}
where $\cN_{t_0}\in \cF^0$ is a set of $\PP^{0,z}$-measure zero. The preceding identities yield \eqref{eq:measurable_set_t_0}.

\setcounter{step}{0}
\begin{step}[Proof of identity \eqref{eq:measure_cC1}]
From Lemma \ref{lem:Continuity_paths_stopped}, it immediately follows that $\{\tau_{\Delta}>t_0\}\subseteq\cC_{t_0}$, and so, identity \eqref{eq:measure_cC1} is obvious.
\end{step}

\begin{step}[Proof of identity \eqref{eq:measure_cC2}]
If $\omega\in \{\tau_{\Delta}<t_0\}$, then there is $t'_0\in (\tau_{\Delta}, t_0)\cap \DD_+$, such that $Z^0(t'_0,\omega)=\Delta$.
Using the Markov property of the coordinate process established in Lemma \ref{lem:Process_dyadic_times}, together with identity \eqref{eq:Extended_measure_Delta}, we obtain that $Z^0(t,\omega)=\Delta$, for all $t \in (t'_0,\infty)$, and so, we have that $\omega \in \cC_{t_0}$. Therefore, it follows that $\{\tau_{\Delta}<t_0\} \subseteq \cC_{t_0}$, from which identity \eqref{eq:measure_cC2} is follows.
\end{step}

\begin{step}[Proof of identity \eqref{eq:measure_cC3}]
\label{step:measure_cC3}
We may assume, without loss of generality, that 
\begin{align}
\label{eq:Nontrivial_set_t_0}
\PP^{0,z} (\tau_{\Delta}=t_0) \neq 0,
\end{align}
otherwise we choose $\cN_{t_0}:=\{\tau_{\Delta}=t_0\}$ in \eqref{eq:measure_cC3}, which is a $\cF^0$-measurable set by Remark \ref{rmk:tau_Delta_stopping_time}. From the Markov property of the coordinate process and identity \eqref{eq:Extended_measure_Delta}, it is clear that $Z^0(t,\omega)=\Delta$, for all $t >t_0$ such that $t\in\DD_+$, and all $\omega\in\cC^3_{t_0}$, and so, the path $Z^0(\cdot,\omega)$ is continuous from the right at $t_0$. We only need to show that the path $Z^0(\cdot,\omega)$ is continuous from the left at $t_0$. We show that there is a set, $\cN_{t_0}\in\cF^0$, of $\PP^{0,z}$-measure zero, such that for all compact sets $K\subset\underline\Omega$, and all $\omega \in \cC^3_{t_0}\backslash \cN_{t_0}$, there is a positive constant, $\delta=\delta(K,\omega)$, such that
\begin{align}
\label{eq:Convergence_process_both_top}
Z^0(t,\omega) \in \underline\Omega\backslash K,\quad\forall\, t\in\DD_+ \hbox{ such that } t_0-\delta<t<t_0,
\end{align}
which is equivalent to the fact that
$$
\lim_{\stackrel{t\nearrow t_0}{t \in\DD_+}} Z^0(t,\omega) = \Delta,\quad\forall\, \omega \in \cC^3_{t_0}\backslash \cN_{t_0},
$$
from which identity \eqref{eq:measure_cC3} follows.

Let $\{K_k\}_{k\in \NN}$ be an increasing sequence of compact sets in $\underline\Omega$, such that $\underline\Omega=\cup_{k\in\NN} K_k$. Let $\{f_k\}_{k\in \NN}$ be a sequence of smooth functions, compactly supported in $\underline\Omega$, with values in $[0,1]$, such that $f_k(z)=1$, for all $z \in K_k$. By \cite[Corollary 4.1]{Epstein_Mazzeo_cont_est}, we know that
\begin{align}
\label{eq:Convergence_semigroup}
\lim_{t\nearrow t_0} \left(T^{\Omega}_t f_k\right)(z) - \left(T^{\Omega}_{t_0} f_k\right)(z)=0,\quad\forall\, z\in \underline\Omega.
\end{align}
From identity \eqref{eq:Relationship_with_semigroup}, we have that, for all $t \in \DD_+$ such that $t <t_0$,
\begin{align*}
&\left(T^{\Omega}_t f_k\right)(z) - \left(T^{\Omega}_{t_0} f_k\right)(z)
= \EE_{\PP^{0,z}} \left[f_k(Z^0(t))\mathbf{1}_{\{t<\tau_{\Delta}\}}\right] - \EE_{\PP^{0,z}} \left[f_k(Z^0(t_0))\mathbf{1}_{\{t_0<\tau_{\Delta}\}}\right]\\
&\qquad= \EE_{\PP^{0,z}} \left[\left(f_k(Z^0(t))-f_k(Z^0(t_0))\right)\mathbf{1}_{\{t_0<\tau_{\Delta}\}}\right]
+ \EE_{\PP^{0,z}} \left[f_k(Z^0(t))\mathbf{1}_{\{t<\tau_{\Delta}\leq t_0\}}\right].
\end{align*}
Using the continuity of the paths of the process $\{Z^0(t)\}_{t\geq 0}$ established in Lemma \ref{lem:Continuity_paths_stopped}, and the Dominated Convergence Theorem, we conclude that
$$
\lim_{\stackrel{t\nearrow t_0}{t \in\DD_+}}\EE\left[\left(f_k(Z^0(t))-f_k(Z^0(t_0))\right)\mathbf{1}_{\{t_0<\tau_{\Delta}\}}\right] =0,\quad\forall\, k\in\NN.
$$
The preceding identity together with \eqref{eq:Convergence_semigroup} give us that
\begin{align}
\label{eq:Convergence_process_K}
\lim_{\stackrel{t\nearrow t_0}{t \in\DD_+}} \EE\left[f_k(Z^0(t))\mathbf{1}_{\{t<\tau_{\Delta}\leq t_0\}}\right] = 0,\quad\forall\, k\in\NN.
\end{align}
We denote by 
$$
\cN_{t_0}^k:=\left\{\lim_{\stackrel{t\nearrow \tau_{\Delta}}{t \in\DD_+}} Z^0(t) \in K_k,\hbox{ and } \tau_{\Delta}=t_0\right\},\quad\forall\, k \in\NN,
$$ 
and again using the fact that $\DD_+$ is a countable set, we easily obtain that $\cN_{t_0}^k$ belongs to $\cF^0$. Since we assume that \eqref{eq:Nontrivial_set_t_0} holds, and we have that 
$$
\{t<\tau_{\Delta}\leq t_0\} \supset \{\tau_{\Delta}=t_0\},\quad\forall\, t\in \DD_+ \hbox{ such that } t<t_0,
$$
it follows from identities \eqref{eq:Continuity_paths_stopped} and \eqref{eq:Convergence_process_K}, and the fact that $f_k\equiv 1$ on $K_k$,  that 
$$
\PP^{0,z}\left(\cN_{t_0}^k \right) = 0, \quad\forall\, k\in\NN.
$$
We let $\cN_{t_0}:=\cup_{k\in\NN}\cN^k_{t_0}$. Then it is clear that the set $\cN_{t_0}\in \cF^0$ and $\PP^{0,z}(\cN_{t_0})=0$, and that, for all $\omega \in \{\tau_{\Delta}=t_0\}\backslash \cN_{t_0}$, property \eqref{eq:Convergence_process_both_top} holds. This implies identity \eqref{eq:measure_cC3}, and completes the proof of Step \ref{step:measure_cC3}.
\end{step}
Combining the preceding three steps we obtain identity \eqref{eq:measurable_set_t_0}. By letting $\cN:=\cup_{t_0\in\DD_+} \cN_{t_0}$, we obtain that $\cN\in\cF^0$ and $\PP^{0,z}(\cN)=0$, and the conclusion follows by letting $\cZ^{0,\star\star}:=\cC\backslash\cN$.
\end{proof}

We now extend the process $Z^0:\DD_+\times\cZ^{0,\star\star}\rightarrow\Omega_{\Delta}$ to a progressively measurable process, $Z^{0,\Omega}:[0,\infty)\times\cZ^{0,\star\star}\rightarrow\bar\Omega$. For this purpose, we first need to introduce a suitable $\sigma$-algebra on the range of the process, $\bar \Omega$. We let $\cG$ be the $\sigma$-algebra generated by the collection of Borel sets on $\underline\Omega$, $\cB(\underline\Omega)$, and $\overline{\partial_1\Omega}$, that is, we set
\begin{align}
\label{eq:sigma_algebra_range}
\cG:=\{B, B\cup\overline{\partial_1\Omega}:\, B\in \cB(\underline\Omega)\}.
\end{align} 

We can now give the proof of 

\begin{proof}[Proof of Proposition \ref{prop:Stopped_process}]
We let $\cZ:=\cZ^{0,\star\star}$, where $\cZ^{0,\star\star}$ is the
$\cF^0$-measurable set introduced in Lemma
\ref{lem:Continuity_paths_compactification}. We denote for simplicity
$\PP^z:=\PP^{0,z}$, where $\PP^{0,z}$ is the probability measure introduced in
Lemma \ref{lem:Continuity_paths_compactification}. Let $\cF:=\cF^0$ and
$\cF_t:=\cF^0_t$, for all $t\geq 0$. We construct the process
$\{Z^{0,\Omega}_t\}_{t\geq 0}$ in Steps \ref{step:Cont_process_1} and
\ref{step:Cont_process_2}. We first replace the  range $\Omega_{\Delta}$ of the
coordinate process $\{Z^0(t)\}_{t\in\DD_+}$ with $\bar\Omega$, and then we
extend the process from $\DD_+$ to $[0,\infty)$ to obtain
$\{Z^{0,\Omega}(t)\}_{t\geq 0}$. In Steps \ref{step:Connection_semigroup} and
\ref{step:Process_after_stopping_time}, we show that identities
\eqref{eq:Connection_semigroup} and \eqref{eq:Process_after_stopping_time}
hold, while in Step \ref{step_Markov_property}, we prove that the process
$\{Z^{0,\Omega}(t)\}_{t\geq 0}$ satisfies the Markov property. The fact that
the random time $\tau_{\Omega}$ defined by \eqref{eq:tau_Omega} is a stopping
time with respect to the filtration $\{\cF_t\}_{t\geq 0}$ is proved in Case
\ref{case:Boundary_set} of Step~\ref{step:Cont_process_2} below.

\setcounter{step}{0}
\begin{step}[Construction of a process with values in $\bar\Omega$]
\label{step:Cont_process_1}
We define the process $\widetilde Z^0:\DD_+\times\cZ\rightarrow \bar\Omega$ by 
\begin{align*}
\widetilde Z^{0}(t,\omega) = \left\{
\begin{array}{ll}
Z^{0}(t,\omega),&\quad\hbox{if } t <\tau_{\Delta},\\
\lim_{\stackrel{s\nearrow \tau_{\Delta}}{s \in\DD_+}} Z^{0}(s,\omega), &\quad\hbox{if } t \geq\tau_{\Delta},
\end{array} \quad\quad\forall t\in \DD_+,\right.	
\end{align*}
where the preceding limit is taken with respect to the Euclidean topology on $\bar\Omega$. By Lemma \ref{lem:Continuity_paths_stopped}, it follows that the process $\{\widetilde Z^0(t)\}_{t\geq 0}$ is well-defined, and has continuous paths supported in $\bar\Omega$, for all $\omega\in\cZ$. We next show that the process $\{\widetilde Z^0(t)\}_{t\in\DD_+}$ is adapted to the filtration $\{\cF_t\}_{t\in\DD_+}$, that is, for all $t\in\DD_+$, the random variable $\widetilde Z^0(t)$ is $\cF_t/\cG$-measurable. For all measurable sets $B\in \cB(\underline\Omega)$, we have by construction that $\{\widetilde Z^0(t)\in B\} = \{Z^0(t)\in B\}$, and so, clearly the set $\{\widetilde Z^0(t)\in B\}$ is contained in $\cF_t$. We also have that $\{\widetilde Z^0(t)\in \overline{\partial_1\Omega}\}=\{\tau_{\Delta} \leq t\}$, which belongs to $\cF_t$ by \eqref{eq:Measurable_set_tau_Delta}. We conclude that the random variables $\widetilde Z^0(t)$ are $\cF_t/\cG$-measurable, for all $t \in\DD_+$. 
\end{step}

\begin{step}[Construction of a process with values in $\bar\Omega$, for all $t\in [0,\infty)$]
\label{step:Cont_process_2}
We extend the process $\{\widetilde Z^0(t)\}_{t\in\DD_+}$ from $\DD_+$ to $[0,\infty)$ by continuity, that is, we let
\begin{equation}
\label{eq:Z_Omega_prim}
\begin{aligned}
Z^{0,\Omega}(t,\omega) = \left\{
\begin{array}{ll}
\widetilde Z^{0}(t,\omega),&\quad\hbox{if } t\in \DD_+,\\
\lim_{\stackrel{s\rightarrow t}{s \in\DD_+}} \widetilde Z^{0}(s,\omega), &\quad\hbox{if } t\notin\DD_+.
\end{array} \right.	
\end{aligned}
\end{equation}
From the preceding step, we see that the process $\{Z^{0,\Omega}(t)\}_{t\geq 0}$ has continuous paths supported in $\bar\Omega$, for all $t\geq 0$, and the random variables $Z^{0,\Omega}(t)$ are $\cF_t/\cG$-measurable, for all $t \in\DD_+$. We next show that the random variables $Z^{0,\Omega}(t)$ are $\cF_t/\cG$-measurable, for all $t \geq 0$. For this purpose, it is sufficient to prove that the sets 
$$
\left\{Z^{0,\Omega}(t)\in\overline{\partial_1\Omega}\right\}\quad\hbox{and}\quad \left\{Z^{0,\Omega}(t)\in B\right\},
$$
belong to $\cF_t$, where $B\subset\underline\Omega$ is a compact set, and $t\in [0,\infty)\backslash\DD_+$. We consider the following cases:

\setcounter{case}{0}
\begin{case}[Proof that $\{Z^{0,\Omega}(t)\in\overline{\partial_1\Omega}\}\in \cF_t$]
\label{case:Boundary_set}
From Lemma \ref{lem:Continuity_paths_compactification}, we know that the paths of the process $\{Z^0(t)\}_{t\in\DD_+}$ are continuous in the topology of the one-point compactification, and so,
$$
\lim_{\stackrel{t\nearrow\tau_{\Delta}}{t\in\DD_+}} Z^0(t,\omega)=\Delta,\quad\forall\,\omega\in\cZ,
$$
which implies that, for all compact sets $K\subset\underline\Omega$, and all $\omega\in \cZ$, there is a positive constant, $\delta=\delta(K,\omega)$, such that $Z^0(t,\omega)\in\underline\Omega\backslash K$, for all $t \in \DD_+$ such that $\tau_{\Delta}(\omega)-\delta<t<\tau_{\Delta}(\omega)$. Thus, Lemma \ref{lem:Continuity_paths_stopped} implies that
$$
\lim_{\stackrel{t\nearrow \tau_{\Delta}}{t \in\DD_+}} Z^0(t) \in \overline{\partial_1\Omega},\quad\forall\,\omega\in \cZ, 
$$ 
where the preceding limit is understood in the Euclidean topology of $\bar\Omega$. From \eqref{eq:Z_Omega_prim}, we see that $Z^{0,\Omega}(t)=\widetilde Z^0(t,\omega)$, for all $t \geq \tau_{\Delta}$ such that $t\in\DD_+$, and using also definition \eqref{eq:tau_Omega} of the random time $\tau_{\Omega}$, we conclude that 
\begin{align}
\label{eq:Stopping_times_are_the_same}
\tau_{\Delta}(\omega)=\tau_{\Omega}(\omega),\quad\forall\,\omega\in\cZ.
\end{align}
From Remark \ref{rmk:tau_Delta_stopping_time}, it follows that $\tau_{\Omega}$ is a stopping time with respect to the filtration $\{\cF_t\}_{t\geq 0}$. We also have that
$$
\{Z^{0,\Omega}(t) \in\overline{\partial_1\Omega}\} = \{\tau_{\Omega} \leq t\},
$$
and so, it follows that the preceding set is $\cF_t$-measurable. This concludes the proof of Case \ref{case:Boundary_set}.
\end{case} 

\begin{case}[Proof that $\{Z^{0,\Omega}(t)\in B\}\in \cF_t$, where $B\subset\underline\Omega$ is a compact set]
\label{case:Closed_sets}
Let now $B\subset\underline\Omega$ be a compact set, and let $k_0\in\NN$ be large enough, such that the relatively open sets $B^k:=\cup_{z\in B} B^e_{1/k}(z)$ have the property that  $B \subset B^k \Subset\underline\Omega$, for all $k\geq k_0$. Using the continuity of the paths of the process $\{Z^{0,\Omega}(t)\}_{t\geq 0}$, we obtain that
$$
\left\{Z^{0,\Omega}(t) \in B\right\} = \bigcap_{k\geq k_0} \bigcup_{l=1}^{\infty} \bigcap_{\stackrel{t<s<t+1/l}{s\in\DD_+}}\{Z^{0,\Omega}(s) \in B^k\}.
$$
Because the filtration $\{\cF_t\}_{t\geq 0}$ is right-continuous, and the sets $\{Z^{0,\Omega}(s) \in B^k\}$ are $\cF_s$-measurable, for all $s\in\DD_+$, it follows that the set $\{Z^{0,\Omega}(t) \in B\}$ is also $\cF_t$-measurable. This concludes the proof of Case \ref{case:Closed_sets}.
\end{case} 

Combining Cases \ref{case:Boundary_set} and \ref{case:Closed_sets}, it follows that the random variables $Z^{0,\Omega}(t)$ are $\cF_t/\cG$-measurable, for all $t \geq 0$. Because the paths of the process are continuous, \cite[Proposition 1.1.13]{KaratzasShreve1991} implies that the process $\{Z^{0,\Omega}(t)\}_{t\geq 0}$ is progressively measurable.
\end{step}

\begin{step}[Proof of identity \eqref{eq:Connection_semigroup}]
\label{step:Connection_semigroup}
From Steps \ref{step:Cont_process_1} and \ref{step:Cont_process_2}, together with identity \eqref{eq:Stopping_times_are_the_same}, we have that $Z^0(t,\omega) =Z^{0,\Omega}(t,\omega)$ on $t<\tau_{\Delta}$, for all $t\in\DD_+$ and $\omega\in\cZ$. Therefore, identity \eqref{eq:Relationship_with_semigroup} implies that \eqref{eq:Connection_semigroup} holds, for all $t \in\DD_+$. It remains to prove that \eqref{eq:Connection_semigroup} holds at all $t \in[0,\infty)\backslash\DD_+$.

We first assume that $f:\underline\Omega\rightarrow\RR$ is a compactly supported, continuous function on $\underline\Omega$. Using the continuity of the paths of the process $\{Z^{0,\Omega}(t)\}_{t \geq 0}$, we see that $f(Z^{0,\Omega}(s))$ converges to $f(Z^{0,\Omega}(t))$, for all $s\in\DD_+$, $s>t$, as $s$ tends to $t$. Also we see that $\mathbf{1}_{\{s<\tau_{\Omega}\}}$ converges to $\mathbf{1}_{\{t<\tau_{\Omega}\}}$, for all $s\in\DD_+$, $s>t$, as $s$ tends to $t$. The Dominated Convergence Theorem gives us that
$$
\lim_{\stackrel{s\downarrow t}{s\in\DD_+}} \EE_{\PP^z}\left[f(Z^{0,\Omega}(s)) \mathbf{1}_{\{s<\tau_{\Omega}\}}\right] 
=  \EE_{\PP^z}\left[f(Z^{0,\Omega}(t)) \mathbf{1}_{\{t<\tau_{\Omega}\}}\right].
$$
From \cite[Corollary 4.1]{Epstein_Mazzeo_cont_est}, we have that $(T^{\Omega}_s f)(z)$ converges to $(T^{\Omega}_t f)(z)$, for all $s\in\DD_+$, $s>t$, as $s$ approaches $t$, and so, we obtain that identity \eqref{eq:Connection_semigroup} holds at all $t\in [0,\infty)$, when $f\in C(\bar \Omega)$. 

Let now $f:\underline\Omega\rightarrow\RR$ be a bounded, Borel measurable function, and let $\{f_k\}_{k\in\NN}$ be a sequence of compactly supported, continuous functions on $\underline\Omega$ that converge a.e. on $\underline\Omega$ to $f$. It follows from \cite[Problem 2.2.4]{KaratzasShreve1991} that
\begin{equation}
\label{eq:Convergence_f_k_process}
f_k\left(Z^{0,\Omega}(t)\right)\mathbf{1}_{\{t<\tau_{\Omega}\}} \rightarrow f\left(Z^{0,\Omega}(t)\right)\mathbf{1}_{\{t<\tau_{\Omega}
\}}\quad \PP^z\hbox{-a.s.},\quad\hbox{as } k\rightarrow\infty,
\end{equation}
and the Dominated Convergence Theorem gives us that
$$
\EE_{\PP^z}\left[f_k\left(Z^{0,\Omega}(t)\right)\mathbf{1}_{\{t<\tau_{\Omega}\}}\right] \rightarrow \EE_{\PP^z}\left[f\left(Z^{0,\Omega}(t)\right)\mathbf{1}_{\{t<\tau_{\Omega}\}}\right], \quad\hbox{as } k\rightarrow\infty.
$$
Because identity \eqref{eq:Connection_semigroup} holds with $f$ replaced by $f_k$, since $f_k\in C(\bar \Omega)$, and $(T^{\Omega}_t f_k)(z)\rightarrow (T^{\Omega}_t f)(z)$, as $k\rightarrow\infty$, for all $z\in\underline\Omega$, the preceding convergence property yields that identity \eqref{eq:Connection_semigroup} holds with $f:\underline\Omega\rightarrow\RR$ a bounded, Borel measurable function. 
\end{step}

\begin{step}[Proof of identity \eqref{eq:Process_after_stopping_time}]
\label{step:Process_after_stopping_time}
Identity \eqref{eq:Process_after_stopping_time} follows from the construction of the process $\{Z^{0,\Omega}(t)\}_{t\geq 0}$ in Steps \ref{step:Cont_process_1} and \ref{step:Cont_process_2}, and property \eqref{eq:Stopping_times_are_the_same} of the stopping time $\tau_{\Omega}$.
\end{step}

\begin{step}[The Markov property]
\label{step_Markov_property}
We now prove that the process $\{Z^{0,\Omega}(t)\}_{t\geq 0}$ satisfies the Markov property, \eqref{eq:Markov_prop_stopped_process}. Recall from the proof of Step \ref{step:Connection_semigroup}, that given any bounded, Borel measurable function, $f:\underline\Omega\rightarrow\RR$, there is a sequence $\{f_k\}_{k\in\NN}$ of compactly supported, continuous functions, such that property \eqref{eq:Convergence_f_k_process} holds. Thus, using the Dominated Convergence Theorem, it is sufficient to show that the Markov property \eqref{eq:Markov_prop_stopped_process} holds for all functions $f:\underline\Omega\rightarrow\RR$ that are compactly supported and continuous. Let $s<u<t$ be such that $s\in\bar\RR_+$ and $u,t\in\DD_+$. Using the tower property and the fact that $\cF_s\subset\cF'_u$, we have that
\begin{equation}
\label{eq:Markov_prop_tower}
\EE_{\PP^z}\left[f(Z^{0,\Omega}(t))\big{|} \cF_s\right] = \EE_{\PP^z}\left[\EE_{\PP^z}\left[f(Z^{0,\Omega}(t))\big{|} \cF'_u\right]\big{|} \cF_s\right],
\end{equation}
and, using property \eqref{eq:Markov_prop_dyadic_times} applied with $s=u$, and our construction of the process $\{Z^{0,\Omega}(t)\}_{t\geq 0}$ from the process $\{Z^0(t)\}_{t\in\DD_+}$, we have that
\begin{equation}
\label{eq:Markov_prop_cond_expe_semigroup}
\EE_{\PP^z}\left[f(Z^{0,\Omega}(t))\big{|} \cF'_u\right] = \left(T^{\Omega}_{t-u}f\right)(Z^{0,\Omega}(u)).
\end{equation}
From \cite[Corollary 4.1]{Epstein_Mazzeo_cont_est}, we have that $(T^{\Omega}_{t-u} f)(z)$ converges to $(T^{\Omega}_{t-s} f)(z)$, as $u$ approaches $s$, for all $z\in\underline\Omega$, and using the continuity of the paths of the process $\{Z^{0,\Omega}(t)\}_{t\geq 0}$, it follows from \eqref{eq:Markov_prop_cond_expe_semigroup} that
$$
\lim_{\stackrel{u\downarrow s}{u\in\DD_+}} \EE_{\PP^z}\left[f(Z^{0,\Omega}(t))\big{|} \cF'_u\right] = \left(T^{\Omega}_{t-s}f\right)(Z^{0,\Omega}(s)).
$$
The preceding property together with identities \eqref{eq:Markov_prop_tower} and \eqref{eq:Markov_prop_cond_expe_semigroup}, gives us by taking limit as $u\in\DD_+$ tends to $s$ in \eqref{eq:Markov_prop_tower}, that
$$
\EE_{\PP^z}\left[f(Z^{0,\Omega}(t))\big{|} \cF_s\right] = \left(T^{\Omega}_{t-s}f\right)(Z^{0,\Omega}(s)),
$$
and so, identity \eqref{eq:Markov_prop_stopped_process} clearly holds, for all $t\in\DD_+$ and $s\in\bar\RR_+$. It remains to establish identity \eqref{eq:Markov_prop_stopped_process} for all $s,t\in\bar\RR_+$. This follows easily from the fact that the Markov property holds for all $t\in\DD_+$ and $s\in\bar\RR_+$, by choosing a sequence $\{t_k\}_{k\in\NN}\subset\DD_+$ converging to $t$, using \eqref{eq:Markov_prop_tower} with $t_k$ in place of $t$, and taking limit as $k\rightarrow\infty$. This completes the proof of Step \ref{step_Markov_property}.
\end{step}

This concludes the proof of Proposition \ref{prop:Stopped_process}.
\end{proof}

\section{Solution to the martingale problem}
\label{sec:Martingale_problem}
In this section, we introduce the martingale problem associated to the operator $L$ in Definition \ref{def:Martingale_problem}, and we prove existence of solutions in Proposition \ref{prop:Solution_local_martingale_problem}, with the aid of Proposition \ref{prop:Stopped_process} and of the $L^q(\Omega;d\mu)$-distribution estimates of the fundamental solutions of the operator $L$, established in Lemma \ref{lem:L_q_transition_density}.

Let $\cX^{\Omega}:=C([0,\infty);\bar \Omega)$ be the space of continuous functions, $\omega:[0,\infty)\rightarrow\bar\Omega$, where we endow $\bar\Omega$ with the $\sigma$-algebra $\cG$ defined in \eqref{eq:sigma_algebra_range}. For all $t\geq 0$, let $\cB^{\Omega}_t$ be the $\sigma$-algebra on $\cX^{\Omega}$ generated by the cylinder sets, 
\begin{equation}
\label{eq:Cylinder_set}
C:=\left\{\omega\in \cX^{\Omega}: \, \omega(t_i)\in B_i,\quad\forall\, i=1,\ldots,k\right\},
\end{equation}
where $k\in \NN$, and $0\leq t_1<\ldots<t_k\leq t$, $B_i\in\cG$, for all $i=1,\ldots,k$. Then $\{\cB^{\Omega}_t\}_{t\geq 0}$ is a filtration of $\sigma$-algebras. We let $\cB^{\Omega}:=\cup_{t\geq 0} \cB^{\Omega}_t$. Let ${\bf e}^{\Omega}$ be the stopping time defined by
\begin{align}
\label{eq:exit_time}
{\bf e}^{\Omega}(\omega):=\inf\{t\geq 0:\, \omega(t) \in\overline{\partial_1\Omega}\},\quad\forall\, \omega\in\cX^{\Omega}.
\end{align}

We can now define
\begin{defn}[The local martingale problem associated to the operator $L$ on the domain $\Omega$]
\label{def:Martingale_problem}
Let $z\in\underline\Omega$. We say that a probability measure, $\QQ^{z,\Omega}$, on the filtered probability space $(\cX^{\Omega}, \{\cB^{\Omega}_t\}_{t\geq 0},\cB^{\Omega})$ is a solution to the martingale problem associated to the operator $L$ on the domain $\Omega$, if for all functions $\varphi\in C^{\infty}_c(\underline{\Omega})$, the process defined by
\begin{equation}
\label{eq:Martingale}
M^{\varphi}(t,\omega):=\varphi(\omega(t\wedge {\bf e}^{\Omega}))-\varphi(\omega(0)) - \int_0^{t\wedge {\bf e}^{\Omega}} L\varphi(\omega(r))\, dr,\quad\forall\, t \geq 0,\quad\forall\, \omega\in \cX^{\Omega},
\end{equation}
is a $\QQ^{z,\Omega}$-martingale with respect to the filtration $\{\cB^{\Omega}_t\}_{t \geq 0}$, and
\begin{equation}
\label{eq:Initial_cond}
\QQ^{z,\Omega}(\omega(0)=z) = 1.
\end{equation}
\end{defn}

\begin{rmk}[Stopped martingale problem]
See \cite[\S 2]{Athreya_Barlow_Bass_Perkins_2002} for the definition of a stopped martingale problem in the spirit of our Definition \ref{def:Martingale_problem}.
\end{rmk}

The main result of this section is the existence of solutions the the local martingale problem associated to the singular Kimura operator $L$ on the domain $\Omega$:
\begin{prop}[Solutions to the local martingale problem]
\label{prop:Solution_local_martingale_problem}
Let $z\in\underline{\Omega}$. Then there is a solution, $\QQ^{z,\Omega}$, to
the local martingale problem associated to the operator $L$ on the domain $\Omega$. 
\end{prop}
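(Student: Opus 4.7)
The plan is to take $\QQ^{z,\Omega}$ to be the distribution on $\cX^{\Omega}$ of the continuous Markov process $\{Z^{0,\Omega}(t)\}_{t\geq 0}$ constructed in Proposition \ref{prop:Stopped_process}, started at $z$. The initial condition \eqref{eq:Initial_cond} is then immediate from $Z^{0,\Omega}(0)=z$, and since the exit time $\mathbf{e}^{\Omega}$ under the pushforward coincides with the stopping time $\tau_{\Omega}$ of \eqref{eq:tau_Omega}, all the work lies in verifying that, for each $\varphi\in C^{\infty}_c(\underline{\Omega})$, the process $M^\varphi$ of \eqref{eq:Martingale} is a $\QQ^{z,\Omega}$-martingale with respect to $\{\cB^{\Omega}_t\}_{t\geq 0}$.

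The analytic heart of the argument is the pointwise Duhamel identity
\begin{equation}
\label{eq:Duhamel_plan}
(T^{\Omega}_t\varphi)(z')-\varphi(z')=\int_0^t (T^{\Omega}_s L\varphi)(z')\,ds,\qquad\forall\,z'\in\underline{\Omega},\;t\geq 0.
\end{equation}
Although $L\varphi$ carries logarithmic singularities near $\partial_0\Omega$ (from the $x_i\ln x_j$ factors in \eqref{eq:Singular_operator} acting on $\varphi_{x_i}$, which need not vanish on $\{x_j=0\}$), the weight $\prod x_i^{b_i-1}$ renders $L\varphi\in L^q(\Omega;d\mu)$ for every $q\in[1,\infty)$; in particular $L\varphi\in\cH$. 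Integrating by parts against $d\mu$ produces the divergence identity $Q(\varphi,v)=-(L\varphi,v)_{\cH}$ for all $v\in\cF$, so $\varphi$ lies in the $L^2$-domain of the generator of $\{T^{\Omega}_t\}_{t\geq 0}$ and that generator acts on $\varphi$ as the classical $L\varphi$. Standard contraction-semigroup theory then gives \eqref{eq:Duhamel_plan} in the $\cH$-sense. To upgrade this to a genuine pointwise identity valid at every $z'\in\underline{\Omega}$ (including $z'\in\partial_0\Omega$), I would appeal to the continuous modifications afforded by \cite[Corollary 4.1]{Epstein_Mazzeo_cont_est} together with the bound
$$|(T^{\Omega}_s L\varphi)(z')|\leq \|p^{\Omega}(s,z',\cdot)\|_{L^q(\Omega;d\mu)}\|L\varphi\|_{L^{q'}(\Omega;d\mu)}\leq C\,s^{-\beta},\qquad \beta<1,$$
supplied by Lemma \ref{lem:L_q_transition_density}, which makes the time integral absolutely convergent and continuous in $z'$.

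With \eqref{eq:Duhamel_plan} in hand, the stochastic representation \eqref{eq:Connection_semigroup} yields $(T^{\Omega}_t\varphi)(z')=\EE_{\PP^{z'}}[\varphi(Z^{0,\Omega}(t\wedge\tau_{\Omega}))]$, where I use that $\varphi$ vanishes on $\overline{\partial_1\Omega}$ and $Z^{0,\Omega}(t)\in\overline{\partial_1\Omega}$ for $t\geq\tau_{\Omega}$ by \eqref{eq:Process_after_stopping_time}. A Fubini argument, again justified by Lemma \ref{lem:L_q_transition_density}, converts the right-hand side of \eqref{eq:Duhamel_plan} into $\EE_{\PP^{z'}}\bigl[\int_0^{t\wedge\tau_{\Omega}}L\varphi(Z^{0,\Omega}(r))\,dr\bigr]$, so that $\EE_{\PP^{z'}}[M^\varphi(t,Z^{0,\Omega}(\cdot))]=0$ for every $z'\in\underline{\Omega}$. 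The Markov property \eqref{eq:Markov_prop_stopped_process} applied at time $s$ then lifts this unconditional identity to the conditional statement $\EE_{\PP^z}[M^\varphi(t)-M^\varphi(s)\mid\cF_s]=0$: on $\{s\geq\tau_{\Omega}\}$ both sides vanish identically because the path is frozen on $\overline{\partial_1\Omega}$ and $\varphi$ vanishes there, whereas on $\{s<\tau_{\Omega}\}$ one invokes \eqref{eq:Duhamel_plan} with the restarted initial point $Z^{0,\Omega}(s)\in\underline{\Omega}$ and time $t-s$. Pushing forward to $(\cX^{\Omega},\cB^{\Omega})$ delivers the $\QQ^{z,\Omega}$-martingale property.

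The main obstacle will be the pointwise Duhamel identity \eqref{eq:Duhamel_plan} at boundary starting points $z'\in\partial_0\Omega$: the $L^2$-version is routine, but removing the almost-everywhere ambiguity and interpreting the time integral classically at singular starting points requires both the H\"older regularity of \cite{Epstein_Mazzeo_cont_est} and the $L^q$-distribution estimates of Lemma \ref{lem:L_q_transition_density}—precisely the reason \S \ref{sec:Fundamental_solution} developed those estimates in such detail.
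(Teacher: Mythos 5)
Your proposal is correct and follows essentially the same route as the paper: take $\QQ^{z,\Omega}$ to be the pushforward of the law constructed in Proposition \ref{prop:Stopped_process}, use the $L^q$-estimates of Lemma \ref{lem:L_q_transition_density} to make $\int_0^{t\wedge\mathbf{e}^{\Omega}}|L\varphi(\omega(r))|\,dr$ integrable, and combine the semigroup (Duhamel) identity, upgraded to a pointwise statement via \cite[Corollary 4.1]{Epstein_Mazzeo_cont_est}, with the Markov property and a Fubini argument to obtain the conditional martingale identity. The differences are only organizational: the paper re-derives the required conditional expectation on path space by testing against cylinder functions and works with $\int_{\eps}^{t-s}T^{\Omega}_r L\varphi\,dr$ before letting $\eps\downarrow 0$, whereas you invoke the already-proved Markov property \eqref{eq:Markov_prop_stopped_process} (extended to the unbounded $L\varphi$ by truncation) and justify the identity down to $s=0$ directly through the $s^{-\beta}$, $\beta<1$, bound, together with an explicit form-domain argument identifying $L\varphi$ with the generator applied to $\varphi$ — both versions are sound.
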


\begin{rmk}[Application of Proposition \ref{prop:Solution_local_martingale_problem}]
We apply the existence of solutions to the local martingale problem associated to $L$ on the domain $\Omega$, to build weak solutions to the singular Kimura stochastic differential equation \eqref{eq:Kimura_SDE_singular}. The weak solutions to \eqref{eq:Kimura_SDE_singular} that we build in Proposition \ref{eq:Weak_solution_semigroup} are associated to the semigroup $\{T^{\Omega}_t\}_{t\geq 0}$ through identity \eqref{prop:Stopped_solutions}. This process is stopped when it hits the portion $\partial_1\Omega$ of the boundary of the domain $\Omega$. In Theorem \ref{thm:Connection_semigroup_Markov_solution}, we improve this result in that we no longer require the process to be stopped on $\partial_1\Omega$.
\end{rmk}

\begin{proof}[Proof of Proposition \ref{prop:Solution_local_martingale_problem}]
Let $\QQ^{z,\Omega}$ be the probability measure induced on the space $\cX^{\Omega}$ by the probability measure $\PP^z$ constructed in Proposition \ref{prop:Stopped_process}. That is, for all $k\in \NN$, $0\leq t_1<\ldots<t_k<\infty$, $B_i\in\cG$, for all $i=1,\ldots,k$, we set
$$
\QQ^{z,\Omega}(\omega(t_i)\in B_i,\, \forall\, i=1,\ldots, k) = \PP^z\left(Z^{0,\Omega}(t_i)\in B_i,\, \forall\, i=1,\ldots, k\right).
$$
It is clear that $\QQ^{z,\Omega}$ satisfies the initial condition
\eqref{eq:Initial_cond} because $\PP^z(Z^{0,\Omega}(0)=z)=1$. It remains to
show that the processes defined in \eqref{eq:Martingale} are $\QQ^{z,\Omega}$-martingales. We split the proof into two steps. We let $\varphi\in C^{\infty}_c(\underline{\Omega})$, and we see that $L\varphi$ is an unbounded function on $\Omega$. For this reason, we show in the first step that
\begin{equation}
\label{eq:Expectation_integral_term}
\EE_{\QQ^{z,\Omega}}\left[\int_0^{t\wedge{\bf e}^{\Omega}}|L\varphi(\omega(r))|\, dr\right]<\infty,\quad\forall\, t \geq 0.
\end{equation}
In the second step, we prove that the process $\{M^{\varphi}(t)\}_{t\geq 0}$ defined by \eqref{eq:Martingale} is indeed a $\QQ^{z,\Omega}$-martingale with respect to the filtration $\{\cB^{\Omega}_t\}_{t\geq 0}$.

\setcounter{step}{0}
\begin{step}[Proof of \eqref{eq:Expectation_integral_term}]
\label{step:Expectation_integral_term}
From the expression \eqref{eq:Singular_operator} of the differential operator $L$, and the fact that $\varphi\in C^{\infty}_c(\underline\Omega)$, we see that $L\varphi\in L^p(\Omega;d\mu)$, for all $p\geq 1$. Let $q_0\in (1,2)$ be the constant appearing in the conclusion of Lemma \ref{lem:L_q_transition_density}, and choose $q\in (1,q_0)$. Let $p\in (1,\infty)$ be the conjugate exponent of $q$. 

Using the fact that  
$$
\int_0^{t\wedge{\bf e}^{\Omega}}L\varphi(\omega(r))\, dr = \int_0^t L\varphi(\omega(r))
\mathbf{1}_{\{r<{\bf e}^{\Omega}\}}\, dr,
$$
and property \eqref{eq:Connection_semigroup}, we have that
\begin{align*}
\EE_{\QQ^{z,\Omega}}\left[\int_0^{t\wedge{\bf e}^{\Omega}}L\varphi(\omega(r))\, dr\right]
&=\EE_{\QQ^{z,\Omega}}\left[\int_0^t L\varphi(\omega(r)) \mathbf{1}_{\{r<{\bf e}^{\Omega}\}}\, dr\right]\\
&= \int_0^t \left(T^{\Omega}_r L\varphi\right)(z)\, dr\\
&= \int_0^t\int_{\underline\Omega}L\varphi(w)p^{\Omega}(r,z,w)\, d\mu(w) \, dr,
\end{align*}
and by H\"older's inequality, we obtain
\begin{align*}
\EE_{\QQ^{z,\Omega}}\left[\int_0^{t\wedge{\bf e}^{\Omega}}|L\varphi(\omega(r))|\, dr\right]
&\leq \int_0^t \|L\varphi\|_{L^p(\Omega;d\mu)} \|p^{\Omega}(r,z,\cdot)\|_{L^q(\Omega;d\mu)} \, dr.
\end{align*}
Inequality \eqref{eq:L_q_transition_density} shows that there is a positive constant, $C_0$, such that
\begin{align*}
\EE_{\QQ^{z,\Omega}}\left[\int_0^{t\wedge{\bf e}^{\Omega}}|L\varphi(\omega(r))|\, dr\right]
&\leq C_0 \|L\varphi\|_{L^p(\Omega;d\mu)}.
\end{align*}
This concludes the proof of inequality \eqref{eq:Expectation_integral_term}.
\end{step}

\begin{step}[The martingale property]
\label{step:Martingale_property}
We now prove that the process $\{M^{\varphi}(t)\}_{t\geq 0}$ defined by \eqref{eq:Martingale} is indeed a $\QQ^{z,\Omega}$-martingale with respect to the filtration $\{\cB^{\Omega}_t\}_{t\geq 0}$, that is, we show that
$$
\EE_{\QQ^{z,\Omega}}\left[M^{\varphi}(t)\Big{|}\cB_s\right] = M^{\varphi}(s),\quad \QQ^{z,\Omega}\hbox{-a.s.},\quad\forall\, 0\leq s\leq t.
$$
The preceding identity is equivalent to proving that, for all $0\leq s\leq t$, we have 
\begin{equation}
\label{eq:Martingale_reformulation}
\EE_{\QQ^{z,\Omega}}\left[\varphi(\omega(t \wedge {\bf e}^{\Omega})) \Big{|} \cB^{\Omega}_s\right] = \varphi(\omega(s \wedge {\bf e}^{\Omega})) + \EE_{\QQ^{z,\Omega}}\left[\int_s^t L\varphi(\omega(r)) \mathbf{1}_{\{r < {\bf e}^{\Omega}\}} \, dr \Big{|} \cB^{\Omega}_s\right],\quad \QQ^{z,\Omega}\hbox{-a.s.}
\end{equation}
We need the following
\begin{claim}
\label{claim:Martingale_conditional_expectation}
For all $0\leq s \leq t$, we have that
\begin{align}
\label{eq:Martingale_conditional_expectation}
\EE_{\QQ^{z,\Omega}}\left[\varphi(\omega(t\wedge {\bf e}^{\Omega})) \Big{|} \cB_s\right] 
&= \left(T^{\Omega}_{t-s}\varphi\right)(\omega(s))\mathbf{1}_{\{s<{\bf e}^{\Omega}\}},\quad \QQ^{z,\Omega}\hbox{-a.s.}
\end{align}
\end{claim}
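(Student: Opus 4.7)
The plan is to transport the claim from the path space $(\cX^\Omega, \{\cB^\Omega_s\}_{s\geq 0}, \QQ^{z,\Omega})$ back to the underlying filtered space $(\cZ, \{\cF_s\}_{s\geq 0}, \PP^z)$ on which the continuous Markov process $\{Z^{0,\Omega}(t)\}_{t\geq 0}$ was constructed, and then to invoke the Markov property (4.3) of Proposition \ref{prop:Stopped_process}. The first observation that drives the argument is that since $\varphi \in C^{\infty}_c(\underline\Omega)$, its support is a compact subset of $\underline\Omega=\Omega\cup\partial_0\Omega$, hence disjoint from $\overline{\partial_1\Omega}$. Extending $\varphi$ by zero to $\bar\Omega$ therefore gives a bounded Borel function agreeing with $\varphi$ on $\underline\Omega$, and for every path $\omega\in \cX^\Omega$ we have
$$
\varphi(\omega(t\wedge {\bf e}^\Omega)) = \varphi(\omega(t))\,\mathbf{1}_{\{t<{\bf e}^\Omega\}},
$$
since on $\{t\geq {\bf e}^\Omega\}$ the argument $\omega({\bf e}^\Omega)$ lies in $\overline{\partial_1\Omega}$ and $\varphi$ vanishes there. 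This removes the stopping time from inside $\varphi$.

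Second, by construction of $\QQ^{z,\Omega}$ as the push-forward of $\PP^z$ under $\omega\mapsto Z^{0,\Omega}(\cdot,\omega)$, together with the identification ${\bf e}^\Omega\circ Z^{0,\Omega}=\tau_\Omega$ from (4.12), the desired identity (5.7) is equivalent to
$$
\EE_{\PP^z}\!\left[\varphi(Z^{0,\Omega}(t))\,\mathbf{1}_{\{t<\tau_\Omega\}}\,\big|\,\cF_s\right] = (T^\Omega_{t-s}\varphi)(Z^{0,\Omega}(s))\,\mathbf{1}_{\{s<\tau_\Omega\}}, \quad \PP^z\text{-a.s.}
$$
Using once more that $\varphi\equiv 0$ on $\overline{\partial_1\Omega}$, combined with the absorption property (4.5) which guarantees $Z^{0,\Omega}(t)\in\overline{\partial_1\Omega}$ for all $t\geq\tau_\Omega$, the integrand simplifies to $\varphi(Z^{0,\Omega}(t))$. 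The Markov property (4.3) of Proposition \ref{prop:Stopped_process}, applied to the bounded Borel function $\varphi|_{\underline\Omega}$, then yields
$$
\EE_{\PP^z}\!\left[\varphi(Z^{0,\Omega}(t))\,\big|\,\cF_s\right] = (T^\Omega_{t-s}\varphi)(Z^{0,\Omega}(s)).
$$

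It only remains to reintroduce the indicator $\mathbf{1}_{\{s<\tau_\Omega\}}$. On $\{s\geq \tau_\Omega\}$ the absorption property forces $\varphi(Z^{0,\Omega}(t))=0$, so the conditional expectation vanishes $\PP^z$-a.s., which matches the right-hand side where $\mathbf{1}_{\{s<\tau_\Omega\}}=0$. On $\{s<\tau_\Omega\}$, $Z^{0,\Omega}(s)\in\underline\Omega$ so that Lemma \ref{lem:Regularity_fundam} makes $(T^\Omega_{t-s}\varphi)(Z^{0,\Omega}(s))$ a well-defined pointwise quantity, and inserting the indicator does not change its value. Splicing the two cases together gives the claim.

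The only point requiring mild care, and the main potential obstacle, is verifying that the push-forward genuinely transports $\cB^\Omega_s$-conditional expectations into $\cF_s$-conditional expectations. This reduces to the fact that $\cB^\Omega_s$ is generated by the coordinate maps $\omega\mapsto\omega(u)$ for $u\leq s$, together with the progressive measurability of $\{Z^{0,\Omega}(t)\}_{t\geq 0}$ with respect to $\{\cF_t\}_{t\geq 0}$ established in Step 2 of the proof of Proposition \ref{prop:Stopped_process}; once that bookkeeping is unpacked, the argument is routine. No further ingredient beyond the Markov property (4.3) and the compact support of $\varphi$ is needed.
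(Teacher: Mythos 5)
Your proof is correct, but it takes a genuinely different route from the paper. The paper proves Claim \ref{claim:Martingale_conditional_expectation} directly on path space: it tests both sides of \eqref{eq:Martingale_conditional_expectation} against cylinder functionals $\prod_i\psi_i(\omega(s_i))$ with $s_i\leq s$, expands both expectations as iterated integrals against the sub-Markovian densities $p^{\Omega}$ using the construction of the finite-dimensional distributions (Lemma \ref{lem:Process_dyadic_times}), and concludes via the Chapman--Kolmogorov/semigroup identity for $p^{\Omega}$ -- in effect re-deriving the Markov property in exactly the form needed on $(\cX^{\Omega},\{\cB^{\Omega}_t\},\QQ^{z,\Omega})$. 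You instead pull the identity back through the push-forward $\QQ^{z,\Omega}=\PP^z\circ\Phi^{-1}$, $\Phi(\zeta)=Z^{0,\Omega}(\cdot,\zeta)$, and quote the already-established Markov property \eqref{eq:Markov_prop_stopped_process} as a black box, using the compact support of $\varphi$ to trade $\varphi(\omega(t\wedge{\bf e}^{\Omega}))$ for $\varphi(\omega(t))\mathbf{1}_{\{t<{\bf e}^{\Omega}\}}$ (an identity the paper itself uses later in the same proof) and the absorption property \eqref{eq:Process_after_stopping_time} to drop and then reinsert the indicators. This is legitimate and non-circular, since Proposition \ref{prop:Stopped_process} precedes the Claim, and it is shorter because no new computation with $p^{\Omega}$ is needed; what it costs is the transport bookkeeping you flag as routine, which should at least record that ${\bf e}^{\Omega}\circ\Phi=\tau_{\Omega}$ pathwise (compare \eqref{eq:tau_Omega} and \eqref{eq:exit_time}), that $\Phi^{-1}(\cB^{\Omega}_s)\subseteq\cF_s$ by adaptedness, and that $\{s<{\bf e}^{\Omega}\}\in\cB^{\Omega}_s$ (first entrance time of a closed set by continuous paths), the last point being needed for the right-hand side of \eqref{eq:Martingale_conditional_expectation} to be $\cB^{\Omega}_s$-measurable -- a requirement the paper's proof shares. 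Note also that only one direction of your claimed ``equivalence'' is used: since $\Phi^{-1}(\cB^{\Omega}_s)\subseteq\cF_s$ and the right-hand side is a $\cB^{\Omega}_s$-measurable functional of the path composed with $\Phi$, the $\PP^z$-a.s.\ identity implies \eqref{eq:Martingale_conditional_expectation} by testing against pulled-back $\cB^{\Omega}_s$-measurable functions; the converse is neither immediate nor needed. Both your argument and the paper's rely on the standing convention that functions on $\underline\Omega$ are extended by zero beyond $\underline\Omega$, which is what makes \eqref{eq:Markov_prop_stopped_process} consistent with \eqref{eq:Connection_semigroup}.
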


\begin{proof}[Proof of Claim \ref{claim:Martingale_conditional_expectation}]
Because the right hand-side in identity \eqref{eq:Martingale_conditional_expectation} is $\cB^{\Omega}_s$-measurable, it is sufficient to show that, for all $k\in \NN$, $0\leq s_1<\ldots<s_k\leq s$, and all $\psi_i\in C_c(\underline\Omega)$, for all $i=1,\ldots,k$, we have that
\begin{align}
\label{eq:Conditional_expectation_interp}
\EE_{\QQ^{z,\Omega}}\left[\varphi(\omega(t\wedge {\bf e}^{\Omega})) \prod_{i=1}^k \psi_i(\omega(s_i))\right] 
&= \EE_{\QQ^{z,\Omega}}\left[ \left(T^{\Omega}_{t-s}\varphi\right)(\omega(s)) \mathbf{1}_{\{s<{\bf e}^{\Omega}\}} \prod_{i=1}^k \psi_i(\omega(s_i))\right].
\end{align}
Using the construction of the measure $\PP^z$ and the representation of the semigroup $\{T^{\Omega}_t\}_{t\geq 0}$ in terms of the fundamental solution $\{p^{\Omega}(t,\cdot,\cdot)\}_{t\geq 0}$, we obtain that the term on the left hand-side of the preceding identity takes the form
\begin{align*}
&\int_{\underline\Omega} \psi_1(z_1)p^{\Omega}(s_1,z,z_1)\, d\mu(z_1)
\int_{\underline\Omega} \psi_2(z_2)p^{\Omega}(s_2-s_1,z_1,z_2)\, d\mu(z_2)
\ldots\\
&\int_{\underline\Omega} \psi_k(z_k)p^{\Omega}(s_k-s_{k-1},z_{k-1},z_k)\, d\mu(z_k)
\int_{\underline\Omega} \varphi(w)p^{\Omega}(t-s_k,z_k,w)\, d\mu(w),
\end{align*}
while the term on the right-hand side takes the form
\begin{align*}
&\int_{\underline\Omega} \psi_1(z_1)p^{\Omega}(s_1,z,z_1)\, d\mu(z_1)
\int_{\underline\Omega} \psi_2(z_2)p^{\Omega}(s_2-s_1,z_1,z_2)\, d\mu(z_2)
\ldots\\
&\int_{\underline\Omega} \psi_k(z_k)p^{\Omega}(s_k-s_{k-1},z_{k-1},z_k)\, d\mu(z_k)
\int_{\underline\Omega} p^{\Omega}(s-s_k,z_k,z)\, d\mu(z)
\int_{\underline\Omega} \varphi(w)p^{\Omega}(t-s,z,w)\, d\mu(w).
\end{align*}
By the semigroup property, we obtain that
\begin{align*}
\int_{\underline\Omega} p^{\Omega}(s-s_k,z_k,z)\, d\mu(z)
\int_{\underline\Omega} \varphi(w)p^{\Omega}(t-s,z,w)\, d\mu(w)
&= \int_{\underline\Omega} \varphi(w)p^{\Omega}(t-s_k,z_k,w)\, d\mu(w),
\end{align*}
and therefore, identity \eqref{eq:Conditional_expectation_interp} immediately follows. This completes the proof.
\end{proof}
Without loss of generality, we may assume that $s<t$, otherwise the identity \eqref{eq:Martingale_reformulation} is obviously true. Let $\eps\in (0,t-s)$. It follows from \cite[Proposition 1.1.5]{Ethier_Kurtz} that we have the following identity in the $L^2(\Omega;d\mu)$-sense:
$$
T^{\Omega}_{t-s} \varphi -T^{\Omega}_{\eps}\varphi = \int_{\eps}^{t-s} T^{\Omega}_r L\varphi\, dr.
$$
By \cite[Corollary 4.1]{Epstein_Mazzeo_cont_est}, the functions $T^{\Omega}_{t-s} \varphi$, $T^{\Omega}_{\eps}\varphi$ and $T^{\Omega}_r L\varphi$ are bounded and H\"older continuous on $\underline\Omega$, and so, the preceding identity holds pointwise on  $\underline\Omega$, that is, we have 
$$
\left(T^{\Omega}_{t-s} \varphi\right)(w) -\left(T^{\Omega}_{\eps}\varphi\right)(w) 
= \int_{\eps}^{t-s} \left(T^{\Omega}_r L\varphi\right)(w)\, dr,\quad\forall\, w\in\underline\Omega.
$$
We now let $w=\omega(s)$ in the preceding identity, and using \eqref{eq:Martingale_conditional_expectation}, we obtain
\begin{align*}
&\EE_{\QQ^{z,\Omega}}\left[\varphi(\omega(t \wedge {\bf e}^{\Omega})) \Big{|} \cB^{\Omega}_s\right] 
- \EE_{\QQ^{z,\Omega}}\left[\varphi(\omega((s+\eps) \wedge {\bf e}^{\Omega})) \Big{|} \cB^{\Omega}_s\right] \\
&\qquad
= \int_{\eps}^{t-s} \EE_{\QQ^{z,\Omega}}\left[ L\varphi(\omega((s+r) \wedge {\bf e}^{\Omega})) \Big{|} \cB^{\Omega}_s\right]\, dr.
\end{align*}
Because the function $\varphi$ has compact support in $\underline\Omega$, we see that
$$
\varphi(\omega(t \wedge {\bf e}^{\Omega})) = \varphi(\omega(t)) \mathbf{1}_{\{t < {\bf e}^{\Omega}\}},\quad\forall\, t\geq 0,
$$
and so, the preceding equality becomes
\begin{align*}
&\quad\EE_{\QQ^{z,\Omega}}\left[\varphi(\omega(t)) \mathbf{1}_{\{t < {\bf e}^{\Omega}\}} \Big{|} \cB^{\Omega}_s\right] 
- \EE_{\QQ^{z,\Omega}}\left[\varphi(\omega(s+\eps)) \mathbf{1}_{\{s+\eps < {\bf e}^{\Omega}\}} \Big{|} \cB^{\Omega}_s\right] \\
&\quad\quad= \int_{\eps}^{t-s} \EE_{\QQ^{z,\Omega}}\left[ L\varphi(\omega(s+r)) \mathbf{1}_{\{s+r < {\bf e}^{\Omega}\}} \Big{|}\cB^{\Omega}_s\right]\, dr.
\end{align*}
Using property \eqref{eq:Expectation_integral_term}, we may apply Fubini's Theorem on the right-hand side of the preceding equality to obtain
\begin{equation}
\begin{aligned}
\label{eq:Martingale_reformulation_eps}
&\quad\EE_{\QQ^{z,\Omega}}\left[\varphi(\omega(t)) \mathbf{1}_{\{t < {\bf e}^{\Omega}\}} \Big{|} \cB^{\Omega}_s\right] 
- \EE_{\QQ^{z,\Omega}}\left[\varphi(\omega(s+\eps)) \mathbf{1}_{\{s+\eps < {\bf e}^{\Omega}\}} \Big{|} \cB^{\Omega}_s\right] \\
&\quad\quad= \EE_{\QQ^{z,\Omega}}\left[\int_{\eps}^{t-s}  L\varphi(\omega(s+r)) \mathbf{1}_{\{s+r < {\bf e}^{\Omega}\}}\, dr \Big{|}\cB^{\Omega}_s\right].
\end{aligned}
\end{equation}
We now show that by letting $\eps\downarrow 0$ in identity \eqref{eq:Martingale_reformulation_eps}, we obtain \eqref{eq:Martingale_reformulation}. Notice that by the Dominated Convergence Theorem, the fact that the function $\varphi$ is bounded, continuous and has compact support in $\underline\Omega$, and that the paths $t\mapsto \omega(t)$ are continuous, we have that
$$
\EE_{\QQ^{z,\Omega}}\left[\left|\varphi(\omega(s+\eps)) \mathbf{1}_{\{s+\eps < {\bf e}^{\Omega}\}} - \varphi(\omega(s)) \mathbf{1}_{\{s < {\bf e}^{\Omega}\}}\right|\right] \rightarrow 0,\quad\hbox{ as }\eps\downarrow 0.
$$
Because the random variable $\varphi(\omega(s)) \mathbf{1}_{\{s < {\bf e}^{\Omega}\}}$ is $\cB^{\Omega}_s$-measurable, we have that
\begin{equation}
\label{eq:Martingale_reformulation_eps_first_term}
\EE_{\QQ^{z,\Omega}}\left[\varphi(\omega(s+\eps)) \mathbf{1}_{\{s+\eps < {\bf e}^{\Omega}\}} \Big{|}\cB^{\Omega}_s\right]  \rightarrow \varphi(\omega(s)) \mathbf{1}_{\{s < {\bf e}^{\Omega}\}},\quad \QQ^{z,\Omega}\hbox{-a.s.}
\end{equation}
Moreover, using identity \eqref{eq:Expectation_integral_term}, we can apply the Dominated Convergence Theorem to the integral term in identity \eqref{eq:Martingale_reformulation_eps} to obtain that
$$
\EE_{\QQ^{z,\Omega}}\left[\left| \int_0^{t-s}  L\varphi(\omega(s+r)) \mathbf{1}_{\{s+r < {\bf e}^{\Omega}\}}\, dr- \int_{\eps}^{t-s}  L\varphi(\omega(s+r)) \mathbf{1}_{\{s+r < {\bf e}^{\Omega}\}}\, dr\right|\right]\rightarrow 0,\quad\hbox{ as }\eps\downarrow 0.
$$
Therefore, it follows that
\begin{equation}
\label{eq:Martingale_reformulation_eps_second_term}
\begin{aligned}
&\EE_{\QQ^{z,\Omega}}\left[ \int_{\eps}^{t-s}  L\varphi(\omega(s+r)) \mathbf{1}_{\{s+r < {\bf e}^{\Omega}\}}\, dr \Big{|}\cB^{\Omega}_s\right]\\ &\qquad\rightarrow 
\EE_{\QQ^{z,\Omega}}\left[ \int_0^{t-s}  L\varphi(\omega(s+r))\mathbf{1}_{\{s+r < {\bf e}^{\Omega}\}}\, dr \Big{|}\cB^{\Omega}_s\right],
\quad \QQ^{z,\Omega}\hbox{-a.s.},
\end{aligned}
\end{equation}
as $\eps$ tends to $0$. From identity \eqref{eq:Martingale_reformulation_eps}, using properties \eqref{eq:Martingale_reformulation_eps_first_term} and \eqref{eq:Martingale_reformulation_eps_second_term}, we obtain \eqref{eq:Martingale_reformulation}, which immediately implies that the process $\{M^{\varphi}(t)\}_{t \geq 0}$ is a $\QQ^{z,\Omega}$-martingale with respect to the filtration $\{\cB^{\Omega}_t\}_{t\geq 0}$.
\end{step}
This concludes the proof.
\end{proof}

\section{Solution to the Kimura stochastic differential equation with singular drift}
\label{sec:Kimura_SDE}
In this section, we establish the relationship between the semigroup $\{T^{\Omega}_t\}_{t\geq 0}$ constructed in \S \ref{sec:Inhomogeneous_initial_value_problem}, and the unique Markov solutions to the Kimura stochastic differential equation with singular drift \eqref{eq:Kimura_SDE_singular}. In \S \ref{subsec:Stopped_solutions_Kimura_SDE_singular}, we introduce the stochastic differential equation \eqref{eq:Kimura_SDE_singular} and we describe the relationship between the coefficients of the stochastic differential equation \eqref{eq:Kimura_SDE_singular} and those of the operator $L$, defined in \eqref{eq:Singular_operator}. In Proposition \ref{prop:Stopped_solutions}, we establish the first relationship between the semigroup $\{T^{\Omega}_t\}_{t\geq 0}$ and solutions to the singular Kimura stochastic differential equation \eqref{eq:Kimura_SDE_singular} that are stopped when the process hits the non-degenerate boundary of a domain $\Omega\subseteq S_{n,m}$. In \S \ref{subsec:Semigroup_SDE}, we prove Theorem \ref{thm:Connection_semigroup_Markov_solution}, where we improve Proposition \ref{prop:Stopped_solutions} in that we no longer require the solution to the Kimura stochastic differential equation \eqref{eq:Kimura_SDE_singular} to be stopped on the non-degenerate boundary of the domain $\Omega$. Theorem \ref{thm:Connection_semigroup_Markov_solution} is then used in \S \ref{subsec:Stochastic_representation_and_Harnack} to prove Theorem \ref{thm:Stochastic_representation} where we establish that functions defined by the stochastic representation formula \eqref{eq:Stochastic_representation_cont} satisfy the scale-invariant Harnack inequality \eqref{eq:Harnack_cont}. This is the key result that we use in \S \ref{sec:Harnack_inequality} to complete the proof of the Harnack inequality for nonnegative solution to the parabolic equations defined by the standard Kimura operator $\widehat L$.

\subsection{Stopped solutions to the Kimura stochastic differential equation with singular drift}
\label{subsec:Stopped_solutions_Kimura_SDE_singular}
Recall that the singular Kimura stochastic differential equation with
logarithmic drift is given by \eqref{eq:Kimura_SDE_singular}. The coefficients of the stochastic differential equation \eqref{eq:Kimura_SDE_singular} are related to the coefficients of the differential operator $L$ defined in \eqref{eq:Singular_operator}, as follows. For all $i=1,\ldots, n$, $j=1,\ldots, n+m$, and $l=1,\ldots,m$, we let
\begin{equation}
\label{eq:Coeff_Kimura_SDE_L}
\begin{aligned}
g_i(z) &:=	b_ia_{ii}+x_i\left(\partial_{x_i} a_{ii}+\sum_{j=1}^n\left(\tilde a_{ij}+\delta_{ij}\tilde a_{ii}+x_j\partial_{x_j}\tilde a_{ij} + \tilde a_{ij}(b_j-1)\right)+\sum_{l=1}^m \partial_{y_l} c_{il}\right),\\
e_l(z) &:= \sum_{i=1}^n\left(x_i\partial_{x_i}c_{il}+b_ic_{il}\right)+\sum_{k=1}^m \partial_{y_k} d_{lk},\\
f_{ij}(z) &:= \sum_{j=1}^n \left(\partial_{x_i} b_j +\sum_{k=1}^n x_k\tilde a_{ik}\partial_{x_k} b_j +\sum_{l=1}^m c_{il} \partial_{y_l} b_j\right),\\
f_{n+l,j}(z) &:= \sum_{i=1}^n x_ic_{il}\partial_{x_i} b_j+\sum_{k=1}^m d_{lk} \partial_{y_k} b_j.
\end{aligned}
\end{equation}
To construct the dispersion coefficient matrix, $(\sigma(z))$, appearing in \eqref{eq:Kimura_SDE_singular}, we introduce the diffusion matrix, $(D(z))$, by letting, for all $i,j=1,\ldots,n$ and all $l,k=1,\ldots,m$,
\begin{equation}
\label{eq:Matrix_D}
\begin{aligned}
D_{ii}(z) &:=2a_{ii}(z)+2x_i\tilde a_{ii}(z),\\
D_{ij}(z) &:=2\sqrt{x_ix_j}\tilde a_{ij}(z),\quad i\neq j,\\
D_{i,n+l}(z)=D_{n+l,i}(z) &:=4\sqrt{x_i} c_{il}(z),\\
D_{n+l,n+k}(z) &=2d_{lk}(z).
\end{aligned}
\end{equation}
We now argue that there is a suitable function $(\sigma(z))$ such that identity \eqref{eq:Relation_sigma_D} holds. Notice that conditions \eqref{eq:Ellipticity_all_second_order_coeff}, \eqref{eq:Outside_neighborhood_O}  and \eqref{eq:Outside_neighborhood_O_a_ii} imply that the matrix $(D(z))$ is strictly elliptic. From Assumption \ref{assump:Coeff}, it follows that the coefficients $a_{ii}$, $\tilde a_{ij}$, $c_{il}$ and $d_{lk}$ are smooth functions of the variable $z=(x,y)$ on $\bar S_{n,m}$, and in particular they are smooth functions of the variable $(\sqrt{x}, y)$ on $\bar S_{n,m}$, where we denote
\begin{equation}
\label{eq:Sqrt_x}
\sqrt{x} = \left(\sqrt{x_1}, \sqrt{x_2}, \ldots, \sqrt{x_n}\right),\quad\forall\, x\in \RR^n_+.
\end{equation}
We obtain that the matrix $D$ defined in \eqref{eq:Matrix_D} is smooth in the variables $(\sqrt{x}, y)$ on $\bar S_{n,m}$. We let $\widetilde D(\sqrt{x},y) = D(x,y)$, for all $(x,y)\in \bar S_{n,m}$. Because the matrix $\widetilde D$ is strictly elliptic and smooth, we can build an extension from $\bar S_{n,m}$ to $\RR^{n+m}$, which we denote the same as the matrix $\widetilde D$, such that the extended matrix remains strictly elliptic and smooth on $\RR^{n+m}$. We can now apply \cite[Lemma 6.1.1]{FriedmanSDE} to the matrix $\widetilde D$, to obtain that there is a matrix $\widetilde \sigma \in C^{\infty}(\RR^{n+m};\RR^{n+m}\times\RR^{n+m})$, such that $\widetilde\sigma\widetilde\sigma^*=\widetilde D$. Letting now
$$
\sigma(x^2,y):= \widetilde \sigma(x,y), \quad\forall (x,y)\in \bar S_{n,m},
$$ 
we obtain that
\begin{equation}
\label{eq:Relation_sigma_D}
(\sigma\sigma^*)(z)=D(z),\quad \forall\, z\in\bar S_{n,m},
\end{equation}
and that $\sigma$ is a smooth function in the variables $(\sqrt{x},y)$ on $\bar S_{n,m}$. In particular, the function $\sigma$ belongs to the classical H\"older space $C^{\alpha}(\bar S_{n,m})$ and to the anisotropic H\"older space $C^{\alpha}_{WF}(\bar S_{n,m})$ (defined in \eqref{eq:Anisotropic_Holder_space}), for all $\alpha \in [0,1/2]$. 

The choice of the `square root', $(\sigma(z))$, of the positive-definite matrix $(D(z))$ is irrelevant for the question of existence and uniqueness of weak solutions to the stochastic differential equations \eqref{eq:Kimura_SDE_singular}, as \cite[Problem 5.4.7]{KaratzasShreve1991} shows. 

Let $\Omega\subseteq S_{n,m}$ be an open set, and let $z\in\underline{\Omega}$. In this section, we consider weak solutions, $\{Z^{\Omega}(t)\}_{0\leq t <\tau_{\Omega}}$, to the stochastic differential equation \eqref{eq:Kimura_SDE_singular}, defined up to the stopping time $\tau_{\Omega}$,
\begin{equation}
\label{eq:Exit_time_Omega}
\tau_{\Omega} :=\lim_{k\rightarrow\infty} \tau_{\Omega_k},
\end{equation}
where 
\begin{equation}
\label{eq:Omega_k}
\underline\Omega_k:=\{z\in\underline\Omega:\ \dist(z,\partial_1\Omega)>1/k\},\quad\forall\, k\in\NN,
\end{equation}
is an increasing family of compact sets in $\underline\Omega$, chosen such that $\underline\Omega=\cup_{k\in\NN} \underline\Omega_k$, and
\begin{equation}
\label{eq:tau_Omega_k}
\tau_{\Omega_k}:= \inf\{t \geq 0:\, Z^{\Omega}(t)\notin \underline\Omega_k \},\quad\forall\, k\in\NN.
\end{equation}

\begin{prop}[Weak solutions to the stochastic differential equation \eqref{eq:Kimura_SDE_singular} on $\Omega$]
\label{prop:Stopped_solutions}
Let $\Omega\subseteq S_{n,m}$ be an open set, and let $z\in\underline{\Omega}$. Then there is a filtered probability space, $(\widetilde\cX, \widetilde\cB, \{\widetilde\cB_t\}_{t\geq 0},\PP^{z,\Omega})$, an $(n+m)$-dimensional Brownian motion, $\{W(t)\}_{t\geq 0}$, and a process, $\{Z^{\Omega}(t)\}_{t\geq 0}$, that solves the Kimura stochastic differential equation with singular drift \eqref{eq:Kimura_SDE_singular}, for all $0\leq t<\tau_{\Omega}$, and satisfies the initial condition, $Z^{\Omega}(0)=z$, $\PP^{z,\Omega}$-a.s. Moreover, the process $\{Z^{\Omega}(t)\}_{t\geq 0}$ satisfies the Markov property, and for all bounded, Borel measurable functions, $f:\underline\Omega\rightarrow\RR$, and $t\geq 0$, we have that
\begin{align}
\label{eq:Weak_solution_semigroup}
\left(T^{\Omega}_t f\right)(z) = \EE_{\PP^{z,\Omega }} \left[f(Z^{\Omega}(t)) \mathbf{1}_{\{t <\tau_{\Omega}\}}\right],\quad\forall z\in\underline\Omega.
\end{align}
\end{prop}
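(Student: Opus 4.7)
The plan is to convert the solution $\QQ^{z,\Omega}$ to the local martingale problem from Proposition \ref{prop:Solution_local_martingale_problem} into a weak solution of the stochastic differential equation \eqref{eq:Kimura_SDE_singular} via the standard representation theorem for continuous martingales with prescribed quadratic variation. The underlying process is the continuous Markov process $\{Z^{0,\Omega}(t)\}_{t\geq 0}$ from Proposition \ref{prop:Stopped_process}, so the Markov property, the initial condition $Z^{\Omega}(0)=z$, and the semigroup identity \eqref{eq:Weak_solution_semigroup} are already in hand via \eqref{eq:Markov_prop_stopped_process}, \eqref{eq:Initial_cond}, and \eqref{eq:Connection_semigroup} respectively; only the driving Brownian motion needs to be exhibited.

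First I would use the exhaustion $\{\underline\Omega_k\}$ defined in \eqref{eq:Omega_k} with stopping times $\tau_{\Omega_k}$ from \eqref{eq:tau_Omega_k}, on a neighborhood of which the coordinates $x_i$ stay bounded away from zero, so that $g_i$, $e_l$, $f_{ij}\ln x_j$, and $\sigma_{ij}$ are all bounded. By applying the martingale problem of Definition \ref{def:Martingale_problem} to truncated coordinate test functions $\varphi\in C^{\infty}_c(\underline\Omega)$ chosen equal to $x_i$, $y_l$, $x_ix_j$, $x_iy_l$, $y_ly_k$ on a neighborhood of $\underline\Omega_k$, I obtain under $\QQ^{z,\Omega}$ continuous martingales
\begin{align*}
M^i_k(t) &:= X_i(t\wedge\tau_{\Omega_k}) - X_i(0) - \int_0^{t\wedge\tau_{\Omega_k}}\left(g_i+X_i\sum_{j=1}^n f_{ij}\ln X_j\right)(\omega(r))\,dr,\\
N^l_k(t) &:= Y_l(t\wedge\tau_{\Omega_k}) - Y_l(0) - \int_0^{t\wedge\tau_{\Omega_k}}\left(e_l+\sum_{j=1}^n f_{n+l,j}\ln X_j\right)(\omega(r))\,dr,
\end{align*}
whose joint predictable quadratic covariation matrix, read off from the product test functions, matches exactly the covariation prescribed by the SDE \eqref{eq:Kimura_SDE_singular} via the factorization $(\sigma\sigma^*)(z)=D(z)$ from \eqref{eq:Relation_sigma_D} and \eqref{eq:Matrix_D}.

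Second, I would enlarge the probability space by an independent $(n+m)$-dimensional Brownian motion and invoke the representation theorem for continuous local martingales with absolutely continuous quadratic variation, as in \cite[Theorem 3.4.2]{KaratzasShreve1991}, to produce a single $(n+m)$-dimensional Brownian motion $\{W(t)\}_{t\geq 0}$ such that $M^i_k(t)=\int_0^{t\wedge\tau_{\Omega_k}}\sqrt{X_i(r)}\sum_j\sigma_{ij}(\omega(r))\,dW_j(r)$ and $N^l_k(t)=\int_0^{t\wedge\tau_{\Omega_k}}\sum_j\sigma_{n+l,j}(\omega(r))\,dW_j(r)$. Choosing $W$ consistently across $k$ and passing to the limit extends the SDE representation to $[0,\tau_\Omega)$, using $\tau_{\Omega_k}\uparrow\tau_\Omega$ from \eqref{eq:Exit_time_Omega}.

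The main obstacle is the logarithmic singularity of $f_{ij}\ln x_j$ as one approaches $\partial_0\Omega$: the representation theorems cited above require locally bounded coefficients, so direct application on all of $\underline\Omega$ is impossible. Exhausting $\underline\Omega$ by the $\underline\Omega_k$ reduces matters to a bounded-coefficient problem on each piece, and passage to the limit is routine since $\tau_{\Omega_k}\uparrow\tau_\Omega$. A secondary technical point is that the martingale problem is formulated only for test functions in $C^{\infty}_c(\underline\Omega)$, so the coordinate maps must first be truncated; taking truncations equal to the coordinates on a neighborhood of $\underline\Omega_k$ makes the cut-off invisible to the processes stopped at $\tau_{\Omega_k}$ and preserves the drift and covariation identities needed above.
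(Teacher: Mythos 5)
Your proposal follows essentially the same route as the paper: the solution $\QQ^{z,\Omega}$ of the local martingale problem, tested against cut-off coordinate and product functions $z_i\psi_k$, $z_iz_j\psi_k$, yields continuous martingales with the prescribed drift and covariation, the representation theorem \cite[Theorem 3.4.2]{KaratzasShreve1991} on an enlarged space produces the driving Brownian motion up to $\tau_\Omega$, and the Markov property together with \eqref{eq:Weak_solution_semigroup} is inherited from Proposition \ref{prop:Stopped_process} because the laws agree. Two points the paper handles explicitly that you gloss over, both routine: instead of producing a Brownian motion level by level and arguing consistency as $\tau_{\Omega_k}\uparrow\tau_\Omega$, the paper builds a single $\widetilde W$ directly by stopping the stochastic integrals at $\tau_\Omega$ and filling in with the independent auxiliary Brownian motion afterwards; and since Theorem 3.4.2 only delivers \emph{some} square root $\rho$ of the covariance $\alpha$, an additional orthogonal rotation $R(z)$ (via \cite[Problem 5.4.7]{KaratzasShreve1991}) is needed to pass from $\rho$ to the specific dispersion rows $\sqrt{x_i}\,\sigma_i$, $\sigma_{n+l}$ appearing in \eqref{eq:Kimura_SDE_singular}.
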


\begin{proof}
Our method of the proof is based on the existence of solutions to the martingale problem associated to the operator $L$ on the domain $\Omega$, established in Proposition \ref{prop:Solution_local_martingale_problem}, and the proof of \cite[Proposition 5.4.6]{KaratzasShreve1991}. We need to be careful that our process is defined for all $t\in [0,\tau_{\Omega})$, as opposed to $[0,\infty)$, when we make use of the arguments used to prove \cite[Proposition 5.4.6]{KaratzasShreve1991}. Let $\psi_k:\underline\Omega\rightarrow[0,1]$ be a smooth function with compact support, such that $\psi_k(z)=1$, for all $z\in \underline\Omega_k$. Let $\{Z^{\Omega}(t)\}_{t\geq 0}$ be the coordinate process on the filtered probability space $(\cX^{\Omega}, \cB^{\Omega}, \{\cB^{\Omega}_t\}_{t\geq 0})$, that is, we let $Z^{\Omega}(t,\omega)=\omega(t)$, for all $\omega\in\cX^{\Omega}$, and let $\QQ^{z,\Omega}$ be the solution to the martingale problem associated to the operator $L$ on $\Omega$, constructed in Proposition \ref{prop:Solution_local_martingale_problem}. 

We denote the drift coefficients of the stochastic differential equation \eqref{eq:Kimura_SDE_singular} by
\begin{equation}
\label{eq:Drift_coeff}
\begin{aligned}
\delta_i(z) &:= b_i(z) + x_i \sum_{j=1}^n f_{ij}(z) \ln x_j,\quad\forall\, i=1,\ldots,n,\\ 	
\delta_{n+l}(z) &:= e_l(z) + \sum_{j=1}^n f_{n+l,j}(z) \ln x_j,\quad\forall\, l=1,\ldots,m, 	
\end{aligned}
\end{equation}
and we denote the covariance coefficient matrix of the stochastic differential equation \eqref{eq:Kimura_SDE_singular} by
\begin{equation}
\label{eq:Definition_matrix_alpha}
\begin{aligned}
\alpha_{ij}(z)&:=\sqrt{x_ix_j} D_{ij}(z),\quad\forall\, i,j=1,\ldots,n,\\
\alpha_{il}(z)=\alpha_{li}(z)&:=\sqrt{x_i} D_{i,n+l}(z),\quad\forall\, i=1,\ldots,n,\,\forall\, l=1,\ldots,m,\\
\alpha_{lk}(z)&:=  D_{n+l,n+k}(z),\quad\forall\, l,k=1,\ldots,m,
\end{aligned}
\end{equation}
where $(D(z))$ is defined in \eqref{eq:Matrix_D}, for all $z\in\bar S_{n,m}$. Similarly to the proof of \cite[Proposition 5.4.6]{KaratzasShreve1991}, we consider the processes,
\begin{align*}
M_i(t) &:= Z^{\Omega}_i(t) - Z^{\Omega}_i(0) - \int_0^t \delta_i(Z^{\Omega}(s))\, ds,\quad\forall\, 0\leq t<\tau_{\Omega},\\
Q_{ij}(t) &:= M_i(t)M_j(t) - M_i(0)M_j(0) - \int_0^t \alpha_{ij}(Z^{\Omega}(s))\, ds,\quad\forall\, 0\leq t<\tau_{\Omega},
\end{align*}
for all $i,j=1,\ldots,n+m$. By choosing $\varphi(z)=z_i\psi_k(z)$, for all $i=1,\ldots,n+m$, as test function in \eqref{eq:Martingale}, it follows from Proposition \ref{prop:Solution_local_martingale_problem} that the process $\{M_i(t\wedge \tau_{K_k})\}_{t\geq 0}$ 
is a $\QQ^{z,\Omega}$-martingale with respect to the filtration
$\{\cB^{\Omega}_t\}_{t\geq 0}$, for all $k\in\NN$.  By choosing next
$\varphi(z)=z_iz_j\psi_k(z)$, for all $i,j=1,\ldots,n+m$, as test function in
\eqref{eq:Martingale}, it follows from Proposition
\ref{prop:Solution_local_martingale_problem} that the process $\{Q_{ij}(t\wedge
\tau_{K_k})\}_{t\geq 0}$ is also a $\QQ^{z,\Omega}$-martingale with respect to
the filtration $\{\cB^{\Omega}_t\}_{t\geq 0}$, for all $k\in\NN$. Because the
processes $\{M_i(t)\}$ and $\{Q_{ij}(t)\}$ are defined only for $t\in
[0,\tau_{\Omega})$, they are not local martingales in the classical sense of
\cite[Definition 1.5.15]{KaratzasShreve1991}, and so, we cannot apply directly
\cite[Theorem 3.4.2]{KaratzasShreve1991} to obtain the existence of a suitable
$(n+m)$-dimensional Brownian motion. We now describe the modifications we need
to use the proof of \cite[Theorem 3.4.2]{KaratzasShreve1991} to obtain the existence of weak solutions to equation \eqref{eq:Kimura_SDE_singular}, for all $0\leq t<\tau_{\Omega}$.

From identities \eqref{eq:Relation_sigma_D} and \eqref{eq:Definition_matrix_alpha}, using the fact that the diffusion matrix $(D(z))$ is strictly elliptic, it follows that the matrix $(\alpha(z))$, defined in \eqref{eq:Definition_matrix_alpha}, is symmetric and nonnegative definite on $\bar S_{n,m}$. Thus, there is an orthogonal matrix, $(Q(z))=(q_{ij}(z))$, and a nonnegative diagonal matrix, $(\Lambda(z))=\diag(\lambda_i(z))$, such that $\alpha(z)=Q^*(z)\Lambda(z)Q(z)$, for all $z\in\bar S_{n,m}$. Following \cite[Identity (3.4.7)]{KaratzasShreve1991}, we define the process
\begin{align*}
N_i(t) = \sum_{j=1}^{n+m}\int_0^t q_{ji}\left(Z^{\Omega}(s)\right)\, dM_j(s),\quad\forall\, 0\leq t<\tau_{\Omega},
\end{align*}
for all $i=1,\ldots,n+m$, and as in \cite[Identity (3.4.8)]{KaratzasShreve1991}, we see that
\begin{align*}
\left\langle N_i, N_j \right\rangle(t) = \delta_{ij}\int_0^t \lambda_{i}\left(Z^{\Omega}(s)\right)\, ds,\quad\forall\, 0\leq t<\tau_{\Omega},
\end{align*}
where $\left\langle \cdot,\cdot\right\rangle$ denotes the quadratic covariation. Then there is an extension of the filtered probability space $(\cX^{\Omega}, \cB^{\Omega}, \{\cB^{\Omega}_t\}_{t\geq 0}, \QQ^{z,\Omega})$, which we denote by $(\widetilde\cX, \widetilde\cB, \{\widetilde\cB_t\}_{t\geq 0}, \PP^{z,\Omega})$, that supports an independent $(n+m)$-dimensional Brownian motion, $\{\widetilde B(t)\}_{t \geq 0}$, such that by letting
\begin{align*}
\widetilde W_i(t) &=\int_0^{t\wedge\tau_{\Omega}} \mathbf{1}_{\left\{\lambda_i\left(Z^{\Omega}(s)\right)>0\right\}} \frac{1}{\sqrt{\lambda_i\left(Z^{\Omega}(s)\right)}}\, dN_i(s)\\
&\quad+\int_0^t\left(\mathbf{1}_{\left\{\lambda_i\left(Z^{\Omega}(s)\right)=0\right\}} \mathbf{1}_{\left\{s<\tau_{\Omega}\right\}}+ \mathbf{1}_{\left\{s\geq\tau_{\Omega}\right\}}\right)\, d\widetilde B_i(s),\quad\forall\, i=1,\ldots,n+m,
\end{align*}
the process $\{\widetilde W(t)\}_{t\geq 0}$ is an $(n+m)$-dimensional Brownian motion. We now let $\rho_{ij}(z):=q_{ij}(z)\sqrt{\lambda_j(z)}$, and similarly to \cite[Identity (3.4.12)]{KaratzasShreve1991}, we obtain the representation
$$
M_i(t) = \sum_{j=1}^{n+m}\int_0^t \rho_{ij}\left(Z^{\Omega}(s)\right)\, d\widetilde W_j(s),\quad\forall\, 0\leq t<\tau_{\Omega}.
$$
This implies that the process $\{Z^{\Omega}(t)\}_{t\in[0,\infty)}$ satisfies
\begin{equation}
\label{eq:Weak_sol_1}
Z^{\Omega}_i(t) = Z^{\Omega}_i(0) + \int_0^t \delta_i(Z^{\Omega}(s))\, ds+\sum_{j=1}^{n+m}\int_0^t \rho_{ij}\left(Z^{\Omega}(s)\right)\, d\widetilde W_j(s),\quad\forall\, 0\leq t<\tau_{\Omega},
\end{equation}
for all $i=1,\ldots,n+m$. From identity \eqref{eq:Relation_sigma_D} and the fact that the matrix $(\rho(z))$ has the property that $\rho(z)\rho^*(z)=\alpha(z)$, where $(\alpha(z))$ is defined in \eqref{eq:Definition_matrix_alpha}, it follows from \cite[Problem 5.4.7]{KaratzasShreve1991}, that there is an orthogonal matrix $(R(z))$, such that 
\begin{align}
\label{eq:Rotation_matrix}
\sqrt{x_i}\sigma_i(z)=\rho_i(z)R(z),\quad\hbox{and}\quad \sigma_{n+l}(z)=\rho_{n+l}(z)R(z),
\end{align}
for all $i=1,\ldots,n$ and all $l=1,\ldots,m$. By letting 
$$
W(t)=\int_0^{t\wedge\tau_{\Omega}} R^*(Z^{\Omega}(s))\, d\widetilde W(s) +\mathbf{1}_{\{t\geq \tau_{\Omega}\}}\int_{\tau_{\Omega}}^t \, d\widetilde W(s),\quad\forall\, t \geq 0,
$$
the process $\{W(t)\}_{t\geq 0}$ is an $(n+m)$-dimensional Brownian motion, and using identities \eqref{eq:Weak_sol_1}, \eqref{eq:Drift_coeff} and \eqref{eq:Rotation_matrix}, we see that the process $\{Z^{\Omega}(t)\}_{0\leq t<\tau_{\Omega}}$ is indeed a weak solution to the stochastic differential equation \eqref{eq:Kimura_SDE_singular}. 

From the proof of Proposition \ref{prop:Solution_local_martingale_problem}, it follows that the laws of the coordinate process $\{Z^{\Omega}(t)\}_{t\geq 0}$ restricted to the probability space $(\cX^{\Omega}, \cB^{\Omega}, \{\cB^{\Omega}_t\}_{t\geq 0}, \QQ^{z,\Omega})$, and that of the process $\{Z^{0,\Omega}(t)\}_{t\geq 0}$ constructed in Proposition \ref{prop:Stopped_process}, agree. The extension of the process $\{Z^{\Omega}(t)\}_{t\geq 0}$ from the probability space  $(\cX^{\Omega}, \cB^{\Omega}, \{\cB^{\Omega}_t\}_{t\geq 0}, \QQ^{z,\Omega})$ to the larger space $(\widetilde\cX, \widetilde\cB, \{\widetilde\cB_t\}_{t\geq 0}, \PP^{z,\Omega})$, is done my adding an $(n+m)$-dimensional independent Brownian motion, and so, the laws of the process $\{Z^{\Omega}(t)\}_{t\geq 0}$  on the space $(\widetilde\cX, \widetilde\cB, \{\widetilde\cB_t\}_{t\geq 0}, \PP^{z,\Omega})$, and that of the process $\{Z^{0,\Omega}(t)\}_{t\geq 0}$ constructed in Proposition \ref{prop:Stopped_process}, also agree. Thus, properties \eqref{eq:Markov_prop_stopped_process} and \eqref{eq:Process_after_stopping_time}, satisfied by the process $\{Z^{0,\Omega}(t)\}_{t\geq 0}$ imply that the process $\{Z^{\Omega}(t)\}_{t\geq 0}$ also satisfies the Markov property and \eqref{eq:Weak_solution_semigroup}, respectively. This concludes the proof.
\end{proof}

When $\Omega=S_{n,m}$, we denote by $\{Z(t)\}_{t \geq 0}$, instead of $\{Z^{\Omega}(t)\}_{t \geq 0}$, the process constructed in Proposition \ref{prop:Stopped_solutions}, and we denote by $\PP^z$, instead of $\PP^{z,\Omega}$ the probability measure appearing in the statement of Proposition \ref{prop:Stopped_solutions}. In this case, we have that $\tau_{S_{n,m}}=\infty$, since $\partial_1 S_{n,m}=\emptyset$, and the stochastic differential equation \eqref{eq:Kimura_SDE_singular} is satisfied for all $t \in (0,\infty)$. Because the process $\{Z(t)\}_{t \geq 0}$ satisfies the Markov property, it follows from \cite[Theorem 3.8]{Pop_2013a}, that the process $\{Z(t)\}_{t \geq 0}$ is the unique weak solution to the stochastic differential equation \eqref{eq:Kimura_SDE_singular}, in the class of Markov processes.

\subsection{Connection between semigroups and solutions to stochastic differential equations}
\label{subsec:Semigroup_SDE}
In this section, we give the proof of Theorem \ref{thm:Connection_semigroup_Markov_solution} where we establish with the aid of Proposition \ref{prop:Stopped_solutions}, the connection between the semigroup $\{T^{\Omega}_t\}_{t\geq 0}$, constructed in \S \ref{sec:Inhomogeneous_initial_value_problem}, and the unique weak solutions, $\{Z(t)\}$, to the stochastic differential equation \eqref{eq:Kimura_SDE_singular} on $S_{n,m}$, which satisfy the Markov property, established in \cite[Theorem 3.8]{Pop_2013a}. 

\begin{proof}[Proof of Theorem \ref{thm:Connection_semigroup_Markov_solution}]
Recall the definition of the increasing sequence of sets $\{\underline\Omega_k\}_{k\in\NN}$ in \eqref{eq:Omega_k}, and of the stopping times $\{\tau_{\Omega_k}\}_{k\in\NN}$ in \eqref{eq:tau_Omega_k}. Let $\{Z^{\Omega}(t)\}_{0\leq t<\tau_{\Omega}}$ be the Markov process constructed in Proposition \ref{prop:Stopped_solutions}. Using identity \eqref{eq:Weak_solution_semigroup}, we see that \eqref{eq:Connection_semigroup_Markov_process} follows if we prove that the probability distributions of the random variables $Z^{\Omega}(t)\mathbf{1}_{\{t<\tau_{\Omega}\}}$ under $\PP^{z,\Omega}$, and of $Z(t)\mathbf{1}_{\{t<\tau_{\Omega}\}}$ under $\PP^z$ agree, for all $t\geq 0$. In turn, using the fact that the sequence of stopping times $\{\tau_{\Omega_k}\}_{k\in\NN}$ converges to $\tau_{\Omega}$, as $k$ tends to $\infty$, this follows if we prove that the probability distributions of the random variables $Z^{\Omega}(t)\mathbf{1}_{\{t\leq\tau_{\Omega_k}\}}$ under $\PP^{z,\Omega}$, and of $Z(t)\mathbf{1}_{\{t\leq \tau_{\Omega_k}\}}$ under $\PP^z$ agree, for all $k\in\NN$ and all $t\geq 0$. Recall from the construction in Proposition \ref{prop:Stopped_solutions} that the probability distribution $\PP^{z,\Omega}$, restricted to the space $(\cX^{\Omega}, \cB^{\Omega})$, agrees with the solution to the martingale problem, $\QQ^{z,\Omega}$. When $\Omega=S_{n,m}$, we denote for brevity
$$
(\cX^{\Omega}, \{\cB^{\Omega}_t\}_{t\geq 0},\cB^{\Omega},\QQ^{z,\Omega}) = (\cX, \{\cB_t\}_{t\geq 0},\cB,\QQ^z),
$$
and we let $\{Z^0(t)\}_{t\geq 0}$ be the coordinate process on $(\cX,\cB)$. Thus, it is sufficient to prove that the probability distribution of the random variable
$$
\cX \ni \omega\mapsto Z^0(t,\omega) \mathbf{1}_{\{t\leq \tau_{\Omega_k}\}}
$$
is the same under the probability measures $\QQ^{z,\Omega}$ and $\QQ^{z}$, for all $k\in\NN$ and $t\geq 0$. 

Our method of the proof consists in building a new probability measure, $\widetilde\QQ^z$, on $(\cX,\cB)$ by combining the measures $\QQ^{z,\Omega}$ and $\QQ^{z}$, such that for all $j\in\NN$, $0\leq t_1<t_2<\ldots<t_j$, $B_i\in\cB(\bar S_{n,m})$, for $i=1,\ldots,j$, we have that
\begin{equation}
\label{eq:tilde_Q_Omega}
\begin{aligned}
&\widetilde \QQ^z\left(Z^0(t_i)\in B_i,\, t_i\leq\tau_{\Omega_k},\,\forall\, i=1,\ldots,j\right)\\
&\qquad= \QQ^{z,\Omega}\left(Z^0(t_i)\in B_i,\, t_i\leq\tau_{\Omega_k},\,\forall\, i=1,\ldots,j\right).
\end{aligned}
\end{equation}
We then prove that $\widetilde \QQ^z$ is also a solution of the Martingale Problem \ref{def:Martingale_problem} associated to the operator $L$ on $S_{n,m}$, and that the coordinate process $\{Z^0(t)\}_{t\geq 0}$ satisfies the Markov property under $\widetilde \QQ^z$. From the uniqueness of weak solutions to the singular Kimura stochastic differential equation established in \cite[Theorem 3.8]{Pop_2013a}, it follows that the distributions of the coordinate process $\{Z^0(t)\}_{t\geq 0}$ under the measures $\widetilde\QQ^z$ and $\QQ^z$ must agree. Combining this property with \eqref{eq:tilde_Q_Omega}, it follows that indeed the distributions of the random variable $Z^0(t)\mathbf{1}_{\{t\leq \tau_{\Omega_k}\}}$ are the same under the probability measures
$\QQ^{z,\Omega}$ and $\QQ^{z}$, for all $k\in\NN$ and $t\geq 0$, which completes the proof.

\setcounter{step}{0}
\begin{step}[Construction of the probability measure $\widetilde\QQ^z$]
\label{step:Definition_tilde_Q}
Because $\cX^{\Omega}$ is a measurable set in $\cX$, we can view the probability measure $\QQ^{z,\Omega}$ as one defined on $(\cX,\cB)$. As usual, we let 
$$
\cB_{\tau_{\Omega_k}}:=\sigma\left(\omega(t\wedge\tau_{\Omega_k}):\, t \geq 0\right),
$$
and we consider the unique, up to sets of $\QQ^z$-measure zero, regular conditional probability distribution of $\QQ^{z}$ given $\cB_{\tau_{\Omega_k}}$, given by \cite[Theorem 5.3.18]{KaratzasShreve1991}. We denote by $\{\QQ^z_{\omega}\}$ a regular conditional probability distribution of $\QQ^{z}$ given $\cB_{\tau_{\Omega_k}}$, and using \cite[Theorem 1.3.4]{Stroock_Varadhan}, we can choose $\{\QQ^z_{\omega}\}$ such that
$$
\QQ^z_{\omega}(A)=\mathbf{1}_{\{\omega\in A\}},\quad\forall\, \omega\in\cX,\quad\forall\, A\in\cB_{\tau_{\Omega_k}},
$$
as opposed to the preceding identity holding for all $\omega\in\cX\backslash \cN$, where $\cN$ is a set of $\QQ^z$-measure zero. 
Using \cite[Definition 5.3.16]{KaratzasShreve1991} and the preceding identity, we see that the hypotheses of \cite[Theorem 6.1.2]{Stroock_Varadhan} are satisfied, and so, we can construct a unique probability measure on $(\cX,\cB)$, which we denote by
$$
\widetilde\QQ^z =\QQ^{z,\Omega} \otimes_{\tau_{\Omega_k}} \QQ^z,
$$
such that $\widetilde\QQ^z$ coincides with $\QQ^{z,\Omega}$ on $\cB_{\tau_{\Omega_k}}$, that is, identity \eqref{eq:tilde_Q_Omega} holds, and $\delta_{\omega}\otimes_{\tau_{\Omega_k}}\QQ^{z}$ is a regular conditional probability distribution of $\widetilde\QQ^z$ on $\cB_{\tau_{\Omega_k}}$, where the probability measure  $\delta_{\omega}\otimes_{\tau_{\Omega_k}}\QQ^{z}$ is defined as in \cite[Lemma 6.1.1]{Stroock_Varadhan}. Thus, it follows that for all $j\in\NN$, $0\leq t_1<t_2<\ldots<t_j$, $B_i\in\cB(\bar S_{n,m})$, for $i=1,\ldots,j$, we can choose a version of the conditional probability such that
\begin{align*}
\QQ^z\left(Z^0(t_i)\in B_i,\,\forall\, i=1,\ldots,j\Big{|}\cB_{\tau_{\Omega_k}}\right)(\omega) =
\QQ^z_{\omega}\left(Z^0(t_i)\in B_i,\,\forall\, i=1,\ldots,j\right),\quad\forall\, \omega\in\cX,
\end{align*}
and so, we can compute
\begin{equation}
\label{eq:tilde_Q}
\begin{aligned}
\widetilde \QQ^z\left(Z^0(t_i)\in B_i,\,\forall\, i=1,\ldots,j\right)
&= \QQ^{z,\Omega}\left(\QQ^z\left(Z^0(t_i)\in B_i,\,\forall\, i=1,\ldots,j\Big{|}\cB_{\tau_{\Omega_k}}\right)\right).
\end{aligned}
\end{equation}
This completes the construction of the probability measure $\widetilde\QQ^z$ on $(\cX,\cB)$.
\end{step}

\begin{step}[Proof that the probability measure $\widetilde\QQ^z$ solves the Martingale Problem \ref{def:Martingale_problem} on $S_{n,m}$]
\label{step:Martingale_problem_tilde_Q}
In this step, it is sufficient to show that the process defined by \eqref{eq:Martingale}, with ${\bf e}^{\Omega}=\infty$, is a $\widetilde\QQ^z$-martingale, for all functions $\varphi\in C^{\infty}_c(\bar S_{n,m})$. We apply \cite[Theorem 6.1.2]{Stroock_Varadhan} with $\theta(t):=M^{\varphi}(t)$ and $s=0$. Because the measure $\QQ^{z,\Omega}$ solves the Martingale Problem \ref{def:Martingale_problem} associated to the operator $L$ on $\Omega$, it follows that the process $\{M^{\varphi}(t\wedge\tau_{\Omega_k})\}_{t\geq 0}$ is a $\QQ^{z,\Omega}$-martingale. Recall that the probability measure $\QQ^z$ solves the Martingale Problem \ref{def:Martingale_problem} associated to the operator $L$ on $S_{n,m}$. It follows that the process $\{M^{\varphi}(t)-M^{\varphi}(t\wedge\tau_{\Omega_k}(\omega))\}_{t\geq 0}$ is a $\QQ^z_{\omega}$-martingale, for all $\omega\in\cX$. Thus, we obtain from \cite[Theorem 6.1.2]{Stroock_Varadhan} that $\{M^{\varphi}(t)\}$ is a $\widetilde\QQ^z$-martingale, which completes the proof of Step \ref{step:Martingale_problem_tilde_Q}.
\end{step}

\begin{step}[Proof of the Markov property of the coordinate process $\{Z^0(t)\}_{t\geq 0}$ under $\widetilde\QQ^z$]
\label{step:Markov_property_tilde_Q}
In this step, our goal is to prove that for all bounded, Borel measurable functions, $f:\bar S_{n,m}\rightarrow\RR$, and all $0\leq s<t$, we have that
\begin{equation*}
\EE_{\widetilde\QQ^z}\left[f\left(Z^0(t)\right)\big{|}\cB_s\right] = \EE_{\widetilde\QQ^z}\left[f\left(Z^0(t)\right)\big{|}Z^0(s)\right].
\end{equation*}
The preceding identity is equivalent to proving that, for all measurable sets, $A\in\cB_s$, we have that
\begin{equation}
\label{eq:Markov_property_tilde_Q_with_set}
\EE_{\widetilde\QQ^z}\left[\mathbf{1}_Af\left(Z^0(t)\right)\right] = \EE_{\widetilde\QQ^z}\left[\mathbf{1}_A\EE_{\widetilde\QQ^z}\left[f\left(Z^0(t)\right)\big{|}Z^0(s)\right]\right].
\end{equation}
Identity \eqref{eq:Markov_property_tilde_Q_with_set} follows by adding the following three equalities:
\begin{align}
\label{eq:Markov_property_tilde_Q_with_set_1}
\EE_{\widetilde\QQ^z}\left[\mathbf{1}_A \mathbf{1}_{\{\tau_{\Omega_k}>t>s\}}f\left(Z^0(t)\right)\right] 
&= \EE_{\widetilde\QQ^z}\left[\mathbf{1}_A\mathbf{1}_{\{\tau_{\Omega_k}>s\}}\EE_{\widetilde\QQ^z}\left[f\left(Z^0(t)\right)
\mathbf{1}_{\{\tau_{\Omega_k}>t\}}\big{|}Z^0(s) \right] \right]\\
\label{eq:Markov_property_tilde_Q_with_set_2}
\EE_{\widetilde\QQ^z}\left[\mathbf{1}_A \mathbf{1}_{\{t\geq \tau_{\Omega_k}>s\}}f\left(Z^0(t)\right)\right] 
&= \EE_{\widetilde\QQ^z}\left[\mathbf{1}_A\mathbf{1}_{\{\tau_{\Omega_k}>s\}}\EE_{\widetilde\QQ^z}\left[f\left(Z^0(t)\right)
\mathbf{1}_{\{t\geq \tau_{\Omega_k}\}}\big{|}Z^0(s) \right] \right]\\
\label{eq:Markov_property_tilde_Q_with_set_3}
\EE_{\widetilde\QQ^z}\left[\mathbf{1}_A \mathbf{1}_{\{t>s\geq \tau_{\Omega_k}\}}f\left(Z^0(t)\right)\right] 
&= \EE_{\widetilde\QQ^z}\left[\mathbf{1}_A\mathbf{1}_{\{s\geq \tau_{\Omega_k}\}}\EE_{\widetilde\QQ^z}\left[f\left(Z^0(t)\right)
\big{|}Z^0(s) \right] \right].
\end{align}
We now prove each one of the preceding equalities.

\setcounter{case}{0}
\begin{case}[Proof of identity \eqref{eq:Markov_property_tilde_Q_with_set_1}]
Using property \eqref{eq:tilde_Q} of the probability measure $\widetilde\QQ^z$, we have that
\begin{align*}
\EE_{\widetilde\QQ^z}\left[\mathbf{1}_A \mathbf{1}_{\{\tau_{\Omega_k}>t>s\}}f\left(Z^0(t)\right)\right] 
&= \EE_{\QQ^{z,\Omega}}\left[\mathbf{1}_A \mathbf{1}_{\{\tau_{\Omega_k}>s\}}\EE_{\QQ^{z,\Omega}}\left[\mathbf{1}_{\{\tau_{\Omega_k}>t\}}f\left(Z^0(t)\right)\big{|}\cB_s\right]\right],
\end{align*}
and using the Markov property for the measure $\QQ^{z,\Omega}$ established in Proposition \ref{prop:Stopped_solutions}, it follows that
\begin{align*}
\EE_{\widetilde\QQ^z}\left[\mathbf{1}_A \mathbf{1}_{\{\tau_{\Omega_k}>t>s\}}f\left(Z^0(t)\right)\right] 
&= \EE_{\QQ^{z,\Omega}}\left[\mathbf{1}_A \mathbf{1}_{\{\tau_{\Omega_k}>s\}}\EE_{\QQ^{z,\Omega}}\left[\mathbf{1}_{\{\tau_{\Omega_k}>t\}}f\left(Z^0(t)\right)\big{|}Z^0(s)\right]\right],
\end{align*}
which immediately implies \eqref{eq:Markov_property_tilde_Q_with_set_1} from property \eqref{eq:tilde_Q} of the probability measure $\widetilde\QQ^z$.
\end{case}

\begin{case}[Proof of identity \eqref{eq:Markov_property_tilde_Q_with_set_2}]
By property \eqref{eq:tilde_Q} of the probability measure $\widetilde\QQ^z$ and the tower property of conditional expectation, we have that
\begin{align*}
\EE_{\widetilde\QQ^z}\left[\mathbf{1}_A \mathbf{1}_{\{t\geq \tau_{\Omega_k}>s\}}f\left(Z^0(t)\right)\right] 
&=\EE_{\QQ^{z,\Omega}}\left[\mathbf{1}_A \mathbf{1}_{\{\tau_{\Omega_k}>s\}}\EE_{\QQ^{z,\Omega}}\left[\EE_{\QQ^z}\left[\mathbf{1}_{\{t\geq \tau_{\Omega_k}\}}f\left(Z^0(t)\right)\big{|}\cB_{\tau_{\Omega_k}}\right]\big{|}\cB_s\right]\right].
\end{align*}
The Markov property for the measure $\QQ^{z,\Omega}$ established in Proposition \ref{prop:Stopped_solutions}, gives that
\begin{align*}
\EE_{\widetilde\QQ^z}\left[\mathbf{1}_A \mathbf{1}_{\{t\geq \tau_{\Omega_k}>s\}}f\left(Z^0(t)\right)\right] 
&=\EE_{\QQ^{z,\Omega}}\left[\mathbf{1}_A \mathbf{1}_{\{\tau_{\Omega_k}>s\}}\EE_{\QQ^{z,\Omega}}\left[\EE_{\QQ^z}\left[\mathbf{1}_{\{t\geq \tau_{\Omega_k}\}}f\left(Z^0(t)\right)\big{|}\cB_{\tau_{\Omega_k}}\right]\big{|}Z^0(s)\right]\right],
\end{align*}
and so, using again \eqref{eq:tilde_Q}, we have that
\begin{align*}
\EE_{\widetilde\QQ^z}\left[\mathbf{1}_A \mathbf{1}_{\{t\geq \tau_{\Omega_k}>s\}}f\left(Z^0(t)\right)\right] 
&=\EE_{\widetilde\QQ^z}\left[\mathbf{1}_A \mathbf{1}_{\{\tau_{\Omega_k}>s\}}\EE_{\widetilde\QQ^z}\left[\EE_{\widetilde\QQ^z}\left[\mathbf{1}_{\{t\geq \tau_{\Omega_k}\}}f\left(Z^0(t)\right)\big{|}\cB_{\tau_{\Omega_k}}\right]\big{|}Z^0(s)\right]\right].
\end{align*}
Applying again the tower property to the most inner conditional expectation, it follows that
\begin{align*}
\EE_{\widetilde\QQ^z}\left[\mathbf{1}_A \mathbf{1}_{\{t\geq \tau_{\Omega_k}>s\}}f\left(Z^0(t)\right)\right] 
&=\EE_{\widetilde\QQ^z}\left[\mathbf{1}_A \mathbf{1}_{\{\tau_{\Omega_k}>s\}}\EE_{\widetilde\QQ^z}\left[\mathbf{1}_{\{t\geq \tau_{\Omega_k}\}}f\left(Z^0(t)\right)\big{|}Z^0(s)\right]\right],
\end{align*}
that is, we obtain identity \eqref{eq:Markov_property_tilde_Q_with_set_2}.
\end{case}

\begin{case}[Proof of identity \eqref{eq:Markov_property_tilde_Q_with_set_3}]
By property \eqref{eq:tilde_Q} of the probability measure $\widetilde\QQ^z$ and the tower property of conditional expectation, we have that
\begin{align*}
\EE_{\widetilde\QQ^z}\left[\mathbf{1}_A \mathbf{1}_{\{t>s\geq \tau_{\Omega_k}\}}f\left(Z^0(t)\right)\right] 
&= \EE_{\widetilde\QQ^z}
\left[ \mathbf{1}_A \mathbf{1}_{\{s\geq \tau_{\Omega_k}\}}\EE_{\QQ^z} \left[ \EE_{\QQ^z}\left[\mathbf{1}_{\{t> \tau_{\Omega_k}\}} f\left(Z^0(t)\right)\big{|} \cB_s \right] \big{|} \cB_{\tau_{\Omega_k}} \right] \right].
\end{align*}
The Markov property for the measure $\QQ^z$ established in Proposition \ref{prop:Stopped_solutions}, gives that
\begin{align*}
\EE_{\widetilde\QQ^z}\left[\mathbf{1}_A \mathbf{1}_{\{t>s\geq \tau_{\Omega_k}\}}f\left(Z^0(t)\right)\right] 
&= \EE_{\widetilde\QQ^z}
\left[ \mathbf{1}_A \mathbf{1}_{\{s\geq \tau_{\Omega_k}\}}\EE_{\QQ^z} \left[ \EE_{\QQ^z}\left[\mathbf{1}_{\{t> \tau_{\Omega_k}\}} f\left(Z^0(t)\right)\big{|} Z^0(s) \right] \big{|} \cB_{\tau_{\Omega_k}} \right] \right],
\end{align*}
which we can rewrite using property \eqref{eq:tilde_Q} of the probability measure $\widetilde\QQ^z$,
\begin{align*}
\EE_{\widetilde\QQ^z}\left[\mathbf{1}_A \mathbf{1}_{\{t>s\geq \tau_{\Omega_k}\}}f\left(Z^0(t)\right)\right] 
&= \EE_{\widetilde\QQ^z}
\left[ \mathbf{1}_A \mathbf{1}_{\{s\geq \tau_{\Omega_k}\}}\EE_{\widetilde\QQ^z} \left[ \EE_{\widetilde\QQ^z}\left[\mathbf{1}_{\{t> \tau_{\Omega_k}\}} f\left(Z^0(t)\right)\big{|} Z^0(s) \right] \big{|} \cB_{\tau_{\Omega_k}} \right] \right],
\end{align*}
The tower property of conditional expectation yields
\begin{align*}
\EE_{\widetilde\QQ^z}\left[\mathbf{1}_A \mathbf{1}_{\{t>s\geq \tau_{\Omega_k}\}}f\left(Z^0(t)\right)\right] 
&= \EE_{\widetilde\QQ^z}
\left[ \mathbf{1}_A \mathbf{1}_{\{s\geq \tau_{\Omega_k}\}} \EE_{\widetilde\QQ^z}\left[\mathbf{1}_{\{t> \tau_{\Omega_k}\}} f\left(Z^0(t)\right)\big{|} Z^0(s) \right] \right],
\end{align*}
which is equivalent to identity \eqref{eq:Markov_property_tilde_Q_with_set_3}
\end{case}
Combining identities \eqref{eq:Markov_property_tilde_Q_with_set_1}-\eqref{eq:Markov_property_tilde_Q_with_set_3}, we obtain \eqref{eq:Markov_property_tilde_Q_with_set}, which completes the proof of Step \ref{step:Markov_property_tilde_Q}.
\end{step}

Steps \ref{step:Definition_tilde_Q}, \ref{step:Martingale_problem_tilde_Q} and \ref{step:Markov_property_tilde_Q} complete the proof.
\end{proof}

\subsection{Stochastic representation of weak solutions and the Harnack inequality}
\label{subsec:Stochastic_representation_and_Harnack}
We now give the proof of Theorem \ref{thm:Stochastic_representation} which establishes the fact that functions defined by the stochastic representation \eqref{eq:Stochastic_representation_cont} satisfy the scale-invariant Harnack inequality \eqref{eq:Harnack_cont}. Our proof relies on the connection established in Theorem \ref{thm:Connection_semigroup_Markov_solution} between the semigroup $\{T^{\Omega}_t\}_{t\geq 0}$ and the unique weak solutions to the stochastic differential equation \eqref{eq:Kimura_SDE_singular}. Let $\Omega\subset S_{n,m}$ be an open, bounded set, and let $I=(t_1,t_2)$ be an open, bounded interval in $\RR_+$. Let $\{Z(t)\}_{t \geq 0}$ be the unique weak solution satisfying the Markov property to the singular Kimura stochastic differential equation \eqref{eq:Kimura_SDE_singular}, with initial condition $Z(0)=z\in \bar S_{n,m}$, established in \cite[Theorems 3.4 and 3.8]{Pop_2013a}. Let $\PP^z$ denote the law of the process $\{Z(t)\}_{t \geq 0}$. 

We denote $Q:=(t_1,t_2)\times\Omega$, and we recall the definition of the parabolic portion of the boundary $\eth Q$ in \eqref{eq:parabolic_boundary}. For a point $(t^0,z^0)\in \RR\times\bar S_{n,m}$, and a positive constant, $r$, we let
\begin{equation}
\label{eq:Parabolic_cylinder}
Q_r(t^0,z^0):=(t^0-r^2,t^0)\times B_r(z^0),
\end{equation}
where we recall that $B_r(z^0)$ denotes the open ball centered at $z^0$ of radius $r$ with respect to the distance function $\rho$.

We can now give the  
\begin{proof}[Proof of Theorem \ref{thm:Stochastic_representation}]
Because we assume that the boundary function $g$ is nonnegative and continuous on $\overline{\eth Q}$, there is a sequence of nonnegative, smooth functions, $\{g_k\}_{k\geq 0}\subset C^{\infty}(\bar Q)$, such that
\begin{equation}
\label{eq:Approximate_boundary_sequence_convergence}
\|g_k-g\|_{C(\overline{\eth Q})} \rightarrow 0,\quad\hbox{as } k \rightarrow \infty.
\end{equation}
Analogously to the representation \eqref{eq:Stochastic_representation_cont}, we define the sequence of functions, $\{u_k\}_{k\geq 0}$, by letting
\begin{equation}
\label{eq:Approximate_stochastic_representation}
u_k(t,z):=\EE_{\PP^z}\left[g_k(t-(t-t_1)\wedge\tau_{\Omega}, Z((t-t_1)\wedge\tau_{\Omega}))\right],\quad\forall\, (t,z) \in \bar Q,\quad\forall\, k\geq 0.
\end{equation}
Using property \eqref{eq:Approximate_boundary_sequence_convergence}, it immediately follows from the preceding equality that
\begin{equation}
\label{eq:Approximate_sequence_convergence}
\|u_k-u\|_{L^{\infty}(\bar Q)} \rightarrow 0,\quad\hbox{as } k \rightarrow \infty,
\end{equation}
and so, it is sufficient to prove that the Harnack inequality, \eqref{eq:Harnack_cont}, holds for each nonnegative function $u_k$, in order to deduce that the Harnack inequality also holds for the function $u$ defined by \eqref{eq:Stochastic_representation_cont}. We now prove that each nonnegative function $u_k$ is a local weak solution to the equation $u_t-Lu=0$ on $Q$. Therefore, it follows form \cite[Theorem 4.1]{Epstein_Mazzeo_cont_est} that the Harnack inequality holds for $u_k$. Let 
$$
h_k(t,z):=\partial_t g_k(t,z)-Lg_k(t,z),\quad\forall\, (t,z)\in\bar Q,\quad\forall\, k\in\NN.
$$
We fix $(t,z) \in\bar Q$, and using the fact that $g_k\in C^{\infty}(\bar Q)$, we may apply It\^o's rule to the process $\{g_k(t-r, Z(r))\}_{0\leq r \leq t-t_1}$. We obtain that
\begin{align*}
d g_k(t-r, Z(r)) &= \left[-\partial_t g_k(t-r, Z(r)) + Lg_k(t-r, Z(r))\right]\, dr + dM(r)\\
&= -h_k(t-r, Z(r)) \, dr + dM(r),
\end{align*}
where the process $\{M(r\wedge\tau_{\Omega})\}_{r \geq 0}$ is a martingale. Stopping the process at time $(t-t_1)\wedge\tau_{\Omega}$, we obtain that
\begin{align*}
g_k(t-(t-t_1)\wedge\tau_{\Omega}, Z((t-t_1)\wedge\tau_{\Omega}))
&= g_k(t,z)	-\int_0^{(t-t_1)\wedge\tau_{\Omega}} h_k(t-r, Z(r)) \, dr\\
&\quad + M((t-t_1)\wedge\tau_{\Omega}),
\end{align*}
from which it follows that
\begin{align*}
g_k(t,z) &= \EE_{\PP^z}\left[g_k(t-(t-t_1)\wedge\tau_{\Omega}, Z((t-t_1)\wedge\tau_{\Omega}))\right]	
+ \EE_{\PP^z} \left[\int_0^{(t-t_1)\wedge\tau_{\Omega}} h_k(t-r, Z(r)) \, dr\right].
\end{align*}
Using our definition \eqref{eq:Approximate_stochastic_representation} of the function $u_k(t,z)$, the preceding expression implies that
\begin{equation}
\label{eq:Decomposition_u_k}
u_k(t,z)= g_k(t,z) - \EE_{\PP^z} \left[\int_0^{(t-t_1)\wedge\tau_{\Omega}} h_k(t-r, Z(r)) \, dr\right].
\end{equation}
The integral term on the right-hand side of the preceding expression can be written as
$$
\EE_{\PP^z} \left[\int_0^{(t-t_1)\wedge\tau_{\Omega}} h_k(t-r, Z(r)) \, dr\right] = \EE_{\PP^z} \left[\int_0^{t-t_1} h_k(t-r, Z(r)) \mathbf{1}_{\{r < \tau_{\Omega}\}} \, dr\right],
$$
and using identity \eqref{eq:Connection_semigroup_Markov_process}, is follows that
\begin{align*}
\EE_{\PP^z} \left[\int_0^{(t-t_1)\wedge\tau_{\Omega}} h_k(t-r, Z(r)) \, dr\right] 
&= \int_0^{t-t_1} \left(T^{\Omega}_{r} h_k\right)(t-r,\cdot) (z)\, dr\\
&= \int_0^{t-t_1} \left(T^{\Omega}_{t-(r+t_1)} h_k\right)(r+t_1,\cdot) (z)\, dr.
\end{align*}
Identity \eqref{eq:Decomposition_u_k} together with the preceding equality, gives us that
\begin{equation*}
u_k(t,z)= g_k(t,z) - \int_0^{t-t_1} \left(T^{\Omega}_{t-(r+t_1)} h_k\right)(r+t_1,\cdot) (z)\, dr,\quad\forall\, (t,z) \in Q.
\end{equation*}
Because the function $g_k$ belongs to $C^{\infty}(\bar Q)$, it is clear that $g_k$ solves the equation $u_t-Lu=h_k$ on $Q$. Using the fact that $h_k \in L^2((t_1,t_2), L^2(\Omega;d\mu))$, it follows by Lemma \ref{lem:Existence_uniqueness_inhomogeneous} that the integral term on the right-hand side of the preceding identity, is a weak solution to the inhomogeneous problem $u_t-Lu=-h_k$ on $Q$. Thus, the function $u_k$ is a weak solution to the homogeneous problem $u_t-Lu=0$ on $Q$, and applying \cite[Theorem 4.1]{Epstein_Mazzeo_cont_est}, we obtain that $u_k$ satisfies the Harnack inequality. Letting $k$ tend to $\infty$ in \eqref{eq:Approximate_stochastic_representation} and using \eqref{eq:Approximate_sequence_convergence}, it follows that the function $u$ defined by \eqref{eq:Stochastic_representation_cont} also satisfies the Harnack inequality \eqref{eq:Harnack_cont}. This completes the proof.
\end{proof}

We have the following corollary of Theorem \ref{thm:Stochastic_representation}. This is a technical result needed in the proof of Lemma \ref{lem:Iterated_Harnack}.

\begin{cor}
\label{cor:Stochastic_representation}
There is a positive constant, $K_0$, such that the following hold. Let $T>t_1$ and let $g \in C(\overline{\eth Q})$ be a nonnegative function, and let
\begin{equation}
\label{eq:Stochastic_representation}
u(t,z):=\EE_{\PP^z}\left[g(t-(t-t_1)\wedge\tau_{\Omega}, Z((t-t_1)\wedge\tau_{\Omega}))\mathbf{1}_{\{(t-t_1)\wedge\tau_{\Omega} < T-t_1\}}\right],\quad\forall\, (t,z) \in \bar Q.
\end{equation}
Then the function $u$ satisfies the scale-invariant Harnack inequality, that is, for all $(t^0,z^0)\in\bar Q$ and $r>0$ such that $Q_{2r}(t^0,z^0) \subset Q$, we have that the scale-invariant Harnack inequality \eqref{eq:Harnack_cont} holds.
\end{cor}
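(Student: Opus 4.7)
The strategy parallels the proof of Theorem \ref{thm:Stochastic_representation}: approximate $g$ by smooth data, recast the approximant $u_k$ as a nonnegative weak solution of $u_t-Lu=0$, and apply the Moser-type Harnack inequality \cite[Theorem 4.1]{Epstein_Mazzeo_cont_est}. Let $\{g_k\}_{k\ge 0}\subset C^\infty(\bar Q)$ be nonnegative with $\|g_k-g\|_{C(\overline{\eth Q})}\to 0$, and set
\[
u_k(t,z):=\EE_{\PP^z}\left[g_k(t-S,Z(S))\,\mathbf{1}_{\{S<T-t_1\}}\right],\qquad S:=(t-t_1)\wedge\tau_{\Omega}.
\]
The dominated convergence theorem yields $\|u_k-u\|_{L^\infty(\bar Q)}\to 0$, so it suffices to prove the Harnack inequality for each $u_k$.

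I would apply It\^o's formula to $r\mapsto g_k(t-r,Z(r))$ on $[0,\tilde S]$, where $\tilde S:=S\wedge(T-t_1)$; since $\{S<T-t_1\}=\{\tilde S<T-t_1\}$, taking expectation and separating off the event $\{\tilde S=T-t_1\}=\{t\ge T\}\cap\{\tau_{\Omega}\ge T-t_1\}$ gives, with $h_k:=\partial_s g_k-Lg_k$,
\[
u_k(t,z)=g_k(t,z)-\int_0^{(t-t_1)\wedge(T-t_1)}(T^{\Omega}_r h_k(t-r,\cdot))(z)\,dr-\mathbf{1}_{\{t\ge T\}}(T^{\Omega}_{T-t_1}g_k(t-T+t_1,\cdot))(z),
\]
after using Theorem \ref{thm:Connection_semigroup_Markov_solution} to rewrite the path expectations as semigroup quantities. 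For $t<T$ the boundary-correction term vanishes and the formula reduces to the one from the proof of Theorem \ref{thm:Stochastic_representation}, so $u_k$ is a weak solution on $Q\cap\{t<T\}$.

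For $t>T$ one establishes $\partial_t u_k-Lu_k=0$ by an integration by parts in $r$ based on $\partial_r T^{\Omega}_r=LT^{\Omega}_r$, giving
\[
\int_0^{T-t_1}T^{\Omega}_r\partial_s h_k(t-r,\cdot)\,dr=h_k(t,\cdot)-T^{\Omega}_{T-t_1}h_k(t-T+t_1,\cdot)+\int_0^{T-t_1}LT^{\Omega}_r h_k(t-r,\cdot)\,dr,
\]
which combines with $\partial_t g_k-Lg_k=h_k$ and the identity $(\partial_t-L)T^{\Omega}_{T-t_1}g_k(t-T+t_1,\cdot)=T^{\Omega}_{T-t_1}h_k(t-T+t_1,\cdot)$ to produce cancellation. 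With $u_k\ge 0$ recognized as a weak solution, \cite[Theorem 4.1]{Epstein_Mazzeo_cont_est} yields \eqref{eq:Harnack_cont} for $u_k$, and passing $k\to\infty$ completes the argument.

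The main obstacle is the case $t>T$: the indicator $\mathbf{1}_{\{S<T-t_1\}}$ produces the boundary-correction term $T^{\Omega}_{T-t_1}g_k(t-T+t_1,\cdot)$ that is absent from the proof of Theorem \ref{thm:Stochastic_representation}, and the success of the proof hinges on showing, via Lemma \ref{lem:Existence_uniqueness_inhomogeneous} (Duhamel) and the integration-by-parts identity above, that this correction cancels cleanly against the semigroup integral so that $u_k$ is a genuine nonnegative weak solution to which Moser's Harnack applies.
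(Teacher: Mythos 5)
Your reduction to the approximants $u_k$ is fine (with the indicator kept in both $u_k$ and $u$, the uniform convergence is immediate), but the central step — the claim that for $t>T$ the boundary-correction term cancels so that $u_k$ is a weak solution of $u_t-Lu=0$ on all of $Q$ — is false, and no integration by parts can repair it. The identity you invoke, $(\partial_t-L)\,T^{\Omega}_{T-t_1}g_k(t-T+t_1,\cdot)=T^{\Omega}_{T-t_1}h_k(t-T+t_1,\cdot)$, requires commuting $L$ with the killed (Dirichlet) semigroup, $L\,T^{\Omega}_s f=T^{\Omega}_s Lf$; this holds only for $f$ in the Dirichlet form domain $\cF=H^1_0(\underline\Omega;d\mu)$, whereas $g_k(t-T+t_1,\cdot)$ does not vanish on $\partial_1\Omega$. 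A concrete counterexample: take $g\equiv 1$, so $g_k\equiv 1$ and $h_k\equiv 0$. Then for $t>T$ your $u_k(t,z)=\PP^z\bigl(\tau_{\Omega}<T-t_1\bigr)=1-(T^{\Omega}_{T-t_1}1)(z)$ is independent of $t$, while $L\bigl(T^{\Omega}_{T-t_1}1\bigr)(z)=\partial_s\big|_{s=T-t_1}(T^{\Omega}_s1)(z)\neq 0$ at interior points (the survival probability is strictly decreasing in $s$). Hence $\partial_t u_k-Lu_k\neq 0$ on $\{t>T\}$, so $u_k$ is not a weak solution there and \cite[Theorem 4.1]{Epstein_Mazzeo_cont_est} cannot be applied to it; your claimed cancellation would force $T^{\Omega}_{T-t_1}1$ to be $L$-harmonic, which it is not.

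The paper's proof sidesteps this by never keeping the indicator inside the approximants: it chooses nonnegative smooth $g_k$ with $\|g-g_k\|_{C(\bar Q\cap\{t<T\})}\to 0$ and $\|g_k\|_{C(\bar Q\cap\{t>T\})}\to 0$, defines $u_k$ by the untruncated formula \eqref{eq:Approximate_stochastic_representation}, so that each $u_k$ is an honest nonnegative weak solution on all of $Q$ (as in the proof of Theorem \ref{thm:Stochastic_representation}) and thus satisfies \eqref{eq:Harnack_cont} with the uniform constant $K_0$. The discrepancy created by the truncation is then absorbed into the term $\PP^z\bigl((t-t_1)\wedge\tau_{\Omega}=T-t_1\bigr)$ in \eqref{eq:Convergence_1}, which vanishes because the exit-time law has no atoms: $v(s,z)=\PP^z(\tau_{\Omega}>s)$ is the weak solution with initial datum $1$, continuous by \cite[Corollary 4.1]{Epstein_Mazzeo_cont_est}, so $\PP^z(\tau_{\Omega}=s)=0$. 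The Harnack inequality then passes to the uniform limit $u$, which need not itself be a solution past $t=T$ — precisely the point your argument misses. To fix your write-up you would have to adopt this mechanism (truncate the data $g_k$ rather than the stochastic representation, and prove the no-atom property of $\tau_{\Omega}$), since the function you are trying to exhibit as a solution simply is not one beyond the time threshold.
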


\begin{proof}
Similarly to the proof of Theorem \ref{thm:Stochastic_representation}, we let $\{g_k\}_{k\geq 0}\subset C^{\infty}(\bar Q)$ be a sequence of nonnegative, smooth functions, such that 
\begin{equation}
\label{eq:Approximate_boundary_sequence_convergence_1}
\begin{aligned}
\|g-g_k\|_{C(\bar Q\cap\{t < T\})} \rightarrow 0,\quad\hbox{as } k\rightarrow\infty,\\
\|g_k\|_{C(\bar Q\cap\{t > T\})} \rightarrow 0,\quad\hbox{as } k\rightarrow\infty.
\end{aligned}
\end{equation}
Theorem \ref{thm:Stochastic_representation} yields that the sequence of functions $\{u_k\}_{k\geq 0}$ defined by \eqref{eq:Approximate_stochastic_representation} satisfies the Harnack inequality \eqref{eq:Harnack_cont}, and it is sufficient to prove that property \eqref{eq:Approximate_sequence_convergence} holds in order to conclude that the function $u$ defined by \eqref{eq:Stochastic_representation} also satisfies the Harnack inequality \eqref{eq:Harnack_cont}. 

From definition \eqref{eq:Stochastic_representation} of the function $u(t,z)$ and property \eqref{eq:Approximate_boundary_sequence_convergence_1}, it follows that there is a positive constant, $C$, such that for all $(t,z)\in \bar Q$, we have that
\begin{equation}
\label{eq:Convergence_1}
|u(t,z)-u_k(t,z)| \leq \|g-g_k\|_{C(\bar Q\cap\{t < T\})} + \|g_k\|_{C(\bar Q\cap\{t > T\})} + C \PP^z((t-t_1)\wedge\tau_{\Omega}=T-t_1).
\end{equation}
We can assume without loss of generality that $t\geq T$. Let $v\in \cF((0,t-T+1)\times\Omega)$ be the unique weak solution to the homogeneous initial-value problem \eqref{eq:Homogeneous_initial_value_problem}, with initial condition $v(0,\cdot)\equiv 1$, given by Lemma \ref{lem:Existence_uniqueness_homogeneous}. Then Theorem \ref{thm:Connection_semigroup_Markov_solution} gives us that
$$
v(s,z) =\PP^z(\tau_{\Omega}>s),\quad\forall\, (s,z)\in (0,t-T+1)\times\underline\Omega.
$$
Using the fact that
\begin{align*}
\PP^z(\tau_{\Omega}=s) &=\lim_{\eps\downarrow 0} \PP^z(\tau_{\Omega}>s-\eps)-\PP^z(\tau_{\Omega}>s)\\
&=v(s-\eps,z)-v(s,z),
\end{align*}
and that the function $v$ is continuous by \cite[Corollary 4.1]{Epstein_Mazzeo_cont_est}, it follows that
$$
\PP^z(\tau_{\Omega}=s)=0,\quad\forall\, (s,z)\in (0,t-T+1)\times\underline\Omega.
$$
Thus, the preceding inequality together with \eqref{eq:Approximate_boundary_sequence_convergence_1} and \eqref{eq:Convergence_1} yield that property \eqref{eq:Approximate_sequence_convergence} holds. We can now conclude that the function $u$ defined in \eqref{eq:Stochastic_representation} satisfies the Harnack inequality \eqref{eq:Harnack_cont}, since each element of the sequence $\{u_k\}_{k\in\NN}$ also satisfy \eqref{eq:Harnack_cont}. This completes the proof.
\end{proof}

\section{Harnack's inequality for standard Kimura operators}
\label{sec:Harnack_inequality}
In this section, we consider the standard Kimura differential operator $\widehat L$ defined in \eqref{eq:Generator}. In Theorem \ref{thm:Harnack_inequality}, we prove the Harnack inequality for nonnegative solutions to the parabolic equation 
\begin{equation}
\label{eq:Parabolic_hat_L}
u_t-\widehat Lu=0,
\end{equation}
by relying on Theorem \ref{thm:Stochastic_representation} and adapting the argument used to prove \cite[Theorem 1]{Sturm_1994}. The solutions that we consider are probabilistic, in the sense that they admit a suitable stochastic representation, \eqref{eq:Probabilistic_solution}. We organize this section into three parts. In \S \ref{subsec:Properties_sol_hat_L}, we describe the assumptions imposed on the coefficients of the differential operator $\widehat L$, and we review some properties of the solutions to the parabolic equation \eqref{eq:Parabolic_hat_L}. In \S \ref{subsec:Connection_L__hat_L}, we establish the connection between the differential operators $L$ and $\widehat L$, and we introduce the notion of a probabilistic solution in Definition \ref{defn:Probabilistic_solution}, which we then use in \S \ref{subsec:Proof_of_Harnack} to give the proof of Harnack's inequality in Theorem \ref{thm:Harnack_inequality}, and of the scale-invariant Harnack inequality in Theorem \ref{thm:Harnack_inequality_scale_invariant}.

\subsection{Properties of solutions to the parabolic equation $u_t-\widehat L u=0$}
\label{subsec:Properties_sol_hat_L} 
We first introduce Assumption \ref{assump:Coeff_hat_L}, which describe the conditions that we impose on the coefficients of the standard Kimura differential operator $\widehat L$. We then review the existence, uniqueness and regularity of solutions in anisotropic H\"older spaces to the inhomogeneous initial-value problem defined by the operator $\widehat L$,
\begin{equation}
\label{eq:Initial_value_problem_widehat_L}
\begin{aligned}
\left\{
\begin{array}{rll}
u_t-\widehat Lu&=g&\quad\hbox{on } (0,\infty)\times S_{n,m},\\
u(0,\cdot) &=f&\quad\hbox{on } S_{n,m},
\end{array} \right.	
\end{aligned}
\end{equation}
obtained in \cite{Epstein_Mazzeo_cont_est_diag, Pop_2013b}. In Lemma \ref{lem:Stochastic_representation_widehat_L_local}, we establish the stochastic representation of the solutions in anisotropic H\"older spaces to problem \eqref{eq:Initial_value_problem_widehat_L}.

We begin with
\begin{assump}[Coefficients of the operator $\widehat L$]
\label{assump:Coeff_hat_L}
There are positive constants, $\bar b$ and $\delta$, such that the following hold:
\begin{enumerate}
\item[1.] The coefficient matrix functions $(\widehat a(z))$, $(\widehat c(z))$ and $(\widehat d(z))$ are chosen such that, for all $z \in \bar M_{\{1,\ldots,n\}}$, $\xi\in\RR^n$ and $\eta\in\RR^m$, we have that 
\begin{equation}
\label{eq:Ellipticity_all_second_order_coeff_hat_L}
\begin{aligned}
&\delta\left(|\xi|^2+|\eta|^2\right)\\
&\leq \sum_{i=1}^n \xi_i^2+\sum_{i,j=1}^n \sqrt{x_ix_j}\widehat a_{ij}(z)\xi_i\xi_j + 8\sum_{i=1}^n\sum_{l=1}^m \sqrt{x_i}\widehat c_{il}(z)\xi_i\eta_l +4\sum_{l,k=1}^m \widehat d_{lk}(z)\eta_l\eta_k\\
&\leq K\left(|\xi|^2+|\eta|^2\right).
\end{aligned}
\end{equation}
Compare condition \eqref{eq:Ellipticity_all_second_order_coeff_hat_L} with \cite[Condition (33) and Definition (1)]{Epstein_Mazzeo_cont_est}.
\item[2.] Let $I\subsetneqq\{1,\ldots,n\}$. For all $z \in \bar M_{I}$, we assume that
\begin{equation}
\label{eq:Outside_neighborhood_O_hat_L}
\begin{aligned}
x_j \widehat a_{jj}(z)&=1,\quad \forall\, j \in I^c,\\
\widehat a_{ij}(z) &=0,\quad\forall\, i,j=1,\ldots,n,\\
\widehat c_{il}(z) &= 0,\quad\forall\, i=1,\ldots,n,\quad\forall\, l=1,\ldots,m,\\
\widehat d_{kl}(z) &= \delta_{lk}, \quad\forall\, k,l=1,\ldots,m.
\end{aligned}
\end{equation}
\item[3.] Let $I\subsetneqq\{1,\ldots,n\}$. For all $z \in \bar M_{I}$, we assume that
\begin{equation}
\label{eq:Outside_neighborhood_O_a_ii_hat_L}
\begin{aligned}
\widehat a_{ii}(z) \geq \delta,\quad \forall\, i \in I.
\end{aligned}
\end{equation}
\item[4.] The coefficients functions $\widehat b_i(z)$ satisfy 
\begin{align}
\label{eq:b_coeff_bound_lower_hat_L}
\widehat b_{i}(z) \geq \bar b,&\quad \forall\, z\in \partial S_{n,m}\cap\{x_i=0\},\quad\forall\, i=1,\ldots,n,\\
\label{eq:b_coeff_away_O_hat_L}
\widehat b_i(z)=1,&\quad\forall z\in M^c_{\{1,\ldots,n\}}, \quad \forall\, i=1,\ldots,n.
\end{align}
\item[5.] The coefficient functions $(\widehat a(z))$, $\widehat b(z)$, $(\widehat c(z))$, $(\widehat d(z))$ are smooth, bounded functions on $\bar S_{n,m}$, and we have that
\begin{equation}
\label{eq:Upper_bound_coeff_widehat}
\begin{aligned}
&\sum_{i,j=1}^n\|\widehat a_{ij}\|_{C(\bar S_{n,m})} + \sum_{i=1}^n\sum_{l=1}^m\|\widehat c_{il}\|_{C(\bar S_{n,m})}
+\sum_{l,k=1}^m\|\widehat d_{lk}\|_{C(\bar S_{n,m})}\\
&+\sum_{i=1}^n\|\widehat b_i\|_{C(\bar S_{n,m})} + \sum_{l=1}^m\|\widehat e_l\|_{C(\bar S_{n,m})} \leq K.
\end{aligned}
\end{equation}
\end{enumerate}
\end{assump}
The stochastic differential equations associated to the standard Kimura diffusion operator $\widehat L$ can be written in the form,
\begin{equation}
\label{eq:Kimura_SDE}
\begin{aligned}
d\widehat X_i(t) &= \widehat b_i(\widehat Z(t))\, dt+\sqrt{\widehat X_i(t)}\sum_{j=1}^{n+m} \widehat \sigma_{ij}(\widehat Z(t))\, d\widehat W_j(t),\quad\forall\, t>0,\\
d\widehat Y_l(t) &= \widehat e_l(\widehat Z(t))\, dt+\sum_{j=1}^{n+m} \widehat \sigma_{l+n,j}(\widehat Z(t))\, d\widehat W_j(t),\quad\forall\, t>0,
\end{aligned}
\end{equation}
where $i=1,\ldots,n$ and $l=1,\ldots,m$. We denote $\widehat Z(t)=(\widehat X(t),\widehat Y(t))$. We choose the diffusion matrix $(\widehat\sigma(z))$ appearing in \eqref{eq:Kimura_SDE}, such that $(\widehat\sigma\widehat\sigma^*)(z)=\widehat D(z)$, where for all $z\in \bar S_{n,m}$, $i,j=1,\ldots,n$, and $l,k=1,\ldots,m$, we let
\begin{equation}
\label{eq:widehat_D}
\begin{aligned}
\widehat D_{ii}(z) &:=2(1+x_i\widehat a_{ii}(z)),\\
\widehat D_{ij}(z) &:=2\sqrt{x_ix_j}\widehat a_{ii}(z),\quad i\neq j,\\
\widehat D_{n+l,i}(z)=\widehat D_{i,n+l}(z) &:=4\sqrt{x_i}\widehat c_{il}(z),\\
\widehat D_{n+l,n+k}(z) &:=2\widehat d_{lk}(z).
\end{aligned}
\end{equation}
Assumption \ref{assump:Coeff_hat_L} allows us to apply the same analysis used to build a solution $(\sigma(z))$ to equation \eqref{eq:Relation_sigma_D} that is smooth in the $(\sqrt{x},y)$ variables (defined in \eqref{eq:Sqrt_x}), to conclude that there is a solution $(\widehat\sigma(z))$ to the equation $(\widehat\sigma\widehat\sigma^*)(z)=\widehat D(z)$ that is also smooth in the $(\sqrt{x},y)$ variables. We recall that the solution $(\widehat\sigma(z))$ is uniquely determined up to orthogonal transformations, by \cite[Problem 5.4.7]{KaratzasShreve1991}. It follows that the coefficients of the stochastic differential equation \eqref{eq:Kimura_SDE} satisfy \cite[Assumptions 2.1 and 2.4]{Pop_2013a}. We may then apply \cite[Propositions 2.2 and 2.8]{Pop_2013a}, to conclude that the standard Kimura stochastic differential equation \eqref{eq:Kimura_SDE} has a unique weak solution, $(\widehat Z(t))_{t \geq 0}$, on a probability space, $(\widehat\Xi, \widehat\PP^z)$, for any initial condition $\widehat Z(0)=z$, where $z \in\bar S_{n,m}$. 

In order to prove the stochastic representation of solutions to the initial-value problem \eqref{eq:Initial_value_problem_widehat_L}, we first recall the existence, uniqueness and regularity of solutions in anisotropic H\"older spaces to equation \eqref{eq:Initial_value_problem_widehat_L} obtained in \cite{Pop_2013b}. We remark that such results are also established in \cite[Theorem 1.1]{Epstein_Mazzeo_cont_est}, \cite[Proposition 2.1]{Epstein_Mazzeo_cont_est_diag} and \cite[Theorem 10.0.2]{Epstein_Mazzeo_annmathstudies}, but the framework in \cite{Pop_2013b} is closer to our article.

Following \cite[\S 5.2.4]{Epstein_Mazzeo_annmathstudies}, we first introduce suitable anisotropic H\"older spaces. Let $\alpha\in (0,1)$, $k\in\NN$, $T>0$ and $U\subseteq S_{n,m}$.  We let $C^{0,\alpha}_{WF}([0,T]\times \bar U)$ be the H\"older space consisting of continuous functions, $u:[0,T]\times\bar U\rightarrow \RR$, such that the following norm is finite
\begin{equation}
\label{eq:Anisotropic_Holder_space}
\|u\|_{C^{0,\alpha}_{WF}([0,T]\times \bar U)} := \|u\|_{C^0([0,T]\times\bar U)} 
+ \sup_{\stackrel{(t^0,z^0), (t,z)\in [0,T]\times \bar U}{(t^0,z^0)\neq (t,z)}} 
\frac{|u(t^0,z^0)-u(t,z)|}{\rho^{\alpha}((t^0,z^0), (t,z))}. 
\end{equation}
We let $C^{k,\alpha}_{WF}([0,T]\times \bar U)$ denote the H\"older space containing functions, $u\in C^k([0,T]\times\bar U)$, such that the derivatives $D^{\tau}_t D^{\zeta}_z$ belong to the space $C^{0,\alpha}_{WF}([0,T]\times \bar U)$, for all $\tau\in\NN$ and $\zeta\in\NN^{n+m}$, such that $2\tau+|\zeta| \leq k$. We endow the space of functions $C^{k,\alpha}_{WF}([0,T]\times \bar U)$ with the norm,
\begin{align*}
\|u\|_{C^{k,\alpha}_{WF}([0,T]\times \bar U)} &:= \sum_{\stackrel{\tau\in\NN, \zeta\in\NN^{n+m}}{2\tau+|\zeta|\leq k}}
 \|D^{\tau}_t D^{\zeta}_z\|_{C^{0,\alpha}_{WF}([0,T]\times \bar U)}.
\end{align*}
We fix a set of indices, $I\subseteq \{1,\ldots, n\}$. Let $U$ be a set such that $U \subseteq M_{I}$. We let $C^{0,2+\alpha}_{WF}([0,T]\times \bar U)$ denote the H\"older space of functions, $u\in C^{1,\alpha}_{WF}([0,T]\times\bar U)\cap C^2([0,T]\times U)$, such that 
$$
u_t \in C^{0,\alpha}_{WF}([0,T]\times \bar U),
$$
and such that the functions,
\begin{align*}
\sqrt{x_ix_j}u_{x_ix_j}, \sqrt{x_i}u_{x_iy_l}, u_{y_ly_k} &\in C^{0,\alpha}_{WF}([0,T]\times \bar U),
\quad \forall\, i,j\in I,\quad \forall\, l,k=1,\ldots,m,\\
\sqrt{x_i}u_{x_i x_j}, u_{x_jx_k} &\in C^{0,\alpha}_{WF}([0,T]\times \bar U),\quad\forall\, i \in I, \quad\forall\, j,k \in I^c.
\end{align*}
We endow the space $C^{0,2+\alpha}_{WF}([0,T]\times \bar U)$ with the norm,
\begin{align*}
\|u\|_{C^{0,2+\alpha}_{WF}([0,T]\times \bar U)} &:= \|u\|_{C^{1,\alpha}_{WF}([0,T]\times \bar U)} 
+ \sum_{i,j \in I}\|\sqrt{x_ix_j}u_{x_ix_j}\|_{C^{0,\alpha}_{WF}([0,T]\times \bar U)}\\
&\quad+ \sum_{l,k=1}^m\|u_{y_ly_k}\|_{C^{0,\alpha}_{WF}([0,T]\times \bar U)}
+ \sum_{i \in I} \sum_{j \in I^c}\|\sqrt{x_i}u_{x_ix_j}\|_{C^{0,\alpha}_{WF}([0,T]\times \bar U)}\\
&\quad+ \sum_{i \in I} \sum_{l=1}^m\|\sqrt{x_i}u_{x_iy_l}\|_{C^{0,\alpha}_{WF}([0,T]\times \bar U)}
+ \sum_{i, j \in I^c} \|u_{x_ix_j}\|_{C^{0,\alpha}_{WF}([0,T]\times \bar U)}\\
&\quad+ \sum_{i \in I^c} \sum_{l=1}^m \|u_{x_iy_l}\|_{C^{0,\alpha}_{WF}([0,T]\times \bar U)}
+\|u_t\|_{C^{0,\alpha}_{WF}([0,T]\times \bar U)} .
\end{align*}
We now consider the case when $U$ is an arbitrary set in $S_{n,m}$. We let $C^{0,2+\alpha}_{WF}([0,T]\times \bar U)$ denote the H\"older space consisting of functions $u\in C^2([0,T]\times U)$, satisfying the property that 
$$
u\upharpoonright_{\bar U\cap \bar M_{I}}\in C^{0,2+\alpha}_{WF}([0,T]\times(\bar U \cap \bar M_I)),\quad\forall\, I\subseteq\{1,\ldots,n\}.
$$
We endow the H\"older space $C^{0,2+\alpha}_{WF}([0,T]\times \bar U)$ with the norm
$$
\|u\|_{C^{0,2+\alpha}_{WF}([0,T]\times \bar U)} = \sum_{I \subseteq \{1,\ldots,n\}} \|u\|_{C^{0,2+\alpha}_{WF}([0,T]\times(\bar U \cap \bar M_I))}.
$$
We let $C^{k,2+\alpha}_{WF}([0,T]\times \bar U)$ be the space of functions $u\in C^k([0,T]\times U)$, satisfying the property that
$$
D^{\tau}_t D^{\zeta}_z u \in C^{0,2+\alpha}_{WF}([0,T]\times \bar U),\quad\forall\, \tau\in \NN,\forall\, \zeta\in\NN^{n+m}\hbox{ such that } 
2\tau+|\zeta| \leq k,
$$
and we endow it with the norm
\begin{align*}
\|u\|_{C^{k,2+\alpha}_{WF}([0,T]\times \bar U)} &:= \sum_{\stackrel{\tau\in \NN, \zeta\in\NN^{n+m}}{2\tau+|\zeta| \leq k}}
 \|D^{\tau}_t D^{\zeta}_z u\|_{C^{0,2+\alpha}_{WF}([0,T]\times \bar U)}.
\end{align*}
When $k=0$, we write for brevity $C([0,T]\times\bar U)$, $C^{\alpha}([0,T]\times \bar U)$, $C^{\alpha}_{WF}([0,T]\times \bar U)$ and $C^{2+\alpha}_{WF}([0,T]\times \bar U)$, instead of $C^0([0,T]\times\bar U)$, $C^{0,\alpha}([0,T]\times \bar U)$, $C^{0,\alpha}_{WF}([0,T]\times \bar U)$ and $C^{0,2+\alpha}_{WF}([0,T]\times \bar U)$.

The elliptic H\"older spaces $C^{k,\alpha}(\bar U)$, $C^{k,\alpha}_{WF}(\bar U)$ and $C^{k,2+\alpha}_{WF}(\bar U)$ are defined analogously to their parabolic counterparts, and so, we omit their definitions for brevity.

From Assumption \ref{assump:Coeff_hat_L}, it follows that the coefficients of the differential operator \eqref{eq:Generator} satisfy the hypotheses of \cite[Theorem 1.4]{Pop_2013b}. Thus, given $f\in C^{k_0,2+\alpha}_{WF}(\bar S_{n,m})$ and $g\in C^{k_0,\alpha}_{WF}([0,\infty)\times\bar S_{n,m})$, the inhomogeneous initial-value problem \eqref{eq:Initial_value_problem_widehat_L} has a unique solution, $u\in C^{k_0,2+\alpha}([0,\infty)\times\bar S_{n,m})\cap C^{\infty}((0,\infty)\times\bar S_{n,m})$. We can now prove

\begin{lem}[Stochastic representation of solutions to equation \eqref{eq:Initial_value_problem_widehat_L} with respect to $\widehat\PP^z$]
\label{lem:Stochastic_representation_widehat_L_local}
Let $\alpha\in (0,1)$, $k_0\in\NN$, $f\in C^{k_0,2+\alpha}_{WF}(\bar S_{n,m})$ and $g\in C^{k_0,\alpha}_{WF}([0,\infty)\times\bar S_{n,m})$. Let $u\in C^{k_0,2+\alpha}([0,\infty)\times\bar S_{n,m})\cap C^{\infty}((0,\infty)\times\bar S_{n,m})$ be the unique solution to the inhomogeneous initial-value problem \eqref{eq:Initial_value_problem_widehat_L}. Let $\Omega\subset S_{n,m}$ be an open set, $I=(t_1,t_2)\subset \RR_+$ be a bounded interval, and $Q:=I\times\Omega$. Then we have that
\begin{equation}
\label{eq:Stochastic_representation_widehat_L_local}
\begin{aligned}
u(t,z) 
&= \EE_{\widehat \PP^z} \left[u\left(t-(t-t_1)\wedge\widehat\tau_{\Omega},\widehat Z((t-t_1)\wedge\widehat\tau_{\Omega})\right)\right]\\
&\quad+\EE_{\widehat\PP^z}\left[\int_0^{(t-t_1)\wedge\widehat\tau_{\Omega}} g(t-s,\widehat Z(s))\, ds\right],
\quad\forall\, (t,z) \in \bar Q,
\end{aligned}
\end{equation}
where the stopping time $\widehat \tau_{\Omega}$ is defined by
\begin{equation}
\label{eq:hat_tau_Omega}
\widehat \tau_{\Omega} := \inf\{t \geq 0:\, \widehat Z(t) \notin\underline \Omega\},
\end{equation}
and the process $\{\widehat Z(t)\}_{t\geq 0}$ is the unique weak solution to the standard Kimura stochastic differential equation \eqref{eq:Kimura_SDE}, with initial condition $\widehat Z(0)=z$.
\end{lem}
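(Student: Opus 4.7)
The strategy is to apply It\^o's formula to $s \mapsto u(t-s, \widehat Z(s))$ on the interval $[0, (t-t_1) \wedge \widehat\tau_\Omega]$, substitute the parabolic equation $u_t - \widehat L u = g$ to identify the drift, and take expectations after a suitable localization. The crucial observation is that $t_1 > 0$ (since $I \subset \RR_+$), so for $s$ in the stopping interval the time argument $t - s$ lies in $[t_1, t] \subset (0, \infty)$, where by hypothesis $u \in C^\infty((0,\infty) \times \bar S_{n,m})$. All spatial derivatives of $u$ are therefore continuous up to $\partial S_{n,m}$ on this range, and classical It\^o's formula applies to the continuous semimartingale $\{\widehat Z(s)\}_{s \geq 0}$ solving the standard Kimura SDE \eqref{eq:Kimura_SDE}.

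\medskip

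Using $(\widehat\sigma\widehat\sigma^*) = \widehat D$ with the coefficient identification \eqref{eq:widehat_D}, the drift produced by It\^o agrees with $\widehat L u$, so that
\begin{equation*}
du(t-s, \widehat Z(s)) = \bigl[-u_t + \widehat L u\bigr](t-s, \widehat Z(s))\, ds + dM(s) = -g(t-s, \widehat Z(s))\, ds + dM(s),
\end{equation*}
where $M$ is a continuous local martingale whose integrands are products of first-order spatial derivatives of $u$ evaluated at $(t-s,\widehat Z(s))$ with the dispersion coefficients $\sqrt{\widehat X_i(s)}\,\widehat\sigma_{ij}(\widehat Z(s))$ and $\widehat\sigma_{n+l,j}(\widehat Z(s))$. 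To secure the true martingale property, the plan is to introduce an exhaustion $\underline{\Omega}_k \Subset \underline{\Omega}$ with $\bigcup_k \underline{\Omega}_k = \underline{\Omega}$, and set $\sigma_k := (t-t_1) \wedge \widehat\tau_{\Omega_k}$, where $\widehat\tau_{\Omega_k} := \inf\{s \geq 0 : \widehat Z(s) \notin \underline{\Omega}_k\}$; on $[0, \sigma_k]$ both the time argument $t-s$ and $\widehat Z(s)$ stay in the compact set $[t_1, t] \times \bar\Omega_k$, hence the integrands of $M$ are uniformly bounded there. Integrating the It\^o identity from $0$ to $\sigma_k$ and taking expectations yields
\begin{equation*}
u(t, z) = \EE_{\widehat\PP^z}\!\left[u\bigl(t - \sigma_k, \widehat Z(\sigma_k)\bigr)\right] + \EE_{\widehat\PP^z}\!\left[\int_0^{\sigma_k} g(t-s, \widehat Z(s))\, ds\right].
\end{equation*}

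\medskip

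To conclude, since $\sigma_k \nearrow (t-t_1) \wedge \widehat\tau_\Omega$ almost surely, and both $u$ and $g$ are globally bounded on $[0,t] \times \bar S_{n,m}$ by the anisotropic H\"older hypotheses, the Dominated Convergence Theorem combined with the joint continuity of $u$ lets one pass to the limit $k \to \infty$ and recover \eqref{eq:Stochastic_representation_widehat_L_local}. The main technical obstacle is justifying the true martingale property of the stochastic integral: the dispersion coefficient $\sqrt{\widehat X_i}$ degenerates at $\partial S_{n,m}$, but the derivatives $u_{x_i}$ need not vanish there, so global bounds on the integrands of $M$ are not available a priori. The hypothesis that $u$ is $C^\infty$ on $(0,\infty) \times \bar S_{n,m}$, paired with the exhaustion argument, sidesteps this by confining all estimates to the compact parabolic cylinders $[t_1, t] \times \bar\Omega_k$, on which every ingredient is uniformly bounded.
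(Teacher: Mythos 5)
Your proof is correct and follows essentially the same route as the paper's: apply It\^o's formula to $u(t-s,\widehat Z(s))$, use $u_t-\widehat Lu=g$ to identify the drift, stop at $(t-t_1)\wedge\widehat\tau_{\Omega}$, take expectations, and pass to the limit using continuity of $u$ and of the paths. The only difference is the localization device: the paper truncates in time with an $\eps$ (redundant once one notes $t_1>0$, as you do) and asserts the martingale property outright, whereas you localize in space via the exhaustion $\Omega_k$, which is a harmless variant that in fact justifies the true-martingale step more explicitly.
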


\begin{proof}
Let $(t,z)\in [0,\infty)\times\bar S_{n,m}$, and $\eps\in (0,t)$. Because the function $u$ is smooth on $[\eps,t]\times\bar S_{n,m}$, we can apply It\^o's rule to the process $\{u(t-r,\widehat Z(r))\}_{0\leq r\leq t-\eps}$, and use the fact that the function $u$ solves the inhomogeneous initial-value problem \eqref{eq:Initial_value_problem_widehat_L}, to conclude that the process
$$
M_r:=u(t-r,\widehat Z(r))-u(t,z) +\int_0^r g(t-s,\widehat Z(s))\, ds,\quad\forall\, r\in [0,t-\eps],
$$
is a martingale. Thus, using the fact that $M_0=0$, and stopping the martingale at $(t-t_1\vee \eps)\wedge\widehat\tau_{\Omega}$, we have that 
$$
\EE_{\widehat\PP^z}\left[u(t-(t-t_1\vee \eps)\wedge\widehat\tau_{\Omega},\widehat Z((t-t_1\vee \eps)\wedge\widehat\tau_{\Omega}))-u(t,z) +\int_0^{(t-t_1\vee \eps)\wedge\widehat\tau_{\Omega}} g(t-s,\widehat Z(s))\, ds\right]=0,
$$
which gives us, for all $\eps\in (0,t)$, that
\begin{align*}
u(t,z) &= \EE_{\widehat\PP^z}\left[u(t-(t-t_1\vee \eps)\wedge\widehat\tau_{\Omega},\widehat Z((t-t_1\vee \eps)\wedge\widehat\tau_{\Omega}))\right]\\
&\quad+ \EE_{\widehat\PP^z}\left[\int_0^{(t-t_1\vee \eps)\wedge\widehat\tau_{\Omega}} g(t-s,\widehat Z(s))\, ds\right].
\end{align*}
Using the fact that the functions $u$ and $g$ are continuous on $[0,t]\times\bar S_{n,m}$, and that the process $\{\widehat Z(t)\}_{t\geq 0}$ has continuous paths, we can take the limit as $\eps$ tends to $0$ in the preceding identity to obtain \eqref{eq:Stochastic_representation_widehat_L_local}. This completes the proof of Lemma \ref{lem:Stochastic_representation_widehat_L_local}.
\end{proof}

\subsection{Connection between the differential operators $L$ and $\widehat L$}
\label{subsec:Connection_L__hat_L} 
Lemma \ref{lem:Stochastic_representation_widehat_L_local} shows that the homogeneous initial-value problem \eqref{eq:Initial_value_problem_widehat_L} admits solutions that can be expressed using the probability distribution, $\widehat\PP^z$, of the unique weak solution, $\{\widehat Z(t)\}$, to the standard Kimura stochastic differential equation equation \eqref{eq:Kimura_SDE}. In this section, using Girsanov's Theorem, we prove in Lemma \ref{lem:Stochastic_representation_L_local} that the solutions to the homogeneous initial-value problem \eqref{eq:Initial_value_problem_widehat_L} have a stochastic representation which uses the probability distribution, $\PP^z$, of the unique Markovian solution to a suitable Kimura stochastic differential equation with singular drift of the form \eqref{eq:Kimura_SDE_singular}, as opposed to that of weak solutions to the stochastic differential equation \eqref{eq:Kimura_SDE}. This shows that our Definition \ref{defn:Probabilistic_solution} of probabilistic solutions is not vacuous. In \S \ref{subsec:Proof_of_Harnack}, we use Definition \ref{defn:Probabilistic_solution} to prove in Theorem \ref{thm:Harnack_inequality} that the Harnack inequality holds for nonnegative probabilistic solutions to equation \eqref{eq:Parabolic_hat_L}. 

We make a specific choice of the differential operator $L$ of the form given by \eqref{eq:Singular_operator}. We define the coefficients of the operator $L$ in terms of the coefficients of the operator $\widehat L$ such that for all $i,j=1,\ldots,n$ and all $l,k=1,\ldots,m$, we have
\begin{align*}
a_{ii} (z)&:= 1,\\
\tilde a_{ij} (z) &:= \widehat a_{ij} (z),\\
c_{il} (z) &:= \frac{1}{2}\widehat c_{il} (z),\\
d_{lk} (z) &:= \widehat d_{lk} (z), 
\end{align*}
and we choose the coefficients $b_i(z)$ in \eqref{eq:Singular_operator}, such that 
$$
g_i(z) = \widehat b_i(z),
$$
where the coefficients $g_i(z)$ are defined in \eqref{eq:Coeff_Kimura_SDE_L}. With this choice of the coefficients, the stochastic differential equation \eqref{eq:Kimura_SDE_singular} becomes, for all $i=1,\ldots,n$ and all $l=1,\ldots,m$,
\begin{equation}
\label{eq:Kimura_SDE_singular_new}
\begin{aligned}
d X_i(t) &= \left(\widehat b_i(Z(t))+ X_i(t)\sum_{j=1}^{n}f_{ij}( Z(t))\ln  X_j(t)\right)\, dt+\sqrt{ X_i(t)}\sum_{j=1}^{n+m} \widehat\sigma_{ij}( Z(t))\, d W_j(t),\\
d Y_l(t) &= \left(e_l( Z(t))+\sum_{j=1}^{n}f_{l+n,j}( Z(t))\ln  X_j(t)\right)\, dt+\sum_{j=1}^{n+m} \widehat\sigma_{l+n,j}( Z(t))\, d W_j(t),
\end{aligned}
\end{equation}
Because the coefficients of the differential operator \eqref{eq:Generator} satisfy Assumption \ref{assump:Coeff_hat_L}, it follows that the preceding choice of the coefficients of the stochastic differential equation \eqref{eq:Kimura_SDE} satisfy \cite[Assumption 3.1]{Pop_2013a}. We may then apply \cite[Theorems 3.4 and 3.8]{Pop_2013a}, to conclude that the Kimura stochastic differential equation with logarithmic drift \eqref{eq:Kimura_SDE_singular_new} has a unique Markov solution, $(Z(t))_{t \geq 0}$, on a probability space, $(\Xi,\PP^z)$, for any initial condition $Z(0)=z$, where $z \in\bar S_{n,m}$. 

From the strict ellipticity condition \eqref{eq:Ellipticity_all_second_order_coeff_hat_L}, it follows from definition \eqref{eq:widehat_D} that the diffusion matrix $(\widehat D(z))$ is strictly elliptic, and so, using the fact that $(\widehat\sigma\widehat\sigma^*)(z)=\widehat D(z)$, we obtain that the matrix $\widehat\sigma(z)$ is invertible. Let then $\theta(z):=(\theta_1(z),\ldots,\theta_{n+m}(z))$ be the unique solution to the system of linear equations,
\begin{align*}
\left\{
\begin{array}{rll}
\sum_{k=1}^{n+m} \widehat\sigma_{ik} (z) \theta_k(z) &=\sqrt{x_i}\sum_{j=1}^n f_{ij}(z) \ln x_j,&\quad\forall\, i=1,\ldots,n\\
\sum_{k=1}^{n+m} \widehat\sigma_{n+l,k} (z) \theta_k(z) &=\sum_{j=1}^n f_{n+l,j}(z) \ln x_j+\widehat e_l(z)- e_l(z),&\quad\forall\, l=1,\ldots,m,
\end{array} \right.	
\end{align*}
for all $z \in \bar S_{n,m}$. It follows from \cite[Lemma 3.5]{Pop_2013a} that
\begin{lem}
\label{lem:Exp_Girsanov_theorem}
Suppose that the coefficients of the differential operator \eqref{eq:Generator} satisfy Assumption \ref{assump:Coeff_hat_L}. Then for all $T>0$, there is a positive constant, $\Lambda=\Lambda(\bar b,\delta,K,m,n,T)$, such that 
\begin{equation}
\label{eq:Exp_Girsanov_theorem}
\EE_{\widehat \PP^z} \left[e^{9\int_0^T |\theta(\widehat Z(t))|^2\, dt}\right] \leq \Lambda,\quad\forall\, z \in\bar S_{n,m},
\end{equation}
where $\{\widehat Z(t)\}_{t\geq 0}$ is the unique weak solution that satisfies the Markov property to the stochastic differential equation \eqref{eq:Kimura_SDE_singular_new}, with initial condition $\widehat Z(0)=z$.
\end{lem}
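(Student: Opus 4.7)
The plan is to bound $|\theta(z)|^2$ pointwise by a function with mild logarithmic singularities at the boundary, then estimate the exponential moment of the associated occupation-type integral using the fact that the Kimura drifts $\widehat b_i$ are bounded below by $\bar b > 0$ on $\{x_i = 0\}$, which forces each component $\widehat X_i$ away from the boundary.

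First I would analyze the pointwise size of $\theta$. The ellipticity \eqref{eq:Ellipticity_all_second_order_coeff_hat_L} together with the diagonal identities $\widehat D_{ii}(z) = 2(1 + x_i \widehat a_{ii}(z)) \geq 2$ gives a uniform bound on $\widehat\sigma(z)^{-1}$ on $\bar S_{n,m}$. Solving the defining linear system for $\theta$ and using that $\widehat b_j \equiv 1$ outside the unit box by \eqref{eq:b_coeff_away_O_hat_L}, so that each $f_{ij}$ and $f_{n+l,j}$ is supported where $x_j \leq 1$, one obtains
\[
|\theta(z)|^2 \;\leq\; C\Big(1 + \sum_{j=1}^n (\ln x_j)^2 \mathbf{1}_{\{x_j \leq 1\}}\Big), \qquad \forall z \in \bar S_{n,m},
\]
for a constant $C = C(\bar b, \delta, K, m, n)$.

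Next, I would apply H\"older's inequality twice: once to split
\[
\exp\!\Big(9\!\int_0^T\! |\theta(\widehat Z(t))|^2\,dt\Big) \;\leq\; e^{9CT}\prod_{j=1}^n \exp\!\Big(9C\!\int_0^T\!(\ln \widehat X_j(t))^2 \mathbf{1}_{\{\widehat X_j(t) \leq 1\}}\,dt\Big),
\]
and again to reduce to bounding, for each $j$ and a constant $C' = C'(\bar b, \delta, K, m, n)$,
\[
\sup_{z \in \bar S_{n,m}}\EE_{\widehat\PP^z}\!\Big[\exp\!\Big(C' \int_0^T (\ln \widehat X_j(t))^2 \mathbf{1}_{\{\widehat X_j(t) \leq 1\}}\,dt\Big)\Big] \;<\; \infty.
\]
Each $\widehat X_j$ is a one-dimensional continuous semimartingale with drift $\widehat b_j(\widehat Z) \geq \bar b$ on $\{X_j = 0\}$ and quadratic-variation density $\widehat X_j\, \widehat D_{jj}(\widehat Z)$. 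A Dubins-Schwarz time change combined with the Ikeda-Watanabe comparison theorem dominates $\widehat X_j$ from below by a squared Bessel process $R_j$ of dimension $d \geq 2\bar b > 0$; since the integrand vanishes when $\widehat X_j > 1$ and is a non-increasing function of $\widehat X_j$ for $\widehat X_j \in (0,1)$, the exponential moment above is controlled by the corresponding one for $R_j$.

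The heart of the argument, and the main obstacle, is the uniform exponential moment bound
\[
\sup_{r \geq 0}\EE_r\!\Big[\exp\!\Big(C' \int_0^T (\ln R(t))^2 dt\Big)\Big] \;<\; \infty
\]
for the squared Bessel process $R$ of dimension $d \geq 2\bar b$. The difficulty is that the single-time quantity $\EE_0[e^{C'(\ln R(t))^2}]$ is already \emph{infinite}, because the transition density $f(t,0,r) \sim r^{d/2-1}$ near $r = 0$ cannot suppress the super-exponential growth of $e^{C'(\ln r)^2}$ there. The resolution is that the time-integrated functional is substantially better behaved than its pointwise integrand: because $d > 0$, the process $R$ spends only a vanishing amount of Lebesgue time near $0$ (its local time at $0$ is trivial when $d \geq 2$, and in the regime $0 < d < 2$ one has sharp control via the It\^o excursion theory of squared Bessel processes). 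The quantitative estimate can be obtained by a Kac-type Feynman-Kac representation together with a perturbation argument exploiting the scaling invariance $(R(\lambda t))_t \stackrel{d}{=} (\lambda R(t))_t$, which reduces the problem on $[0,T]$ to a Bessel-bridge calculation where the exponential moment is finite uniformly in the starting point.
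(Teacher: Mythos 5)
Your reduction steps are reasonable in outline (and, for the record, the paper does not prove this lemma at all: it is quoted verbatim from \cite[Lemma 3.5]{Pop_2013a}, so there is no internal argument to compare against). The pointwise bound $|\theta(z)|^2\leq C\bigl(1+\sum_j(\ln x_j)^2\mathbf{1}_{\{x_j\leq 1\}}\bigr)$ is correct, since $\widehat D\geq cI$ makes $\widehat\sigma^{-1}$ uniformly bounded and the $f$-coefficients vanish where some $x_k\geq 1$, and the H\"older splitting into single-coordinate functionals is fine. The comparison step, however, is shakier than you state: the quadratic-variation density of $\widehat X_j$ is $\widehat X_j\widehat D_{jj}(\widehat Z)$, which depends on the \emph{whole} process and is not the BESQ normalization $4R$, so Ikeda--Watanabe does not apply directly; the time change you invoke to fix the diffusion coefficient also distorts the Lebesgue-time functional $\int_0^T(\ln\widehat X_j(t))^2\,dt$ you are trying to bound, so you need two-sided control of the time-change rate; and $\widehat b_j\geq\bar b$ is only assumed on $\{x_j=0\}$, so a global pathwise domination by a BESQ of dimension $2\bar b$ requires a localization near the boundary that you have not supplied.

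The decisive gap is the step you yourself call the heart of the argument: the uniform bound $\sup_{r}\EE_r\bigl[\exp\bigl(C'\int_0^T(\ln R(t))^2\,dt\bigr)\bigr]<\infty$. After your reductions this \emph{is} the lemma, and the closing paragraph does not prove it --- the appeal to ``a Kac-type Feynman--Kac representation together with a perturbation argument exploiting scaling'' and ``a Bessel-bridge calculation where the exponential moment is finite uniformly in the starting point'' is a restatement of the claim, not an argument; you correctly note that the single-time exponential moment is infinite, which is precisely why some mechanism is needed to convert mild occupation-time control into an exponential moment. The standard mechanism, absent from your proposal, is Khasminskii's lemma: establish a first-moment occupation estimate $\sup_{z}\EE_{z}\int_0^\delta V(\widehat Z(t))\,dt\leq\alpha<1$ for $V=9|\theta|^2$ and $\delta$ small (this only needs a uniform-in-$z$ bound on $\EE\bigl[(\ln\widehat X_j(t))^2\mathbf{1}_{\{\widehat X_j(t)\leq 1\}}\bigr]$ that is integrable in $t$ near $0$, obtainable from negative-moment or transition-density estimates for the degenerate coordinate), and then iterate over subintervals of length $\delta$ using the Markov property to get $\EE\bigl[\exp\int_0^T V\bigr]\leq(1-\alpha)^{-\lceil T/\delta\rceil}$, uniformly in the starting point. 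Without this step (or an equivalent series-expansion argument), the proof is incomplete at exactly the point where the work lies.
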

We obtain from Lemma \ref{lem:Exp_Girsanov_theorem} and \cite[Proposition 3.5.12 and Corollary 3.5.13]{KaratzasShreve1991} that the process
\begin{equation*}
\widehat M(t) := e^{\int_0^t \theta(\widehat Z(s))\cdot\, d\widehat W(s) - \frac{1}{2} \int_0^t |\theta(\widehat Z(s))|^2\, ds},\quad\forall\, t\in [0,T],
\end{equation*}
is a $\widehat\PP^z$-martingale. From Girsanov's Theorem (\cite[Theorem 3.5.1]{KaratzasShreve1991}), by letting
\begin{equation}
\label{eq:New_Brownian_motion}
 W(t):= \widehat W(t)-\int_0^t \theta(\widehat Z(s))\, ds,\quad\forall\, t\in [0,T],
\end{equation}
and defining a new probability measure $\PP^z$ by 
\begin{equation}
\label{eq:Change_of_measure}
\frac{d\PP^z}{d\widehat\PP^z} = M(T),
\end{equation}
we obtain that $\{W(t)\}_{t\geq 0}$ is a $\PP^z$-Brownian motion. We also have that $(Z(t):=\widehat Z(t), W(t))$ is a weak solution to the standard Kimura stochastic differential equation \eqref{eq:Kimura_SDE} with initial condition $Z(0)=z$, for all $0\leq t\leq T$. Because $Z(t):=\widehat Z(t)$, for all $0\leq t\leq T$, we also have that $\tau_{\Omega}=\widehat\tau_{\Omega}$, for all open sets $\Omega\subseteq S_{n,m}$, where the preceding two stopping times are defined by \eqref{eq:Exit_time_Omega} and \eqref{eq:hat_tau_Omega}, respectively. From \cite[Lemma 3.9]{Pop_2013a}, it follows that the process
\begin{equation}
\label{eq:Martingale_change_of_measure}
M(t) := e^{-\int_0^t \theta(Z(s))\cdot\, d W(s) - \frac{1}{2} \int_0^t |\theta(Z(s))|^2\, ds},\quad\forall\, t\in [0,T],
\end{equation}
is a $\PP^z$-martingale.

We can now state the stochastic representation of solution to the homogeneous initial-value problem with respect to the probability distribution $\PP^z$. Identity \eqref{eq:Change_of_measure} and Lemma \ref{lem:Stochastic_representation_widehat_L_local} imply

\begin{lem}[Stochastic representation of solutions to equation \eqref{eq:Initial_value_problem_widehat_L} with respect to $\PP^z$]
\label{lem:Stochastic_representation_L_local}
Suppose that the hypotheses of Lemma \ref{lem:Stochastic_representation_widehat_L_local} hold. Then we have that
\begin{equation}
\label{eq:Stochastic_representation_L_local}
\begin{aligned}
u(t,z) &= \EE_{\PP^z} \left[M((t-t_1)\wedge\tau_{\Omega})u\left(t-(t-t_1)\wedge\tau_{\Omega}, Z((t-t_1)\wedge\tau_{\Omega})\right)\right]\\
&\quad+\EE_{\PP^z}\left[M((t-t_1)\wedge\tau_{\Omega})\int_0^{(t-t_1)\wedge\widehat\tau_{\Omega}} g(t-s,\widehat Z(s))\, ds\right],
\quad\forall\, (t,z) \in \bar Q,
\end{aligned}
\end{equation}
where the process $\{Z(t)\}_{t\geq 0}$ is the unique weak solution to the stochastic differential equation \eqref{eq:Kimura_SDE_singular_new}, with initial condition $Z(0)=z$.
\end{lem}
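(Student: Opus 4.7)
The plan is to derive \eqref{eq:Stochastic_representation_L_local} by transporting the representation \eqref{eq:Stochastic_representation_widehat_L_local} from $\widehat\PP^z$ to $\PP^z$ via the Girsanov change of measure set up in the paragraphs preceding the lemma. Set $\tau := (t-t_1)\wedge\widehat\tau_{\Omega}$, a stopping time bounded by $T := t_2-t_1$. By the regularity stated in Lemma \ref{lem:Stochastic_representation_widehat_L_local}, $u$ is continuous on $[0,\infty)\times\bar S_{n,m}$ and $g$ lies in $C^{k_0,\alpha}_{WF}([0,\infty)\times\bar S_{n,m})$, so both are bounded on $[0,t_2]\times\bar S_{n,m}$; hence the random variables
\[
A := u\bigl(t-\tau,\widehat Z(\tau)\bigr), \qquad B := \int_0^{\tau} g\bigl(t-s,\widehat Z(s)\bigr)\, ds
\]
are bounded and $\cF_{\tau}$-measurable by the progressive measurability of $\widehat Z$.

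I then invoke the change-of-measure identity in its stopped form. Recall from the discussion preceding this lemma that $\PP^z$ is defined on the same filtered space carrying $\widehat Z$ by $d\PP^z/d\widehat\PP^z=\widehat M(T)$; that the Girsanov drift computation identifies $Z := \widehat Z$, under $\PP^z$, as a weak solution to the singular Kimura SDE \eqref{eq:Kimura_SDE_singular_new} driven by the $\PP^z$-Brownian motion $W$ of \eqref{eq:New_Brownian_motion}; and that the reciprocal $M(s)=\widehat M(s)^{-1}$ given by \eqref{eq:Martingale_change_of_measure} is a $\PP^z$-martingale (by \cite[Lemma 3.9]{Pop_2013a}), so that $d\widehat\PP^z|_{\cF_T}=M(T)\, d\PP^z|_{\cF_T}$. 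For any bounded $\cF_{\tau}$-measurable $Y$, the tower property combined with the martingale property of $M$ under $\PP^z$ and the optional sampling theorem yields
\[
\EE_{\widehat\PP^z}[Y] \;=\; \EE_{\PP^z}\bigl[M(T)\,Y\bigr] \;=\; \EE_{\PP^z}\bigl[Y\,\EE_{\PP^z}[M(T)\mid\cF_{\tau}]\bigr] \;=\; \EE_{\PP^z}\bigl[M(\tau)\,Y\bigr].
\]
Applying this identity to $Y=A$ and $Y=B$, and using the pathwise identification $\widehat Z=Z$, $\widehat\tau_{\Omega}=\tau_{\Omega}$ under $\PP^z$, converts \eqref{eq:Stochastic_representation_widehat_L_local} directly into \eqref{eq:Stochastic_representation_L_local}.

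There is no substantial analytical obstacle: Girsanov's theorem on $[0,T]$, the exponential integrability of $\theta(\widehat Z)$ ensuring that $\widehat M$ is a true martingale (Lemma \ref{lem:Exp_Girsanov_theorem}), and the martingale property of $M$ under $\PP^z$ are all in place earlier in the section. The only points deserving attention are the boundedness of $\tau$ by $T$, which permits the use of the full Radon--Nikodym derivative $M(T)$, and the $\cF_{\tau}$-measurability of $A$ and $B$, both of which are immediate from continuity of $u$ and $g$ together with the progressive measurability of $\widehat Z$.
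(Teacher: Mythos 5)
Your proposal is correct and is essentially the paper's argument: the paper proves this lemma by simply invoking the change-of-measure identity \eqref{eq:Change_of_measure} together with Lemma \ref{lem:Stochastic_representation_widehat_L_local}, and the steps you spell out (boundedness and $\cF_{\tau}$-measurability of the stopped quantities, $d\widehat\PP^z|_{\cF_T}=M(T)\,d\PP^z|_{\cF_T}$, and optional sampling for the $\PP^z$-martingale $M$ at the bounded stopping time $(t-t_1)\wedge\tau_{\Omega}$, established via Lemma \ref{lem:Exp_Girsanov_theorem}) are exactly the details the paper leaves implicit.
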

The stochastic representation \eqref{eq:Stochastic_representation_L_local} shows that the parabolic problem $u_t-\widehat Lu=0$ admits probabilistic solutions in the sense of \cite[Definition (2.1)]{Sturm_1994}. Thus the following definition is not vacuous

\begin{defn}[Probabilistic solution]
\label{defn:Probabilistic_solution}
Let $\Omega\subseteq S_{n,m}$ be an open set, and $(t_1,t_2)\subset\RR_+$. Let $Q:=(t_1,t_2)\times\Omega$. We say that a \emph{continuous} function, $u:Q\rightarrow\RR$, is a probabilistic solution to the parabolic equation $u_t-\widehat L u=0$ on $Q$, if for all open sets $Q':=(t'_1,t'_2)\times\Omega'\subseteq Q$, we have that
\begin{equation}
\label{eq:Probabilistic_solution}
u(t,z) = \EE_{\PP^z} \left[M((t-t'_1)\wedge\tau_{\Omega'})u\left(t-(t-t'_1)\wedge\tau_{\Omega'},Z((t-t'_1)\wedge\tau_{\Omega'})\right)\right],\quad\forall\, (t,z) \in \bar Q'.
\end{equation}
\end{defn}

\begin{rmk}
Notice that unlike the definition of the probabilistic solution in \cite[p.~596]{Sturm_1994}, we assume that our probabilistic solution in Definition \ref{defn:Probabilistic_solution}, is continuous, as opposed to only Borel measurable, since the expression inside the expectation in identity \eqref{eq:Probabilistic_solution} is evaluated at points on the boundary of a cylinder, which is a set of measure zero.
\end{rmk}

\subsection{The proof of Harnack's inequality}
\label{subsec:Proof_of_Harnack} 
We use Definition \ref{defn:Probabilistic_solution} to prove that the Harnack inequality holds for nonnegative probabilistic solutions to equation \eqref{eq:Parabolic_hat_L}. We have the following analogue of \cite[Theorem 1]{Sturm_1994}.
\begin{thm}[Harnack inequality]
\label{thm:Harnack_inequality}
Suppose that Assumption \ref{assump:Coeff_hat_L} holds. Let $c\in (\sqrt{2/3},1)$ and $T>0$. Then there is a positive constant, $H=H(\bar b,\delta,K,K_0,\Lambda,T)$, such that for all $(s,z)\in (0,T)\times \bar S_{n,m}$, and all $R \in (0,\sqrt{s})$, if $u$ is a nonnegative, continuous probabilistic solution to the parabolic equation \eqref{eq:Parabolic_hat_L} on $Q_R(s,z)$, we have that
\begin{equation}
\label{eq:Harnack_inequality}
u(t,w) \leq H u(s,z),\quad\forall (t,w) \in (s-c^2R^2,s-2R^2/3)\times B_{cR}(s,z).
\end{equation}
\end{thm}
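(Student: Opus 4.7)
The proof follows K.-T.\ Sturm's probabilistic technique \cite{Sturm_1994}, viewing the $\widehat L$-parabolic probabilistic solution $u$ through the Girsanov relationship with the $L$-diffusion $\{Z(t)\}$ established in \S\ref{subsec:Connection_L__hat_L}. The strategy is to apply Hölder's inequality to the representation \eqref{eq:Probabilistic_solution}, control the resulting $M$-weights via the exponential integrability estimate \eqref{eq:Exp_Girsanov_theorem}, and then transport bounds by iterating the scale-invariant Harnack inequality for the resulting $L$-auxiliary function provided by Corollary \ref{cor:Stochastic_representation}. Fix exponents $p\in(1,p_0)$ and $q=p/(p-1)$, where $p_0>1$ will be specified in the second step. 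For every $(t',w')\in Q_R(s,z)$, applying Definition \ref{defn:Probabilistic_solution} on the cylinder $Q_R(s,z)$ and then Hölder's inequality yields
\[
u(t',w')\le \EE_{\PP^{w'}}\!\left[M^p(\eta')\right]^{1/p}\,v(t',w')^{1/q},\quad \eta':=(t'-s+R^2)\wedge\tau_{B_R(z)},
\]
where $v(t',w'):=\EE_{\PP^{w'}}\!\left[u^q\bigl(t'-\eta',Z(\eta')\bigr)\right]$.

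The exponential moment factor is handled as follows. Using the relation $d\PP^{w'}=\widehat M(T)\,d\widehat\PP^{w'}$ together with the identity $M\equiv \widehat M^{-1}$ and the martingale property of $\widehat M$, a direct computation gives $\EE_{\PP^{w'}}[M^p(\eta')]=\EE_{\widehat\PP^{w'}}[\widehat M(\eta')^{1-p}]$. Splitting this into a genuine exponential martingale times $\exp\bigl(C(p)\int_0^{\eta'}|\theta(\widehat Z(r))|^2\,dr\bigr)$ and applying Cauchy-Schwarz iteratively, one obtains a uniform bound $\EE_{\PP^{w'}}[M^p(\eta')]\le C_1(\Lambda,p,T)$ as soon as $p_0-1$ is small enough that the constant $C(p)$ stays below the exponent $9$ in Lemma \ref{lem:Exp_Girsanov_theorem}. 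Meanwhile, $v$ is precisely a stochastic representation of the form covered by Corollary \ref{cor:Stochastic_representation} with continuous nonnegative boundary data $u^q$ on $\eth Q_R(s,z)$, so $v$ satisfies the scale-invariant Harnack inequality \eqref{eq:Harnack_cont} with constant $K_0$ on every parabolic sub-cylinder of $Q_R(s,z)$.

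The core geometric step is a chaining argument. Using the hypothesis $c>\sqrt{2/3}$, one constructs a finite sequence $(t_0,w_0)=(t,w),(t_1,w_1),\ldots,(t_N,w_N)$ with $(t_N,w_N)$ approaching $(s,z)$ from inside $Q_R(s,z)$ and radii $r_k$ chosen so that $(t_{k-1},w_{k-1})\in Q_{r_k}(t_k-2r_k^2,w_k)$ and $(t_k,w_k)\in Q_{r_k}(t_k,w_k)$ with $Q_{2r_k}(t_k,w_k)\subset Q_R(s,z)$. Iterating Corollary \ref{cor:Stochastic_representation} along this chain gives $v(t,w)\le K_0^{N}v(t_N,w_N)$, and continuity of $v$ on $\overline{Q_R(s,z)}$ (inherited from the continuity of $u^q$) yields $v(t,w)\le K_0^{N} v(s,z)$ for some $N=N(c)\in\NN$. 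Combining the above,
\[
u(t,w)\le C_1^{1/p}K_0^{N/q}\,v(s,z)^{1/q}.
\]

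The last and principal obstacle is to close the loop: one must replace $v(s,z)^{1/q}$ by a constant multiple of $u(s,z)$. The Hölder inequality applied at points $(s',z')\to(s,z)$ only delivers the \emph{opposite} inequality $u(s,z)^q\le C_1 v(s,z)$, so a genuine reverse Hölder estimate at the apex is required. Following \cite[Theorem~1]{Sturm_1994}, this is obtained by a self-improvement step: one writes $v(s',z')$ using the probabilistic representation of $u(s',z')$ and bounds the integrand $u^q(t'-\eta',Z(\eta'))$ on the parabolic boundary of a second, shrinking cylinder around $(s',z')$ by iterating the $L$-Harnack for $v$ in the \emph{reverse} time direction along chains of overlapping sub-cylinders; positivity and continuity of $u$, together with the uniform exponential moment control from Step 2, eventually force a bound of the form $v(s',z')\le C_3\,u(s',z')^q$, with $C_3=C_3(\bar b,\delta,K,K_0,\Lambda,T)$, which passes to the limit as $(s',z')\to(s,z)$. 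Substituting this into the previous chain produces $u(t,w)\le H\,u(s,z)$ with $H:=C_1^{1/p}K_0^{N/q}C_3^{1/q}$, depending only on the asserted parameters. The delicate reverse Hölder / bootstrapping step is the main difficulty; the rest is a routine combination of probabilistic representation, Girsanov's formula, and iterated application of the already-established $L$-Harnack inequality.
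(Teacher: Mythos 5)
Your skeleton reproduces the first half of the paper's argument: the H\"older split of the representation \eqref{eq:Probabilistic_solution} with the Girsanov weight controlled by Lemma \ref{lem:Exp_Girsanov_theorem}, and the chaining of the $L$-Harnack inequality from Corollary \ref{cor:Stochastic_representation} applied to the auxiliary function $v$, correspond to Lemmas \ref{lem:Estimate_from_above_aux} and \ref{lem:Iterated_Harnack} (with the fixed exponents $1/3+1/2+1/6=1$, for which the constant $9$ in \eqref{eq:Exp_Girsanov_theorem} is tuned, in place of your generic $p\downarrow 1$). The genuine gap is your final step. You need $v(s,z)\le C_3\,u(s,z)^q$ with a universal constant, and you propose to get it by iterating the Harnack inequality for $v$ ``in the reverse time direction'' along overlapping sub-cylinders. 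That cannot work: iterating \eqref{eq:Harnack_cont} only compares values of $v$ at different points of the cylinder; it never converts an expectation of $u^q$ at the apex into a power of the expectation of $Mu$ at the apex. A pointwise reverse-H\"older (reverse-Jensen) bound of this type, with a constant independent of $u$, is essentially equivalent to the Harnack inequality you are trying to prove, so asserting it as a lemma begs the question; neither positivity, continuity, nor the exponential moment bound supplies it.

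What the paper (following Sturm and Fabes--Stroock) does instead is to avoid any pointwise reverse inequality at the apex by comparing both sides to the same $r$-averaged quantity $\EE_{\PP^z}\bigl[\tfrac 1R\int_0^R u^{1/3}(s-r^2\wedge\tau_r,Z(r^2\wedge\tau_r))\,dr\bigr]$. Two ingredients are needed, and your proposal contains neither. First, an \emph{estimate from below} at the apex (Lemma \ref{lem:Estimate_from_below}): writing $u^{1/3}=(Mu)^{1/3}M^{-1/3}$ and applying the three-way H\"older inequality, the representation \eqref{eq:Stochastic_representation_tau_r} bounds this average from above by $\Lambda^{1/6}u^{1/3}(s,z)$; the sub-unit exponent $1/3$ on the main factor is exactly what makes this direction possible, and it is the reason the specific exponents are not a matter of convenience. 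Second, an \emph{estimate from above} (Lemma \ref{lem:Estimate_from_above}) that lowers the exponent in your bound from $q$ (resp.\ $6$) down to $1/3$: one sets $I(\rho/R)$ as in \eqref{eq:Definition_I}, normalizes the $L^{1/3}$-average to $1$ as in \eqref{eq:Normalize_I}, uses the interpolation $u^6=u^{6-1/3}u^{1/3}$ together with the iterated-Harnack bound \eqref{eq:Iterated_Harnack} to get the self-improving inequality $I(\alpha)\le C(\beta-\alpha)^{-(m+1)\theta/6}I(\beta)^{\theta}$ with $\theta=17/18<1$, and then integrates logarithmically over scales (substituting $\alpha=\beta^p$) to conclude $I(c)\le C$. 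Combining the two estimates yields \eqref{eq:Harnack_inequality}. So the ``delicate step'' you flagged is indeed the crux, but the mechanism is this scale-iteration on averaged functionals, not a reverse H\"older estimate at a single point; as written, your proof does not close.
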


We have the following consequence of Theorem \ref{thm:Harnack_inequality}.
\begin{thm}[Scale-invariant Harnack inequality]
\label{thm:Harnack_inequality_scale_invariant}
Suppose that Assumption \ref{assump:Coeff_hat_L} holds, and let $c\in (\sqrt{2/3},1)$. Then there are positive constants, $\alpha$, $\beta$, $\gamma$ and $H=H(\bar b,\delta,K,K_0,\Lambda,T)$, such that $\alpha>\beta$ and the following hold. Let $Q\subset (0,\infty)\times S_{n,m}$ be an open set, and assume that $u$ is a nonnegative probabilistic solution to the equation $u_t-\widehat Lu=0$ on $Q$, in the sense of Definition \ref{defn:Probabilistic_solution}. Then for all $(s,z)\in Q$ and all $R>0$ such that
\begin{equation}
\label{eq:Set_for_s_z}
(s-4R^2,s+R^2)\times B_{4R}(z) \subset Q, 
\end{equation}
we have that
\begin{equation}
\label{eq:Harnack_inequality_scale_invariant}
\sup_{Q^-_{\rho}(s,z)} u\leq H \inf_{Q^+_{\rho}(s,z)} u,\quad\forall\, \rho\in (0,cR),
\end{equation}
where we let
\begin{align}
\label{eq:Q_minus}
Q^-_{\rho}(s,z) &:= (s-\alpha\rho^2,s-\beta\rho^2)\times B_{\rho}(z),\\
\label{eq:Q_plus}
Q^+_{\rho}(s,z) &:= (s,s+\gamma \rho^2)\times B_{\rho}(z).
\end{align}
\end{thm}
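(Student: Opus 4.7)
The plan is to deduce Theorem~\ref{thm:Harnack_inequality_scale_invariant} from the local Harnack inequality of Theorem~\ref{thm:Harnack_inequality} by a standard Moser-type chaining argument. Fix $(s,z)$ and $R$ satisfying \eqref{eq:Set_for_s_z}, fix $\rho \in (0, cR)$, and pick arbitrary points $(t_-,w_-) \in Q^-_\rho(s,z)$ and $(t_+,w_+) \in Q^+_\rho(s,z)$. I will construct a finite chain $(\tau_0,\zeta_0) = (t_-,w_-), (\tau_1,\zeta_1), \ldots, (\tau_N,\zeta_N) = (t_+,w_+)$ with $\tau_0 < \tau_1 < \cdots < \tau_N$, and a common scale $r = \eta \rho$, so that at every link $k$ one has $\tau_k \in (\tau_{k+1} - c^2 r^2, \tau_{k+1} - 2r^2/3)$, $\zeta_k \in B_{cr}(\zeta_{k+1})$, and $Q_r(\tau_{k+1},\zeta_{k+1}) \subset Q$. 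Definition~\ref{defn:Probabilistic_solution} requires \eqref{eq:Probabilistic_solution} on every sub-cylinder of $Q$, so $u$ is automatically a probabilistic solution on each $Q_r(\tau_{k+1},\zeta_{k+1})$. Applying Theorem~\ref{thm:Harnack_inequality} at the apex $(\tau_{k+1},\zeta_{k+1})$ at scale $r$ yields $u(\tau_k,\zeta_k) \leq H_0\, u(\tau_{k+1},\zeta_{k+1})$, where $H_0$ is the constant from Theorem~\ref{thm:Harnack_inequality}; iterating in $k$ gives $u(t_-,w_-) \leq H_0^N\, u(t_+,w_+)$, and setting $H := H_0^N$ proves \eqref{eq:Harnack_inequality_scale_invariant}.

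To construct the chain, fix a small $\eta \in (0,1)$ and set $r := \eta\rho$. The temporal gap $\Delta t := t_+ - t_-$ lies in $(\beta\rho^2, (\alpha+\gamma)\rho^2)$, and the spatial distance satisfies $\rho(w_-,w_+) \leq 2\rho$. Interpolating affinely in time with $N$ equal increments and along a near-geodesic in space, the per-step advances are $\Delta t / N$ in time and at most $2\rho/N$ in space, so the chain constraints demand
\[
\tfrac{2}{3}\eta^2\rho^2 \;<\; \Delta t/N \;<\; c^2 \eta^2 \rho^2 \qquad \text{and} \qquad 2\rho/N \;\leq\; c\eta\rho.
\]
The second inequality forces $N > 2/(c\eta)$, while the first is compatible with the allowed range for $\Delta t$ precisely when $c^2 > 2/3$, which is the standing assumption on $c$. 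Concretely, pick $\eta$ small, $N := \lceil 4/(c\eta)\rceil$, $\beta$ slightly above $\tfrac{2}{3}N\eta^2$, $\gamma$ small and positive, and $\alpha$ slightly below $c^2 N \eta^2 - \gamma$; the ordering $\alpha > \beta$ asserted in the theorem then reduces to $c^2 > 2/3$.

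The inclusion $Q_r(\tau_{k+1},\zeta_{k+1}) \subset Q$ follows from \eqref{eq:Set_for_s_z}: by the triangle inequality each chain apex lies in $(s - \alpha\rho^2, s + \gamma\rho^2) \times B_{3\rho}(z)$, so $Q_r(\tau_{k+1},\zeta_{k+1}) \subset (s - 4R^2, s + R^2) \times B_{4R}(z)$ as soon as $(\alpha+\eta^2)c^2 \leq 4$, $\gamma c^2 \leq 1$, and $(3+\eta)c \leq 4$, each arranged for $c < 1$ and $\eta$ small. The ancillary hypotheses of Theorem~\ref{thm:Harnack_inequality} are likewise satisfied: \eqref{eq:Set_for_s_z} gives $s > 4R^2$, hence $\tau_{k+1} > s - \alpha\rho^2 > 0$ and $r^2 < \tau_{k+1}$ whenever $\alpha < 4/c^2$ and $\eta$ is small; while $(\tau_{k+1},\zeta_{k+1}) \in (0,T) \times \bar S_{n,m}$ holds once $T$ is taken to dominate the time coordinates of $Q$. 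The main obstacle is this parameter bookkeeping: one must simultaneously satisfy the time-window, spatial-jump, containment, and positivity constraints with universal constants $\alpha, \beta, \gamma, \eta, N$ depending only on the data, and the crucial feasibility condition is exactly $c^2 > 2/3$, built into the theorem's hypothesis. Once the parameters are fixed, $N$ is a universal integer, and $H = H_0^N$ inherits its dependence $(\bar b, \delta, K, K_0, \Lambda, T)$ from Theorem~\ref{thm:Harnack_inequality}.
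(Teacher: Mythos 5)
Your chaining argument is correct, and the parameter bookkeeping you sketch is feasible: with $r=\eta\rho$, $N\asymp 1/(c\eta)$, $\beta$ just above $\tfrac23 N\eta^2$, $\gamma$ small and $\alpha$ just below $c^2N\eta^2-\gamma$, every link's time step lies strictly in $(\tfrac23 r^2,c^2r^2)$, the spatial step is below $cr$, each link cylinder $Q_r(\tau_{k+1},\zeta_{k+1})$ sits inside $(s-4R^2,s+R^2)\times B_{4R}(z)\subset Q$ by \eqref{eq:Set_for_s_z}, and since Definition \ref{defn:Probabilistic_solution} is formulated on all subcylinders, Theorem \ref{thm:Harnack_inequality} applies at each apex and iterates to $u(t_-,w_-)\le H_0^N u(t_+,w_+)$; the feasibility condition is exactly $c^2>2/3$, as you note. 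The paper, however, proves the theorem with no chaining at all: it applies Theorem \ref{thm:Harnack_inequality} a single time, centered at an arbitrary apex $(s',z')\in(s,s+d^2\rho^2)\times B_\rho(z)$ but at the doubled radius $2\rho$, and then observes the cylinder inclusion \eqref{eq:Inclusion_cylinders}, which says that the whole backward box $(s-\tfrac{8}{3c^2}\rho^2,\,s-(4-d^2)\rho^2)\times B_\rho(z)$ lies in the estimate region of every such apex; this gives \eqref{eq:Harnack_inequality_scale_invariant} with $H$ equal to the constant of Theorem \ref{thm:Harnack_inequality} itself and the explicit $O(1)$ values $\alpha=8/(3c^2)$, $\beta=4-d^2$, $\gamma=d^2$, with $\alpha>\beta$ again coming from $c^2>2/3$. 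Comparing the two: your route is the standard Harnack-chain argument, more robust in that it would connect much more general pairs of cylinders, but it pays with the iterated constant $H_0^N$, waiting times $\alpha,\beta,\gamma$ of size $O(\eta)$ rather than $O(1)$, and one extra geometric input you should state explicitly --- that $\rho$ is an intrinsic (length) metric, or equivalently that one can interpolate in the $(\sqrt{x},y)$ coordinates using \eqref{eq:Equivalent_intrinsic_metric}, so the near-geodesic subdivision exists inside $\bar S_{n,m}$ and stays in $B_{4R}(z)$. The paper's proof needs neither the chaining nor that input, because Theorem \ref{thm:Harnack_inequality} already allows the apex to be any point and gives an estimate region of radius $cR$ with $c$ close to $1$ and time window of width $(c^2-\tfrac23)R^2$.
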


\begin{proof}
Let $(s,z)\in Q$ and $R>0$ be such that inclusion \eqref{eq:Set_for_s_z} holds. Let $c\in (\sqrt{2/3},1)$. Inequality \eqref{eq:Harnack_inequality} gives us, for all $r\in (0,R)$, that
$$
u(t,w) \leq H u(s,z),\quad\forall (t,w) \in (s-c^2r^2,s-2r^2/3)\times B_{cr}(s,z),
$$
and by denoting $\rho:=cr$, we obtain that for all $\rho\in (0,cR)$, we have that
\begin{equation}
\label{eq:Harnack_inequality_aux}
u(t,w) \leq H u(s,z),\quad\forall (t,w) \in \left(s-\rho^2,s-\frac{2}{3c^2}\rho^2\right)\times B_{\rho}(s,z).
\end{equation}
Let $d$ be a positive constant chosen such that
\begin{equation}
\label{eq:Choice_constant_d}
d^2 < \max\left\{1,4-\frac{8}{3c^2}\right\},
\end{equation}
and let $(s',z')\in (s,s+d^2)\times B_{\rho}(z)$, where we assume that $\rho\in (0,cR)$. Inclusion \eqref{eq:Set_for_s_z} and our choice of the point $(s',z')$ allows us to apply inequality \eqref{eq:Harnack_inequality_aux} with $(s',z')$ replacing $(s,z)$ and $2\rho$ replacing $\rho$, to obtain that
\begin{equation}
\label{eq:Harnack_inequality_aux_aux}
u(t',w') \leq H u(s',z'),\quad\forall (t',w') \in \left(s'-4\rho^2,s'-\frac{8}{3c^2}\rho^2\right)\times B_{2\rho}(z').
\end{equation}
Notice that
\begin{equation}
\label{eq:Inclusion_cylinders}
\left(s-\frac{8}{3c^2}\rho^2,s-(4-d^2)\rho^2\right)\times B_{\rho}(z)\subseteq \bigcap \left(s'-4\rho^2,s'-\frac{8}{3c^2}\rho^2\right)\times B_{2\rho}(z'),
\end{equation}
where the preceding intersection is taken over all points $(s',z')$ in the set $(s,s+d^2\rho^2)\times B_{\rho}(z)$. We now set
$$
\alpha:=\frac{8}{3c^2},\quad\beta:=4-d^2,\quad\hbox{and}\quad\gamma:=d^2.
$$
and notice that our choice of the positive constant $d$ in \eqref{eq:Choice_constant_d} implies that $\alpha>\beta$. Then inequality \eqref{eq:Harnack_inequality_aux_aux} and property \eqref{eq:Inclusion_cylinders} gives us that the scale-invariant Harnack inequality \eqref{eq:Harnack_inequality_scale_invariant} holds. This completes the proof of Theorem \ref{thm:Harnack_inequality_scale_invariant}.
\end{proof}

We prove Theorem \ref{thm:Harnack_inequality} with the aid of a series of lemmas. Our proof follows closely the argument in \cite[\S 2]{Sturm_1994} used to prove \cite[Theorem 1]{Sturm_1994}, but we include the details for clarity and completeness. 

Let $T$ be a positive constant, and $(s,z)\in (0,T)\times\bar S_{n,m}$, and let $r\in (0,\sqrt{s})$. Let $Q_r(s,z):=(s-r^2,s)\times B_r(z)$, where $B_r(z)$ denotes the open ball defined in \eqref{eq:Ball_rho}. We let $\tau_r$ be the stopping time defined by
\begin{equation}
\label{eq:tau_r}
\tau_r := \inf\{t\geq 0:\, Z(t) \notin \bar B_r(z)\}.
\end{equation}
From identity \eqref{eq:Probabilistic_solution}, we obtain that the probabilistic solutions to the parabolic equation \eqref{eq:Parabolic_hat_L} on $Q_R(s,z)$, can be written in the form
\begin{equation}
\label{eq:Stochastic_representation_tau_r}
u(t,w) = \EE_{\PP^w} \left[M((t-s+r^2)\wedge\tau_r)u\left(t-(t-s+r^2)\wedge\tau_r,Z((t-s+r^2)\wedge\tau_r)\right)\right],
\end{equation}
for all $r\in (0,R)$ and all $(t,w)\in Q_r(s,z)$. 

We begin with the analogue of \cite[Lemma 1]{Sturm_1994}.

\begin{lem}[An estimate from below]
\label{lem:Estimate_from_below}
Assume that the hypotheses of Theorem \ref{thm:Harnack_inequality} hold. We then have that
\begin{equation}
\label{eq:Estimate_from_below}
u^{1/3}(s,z) \geq \Lambda^{-1/6} \EE_{\PP^z}\left[\frac{1}{R}\int_0^R u^{1/3}(s-r^2\wedge\tau_r, Z(r^2\wedge\tau_r))\, dr\right].
\end{equation}
\end{lem}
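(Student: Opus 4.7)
The starting point is the stochastic representation \eqref{eq:Stochastic_representation_tau_r} specialized at $(t,w)=(s,z)$: for every $r\in(0,R)$ one has
\[
u(s,z)=\EE_{\PP^z}\bigl[M(r^2\wedge\tau_r)\,u\bigl(s-r^2\wedge\tau_r,\,Z(r^2\wedge\tau_r)\bigr)\bigr].
\]
Write $\tau:=r^2\wedge\tau_r$ and $v:=u\bigl(s-\tau,Z(\tau)\bigr)\ge 0$. The plan is to convert this identity into a lower bound for $u(s,z)^{1/3}$ by a careful Hölder inequality, then average over $r\in(0,R)$.

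The first step is to apply Hölder's inequality with conjugate exponents $3$ and $3/2$ to the factorization $v^{1/3}=(Mv)^{1/3}\cdot M^{-1/3}$. Since $v\ge 0$ and $M>0$, this yields
\[
\EE_{\PP^z}\bigl[v^{1/3}\bigr]\le\EE_{\PP^z}[Mv]^{1/3}\,\EE_{\PP^z}\bigl[M^{-1/2}(\tau)\bigr]^{2/3}=u(s,z)^{1/3}\,\EE_{\PP^z}\bigl[M^{-1/2}(\tau)\bigr]^{2/3},
\]
so it remains to establish the uniform bound $\EE_{\PP^z}[M^{-1/2}(\tau)]\le\Lambda^{1/4}$, independently of $r\in(0,R)$ and $z\in\bar S_{n,m}$.

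For this second step I will exploit the explicit Girsanov formulas in \S\ref{subsec:Connection_L__hat_L}. From the expression \eqref{eq:New_Brownian_motion} for $W$ in terms of $\widehat W$, a direct computation shows that the martingale $M(t)$ defined in \eqref{eq:Martingale_change_of_measure} and the Girsanov density $\widehat M(t)$ satisfy $M(t)\widehat M(t)\equiv 1$, hence $M^{-1/2}(\tau)=\widehat M^{1/2}(\tau)$. Since $\tau\le T$ and $d\PP^z=\widehat M(T)\,d\widehat\PP^z$, the martingale property of $\widehat M$ under $\widehat\PP^z$ combined with the tower property for $\cF_\tau$ gives
\[
\EE_{\PP^z}\bigl[M^{-1/2}(\tau)\bigr]=\EE_{\widehat\PP^z}\bigl[\widehat M(T)\,\widehat M^{1/2}(\tau)\bigr]=\EE_{\widehat\PP^z}\bigl[\widehat M(\tau)\widehat M^{1/2}(\tau)\bigr]=\EE_{\widehat\PP^z}\bigl[\widehat M^{3/2}(\tau)\bigr].
\]
Writing $\widehat M^{3/2}=\cE_{3/2}\cdot e^{\frac{3}{8}\int_0^\tau|\theta|^2\,ds}$, where $\cE_{3/2}$ is the Dol\'eans-Dade exponential with parameter $\tfrac{3}{2}\theta$, the idea is to apply Cauchy--Schwarz and then absorb all stochastic-integral factors into Dol\'eans-Dade exponentials whose expectation equals $1$ by optional stopping, so that the remaining factors involve only $e^{\alpha\int_0^T|\theta|^2}$ with $\alpha\le 9$; by Jensen's inequality, each such factor is bounded by $\Lambda^{\alpha/9}$ thanks to Lemma~\ref{lem:Exp_Girsanov_theorem}. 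Tracking the exponents carefully (or alternatively using the cruder Jensen bound $\EE[\widehat M^{3/2}]\le\EE[\widehat M^{2}]^{1/2}\cdot\EE[\widehat M]^{1/2}$ and bounding $\EE[\widehat M^{2}(\tau)]$ by a change of measure to $d\PP_{2}=\cE_{2}\,d\widehat\PP^z$, which reduces it to $\EE_{\PP_{2}}[e^{\int_0^\tau|\theta|^2\,ds}]$ and then Cauchy--Schwarz against Lemma~\ref{lem:Exp_Girsanov_theorem}) yields $\EE_{\widehat\PP^z}[\widehat M^{3/2}(\tau)]\le\Lambda^{1/4}$ as required. I expect \emph{this} is the technical heart of the proof: the bootstrapping via Cauchy--Schwarz doubles the Dol\'eans-Dade parameters at each step, and the exponent $9$ in \eqref{eq:Exp_Girsanov_theorem} is specifically calibrated to be just large enough to close the estimate after the required number of iterations.

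Inserting the bound $\EE[M^{-1/2}(\tau)]^{2/3}\le\Lambda^{1/6}$ into the Hölder estimate of the first step produces
\[
u(s,z)^{1/3}\ge\Lambda^{-1/6}\,\EE_{\PP^z}\bigl[u^{1/3}\bigl(s-r^2\wedge\tau_r,Z(r^2\wedge\tau_r)\bigr)\bigr],\qquad\forall\,r\in(0,R).
\]
The final step is to average the right-hand side in $r$ over $(0,R)$ and interchange integration with expectation by Tonelli's theorem (applicable since the integrand is nonnegative and jointly measurable in $(r,\omega)$ by the continuity of the paths of $\{Z(t)\}$ and of $u$), which produces exactly \eqref{eq:Estimate_from_below}.
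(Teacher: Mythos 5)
Your first and last steps are fine and match the paper's strategy: specialize \eqref{eq:Stochastic_representation_tau_r} at $(t,w)=(s,z)$, peel off the density $M$ by H\"older, and average in $r$ by Tonelli. The identity $M=\widehat M^{-1}$ and the reduction $\EE_{\PP^z}\bigl[M^{-1/2}(\tau)\bigr]=\EE_{\widehat\PP^z}\bigl[\widehat M^{3/2}(\tau)\bigr]$ via the tower property are also correct. The genuine gap is in what you yourself call the technical heart: the iteration you describe for bounding $\EE_{\widehat\PP^z}\bigl[\widehat M^{3/2}(\tau)\bigr]$ does not close. Writing $\widehat M^{3/2}=\cE\bigl(\tfrac32\theta\bigr)\,e^{\frac38\int_0^\tau|\theta|^2\,ds}$ and applying plain Cauchy--Schwarz gives $\EE\bigl[\cE(\tfrac32\theta)^2\bigr]^{1/2}\EE\bigl[e^{\frac34\int|\theta|^2}\bigr]^{1/2}$; but $\cE(\tfrac32\theta)^2=\cE(3\theta)\,e^{\frac94\int|\theta|^2}$, the next squaring produces $\cE(3\theta)^2=\cE(6\theta)\,e^{9\int|\theta|^2}$, and one more step requires $\EE\bigl[e^{18\int|\theta|^2}\bigr]$, which is beyond the exponent $9$ of \eqref{eq:Exp_Girsanov_theorem}. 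The "doubling" never terminates within that budget, and the same problem reappears in your alternative route through $\EE[\widehat M^2(\tau)]$ and the measure $\PP_2$. So the intermediate bound $\EE_{\PP^z}[M^{-1/2}(\tau)]\leq\Lambda^{1/4}$, while true, is not established by the argument as described.

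The repair is to use a single H\"older step in which the Dol\'eans--Dade factor enters with exponent exactly one, not squared: for $q>1$ write $\cE\bigl(\tfrac32\theta\bigr)=\cE\bigl(\tfrac{3q}{2}\theta\bigr)^{1/q}e^{\frac{9(q-1)}{8}\int|\theta|^2}$, so that
\begin{equation*}
\EE_{\widehat\PP^z}\bigl[\widehat M^{3/2}(\tau)\bigr]\leq \EE_{\widehat\PP^z}\Bigl[\cE\bigl(\tfrac{3q}{2}\theta\bigr)(\tau)\Bigr]^{1/q}\,\EE_{\widehat\PP^z}\Bigl[e^{q'\left(\frac{9(q-1)}{8}+\frac38\right)\int_0^\tau|\theta|^2\,ds}\Bigr]^{1/q'}\leq \EE_{\widehat\PP^z}\Bigl[e^{3\int_0^T|\theta|^2\,ds}\Bigr]^{1/2}\leq\Lambda^{1/6}
\end{equation*}
with $q=q'=2$, the first factor being at most one by optional stopping (a nonnegative local martingale is a supermartingale), and the last inequality by Jensen from \eqref{eq:Exp_Girsanov_theorem}; this is even better than the $\Lambda^{1/4}$ you need. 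Note also that the paper sidesteps this entire excursion: it never bounds a negative moment of $M$, but instead splits $M^{-1/3}$ itself as $e^{\frac13\int_0^\tau\theta\cdot dW-\frac19\int_0^\tau|\theta|^2\,ds}\cdot e^{\frac{5}{18}\int_0^\tau|\theta|^2\,ds}$ and applies a three-factor H\"older inequality with $1/3+1/2+1/6=1$, so that the middle factor squares to exactly the Dol\'eans--Dade exponential $N$ of $\tfrac23\theta$ (expectation one) and the last factor is $\EE_{\PP^z}\bigl[e^{\frac53\int_0^\tau|\theta|^2\,ds}\bigr]^{1/6}\leq\Lambda^{1/6}$, with no change of measure back to $\widehat\PP^z$ and no iteration. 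Either the one-step H\"older fix above or the paper's direct split makes your outline into a complete proof.
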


\begin{proof}
For all $r\in (0,R)$, we have that
\begin{align*}
&u^{1/3}(s-r^2\wedge\tau_r, Z(r^2\wedge\tau_r)) \\
&\qquad= \left(M(r^2\wedge\tau_r)u(s-r^2\wedge\tau_r, Z(r^2\wedge\tau_r))\right)^{1/3}	M^{-1/3}(r^2\wedge\tau_r)\\
&\qquad= \left(M(r^2\wedge\tau_r)u(s-r^2\wedge\tau_r, Z(r^2\wedge\tau_r))\right)^{1/3}\\
&\qquad\quad	e^{\frac{1}{3}\int_0^{r^2\wedge\tau_r} \theta(Z(s))\cdot\, dW(s) -\frac{1}{9}\int_0^{r^2\wedge\tau_r} |\theta(Z(s))|^2\, ds} e^{\frac{5}{18}\int_0^{r^2\wedge\tau_r} |\theta(Z(s))|^2\, ds},
\end{align*}
where in the last line we use definition \eqref{eq:Martingale_change_of_measure} of the martingale $\{M(t)\}_{t\geq 0}$. Applying H\"older inequality with $1/3+1/2+1/6=1$, it follows that
\begin{align*}
&\EE_{\PP^z}\left[u^{1/3}(s-r^2\wedge\tau_r, Z(r^2\wedge\tau_r))\right]\\
&\qquad\leq \EE_{\PP^z}\left[(M(r^2\wedge\tau_r)u(s-r^2\wedge\tau_r, Z(r^2\wedge\tau_r))\right]^{1/3}\\
&\qquad\quad \EE_{\PP^z}\left[N(r^2\wedge\tau_r)\right]^{1/2}\EE_{\PP^z}\left[ e^{\frac{5}{3}\int_0^{r^2\wedge\tau_r} |\theta(Z(s))|^2\, ds}\right]^{1/6},
\end{align*}
where we let 
$$
N(t):= e^{\int_0^t \frac{2\theta(Z(s))}{3}\cdot\, dW(s) -\frac{1}{2}\int_0^t\frac{4|\theta(Z(s))|^2}{9}\, ds},\quad\forall\, t\in [0,T].
$$
By Lemma \ref{lem:Exp_Girsanov_theorem} and \cite[Corollary 3.5.13]{KaratzasShreve1991}, we obtain that the process $\{N(t)\}_{0\leq t\leq T}$ is a $\PP^z$-martingale, and so, it follows that $\EE_{\PP^z}[N(r^2\wedge\tau_r)]=1$. Again using Lemma \ref{lem:Exp_Girsanov_theorem}, together with the fact that $u$ satisfies identity \eqref{eq:Stochastic_representation_tau_r}, we obtain that
\begin{align*}
\EE_{\PP^z}\left[u^{1/3}(s-r^2\wedge\tau_r, Z(r^2\wedge\tau_r))\right] &\leq \Lambda^{1/6}u^{1/3}(s,z).
\end{align*}
Integrating the preceding inequality in $r$ from $0$ to $R$, yields estimate \eqref{eq:Estimate_from_below}.
\end{proof}

Let $K_0$ be the positive constant appearing in the statement of Theorem \ref{thm:Stochastic_representation}. For simplicity, given $\rho\in (0,r)$ and $r\in (0,R)$, we denote 
\begin{equation}
\label{eq:Truncated_cylinder}
Q^r_{\rho}(s,z) := Q_{\rho}(s,z) \cap (0,s-2r^2/3) \times S_{n,m}.
\end{equation}
We have the following analogue of \cite[Lemma 3]{Sturm_1994}.

\begin{lem}[Iterated Harnack inequality]
\label{lem:Iterated_Harnack}
There are positive constants, $C=C(K_0)$ and $m=m(K_0)$, such that the following hold. Assume that the hypotheses of Theorem \ref{thm:Harnack_inequality} hold. Let $r\in (\sqrt{2}R/\sqrt{3},R)$ and for all $(t,w) \in Q_r(s,z)$ let
\begin{equation}
\label{eq:v}
v(t,w) := \EE_{\PP^w}\left[u^6(t-(t-s+r^2)\wedge\tau_r, Z((t-s+r^2)\wedge\tau_r)) \mathbf{1}_{\{t-(t-s+r^2)\wedge\tau_r<s-2R^2/3\}}\right].
\end{equation}  
Then, for all $\rho\in (\sqrt{2}R/\sqrt{3},r)$, we have that
\begin{equation}
\label{eq:Iterated_Harnack}
v(t,w) \leq C \left(\frac{r}{r-\rho}\right)^m v(s,z),\quad\forall\, (t,w) \in Q^R_{\rho}(s,z).
\end{equation}
\end{lem}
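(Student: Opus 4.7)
The plan is to combine Corollary \ref{cor:Stochastic_representation} with an iterated Harnack chain of length $O(\log(r/(r-\rho)))$.

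First I verify that $v$ falls under Corollary \ref{cor:Stochastic_representation}. Setting $\Omega:=B_r(z)$, $(t_1,t_2):=(s-r^2,s)$, and $T:=s-2R^2/3$, the hypothesis $r>\sqrt{2/3}\,R$ yields $T>t_1$; the stopping time $\tau_r$ of \eqref{eq:tau_r} agrees with $\tau_\Omega$ from \eqref{eq:tau_Omega} for $\Omega=B_r(z)$; and the boundary datum $g:=u^6$ is nonnegative and continuous on $\overline{\eth Q_r(s,z)}$ because $u$ is a nonnegative, continuous probabilistic solution on $Q_R(s,z)\supset\overline{Q_r(s,z)}$. A direct comparison of \eqref{eq:v} with \eqref{eq:Stochastic_representation} identifies $v$ as the function produced by Corollary \ref{cor:Stochastic_representation}, so $v$ obeys the scale-invariant Harnack inequality \eqref{eq:Harnack_cont} on $Q_r(s,z)$ with constant $K_0$. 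The uniform approximation in the proof of Corollary \ref{cor:Stochastic_representation} combined with \cite[Corollary 4.1]{Epstein_Mazzeo_cont_est} shows that $v$ is in fact continuous on $\bar Q_r(s,z)$, so essential sup/inf may be replaced by pointwise sup/inf.

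Next, for $(t,w)\in Q^R_\rho(s,z)$ I will construct a chain $(t,w)=(t_0,w_0),(t_1,w_1),\ldots,(t_N,w_N)=(s,z)$ together with Harnack parameters $(t^0_k,z^0_k,r'_k)$ such that $Q_{2r'_k}(t^0_k,z^0_k)\subset Q_r(s,z)$, $(t_k,w_k)\in Q_{r'_k}(t^0_k-2(r'_k)^2,z^0_k)$, and $(t_{k+1},w_{k+1})\in Q_{r'_k}(t^0_k,z^0_k)$ at every link. I introduce the parabolic pseudo-distance to the target $d(t',w'):=\max\{\sqrt{s-t'},\,\rho(w',z)\}$, so that $d(t,w)<\rho$. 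The decisive geometric choice is $r'_k:=\kappa(r-d(t_k,w_k))$ for a small absolute constant $\kappa\in(0,1/8)$, with $(t^0_k,z^0_k)$ placed on the space-time segment from $(t_k,w_k)$ to $(s,z)$ and an appropriate interleaving of spatial and temporal advancement. The inclusion $Q_{2r'_k}(t^0_k,z^0_k)\subset Q_r(s,z)$ reduces to the constraints $\rho(z^0_k,z)+2r'_k<r$ and $t^0_k-4(r'_k)^2>s-r^2$, both of which hold because $r'_k\le\kappa(r-d(t_k,w_k))$. By arranging each step to produce $d(t_{k+1},w_{k+1})\le d(t_k,w_k)-c_0 r'_k$ for an absolute $c_0>0$, the recursion $r-d(t_{k+1},w_{k+1})\ge(r-d(t_k,w_k))(1+c_0\kappa)$ propagates and gives $r-d(t_k,w_k)\ge(r-\rho)(1+c_0\kappa)^k$. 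Taking $N$ to be the first index for which $d(t_N,w_N)=0$ (with continuity of $v$ used at the last link to pass from $\inf_{Q_{r'_{N-1}}(s,z)}v$ to $v(s,z)$) produces $N=O(\log(r/(r-\rho)))$.

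Composing $v(t_k,w_k)\le K_0\,v(t_{k+1},w_{k+1})$ along the chain then yields
\[
v(t,w)\le K_0^{N}\,v(s,z)\le C\left(\frac{r}{r-\rho}\right)^{m}v(s,z),\qquad m:=\frac{\log K_0}{\log(1+c_0\kappa)},
\]
with $C$ absorbing the implicit $O(1)$ constants from the chain; both $C$ and $m$ depend only on $K_0$, as claimed. The principal obstacle is the careful geometric bookkeeping in the construction of the chain: one must simultaneously ensure that each enlarged Harnack cylinder $Q_{2r'_k}(t^0_k,z^0_k)$ stays inside $Q_r(s,z)$, interleave spatial and temporal advances so that the pseudo-distance $d(t_k,w_k)$ shrinks by a definite multiple of $r'_k$ at every step (this requires distinguishing the regimes where the spatial or the temporal component of $d$ dominates), and connect the final link to $(s,z)$ via continuity of $v$. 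Once this geometry is in place, the logarithmic count of Harnack applications and the resulting polynomial dependence on $r/(r-\rho)$ follow mechanically.
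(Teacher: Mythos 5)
Your first step is sound and coincides with the paper's: with $\Omega=B_r(z)$, $t_1=s-r^2$, $T=s-2R^2/3$ and boundary datum $g=u^6$, the function $v$ in \eqref{eq:v} is of the form treated in Corollary \ref{cor:Stochastic_representation}, hence satisfies \eqref{eq:Harnack_cont} with constant $K_0$, and continuity of $v$ lets you use pointwise sup/inf. The gap is in the chain construction. One link of your chain is an application of \eqref{eq:Harnack_cont} with radius $r'_k$, which compares the time windows $(t^0_k-3(r'_k)^2,\,t^0_k-2(r'_k)^2)$ and $(t^0_k-(r'_k)^2,\,t^0_k)$ over the \emph{same} spatial ball $B_{r'_k}(z^0_k)$; thus a link advances time by at most $3(r'_k)^2$, and therefore decreases the temporal component $\sqrt{s-t_k}$ of your pseudo-distance by at most about $3(r'_k)^2/\sqrt{s-t_k}$. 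On $Q^R_\rho(s,z)$ one has $s-t\geq 2R^2/3$, so $\sqrt{s-t_k}$ is comparable to $r$ at the beginning of the chain, while your radius rule forces $r'_k=\kappa\bigl(r-d(t_k,w_k)\bigr)\leq\kappa(r-\rho)$ there. Hence the claimed decrement $d(t_{k+1},w_{k+1})\leq d(t_k,w_k)-c_0r'_k$ with an absolute $c_0$ is false exactly in the regime where \eqref{eq:Iterated_Harnack} has content ($\rho$ close to $r$): the achievable decrement is of order $r'_k\cdot\kappa(r-d_k)/r\ll c_0 r'_k$. Quantitatively, with $x_k:=r-d_k$ your dynamics is $x_{k+1}\leq x_k\bigl(1+c\kappa^2x_k/r\bigr)$, not $x_{k+1}\geq(1+c_0\kappa)x_k$, and closing the temporal gap then takes on the order of $r/(r-\rho)$ links; composing Harnack would only give $v(t,w)\leq K_0^{\,c\,r/(r-\rho)}v(s,z)$, which is exponentially worse than \eqref{eq:Iterated_Harnack} and does not yield constants $C(K_0)$, $m(K_0)$.

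The source of the error is that you imposed an elliptic (linear) scaling on the time variable: the constraint $Q_{2r'_k}(t^0_k,z^0_k)\subset Q_r(s,z)$ limits the radius in time by $4(r'_k)^2<r^2-(s-t^0_k)$, i.e. $(r'_k)^2$ should be taken comparable to the temporal margin $r^2-(s-t_k)$, not $r'_k$ comparable to $r-\sqrt{s-t_k}$. With the parabolic rule $(r'_k)^2\asymp r^2-(s-t_k)$ (together with the spatial constraint $2r'_k\leq r-\rho(w_k,z)$), the temporal margin grows geometrically per link and the chain length is again $O(\log\frac{r}{r-\rho})$. This is precisely what the paper's proof implements with cleaner bookkeeping: it fixes the endpoint $(s,z)$ and iterates outward with dyadic radii $\eta=r/2^k$, obtaining $v\leq K_0^k\,v(s,z)$ on expanding boxes $\bigl[s-(1-4^{-k})r^2,\,s-\tfrac{2}{3}(1-4^{-k})r^2\bigr]\times B_{(1-2^{-k})r}(z)$ — note the squared radii sum to the temporal depth — and these cover $Q^R_\rho(s,z)$ after $k_0\approx\log_2\frac{r}{r-\rho}$ steps, giving $m=\ln K_0/\ln 2$. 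So your overall strategy matches the paper's, but the specific pseudo-distance/radius choice must be replaced by the parabolic one for the argument to go through.
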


\begin{proof}
Because the function $u$ is assumed to be continuous by the hypotheses of Theorem \ref{thm:Harnack_inequality}, it follows from Corollary \ref{cor:Stochastic_representation} that the function $v$ defined in \eqref{eq:v} satisfies the Harnack inequality \eqref{eq:Harnack_cont}. We obtain that, for all $\eta\in (0,r)$, we have
\begin{equation}
\label{eq:Harnack_aux}
v(t,w) \leq K_0 v(t',w'), 
\end{equation}
for all $(t,w) \in \left[t'-3\eta^2, t'-2\eta^2\right]\times B_{\eta}(z)$, and for all $(t',w')\in Q_r(s,z)$, chosen such that $Q_{2\eta}(t',w') \subseteq Q_r(s,z)$. Applying inequality \eqref{eq:Harnack_aux} with the choice $(t',w')=(s,z)$ and $\eta=r/2$, we obtain that
\begin{equation}
\label{eq:First_iteration}
v(t,w) \leq K_0 v(s,z),\quad\forall\, (t,w) \in P^1_r(s,w),
\end{equation}
where we let
$$
P^1_r(r,z):=\left[s-3\left(\frac{r}{2}\right)^2, s-2\left(\frac{r}{2}\right)^2\right]\times B_{r/2}(z).
$$
Next, we apply inequality \eqref{eq:Harnack_aux} with the choice $(t',w') \in P^1(s,z)$ and $\eta=r/2^2$, and using inequality \eqref{eq:First_iteration}, we obtain that
$$
v(t,w) \leq K_0^2 v(s,z),\quad\forall\, (t,w) \in P^2_r(s,w),
$$
where we now let
$$
P^2_r(s,z):=\left[s-3\left(\frac{r}{2}\right)^2 -3\left(\frac{r}{2^2}\right)^2, s-2\left(\frac{r}{2}\right)^2-2\left(\frac{r}{2^2}\right)^2\right]\times B_{r/2+r/2^2}(z).
$$
Inductively, we obtain after $k$ steps that
\begin{equation}
\label{eq:kth_Harnack_iteration}
v(t,w) \leq K_0^k v(s,z),\quad\forall\, (t,w) \in P^k_r(s,w),
\end{equation}
where we have that
$$
P^k_r(r,z):=\left[s-\alpha_k, s-\beta_k\right]\times B_{\gamma_k}(z).
$$
The constants $\alpha_k$, $\beta_k$ and $\gamma_k$ are given by
\begin{align*}
\alpha_k&:=	3\left(\frac{r}{2}\right)^2 +3\left(\frac{r}{2^2}\right)^2+\ldots+3\left(\frac{r}{2^k}\right)^2 = (1-4^{-k}) r^2,\\
\beta_k &:=	2\left(\frac{r}{2}\right)^2+2\left(\frac{r}{2^2}\right)^2\ldots+2\left(\frac{r}{2^k}\right)^2= \frac{2}{3}(1-4^{-k}) r^2 ,\\
\gamma_k&:= \frac{r}{2} + \frac{r}{2^2}+ \ldots+ \frac{r}{2^k} = (1-2^{-k}) r.
\end{align*}
Let $\rho\in (0,r)$, and let $k_0\in\NN$ be the smallest integer such that 
$$
(1-2^{-k_0})r \geq \rho,\quad\hbox{and}\quad (1-4^{-k_0})r^2 \geq \rho^2.
$$
We see from \eqref{eq:Truncated_cylinder} that $Q^R_{\rho}(s,z) \subset P^{k_0}_r(s,z)$, and that we can find a positive constant, $c$, such that
$$
\frac{1}{\ln 2}\ln \frac{r}{r-\rho} \leq k_0 \leq \frac{1}{\ln 2}\ln \frac{r}{r-\rho} + c.
$$
The preceding inequality together with estimate \eqref{eq:kth_Harnack_iteration} implies that there are positive constants, $C=C(K_0)$ and $m=\ln K_0/\ln 2$, such that inequality \eqref{eq:Iterated_Harnack} holds.
\end{proof}

We now have the analogue of \cite[Lemma 2]{Sturm_1994}.

\begin{lem}[An intermediary estimate from above]
\label{lem:Estimate_from_above_aux}
There are positive constants, $C=C(K_0)$ and $m=m(K_0)$, such that the following hold. Assume that the hypotheses of Theorem \ref{thm:Harnack_inequality} hold. Then, for all $\rho\in (\sqrt{2}R/\sqrt{3}, R)$ and all $\eta\in(\rho,R)$, we have that
\begin{equation}
\label{eq:Estimate_from_above_aux}
\begin{aligned}
u^6(t,w) &\leq C\Lambda^3\frac{\eta^{m+1}}{(\eta-\rho)^{m+1}}\\
&\quad \EE_{\PP^z}\left[\frac{1}{\eta}\int_0^{\eta} u^6(s-r^2\wedge\tau_r,Z(r^2\wedge\tau_r))\mathbf{1}_{\{s-r^2\wedge\tau_r \leq s-2R^2/3\}}\, dr\right],
\end{aligned}
\end{equation}
for all $(t,w)\in Q^R_{\rho}(s,z)$.
\end{lem}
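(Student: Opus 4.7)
The strategy is the probabilistic analogue of a Moser iteration step and mirrors the argument used by Sturm to prove Lemma~2 of \cite{Sturm_1994}. The idea is to combine the probabilistic representation \eqref{eq:Probabilistic_solution} with H\"older's inequality in order to pass from $u$ to $u^6$, then apply the iterated Harnack inequality of Lemma \ref{lem:Iterated_Harnack} to transfer the estimate from $(t,w)$ to $(s,z)$, and finally integrate in the radius parameter to recover the averaged form on the right-hand side of \eqref{eq:Estimate_from_above_aux}.

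More concretely, first I would fix $r\in(\sqrt{2}R/\sqrt{3},R)$ and $(t,w)\in Q^R_\rho(s,z)\subset Q_r(s,z)$, and use \eqref{eq:Stochastic_representation_tau_r} together with H\"older's inequality with conjugate exponents $(6/5,6)$ to obtain
\[
u^6(t,w)\;\leq\;\EE_{\PP^w}\!\bigl[M^{6/5}(\sigma_r)\bigr]^{5}\,\EE_{\PP^w}\!\bigl[u^6(t-\sigma_r,Z(\sigma_r))\bigr],\qquad \sigma_r:=(t-s+r^2)\wedge\tau_r.
\]
I would then bound the first factor by $C\Lambda^{3}$ using a standard decomposition $M^{6/5}=N\cdot\exp\bigl((3/25)\int_0^{\cdot}|\theta(Z(s))|^2\,ds\bigr)$ with $N$ a Girsanov exponential martingale, followed by Cauchy-Schwarz (or H\"older) and the uniform exponential estimate of Lemma~\ref{lem:Exp_Girsanov_theorem}. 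For the second factor, I would observe that for $(t,w)\in Q^R_\rho(s,z)$ we have $t<s-2R^2/3$ by definition \eqref{eq:Truncated_cylinder}, hence $t-\sigma_r\leq t<s-2R^2/3$ $\PP^w$-almost surely, so the indicator appearing in the definition \eqref{eq:v} of $v$ is identically $1$; this identifies the second factor with $v(t,w)$. Applying Lemma \ref{lem:Iterated_Harnack} then gives
\[
u^6(t,w)\;\leq\;C\Lambda^{3}\,\Bigl(\tfrac{r}{r-\rho}\Bigr)^{\!m} v_r(s,z),\qquad r\in(\rho,\eta),
\]
where I write $v_r$ to emphasize the dependence of $v$ in \eqref{eq:v} on $r$, and where evaluating $v$ at $(s,z)$ yields
\[
v_r(s,z)\;=\;\EE_{\PP^z}\!\bigl[u^6\bigl(s-r^2\wedge\tau_r,Z(r^2\wedge\tau_r)\bigr)\mathbf{1}_{\{r^2\wedge\tau_r>2R^2/3\}}\bigr].
\]

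The final step is a Cavalieri-style integration in $r$. Rearranging the previous inequality as $\frac{(r-\rho)^m}{r^m}u^6(t,w)\leq C\Lambda^{3}v_r(s,z)$ and integrating from $r=\rho$ to $r=\eta$, the left-hand integral is bounded below by $(\eta-\rho)^{m+1}/((m+1)\eta^m)$ (using $r\leq\eta$), while on the right I extend the range of integration to $(0,\eta)$, which only increases the integral since $v_r(s,z)\geq 0$; for $r\leq\sqrt{2}R/\sqrt{3}$ the integrand actually vanishes because the event $\{r^2\wedge\tau_r>2R^2/3\}$ is empty. Rearranging yields exactly \eqref{eq:Estimate_from_above_aux}, after absorbing the harmless factor $m+1$ into the constant $C=C(K_0)$. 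The main obstacle I anticipate is the sharp bookkeeping for $\EE_{\PP^w}[M^{6/5}(\sigma_r)]^{5}\leq C\Lambda^{3}$: the H\"older exponents need to be chosen so that the resulting exponential functional of $|\theta|^2$ is dominated by the factor $9$ appearing in Lemma \ref{lem:Exp_Girsanov_theorem}, which is precisely why the constant $9$ was built into that lemma.
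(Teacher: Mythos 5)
Your overall architecture is the same as the paper's: start from the representation \eqref{eq:Stochastic_representation_tau_r}, use H\"older to trade the Girsanov density $M$ for exponential functionals of $|\theta|^2$ controlled by Lemma \ref{lem:Exp_Girsanov_theorem} and a factor $\EE_{\PP^w}[u^6(\cdot)]^{1/6}$, observe that on $Q^R_{\rho}(s,z)$ one has $t<s-2R^2/3$ so the indicator in \eqref{eq:v} is identically one and the second factor is exactly $v(t,w)$, apply Lemma \ref{lem:Iterated_Harnack} to pass to $v(s,z)$, and integrate in $r$ (your weight $(r-\rho)^m/r^m$ over $(\rho,\eta)$ versus the paper's integration over $((\rho+\eta)/2,\eta)$ are interchangeable, both using nonnegativity to enlarge the range to $(0,\eta)$). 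All of these steps are correct, including the identification $v_r(s,z)=\EE_{\PP^z}[u^6(s-r^2\wedge\tau_r,Z(r^2\wedge\tau_r))\mathbf{1}_{\{r^2\wedge\tau_r>2R^2/3\}}]$.

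The one step that does not close as written is your bound $\EE_{\PP^w}[M^{6/5}(\sigma_r)]^{5}\leq C\Lambda^3$. Your decomposition $M^{6/5}=N\cdot e^{\frac{3}{25}\int_0^{\cdot}|\theta(Z(s))|^2 ds}$, with $N=e^{-\frac{6}{5}\int_0^{\cdot}\theta\cdot dW-\frac{18}{25}\int_0^{\cdot}|\theta|^2 ds}$ a true exponential martingale, followed by Cauchy--Schwarz, leaves you with the factor $\EE_{\PP^w}[N^2]^{1/2}$; but $N^2$ is \emph{not} a martingale (it equals another exponential martingale times $e^{\frac{36}{25}\int|\theta|^2}$), so the martingale property gives you nothing at that point and the estimate is unfinished. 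The repair is to pull out the square root of a martingale instead: write $M^{6/5}=\bigl(e^{-\frac{12}{5}\int\theta\cdot dW-\frac{72}{25}\int|\theta|^2 ds}\bigr)^{1/2}\, e^{\frac{21}{25}\int|\theta|^2 ds}$, so that Cauchy--Schwarz yields $\EE_{\PP^w}[M^{6/5}]\leq \EE_{\PP^w}\bigl[e^{-\frac{12}{5}\int\theta\cdot dW-\frac{72}{25}\int|\theta|^2 ds}\bigr]^{1/2}\EE_{\PP^w}\bigl[e^{\frac{42}{25}\int|\theta|^2 ds}\bigr]^{1/2}\leq \Lambda^{1/2}$, since both coefficients $\tfrac{72}{25}$ and $\tfrac{42}{25}$ are below the constant $9$ of Lemma \ref{lem:Exp_Girsanov_theorem} (the first gives Novikov, hence expectation one for the stopped martingale; the second gives the $\Lambda$ bound), whence $\EE_{\PP^w}[M^{6/5}]^5\leq\Lambda^{5/2}\leq\Lambda^3$. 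The paper sidesteps powers of $M$ entirely by splitting $M=e^{-\int\theta\cdot dW-\frac{3}{2}\int|\theta|^2 ds}\cdot e^{\int|\theta|^2 ds}$ and applying a single three-factor H\"older with exponents $(3,2,6)$ to $\EE_{\PP^w}[M\,u]$, so that the first factor cubes to exactly the exponential martingale with parameter $3\theta$ and the second is bounded by $\Lambda^{1/2}$, giving $u^6(t,w)\leq\Lambda^3\,v(t,w)$ in one stroke; either bookkeeping works, and with the fix above the rest of your argument goes through.
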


\begin{proof}
From identity \eqref{eq:Martingale_change_of_measure}, we have for all $r\in (0,R)$ and all $(t,w)\in Q_r(s,z)$, that
\begin{align*}
u(t,w)&= \EE_{\PP^z}\left[e^{-\int_0^{(t-s+r^2)\wedge\tau_r} \theta(Z(s))\cdot\, dW(s) - \frac{3}{2}\int_0^{(t-s+r^2)\wedge\tau_r}|\theta(Z(s))|^2\, ds}\right.\\
&\qquad\left. e^{\int_0^{(t-s+r^2)\wedge\tau_r}|\theta(Z(s))|^2\, ds} u(t-(t-s+r^2)\wedge\tau_r, Z((t-s+r^2)\wedge\tau_r))\right].
\end{align*}
Applying H\"older inequality with $1/3+1/2+1/6=1$, it follows that
\begin{align*}
u(t,w)&\leq 	
\EE_{\PP^w}\left[e^{-\int_0^{(t-s+r^2)\wedge\tau_r} 3\theta(Z(s))\cdot\, dW(s) - \frac{1}{2}\int_0^{(t-s+r^2)\wedge\tau_r}|3\theta(Z(s))|^2\, ds}\right]^{1/3} \\
&\quad\EE_{\PP^w}\left[e^{\int_0^{(t-s+r^2)\wedge\tau_r}2|\theta(Z(s))|^2\, ds}\right]^{1/2} \EE_{\PP^w}\left[u^6(t-(t-s+r^2)\wedge\tau_r, Z((t-s+r^2)\wedge\tau_r))\right]^{1/6}.
\end{align*}
From Lemma \ref{lem:Exp_Girsanov_theorem} and \cite[Corollary 3.5.13]{KaratzasShreve1991}, it follows that the process
$$
N(t):= e^{-\int_0^t 3\theta(Z(s))\, dW(s) - \frac{1}{2}\int_0^t|3\theta(Z(s))|^2\, ds},\quad\forall\, t\in [0,T],
$$
is a $\PP^z$-martingale, and so, using the fact that $\tau_r \leq T$, we have that 
$$
\EE_{\PP^z}[N((t-s+r^2)\wedge\tau_r)]=1.
$$
Using again Lemma \ref{lem:Exp_Girsanov_theorem}, we have that
$$
\EE_{\PP^w}\left[e^{\int_0^{(t-s+r^2)\wedge\tau_r}2|\theta(Z(s))|^2\, ds}\right] \leq \Lambda,
$$ 
and so, we obtain that
$$
u(t,w)\leq  \Lambda^{1/2} \EE_{\PP^w}\left[u^6(t-(t-s+r^2)\wedge\tau_r, Z((t-s+r^2)\wedge\tau_r))\right]^{1/6}.
$$
Let $\rho\in(\sqrt{2}R/\sqrt{3}, R)$, $r\in(\rho, R)$ and $(t,w)\in Q^R_{\rho}(s,z)$. By definition \eqref{eq:Truncated_cylinder} of the set $Q^R_{\rho}(s,z)$, we have that $t \leq s-2R^2/3$, and also that 
$$
t-(t-s+r^2)\wedge\tau_r \leq s-2R^2/3, \quad\forall\, r \in (\rho,R),
$$
which gives us 
$$
u^6(t,w)\leq  \Lambda^3 \EE_{\PP^w}\left[u^6(t-(t-s+r^2)\wedge\tau_r, Z((t-s+r^2)\wedge\tau_r))\mathbf{1}_{\{t-(t-s+r^2)\wedge\tau_r <s-2R^2/3\}}\right],
$$
for all $(t,w) \in Q^R_{\rho}(s,z)$. We can now apply Lemma \ref{lem:Iterated_Harnack} to obtain that, for all $r\in (\rho,R)$ and for all $(t,w) \in Q^R_{\rho}(s,z)$, we have that
$$
u^6(t,w)\leq  C\Lambda^3 \left(\frac{r}{r-\rho}\right)^m\EE_{\PP^z}\left[u^6(s-r^2\wedge\tau_r, Z(r^2\wedge\tau_r))\mathbf{1}_{\{s-r^2\wedge\tau_r <s-2R^2/3\}}\right].
$$
Let now $\eta\in(\rho,R)$. Integrating the preceding inequality in $r$ from $(\rho+\eta)/2$ to $\eta$, we obtain estimate \eqref{eq:Estimate_from_above_aux}.
\end{proof}

We now have the analogue of \cite[Lemma 4]{Sturm_1994}.

\begin{lem}[An estimate from above]
\label{lem:Estimate_from_above}
Let $c\in (\sqrt{2}/\sqrt{3},1)$. Then there is a positive constant, $C=C(c,K_0,\Lambda)$, such that the following hold. Assume that the hypotheses of Theorem \ref{thm:Harnack_inequality} hold. Then we have, for all $(t,w) \in Q^R_{cR}(s,z)$,
\begin{equation}
\label{eq:Estimate_from_above}
u^{1/3} (t,w) \leq C \EE_{\PP^z}\left[\frac{1}{R}\int_0^R u^{1/3}(s-r^2\wedge\tau_r, Z(\tau_r))\mathbf{1}_{\{s-r^2\wedge\tau_r \leq s-2R^2/3\}}\, dr\right] .
\end{equation}
\end{lem}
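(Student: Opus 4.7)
The strategy is the Moser-type self-improvement used in Sturm's proof of Theorem 4 in \cite{Sturm_1994}: convert the $L^6$-type estimate of Lemma \ref{lem:Estimate_from_above_aux} into the $L^{1/3}$-type estimate \eqref{eq:Estimate_from_above} by trading the sixth power of $u$ for a supremum on a slightly larger cylinder, and then iterating along a shrinking family of radii. To that end, define
\[
M(\rho) := \sup_{(t,w) \in Q^R_\rho(s,z)} u^{1/3}(t,w),\quad \rho\in(\sqrt{2/3}\,R,R),
\]
and let $A$ denote the expectation on the right-hand side of \eqref{eq:Estimate_from_above} without the leading constant. Continuity of $u$ on $Q_R(s,z)$ together with $\bar B_\rho(z)\subset B_R(z)$ for $\rho<R$ ensures $M(\rho)<\infty$ for every such $\rho$, and the map $\rho\mapsto M(\rho)$ is manifestly non-decreasing.

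For the core estimate I would decompose $u^6=(u^{1/3})^{17}\cdot u^{1/3}$ inside the expectation of Lemma \ref{lem:Estimate_from_above_aux}. On the indicator set $\{s-r^2\wedge\tau_r\le s-2R^2/3\}$ and for $r\le\eta$, the point $(s-r^2\wedge\tau_r, Z(r^2\wedge\tau_r))$ lies in $\overline{Q^R_\eta(s,z)}$, because $Z(r^2\wedge\tau_r)\in\bar B_r(z)\subseteq\bar B_\eta(z)$ by definition of the exit time $\tau_r$. Hence $u^{6}\mathbf{1}_{\{\cdots\}}\le M(\eta)^{17}\,u^{1/3}\mathbf{1}_{\{\cdots\}}$ inside the integrand. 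Combined with the trivial bound $\eta^{-1}\le\sqrt{3/2}\,R^{-1}$ for $\eta\ge\sqrt{2/3}\,R$, Lemma \ref{lem:Estimate_from_above_aux} yields
\[
M(\rho)^{18} \le C\Lambda^3\left(\frac{\eta}{\eta-\rho}\right)^{m+1}M(\eta)^{17}\,A,\quad \sqrt{2/3}\,R<\rho<\eta<R.
\]
A weighted Young inequality applied to $M(\eta)^{17/18}\cdot A^{1/18}$ with conjugate exponents $18/17$ and $18$, with the weight chosen so that the coefficient of $M(\eta)$ equals $1/2$, and absorbing $\eta\le R$, converts this into the self-improving bound
\[
M(\rho)\le\tfrac{1}{2} M(\eta)+C'\left(\frac{R}{\eta-\rho}\right)^{m+1}A,\quad \sqrt{2/3}\,R<\rho<\eta<R.
\]

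Finally, I would invoke a standard iteration lemma: a bounded non-decreasing function $\phi$ on $[t_0,t_1]$ satisfying $\phi(s)\le\tfrac{1}{2}\phi(t)+A'(t-s)^{-\alpha}$ for all $t_0\le s<t\le t_1$ also satisfies $\phi(t_0)\le C(\alpha)A'(t_1-t_0)^{-\alpha}$. Taking $\phi=M$, $t_0=cR$, $t_1=(1+c)R/2$, and $\alpha=m+1$, yields $M(cR)\le C''A$ with $C''=C''(c,K_0,\Lambda)$; finiteness of $M(t_1)$, required for the lemma to apply, follows from continuity of $u$ together with the fact that $\overline{Q^R_{(1+c)R/2}(s,z)}$ is a compact subset of $Q_R(s,z)$. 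Since $u^{1/3}(t,w)\le M(cR)$ for every $(t,w)\in Q^R_{cR}(s,z)$, this is exactly \eqref{eq:Estimate_from_above}.

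The argument is essentially a verbatim transcription of Sturm's Lemma 4 into the present framework, so I do not anticipate a serious obstacle. The only delicate point is internal to the iteration lemma: the geometric decay ratio $\lambda$ must be chosen strictly larger than $2^{-1/(m+1)}$ in order to make the telescoped series $\sum_k 2^{-k}\lambda^{-k(m+1)}$ converge, while still leaving $\sum_k\lambda^k$ summable. This is routine, and is the single place where $m=m(K_0)$ enters the final constant, thereby accounting for the dependence of $C$ on $K_0$.
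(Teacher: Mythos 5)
Your proof is correct, but the absorption mechanism is genuinely different from the paper's. The paper follows Sturm's Lemma 4 (and hence Fabes--Stroock): it tracks the normalized $L^{6}$-type functional $I(\rho/R)$ built from the expectation in Lemma \ref{lem:Estimate_from_above_aux}, derives the nonlinear recursion $I(\alpha)\leq C(\beta-\alpha)^{-(m+1)\theta/6}I(\beta)^{\theta}$ with $\theta=17/18$, and absorbs by taking logarithms and integrating over the multiplicative change of variables $\alpha=\beta^{p}$ (the Bombieri--Giusti device), using monotonicity of $\beta\mapsto\beta I(\beta)$ to extract $I(c)\leq C$. You instead track the supremum $M(\rho)$ of $u^{1/3}$ over the truncated cylinders \eqref{eq:Truncated_cylinder}, use the same splitting $u^{6}=u^{17/3}\,u^{1/3}$ inside the expectation of Lemma \ref{lem:Estimate_from_above_aux}, and absorb via a weighted Young inequality followed by the standard iteration lemma of elliptic regularity theory. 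Both routes have Lemma \ref{lem:Estimate_from_above_aux} as the sole substantive input and give the same dependence $C=C(c,K_{0},\Lambda)$; yours avoids the normalization and the logarithmic integration at the price of invoking (or proving) the iteration lemma, which you handle correctly, including the constraint $\lambda>2^{-1/(m+1)}$ on the geometric ratio and the finiteness of $M$ needed to start the absorption.

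Two small points to tidy up. First, on the event $\{s-r^{2}\wedge\tau_{r}\leq s-2R^{2}/3\}$ the point $(s-r^{2}\wedge\tau_{r},Z(r^{2}\wedge\tau_{r}))$ lies in the \emph{closed} cylinder $[s-\eta^{2},s-2R^{2}/3]\times\bar B_{\eta}(z)$ (the time coordinate can equal either endpoint and the space point can sit on the boundary of $\bar B_{r}(z)$), so $M(\eta)$ should be taken as the supremum over this closed set; it is still finite and nondecreasing in $\eta$, since the closed cylinder is a compact subset of $Q_{R}(s,z)$ on which $u$ is continuous, so nothing else changes. Second, your derivation, exactly like the paper's own proof, produces the right-hand side with $Z(r^{2}\wedge\tau_{r})$ rather than the $Z(\tau_{r})$ printed in \eqref{eq:Estimate_from_above}; that is the intended statement (it is the form matched against Lemma \ref{lem:Estimate_from_below} in the proof of Theorem \ref{thm:Harnack_inequality}), so no harm is done, but your definition of the quantity $A$ should be stated with $Z(r^{2}\wedge\tau_{r})$.
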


\begin{proof}
The method of the proof is based on that of \cite[Lemma 4]{Sturm_1994}, which in turn uses ideas of the \cite[Proof of Lemma 3.2]{Fabes_Stroock_1984}. 
For all $\rho\in (\sqrt{2}R/\sqrt{3},R)$, we let
\begin{equation}
\label{eq:Definition_I}
I\left(\frac{\rho}{R}\right) := \EE_{\PP^z} \left[\frac{1}{\rho}\int_0^{\rho} u^6(s-r^2\wedge\tau_r,Z(r^2\wedge\tau_r)) \mathbf{1}_{\{s-r^2\wedge\tau_r \leq s-2R^2/3\}}\, dr\right]^{1/6},
\end{equation}
and we assume, without loss of generality, that
\begin{equation}
\label{eq:Normalize_I}
\EE_{\PP^z} \left[\frac{1}{R}\int_0^R u^{1/3}(s-r^2\wedge\tau_r,Z(r^2\wedge\tau_r)) \mathbf{1}_{\{s-r^2\wedge\tau_r \leq s-2R^2/3\}}\, dr\right]^3=1.
\end{equation}
From \eqref{eq:Definition_I} and \eqref{eq:Normalize_I}, using the fact that we assume that $\rho\in(\sqrt{2}R/\sqrt{3},R)$, we have that
\begin{align*}
I\left(\frac{\rho}{R}\right) 
&= \EE_{\PP^z} \left[\frac{1}{\rho}\int_0^{\rho} u^{6-1/3}(s-r^2\wedge\tau_r,Z(r^2\wedge\tau_r))\right.\\
&\qquad\left. u^{1/3}(s-r^2\wedge\tau_r,Z(r^2\wedge\tau_r))  \mathbf{1}_{\{s-r^2\wedge\tau_r \leq s-2R^2/3\}}\, dr\right]^{1/6}\\
&\leq C\sup_{Q^R_{\rho}(s,z)} |u|^{17/18}.
\end{align*}
Let $\theta:=17/18$. In the preceding inequality, we use \eqref{eq:Estimate_from_above_aux} applied with $r$ replacing $\eta$, and it follows that there is a positive constant, $C=C(K_0,\Lambda)$, such that for all $r\in (\rho,R)$, we have 
$$
I\left(\frac{\rho}{R}\right) \leq C \left(\frac{r}{r-\rho}\right)^{(m+1)\theta/6} I\left(\frac{r}{R}\right)^{\theta}.
$$
Taking the logarithm in the preceding expression, we obtain that
$$
\ln I\left(\frac{\rho}{R}\right) \leq \ln C + \frac{(m+1)\theta}{6} \ln \frac{r}{r-\rho} + \theta \ln I\left(\frac{r}{R}\right),
$$
Let $\alpha:=\rho/R$ and $\beta:=r/R$. Because we assume that $\rho\in(\sqrt{2}R/\sqrt{3},R)$ and $r\in(\rho,R)$, the preceding expression gives us that, for all $\alpha\in (\sqrt{2}/\sqrt{3}, 1)$ and all $\beta\in (\alpha, 1)$, we have 
$$
\ln I(\alpha) \leq C - \frac{(m+1)\theta}{6} \ln (\beta-\alpha) + \theta \ln I(\beta),
$$
where $C=C(K_0,\Lambda)$ is a possibly different positive constant. We choose $p>1$ such that $1/p>\theta$, and $\alpha=\beta^p$ in the preceding expression. Dividing by $\beta$ and integrating from $c^{1/p}$ to $1$, where we choose $c\in(\sqrt{2}/\sqrt{3},1)$, we can find a positive constant, $C=C(K_0,\Lambda)$, such that
\begin{align*}
\int_{c^{1/p}}^1 \frac{\ln I(\beta^p)}{\beta} \, d\beta 
&\leq C + \theta \int_{c^{1/p}}^1 \frac{\ln I(\beta)}{\beta} \, d\beta\\
&\leq C	 + \theta \int_c^1 \frac{\ln I(\beta)}{\beta} \, d\beta\quad\hbox{(since $c^{1/p}>c$.)}
\end{align*}
Applying the change of variable $u:=\beta^p$ on the left-hand side of the preceding inequalities, we obtain that
$$
\frac{1}{p}\int_{c}^1 \frac{\ln I(\beta)}{\beta} \, d\beta \leq C	 + \theta \int_c^1 \frac{\ln I(\beta)}{\beta},
$$
and using the fact that we choose $p>1$ such that $1/p>\theta$, we obtain that
$$
\int_{c}^1 \frac{\ln I(\beta)}{\beta} \, d\beta \leq C.
$$
Since the function $\beta I(\beta)$ is increasing in $\beta$ by construction, the preceding inequality gives us that $I(c) \leq C$. Assumption \eqref{eq:Normalize_I} can now be removed, and the inequality $I(c) \leq C$ translates into
\begin{align*}
&\EE_{\PP^z} \left[\frac{1}{cR}\int_0^{cR} u^6(s-r^2\wedge\tau_r,Z(r^2\wedge\tau_r)) \mathbf{1}_{\{s-r^2\wedge\tau_r \leq s-2R^2/3\}}\, dr\right]^{1/6}\\
&\quad\quad\leq C \EE_{\PP^z} \left[\frac{1}{R}\int_0^R u^{1/3}(s-r^2\wedge\tau_r,Z(r^2\wedge\tau_r)) \mathbf{1}_{\{s-r^2\wedge\tau_r \leq s-2R^2/3\}}\, dr\right]^3.
\end{align*}
Because we assume that $c\in (\sqrt{2}/\sqrt{3},1)$, the preceding inequality together with estimate \eqref{eq:Estimate_from_above_aux} give us that there is a positive constant, $C=C(c,K_0,\Lambda)$, such that inequality \eqref{eq:Estimate_from_above} holds, for all $(t,w)\in Q^R_{cR}(s,z)$.
\end{proof}

We can now give the
\begin{proof}[Proof of Theorem \ref{thm:Harnack_inequality}]
Inequality \eqref{eq:Harnack_inequality} follows from estimates \eqref{eq:Estimate_from_below} and \eqref{eq:Estimate_from_above}.
\end{proof}

%
%

\bibliography{mfpde}
\bibliographystyle{amsplain}

\end{document}